\newtheorem{theorem}{Theorem}[section]
\newtheorem{lemma}[theorem]{Lemma}
\newtheorem{proposition}[theorem]{Proposition}
\newtheorem{definition}[theorem]{Definition}
\newtheorem{remark}[theorem]{Remark}
\newtheorem{corollary}[theorem]{Corollary}
\DeclareMathOperator{\supp}{supp}
\numberwithin{equation}{section}
\newcommand{\bzero}{\mathbf{0}}
\newcommand{\BL}{\mathrm{BL}}
\newcommand{\mcC}{C}
\newcommand{\rd}{\mathrm{d}}
\newcommand{\ud}{\,\mathrm{d}}
\renewcommand{\div}{\mathrm{div}}
\newcommand{\be}{\boldsymbol{e}}
\newcommand{\Ep}{\mathrm{E}}
\newcommand{\eps}{\varepsilon}
\renewcommand{\hat}{\widehat}
\newcommand{\Id}{\mathrm{Id}}
\newcommand{\bk}{{\boldsymbol{k}}}
\newcommand{\Lip}{\mathrm{Lip}}
\newcommand{\bn}{{\boldsymbol{n}}}
\newcommand{\p}{\partial}
\newcommand{\para}{\parallel}
\newcommand{\N}{\mathbf{N}}
\newcommand{\Z}{\mathbf{Z}}
\newcommand{\R}{\mathbf{R}}
\newcommand{\T}{\mathbf{T}}
\newcommand{\bu}{\boldsymbol{u}}
\newcommand{\rw}{\mathrm{w}}
\newcommand{\bx}{\boldsymbol{x}}
\newcommand{\ie}{\emph{i.e.\,}}
\newcommand{\Om}{\Omega}
\newcommand{\thbl}{\theta^\BL}
\newcommand{\psbl}{\psi^\BL}
\newcommand{\thint}{\theta^{\mathrm{int}}}
\newcommand{\psiint}{\psi^{\mathrm{int}}}
\newcommand{\rem}{\mathrm{rem}}
\newcommand{\mtop}{\mathrm{top}}
\newcommand{\mbot}{\mathrm{bot}}
\newcommand{\Thtop}{\Theta_\mtop}
\newcommand{\Thbot}{\Theta_\mbot}
\newcommand{\Pstop}{\Psi_\mtop}
\newcommand{\Psbot}{\Psi_\mbot}
\newcommand{\threm}{\theta^{\rem}}
\newcommand{\psrem}{\psi^{\rem}}
\newcommand{\bX}{\mathbf{X}}
\newcommand{\thapp}{\theta^\mathrm{app}}
\newcommand{\ba}{\begin{aligned}}
\newcommand{\ea}{\end{aligned}}
\newcommand{\beq}{\begin{equation}}
\newcommand{\eeq}{\end{equation}}
\title{Long-time behavior of the Stokes-transport system\\ in a channel}
\author{Anne-Laure Dalibard\textsuperscript{1,2} \quad Julien Guillod\textsuperscript{1,2} \quad Antoine Leblond\textsuperscript{1}}
\date{July 2025}
\begin{document}
	
	\maketitle
	
	\begin{abstract}
		We consider here a two-dimensional incompressible fluid in a periodic channel, whose density is advected by pure transport, and whose velocity is given by the Stokes equation with gravity source term.
		Dirichlet boundary conditions are taken for the velocity field on the bottom and top of the channel, and periodic conditions in the horizontal variable.
		We prove that the affine stratified density profile is stable under small perturbations in Sobolev spaces and prove convergence of the density to another limiting stratified density profile for large time with an explicit algebraic decay rate.
		Moreover, we are able to precisely identify the limiting profile as the decreasing vertical rearrangement of the initial density.
		Finally, we show that boundary layers are formed for large times in the vicinity of the upper and lower boundaries. These boundary layers, which had not been identified in previous works, are given by a self-similar Ansatz and driven by a linear mechanism. This allows us 
		to precisely characterize the long-time behavior beyond the constant limiting profile and reach more optimal decay rates.
	\end{abstract}
	
	\noindent\textbf{Keywords}\quad{}Stokes-transport, channel, stability, affine profile, decreasing vertical rearrangement, long-time behavior, boundary layers\\
	\textbf{MSC classes}\quad{}35B35, 35B40, 35M13, 35D35, 35Q49, 76D07, 76D10
	
	\newpage
	
	\tableofcontents

	\newpage

	\newpage
	\section{Introduction}
	
	\begin{subequations}\label{eq:ST}
		The Stokes-transport system
		\begin{equation}
			\left\{
			\ba
			\p_t\rho + \bu\cdot\nabla\rho &= 0 \\
			-\Delta \bu + \nabla p &= -\rho\be_z \\
			\div\,\bu &= 0 \\
			\rho|_{t=0} &= \rho_0
			\ea
			\right.
		\end{equation}
		models the evolution of an incompressible inhomogeneous fluid with density $\rho,$ velocity and pressure fields $(\bu,p)$. 
		For physical reasons and without loss of generality, we assume that the initial density $\rho_0$ is non-negative.
		This equation will be studied in a two-dimensional periodic strip, namely $\Omega=\T\times(0,1)$ with variables $(x,z)\in\Omega$ and with Dirichlet boundary condition of the velocity field:
		\begin{equation}
			\bu = \bzero \quad \text{on} \quad \partial\Omega.
		\end{equation}
	\end{subequations}
	
	It consists of a coupling of the transport equation for the density of the fluid, with a velocity field satisfying for all times the Stokes equation with gravity forcing $-\rho\be_z$ where $\be_z$ is the unitary vertical vector.
	This equation has been studied in particular in \cite{hofer2018sedimentation} and \cite{mecherbet2020sedimentation} showing that \eqref{eq:ST} is a model obtained as the homogenization limit of inertialess particles in a fluid satisfying Stokes equation. A more recent paper \cite{grayer} shows that this system is obtained as a formal limit where the Prandtl number is infinite.
	In this paper, the domain is chosen as $\Omega=\T\times(0,1)$, which describes a physically meaningful situation including Dirichlet boundary conditions.

	\paragraph{Well-posedness.}
	The well-posedness of this system has been shown in \cite{antontsev2000free} for piecewise constant initial data in bounded domains of $\R^n$ 
 and in  \cite{leblond} for arbitrary $L^\infty$ data in bounded domains of $\R^2$ and $\R^3$ or in the infinite strip $\R\times(0,1)$,
 the well-posedness in $\Omega=\mathbf{T}\times(0,1)$ being a direct consequence.

	Well-posedness in Sobolev spaces is required for our results. Since this result does not seem to appear in the literature, we provide a concise proof of the global well-posedness of this problem in \cref{app:wp} for the sake of completeness. More precisely, for any $\rho_0\in H^m$ with $m\geq3$, there exists a unique strong solution $(\rho,\bu)$ of \eqref{eq:ST} with $\rho\in\mcC(\R_+;H^m(\Omega))$	and $\bu\in\mcC(\R_+;H^{m+2}(\Omega))$.
	Well-posedness in other domains and spaces has also been proven, see for example the recent results \cite{mecherbetsueur,inversi}.
	
	\paragraph{Steady states.}
	Before going further let us observe that the stationary states, \ie states such that $\p_t\rho = 0$, of this system are precisely the \emph{stratified} density profiles, which means in this paper density profiles depending only on the vertical variable $z$. Indeed, for such a map $\rho_s = \rho_s(z)$,
	\begin{equation*}
		\left(\rho,\bu,p\right) = \left(\rho_s,\bzero,-\int^z\rho_s(z')\ud z'\right).
	\end{equation*}
	is a solution of \eqref{eq:ST}. To show the converse, let us introduce the potential energy associated to a density profile $\rho$,
	\begin{equation*}
		\Ep(\rho) := \int_\Omega z\rho\ud x\ud z.
	\end{equation*}
	The energy balance is
	\begin{equation} \label{eq:ep}
		\frac\rd{\rd t}\Ep(\rho)
		= \int_\Omega z\p_t\rho
		=-\int_\Omega z\bu\cdot\nabla\rho
		=\int_\Omega \bu\cdot\be_z\rho
		=-\|\nabla\bu\|_{L^2}^2,
	\end{equation}
	where the divergence-free and the Dirichlet boundary conditions on $\bu$ are used in the integration by parts. The last equality is simply the basic estimate of the Stokes equation.
	The potential energy dissipates exactly through the viscosity effects. From this observation we see that the whole evolution is non-reversible; the fluid only rearranges in states of lower potential energy. Moreover, a stationary state is exactly a state for which $\bu = \bzero$, therefore it means that the density $\rho$ and the pressure $p$ must satisfy
	\begin{equation*}
		\nabla p = -\rho\be_z,
	\end{equation*}
	so that the pressure is independent of the $x$ variable, implying $\rho$ depends only on the $z$ variable.

{The aim of this paper is to study the long-time  behavior of perturbations of stratified initial data in the stable regime, with lighter fluid on top and heavier fluid on the bottom.
We will prove three different results: the first one, \cref{thm:stab}, provides the stability of such stratified profiles, together with some decay estimates.
The second one, \cref{thm:BL-linear}, gives an explicit asymptotic decomposition of solutions of a linear version of \eqref{eq:ST} as $t\to \infty$. 
In particular, we identify boundary layer profiles in the vicinity of the top and bottom boundaries.
Eventually, in \cref{thm:BL-nonlinear}, we go back to the nonlinear system \eqref{eq:ST} and provide a more precise description of the solutions as $t\to \infty$, stepping on the analysis from \cref{thm:BL-linear}. A striking consequence of our results lies in the fact that the boundaries slow down the relaxation towards the asymptotic state. This new observation could probably be adapted to other systems, see \cref{rmk:extension-IPM}.
}
 
	\paragraph{Main stability result.}

	For simplicity and in the rest of this paper, we consider perturbations of the affine profile $\rho_s(z) = 1-z$, although more general profiles such that $\p_z \rho_s<0$  could be considered, see \cref{general-profiles}.

	Our main stability result for perturbations vanishing on the boundary is the following:
	\begin{theorem}
		\label{thm:stab}
		There exists a small universal constant $\eps_0 >0$ such that for any $\rho_0 \in H^6(\Omega)$ satisfying $\|\rho_0-\rho_s\|_{H^6} \leq \eps_0$ and $\rho_0-\rho_s\in H^2_0(\Omega)$, the solution $\rho$ of \eqref{eq:ST} satisfies:
		\begin{align} \label{eq:thm-decay}
			\|\rho-\rho_\infty\|_{L^2(\Omega)} &\lesssim \frac{\eps_0}{1+t},& \|\rho-\rho_\infty\|_{H^4(\Omega)} \lesssim \eps_0,
		\end{align}
		where $\rho_\infty$ is given by the decreasing vertical rearrangement of $\rho_0$:
		\begin{equation*}
			\rho_\infty(z) := \int_0^\infty \boldsymbol{1}_{0 \leq z \leq |\{\rho_0>\lambda\}|}\ud\lambda.
		\end{equation*}
	\end{theorem}
	Note that the condition on $H^2_0(\Omega)$ is equivalent to the following requirements, discussed in the following paragraphs: 
	\begin{equation*}
		\rho_0|_{\partial\Omega} = \rho_s|_{\partial\Omega}, \qquad \p_\bn \rho_0|_{\partial\Omega} = \p_\bn\rho_s|_{\partial\Omega}.
	\end{equation*}
	This theorem will be proven in \cref{sec:stab}, and we provide a scheme of proof at the end of this Section.
	
	\paragraph{Remarks on the main stability result.}

	\begin{itemize}
		\item  Since the set of steady-states is not discrete, it is expected that $\rho_s$ is not asymptotically stable, and that the long-time behavior is given by a slightly modified density profile. In general, this asymptotic profile depends on the entire nonlinear dynamics in a very non-explicit way. However the transport equation is remarkable since it preserves the measure of the level sets. This property combined with the fact that the asymptotic profile is strictly decreasing (as a smooth perturbation of $\rho_s$) allows us to identify the asymptotic profile as the decreasing vertical rearrangement of $\rho_0$, which can be computed directly from $\rho_0$ without dependence on the full non-linear dynamics. See \cref{sub:finalstate} for details.

        \item In fact the identification of the limit as the decreasing vertical rearrangement is quite general and only requires two properties. First the density needs to converge to a stratified (\emph{i.e} independent of $x$) and decreasing limiting profile. Second, the density should satisfy a transport equation with well-defined characteristics. This is in particular the case for the incompressible porous medium equation, see \cref{sub:finalstate} for details.
  
		\item This result proves the stability of the particular state $\rho_s(z)=1-z$. We believe that the result generalizes and our proofs adapt to the case of stratified $\rho_s\in H^6$ satisfying
		\begin{equation*}
			\sup_{(0,1)}\p_z\rho_s < 0.
		\end{equation*}
		This remark is detailed at the end of \cref{sub:linearized}. We note that without the monotony assumption, lighter fluid might be below heavy one, and physical instabilities - similar to the Rayleigh-Bénard or Rayleigh-Taylor instabilities - are expected to develop (see \cite{drazin2004hydrodynamic}). Some weak convergence up to extraction toward a stationary state could be proven, but the limit might be a non-trivial $\omega$-limit set in general. In any case, it is not clear whether convergence to the rearranging steady-state holds.
		
		\item One can of course wonder about the strong regularity requirement in \cref{thm:stab}. 
			It turns out that one can adapt a strategy developed by Kiselev and Yao \cite{kiselevyao} about the instability of the incompressible porous media equation. Indeed, the arguments are  essentially geometric, and the result is the same: there exist smooth perturbations small in $H^{2-}(\Omega)$-norm such that $\limsup_{t\to\infty} \| \rho(t) - \rho_s \|_{H^s(\Omega)} = \infty$ for any $s>1$. Therefore, this shows the existence of a regularity threshold between $H^2(\Omega)$ and $H^6(\Omega)$ between stability and instability.
		The details are provided in the thesis of Antoine Leblond \cite{leblondPhD}.
		
		\item Finally, an interesting question is  the optimality of the $(1+t)^{-1}$ decay in \eqref{eq:thm-decay}. The dynamics of the equation preserve the fact that the perturbation and its normal derivative are vanishing on $\p\Omega$ \ie $\rho - \rho_s\in H^2_0(\Omega)$. For higher normal derivatives this property is not preserved, and this is the main reason why the time-decay is limited.
		This is one of the main motivations to study the formation of boundary layers in this system, together with the possibility to allow non-vanishing perturbations on $\p\Omega$.
    \cref{thm:BL-nonlinear} below indicates that the optimal decay rate under the assumptions of \cref{thm:stab} is very likely $(1+t)^{-9/8}$. We refer to the discussion at the top of page \pageref{optimal-rate} for more details.

	\end{itemize}

	\paragraph{Related results and comparison with the incompressible porous medium equation.}
		In \cite{gancedogranero} the interface problem for \eqref{eq:ST} is considered also in the domain $\Omega=\T\times(0,1)$. The interface problem treats the case where the density is equal to two different constants below and above an interface $\Gamma(t)\subset\Omega$. The question lies in the regularity of the interface, the well-posedness for $L^\infty$ densities being established in \cite{leblond,antontsev2000free}.
	More precisely, the authors prove local well-posedness for the interface in $C^{1,\gamma}$ for $0<\gamma<1$
	as well as the global well-posedness and decay of small perturbation in $H^3(\T)$ of the flat interface with lighter fluid on top.
	The proof is very different from ours as it uses a contour dynamics equation, but the spirit of the stability result is pretty similar.
	
		Let us also compare the results and properties of the Stokes-transport equation and of the incompressible porous medium equation, namely \eqref{eq:ST} where the Stokes equation is replaced by Darcy's law,
	\begin{equation} \label{eq:Darcy}
		\left\{
		\ba
		\p_t\rho+\bu\cdot\nabla\rho &= 0\\
		\bu+\nabla p &= -\rho\be_z \\
		\div\,\bu &= 0\\
		\rho|_{t=0} &= \rho_0.
		\ea
		\right.
	\end{equation}
	This equation has been intensively studied and we only cite comparable results. The question of well-posedness is much more difficult than for the Stokes-transport equation. In particular global well-posedness like \cref{thm:wp} seems to remain an open question. Local in time well-posedness has been shown in \cite{cordoba2007,xue2009,Yu2014,Constantin2015} whereas  ill-posedness through non-uniqueness in some spaces has been shown in \cite{Cordoba2010,Shvydkoy2011,Isett2015}.
	
	Concerning classical global solutions, the only known results have been proven for initial data close enough in Sobolev space to the stratified initial data $\rho_s(z)=1-z$ by \cite{elgindi} in $\mathbf{R}^2$ and $\mathbf{T}^2$ and later generalized in \cite{castro2019global} to the domain $\mathbf{T}\times(0,1)$.
	More precisely, these results prove that the profile $\rho_s(z)=1-z$ is asymptotically stable under small perturbations in $H^m$ for some $m$.
  Let us also mention the recent work \cite{park2025}, which revisits these results, relying mainly on energy estimates.
	
	In $\mathbf{T}\times(0,1)$, the boundary conditions identified and used by Castro, Cordoba and Lear \cite{castro2019global} ensure that the main linearized structure remains stable by differentiation, which makes the analysis of that work similar to the one of the periodic or whole space case. This analysis has then been extended by the same authors to the Boussinesq system with a damping velocity term in \cite{castro2019asymptotic}.
	In particular integrations by parts of high-order derivatives are possible to obtain uniform bounds in Sobolev spaces of high enough regularity. By using similar boundary conditions for Stokes-transport in $\mathbf{T}\times(0,1)$, the results of \cite{castro2019global} could be adapted in a straightforward way. 
	In our situation, \emph{the presence of the Dirichlet boundary condition is the major obstacle}.  In particular,  uniform bounds in high regularity Sobolev spaces are no longer valid, as \cref{thm:BL-nonlinear} below will highlight. This is due to the presence of boundary layers, as explained above. More details are provided below in the scheme of the proof.
	
	Eventually, let us mention that  the existence of the limiting profile was obtained in \cite{elgindi,castro2019global} through a fixed point argument. One contribution of the present paper is to precisely identify the long-time asymptotic profile as the decreasing vertical rearrangement of $\rho_0$. As explained previously, our method to identify the limiting profile is robust and in particular also applies to show that the long-time asymptotic profile for the incompressible porous media equation is also given by the decreasing vertical rearrangement.

	\paragraph{Linear asymptotic expansion for non-vanishing perturbation on $\p\Omega$.}
	\cref{thm:stab} is only valid under the assumption that the perturbation and its normal derivative are vanishing on $\p\Omega$ \ie when $\rho_0 - \rho_s\in H^2_0(\Omega)$. If the perturbation does not vanish on the boundary, this question is non trivial even for the linearized equations around $\rho_s=1-z$: denoting by $\theta$ the perturbed density, we consider
	\begin{equation}\label{eq:ST-linear}
		\left\{
		\ba
		\p_t\theta - \bu\cdot\be_z &= 0 \\
		-\Delta \bu + \nabla p &= -\theta\be_z \\
		\div\,\bu &= 0 \\
		\theta|_{t=0} &= \theta_0.
		\ea
		\right.
	\end{equation}
It can be easily checked that \cref{thm:stab} is also valid for \eqref{eq:ST-linear}. In other words, if $\theta_0\in H^6\cap H^2_0$, then $\|\theta(t)\|_{L^2}\lesssim (1+t)^{-1}$.
	Note that there is no smallness assumption in this case because the system is linear.
If $\theta_0$ or $ \p_\bn \theta_0$ do not vanish on the boundary, however,
		it turns out that $\theta$ vanishes as $t\to \infty$ but with a much slower rate.
	This is due to the formation of boundary layers of typical size $t^{-1/4}$ as $t\to \infty$, in the vicinity of $z=0$ and $z=1$.
	More precisely, we will prove the following result in \cref{sec:linear-BL}:
	
	\begin{theorem} \label{thm:BL-linear}
		Let $\theta_0\in H^s(\Om) $ for some $s$ sufficiently large. 
		Then the solution of \eqref{eq:ST-linear} satisfies:
		\begin{align*}
			\theta = \bar{\theta}_0 + \theta^\BL + O(t^{-1}) \quad \text{in} \quad L^2(\Omega) \quad \text{as} \quad t\to\infty,
		\end{align*}
		where $\bar{\theta}_0(z) = \frac{1}{2\pi}\int_0^{2\pi} \theta_0(x,z) \ud x$ is the horizontal average of the initial data and $\theta^\BL$ is the boundary layer part whose leading terms are
		\begin{equation}\label{BL-Ansatz-linear}
			\theta^\BL = \Theta^0_\mtop(x,t^{1/4}(1-z)) + \Theta^0_\mbot(x,t^{1/4}z) + \text{l.o.t}
		\end{equation}
		with $\Theta^0_\mtop$ (resp. $\Theta^1_\mbot$) decaying exponentially as $Z_\mtop=t^{1/4}(1-z)\to\infty$ (resp. as $Z_\mbot=t^{1/4}z\to\infty$). Furthermore, for $t\geq 1$,
	 \[
	 C^{-1} \|\theta_0\vert_{\p \Om}\|_{L^2} t^{-1/8} + O(t^{-3/8})\leq	\|\theta^\BL\|_{L^2(\Omega)} \leq C \|\theta_0\vert_{\p \Om}\|_{L^2} t^{-1/8} + O(t^{-3/8}).
		\]
   
	\end{theorem}
	
    \paragraph{Remarks on the linear asymptotic expansion}
    \begin{itemize}
	\item In fact, the definition of $\thbl$ is more involved, and is given as a sum in powers of $t^{-1/4}$ of different boundary layer profiles. For instance, in the vicinity of $z=0$ and for $t>1$,
	\[
	\thbl= \sum_{j=0}^4 t^{-j/4} \Theta^j_\mbot (x, t^{1/4} z).
	\]
	Furthermore, the construction can be iterated. Up to a stronger regularity requirement on the initial data, we could probably push the expansion of $\thbl $ further and prove that $\theta=\thbl + O(t^{-k})$ for $k$ arbitrarily large. In this case, the definition of $\thbl$ has to be modified in order to include profiles up to $j=4k$. We shall give more details on this matter in \cref{rmk:higher-order-expansion}.
	
	\item Note that the scaling of boundary layers is consistent with the estimates of \cref{thm:stab}: heuristically,
	one power of $t^{1/4}$ is lost with each differentiation (with respect to $z$.)
	
    \end{itemize}

	\paragraph{Non-linear asymptotic expansion.}
	Let us now go back to the nonlinear problem in the case where $\rho_0 - \rho_s\in H^2_0(\Omega)$.
	In this case, $\rho(t)-\rho_s$ and $\p_\bn (\rho(t)-\rho_s)$ vanish on the boundary for all $t\geq 0$ (see \cref{lem:traces}).
	As a consequence, the advection term is negligible in the vicinity of the boundary, and we expect the dynamics to be driven by a linear mechanism in this zone at main order.
Building on the analysis of \cref{thm:BL-linear}, we then
	derive uniform bounds in $H^8(\Omega)$, modulo some boundary layer terms:
	\begin{theorem} \label{thm:BL-nonlinear}
		There exists $\eps_0 >0$ small such that for any $\rho_0 \in H^{14}(\Omega)$ satisfying $\|\rho_0-\rho_s\|_{H^{14}} \leq \eps_0$ and $\rho_0 - \rho_s\in H^3_0(\Omega)$, the solution $\rho$ of \eqref{eq:ST} satisfies:
		\begin{align*}
			\rho = \rho_\infty + \thbl + O(t^{-2}) \quad \text{in} \quad L^2(\Omega) \quad \text{as} \quad t\to\infty,
		\end{align*}
		where $\thbl$ is the boundary layer part, given by
		\begin{equation*}\label{BL-Ansatz-NL}
		\thbl = \frac{1}{t}\Theta_\mtop(x,t^{1/4}(1-z)) + \frac{1}{t}\Theta_\mbot(x,t^{1/4}z) + \text{l.o.t.}
		\end{equation*}
            with $\Theta_\mtop$ (resp. $\Theta_\mbot$) decaying exponentially as $Z_\mtop=t^{1/4}(1-z)\to\infty$ (resp. as $Z_\mbot=t^{1/4}z\to\infty$).
	\end{theorem}
	
A more precise version of the Theorem, including $H^s$ estimates on the remainder, will be provided in \cref{prop:BL-NL}.
	We note that $\|\thbl\|_{L^2(\Omega)} \lesssim (1+t)^{-9/8}$, so this result strongly suggests that the optimal decay of $\rho-\rho_\infty$ is like $t^{-9/8}$ in $L^2(\Omega)$, which is close to the rate $t^{-1}$ obtained in \cref{thm:stab}. \label{optimal-rate}
  \cref{thm:BL-nonlinear} also shows that the decay rate is dictated by boundary layers. 
	Nevertheless, it is not excluded that the non-linear dynamics drive the system to the case where these boundary layer terms always vanish, although we expect this behavior to be rather unlikely. Let us emphasize that the formation of boundary layers, let alone the construction of boundary layer profiles, had not been identified in previous works, even in the linear setting of \cref{thm:BL-linear}.
We believe that our analysis could be extended to the incompressible porous media (IPM) system \eqref{eq:Darcy}, for which similar boundary layers are expected to develop, see \cref{rmk:extension-IPM} below.
Let us also recall  that in the cases without boundaries (see \cite{elgindi,castro2019global,park2025}), the rate of decay of $\rho-\rho_\infty$ can be arbitrarily large, provided the initial data is sufficiently smooth. 

Let us now say a few words about the case when the initial data $\rho_0$ of \eqref{eq:ST} is such that $\rho_0-\rho_s $ or $\p_\bn(\rho_0-\rho_s)$ do not vanish on the boundary. We expect the scaling of the boundary layers to be different. Indeed, if the Ansatz of the linear case \eqref{BL-Ansatz-linear} is plugged into \eqref{eq:ST}, we find that the quadratic term becomes dominant close to the boundary, and cannot be balanced by other terms in the equation.
As a consequence, studying \eqref{eq:ST} when $\rho-\rho_s\notin H^2_0(\Om)$ goes beyond the scope of this paper. 
We expect that the boundary layer equations become nonlinear in this setting.

Note that \cref{thm:BL-nonlinear} requires more stringent assumptions on the initial data than \cref{thm:stab}, since the initial perturbation is assumed to be small in $H^{14}$ (rather than $H^6$), and its second normal derivative is also assumed to vanish on the boundary (\ie $\p_z^2 (\rho-\rho_s)\vert_{\p\Om}=0$). Actually, the latter condition can be slightly weakened, see \cref{prop:BL-NL} for a more precise statement.

\begin{remark}[Extension to the incompressible porous medium equation]
\label{rmk:extension-IPM}
We believe that \cref{thm:BL-nonlinear} could be extended to the IPM equation \eqref{eq:Darcy} when the initial datum $\rho_0$ is sufficiently smooth and such that $\rho_0-\rho_s \in H^1_0$ (\ie the trace of $\rho_0-\rho_s$ vanishes on the boundary). 
In this case, $(\rho(t)-\rho_s)\vert_{\p\Om}=0$ for all $t\geq0$ (see \cref{lem:traces}).

In this setting, the boundary layer Ansatz from \cref{thm:BL-nonlinear} should be replaced with
\[
\thbl=\frac{1}{t}\Thtop^\mathrm{IPM}(x, t^{1/2}(1-z))+ \frac{1}{t}\Thbot^\mathrm{IPM}(x, t^{1/2}z) + \text{l.o.t.},
\]
where the profiles $\Theta_a^\mathrm{IPM}$ for $a\in \{\mtop, \mbot\}$ satisfy
\[
\ba 
- \Theta_a^\mathrm{IPM} + \frac{1}{2} Z \p_Z \Theta_a^\mathrm{IPM}&= \p_x \Psi_a^\mathrm{IPM}\\
\p_Z^2  \Psi_a^\mathrm{IPM}&= \p_x \Theta_a^\mathrm{IPM}.
\ea
\]
This system should be compared with \eqref{eq:Thetaj-BL-EDP}, and is endowed with the boundary conditions
\[
\Psi_a^\mathrm{IPM}\vert_{Z=0}=  \Theta_a^\mathrm{IPM}\vert_{Z=0}=0,\qquad \p_Z^2 \Theta_a^\mathrm{IPM}\vert_{Z=0}=\gamma_a,
\]
where $\gamma_\mtop(x)=\lim_{t\to\infty} \p_z^2 \theta(t,x,1)$, $\gamma_\mbot(x)=\lim_{t\to\infty} \p_z^2 \theta(t,x,0)$. 

Therefore the situation is very similar to the one of \cref{thm:BL-nonlinear}: the main difference lies in the thickness of the boundary layer  ($t^{-1/2}$ for IPM vs. $t^{-1/4}$ for Stokes-transport), which is consistent with the order of the damping term ($\p_x^2 \Delta^{-1}$ for IPM vs. $\p_x^2 \Delta^{-2}$ for Stokes-transport).
Furthermore, if $\p_z^{2\ell} \theta_0|_{\p\Omega}=0$ for $0\leq \ell \leq k$ and for some $k\geq 1$, then the above Ansatz should be replaced by
\[
\thbl=\frac{1}{t^{k+1}}\Thtop^\mathrm{IPM}(x, t^{1/2}(1-z))+ \frac{1}{t^{k+1}}\Thbot^\mathrm{IPM}(x, t^{1/2}z) + \text{l.o.t.}.
\]
Note that this is consistent with the results of \cite{park2025} (see also \cite{castro2019asymptotic}), in which the author proves that
$\| \rho(t)-\rho_s\|_{L^2}\lesssim t^{-k - 1/2}$ under a slightly more stringent version of the previous assumption. 

However, if the trace of $\rho_0-\rho_s$ on the boundary does not vanish, the situation is different. In this scenario, the nonlinear terms are expected to be of leading order close to the boundary, and we expect that nonlinear boundary layers are created.

Interpolating between the IPM system and the Stokes-transport system, it is also natural to wonder what happens for fractional equations such as
\[
\p_t \theta + \bu\cdot \nabla \theta= \p_x^2 (-\Delta)^{-\alpha} \theta,
\]
with $\alpha\in (1,2)$. One should however define carefully the fractional operator $ (-\Delta)^{-\alpha}$ in this setting, since the domain $\T\times (0,1)$ is bounded in the vertical direction (the boundedness in the horizontal direction is not really an issue since we can rely on a Fourier definition of the fractional laplacian in the horizontal variable.) 
One canonical choice is to use a spectral definition of the fractional laplacian. However, in the present setting, there are two possible choices for the eigenbasis: the eigenfunctions of the laplacian, or the ones of the bilaplacian, described in \cref{lem:basis}. These two choices seem to lead to different operators, and in particular, they are incompatible with one another.

Therefore it seems better to consider the so-called ``restricted fractional laplacian'': for $\psi \in H^{2\alpha}(\Om)$ such that $\psi\vert_{\p\Om}=0 $ and $\p_z \psi\vert_{\p\Om}=0 $ if $\alpha>3/2$, extend $\psi$ by zero outside $\Om$, and define 
\[(-\Delta)^\alpha \psi:= C_\alpha \mathrm{PV} \int_{\T \times \mathbf{R}} \frac{\Delta \psi (x',z') - \Delta \psi (x,z)}{|(x,z)- (x',z')|^{2\alpha }}\rd x'\; \rd z'.\]
    The equation for $\psi$ then becomes 
    \[
    (-\Delta)^\alpha \psi= \p_x \theta\text{ in }\Om, \qquad \psi\vert_{\Om^c}=0.
    \]
The main advantage of this choice is to be compatible with the end-cases $\alpha=1$ (IPM) and $\alpha=2$ (Stokes-transport). However, due to the nonlocal nature of the fractional laplacian, having a description of the boundary layer formation seems much more involved.

\end{remark}

	\subsection*{Schemes of proofs} 
	Here we explain the main steps and difficulties of the proofs of \cref{thm:stab,thm:BL-linear,thm:BL-nonlinear}. 
	
	\paragraph{Rewriting of the equation.}
	Since perturbations of $\rho_s(z)=1-z$ are considered, it is natural to introduce the perturbation $\theta$ as
	\begin{equation*}
		\rho = \rho_s + \theta
	\end{equation*}
	with initial perturbation $\theta_0=\rho_0-\rho_s$.
	Substituting this expression in \eqref{eq:ST} and recalling that stratified states do not contribute to the velocity field in the Stokes equation, we obtain the following equation for the perturbation $\theta$:
	\begin{equation*}
		\left\{
		\ba
		\p_t\theta + \bu\cdot\nabla\theta &= u_z \\
		-\Delta \bu + \nabla p &= -\theta\be_z \\
		\div\,\bu &= 0 \\
            \bu|_{\p\Omega} &= \bzero \\
		\theta|_{t=0} &= \theta_0.
		\ea
		\right.
	\end{equation*}

    We note that we used the notation $\bu=(u_x,u_z)$ and that $x$ and $z$ indices always denote the horizontal and vertical components and never derivatives with respect to $x$ or $z$.
 
	The Stokes equation can be simplified by introducing the stream function of the divergence-free velocity field $\bu$ through $\bu = \nabla^\perp\psi = (-\p_z\psi,\p_x\psi)$. Substituting it in the Stokes equation and considering the curl of this equation, we get
	\begin{equation*} \label{eq:intro-bilap}
		\left\{
		\ba
		\p_t\theta + \bu\cdot\nabla\theta &= u_z \\
		\Delta^2\psi &= \p_x\theta \\
		\bu &= \nabla^\perp\psi \\
		\psi|_{\p\Omega} = \p_\bn\psi|_{\p\Omega} &= 0\\
            \theta|_{t=0} &= \theta_0.
		\ea
		\right.
	\end{equation*}
	Notice that this writing is consistent with the previous observation that any $z$ dependent perturbation of the density does not affect the velocity field.
	
	Once the steady states of \eqref{eq:ST} are identified as the stratified density profiles, \ie functions depending only on $z$, it is natural to decompose the perturbation $\theta(t,x,z)$ as the sum of its horizontal average $\bar\theta(t,z)$ and  its complement $\theta'(t,x,z)$ with zero horizontal average, following \cite{elgindi} and others:
	\begin{align*}
		\theta(t,x,z) &= \bar\theta(t,z) + \theta'(t,x,z) & \bar\theta(t,z) &= \frac{1}{2\pi}\int_0^{2\pi}\theta(t,x,z) \ud x.
	\end{align*}
	We note that contrary to \cite{elgindi,castro2019global}, $\bar \theta$ denotes the average rather than the fluctuation, as this seems a more natural notation. In particular our notation is comparable to the standard notation used for the Reynolds-averaged Navier--Stokes equations.
	
	This decomposition is actually orthogonal in any Sobolev space $H^m$ and one can project the transport equation onto the two appropriate complementary subspaces, leading to
	\begin{equation} \label{eq:STi}
		\left\{
		\ba
		\p_t\theta' + (\bu\cdot\nabla\theta')' &= (1-\p_z\bar\theta)u_z & \theta'|_{t=0} &= \theta'_0 \\
		\p_t\bar\theta + \overline{\bu\cdot\nabla\theta'} &= 0 & \bar\theta|_{t=0} &= \bar\theta_0 \\
		\Delta^2\psi &= \p_x\theta' & \psi|_{\p\Omega} &= 0 \\
		\bu &= \nabla^\perp\psi & \p_\bn\psi|_{\p\Omega} &= 0. \\
		\ea
		\right.
	\end{equation}
	Although more complicated at first sight, this equation allows us to distinguish the evolution of $\theta'$ and of the average perturbation $\bar\theta$. This is needed since the whole perturbation cannot be expected to decay in Sobolev spaces due to its pure transport. Only the average-free part $\theta'$ is decaying.
	
	\paragraph{Toy problem on the torus.}
	In order to get an intuition of the decay of $\|\theta'\|_{L^2(\Omega)}$ and to highlight the specific difficulties of our work, we will first explain the strategy in the case when the problem is set on the torus, in order to avoid the issues associated with the boundary conditions. More precisely, we consider the following linear problem for $\theta'$ on the torus $\T^2$:
	\begin{equation} \label{eq:ST-torus}
		\left\{
		\ba
		\p_t \theta' &= (1-G) \p_x\psi + S,\\
		\Delta^2 \psi &= \p_x \theta' ,\\
		\theta'|_{t=0} &= \theta'_0,
		\ea
		\right.
	\end{equation}
	where $G$ is a given small function of $t$ and $z$, whose finality is to be replaced by $\p_z\bar\theta$. 
	The source term $S$, which will include the nonlinearities of the system, will be omitted in this short presentation for simplicity.
	Note that \eqref{eq:ST-torus} differs from our original system through the periodic boundary conditions on $\psi$ in the vertical variable.
	The choice of periodic boundary conditions simplifies the analysis in several ways, which we will detail below.

For any $s\geq0$, applying $\Delta^{\frac{s}{2}}$ to the first equation of \eqref{eq:ST-torus} and projecting on $\Delta^{\frac{s}{2}}\theta'$, we obtain, after several integrations by parts in the right-hand side,
	\begin{equation} \label{eq:intro-Hs}
		\ba
		\frac{1}{2} \frac{\rd}{\rd t} \|\Delta^{\frac{s}{2}}\theta'\|_{L^2}^2 
		&= - \int_{\T^2}\Delta^{\frac{s}{2}}((1-G)\psi) \Delta^{\frac{s}{2}+2}\psi \\
		&= - \int_{\T^2}\Delta^{\frac{s}{2}+1}((1-G)\psi) \Delta^{\frac{s}{2}+1}\psi \\
		&\leq -(1-\bar C \|G\|_{H^{s+2}}) \|\Delta^{\frac{s}{2}+1}\psi\|_{L^2}^2.
		\ea
	\end{equation}
	where $\bar C$ is a universal constant. As a consequence, if $\bar C\|G(t)\|_{H^{s+2}}<1$, then the $H^s$ norm of $\theta'$ is non-increasing, and whence uniformly bounded.
	
	Then, the decay of $\|\theta'(t)\|_{L^2}$ is deduced by using the following Gagliardo--Nirenberg interpolation inequality, which in the case of the torus can be proved simply by Fourier analysis:
	\begin{equation} \label{eq:intro-interp}
		\|\p_x^{-1}\Delta^{2}\phi\|_{L^{2}}^2\lesssim \frac1{K}\Vert\Delta\phi\Vert_{L^{2}}^2+K^{2}\Vert\p_x^{-3}\Delta^{4}\phi\Vert_{L^{2}}^2
	\end{equation}
    where $\p_x^{-1} f$ denotes the anti-derivative of $f$ with null horizontal average, and $K>0$  is an arbitrary positive constant.

	More precisely, combining \eqref{eq:intro-Hs} %
	and \eqref{eq:intro-interp} with $\phi= \Delta^{\frac{r}{2}} \psi$ for some $r\geq 0$ leads to:
	\begin{equation*}\ba
		\frac{\rd}{\rd t} \|\Delta^{\frac{r}{2}}\theta'\|_{L^{2}}^{2}
		&\lesssim-\|\Delta^{\frac{r}{2}+1}\psi\|_{L^{2}}^{2}\\
	&	\lesssim K^{3}\Vert\Delta^{\frac{r}{2} +4}\p_x^{-3}\psi\Vert_{L^{2}}^{2}-K\|\p_x^{-1}\Delta^{\frac{r}{2} + 2}\psi\|_{L^{2}}^{2}\\
&		\lesssim K^{3}\Vert\Delta^{\frac{r}{2}+2}\partial_{x}^{-2}\theta'\Vert_{L^{2}}^{2}-K\|\Delta^{\frac{r}{2}}\theta'\|_{L^{2}}^{2}
		\ea
	\end{equation*}
	recalling that $\Delta^2\psi = \p_x\theta'$. Taking $K \simeq (1+t)^{-1}$ we deduce:
	\beq\label{diff-eq-Hs}
	\frac{\rd}{\rd t} \|\Delta^{\frac{r}{2}}\theta'\|_{L^{2}}^{2} + \frac{3}{1+t} \|\Delta^{\frac{r}{2}}\theta'\|_{L^{2}}^{2} \lesssim \frac{1}{(1+t)^3} \|\Delta^{\frac{r}{2}+2}\partial_{x}^{-2}\theta'\|_{L^{2}}^{2}.
	\eeq
  Note that here the factor 3 could be made arbitrarily large by taking a larger multiplicative constant in $K$, but any constant strictly larger than 2 is sufficient for the argument.

	Since $\Delta^{\frac{r}{2}+2}\partial_{x}^{-2}\theta'$ is uniformly bounded in $L^2(\T^2)$ by $\|\p_x^{-2}\theta'_0\|_{H^{r+4}}$, this integrates into
	\begin{equation} \label{eq:intro-decay}
	\forall t\geq 0,\quad  \|\Delta^{\frac{r}{2}}\theta'(t)\|_{L^{2}}\lesssim \frac{\|\p_x^{-2}\theta'_0\|_{H^{r+4}}}{1+t}.
	\end{equation} 
Note that the index of regularity $r$ is arbitrary. Hence, plugging this estimate back into \eqref{diff-eq-Hs} and using an induction argument, we find that for any $\alpha\geq 0$, $r\geq 0$, 
\beq
\label{decay-torus-Hs}
\forall t\geq 0,\quad \| \Delta^{\frac{r}{2}}\theta'(t)\|_{L^2}\leq (1+t)^{-\alpha/2} \|\p_x^{-\alpha}\theta_0'\|_{H^{r+2\alpha}}.
\eeq
Let us emphasize that when $G=0$, this estimate can be proved directly from the Fourier representation formula 
\[
	\theta'(t,x,z)=\sum_{\substack{\bk\in \Z^2\\ k_x\neq 0}} \hat\theta_{\bk}(0)\exp\left(- \frac{k_x^2}{|\bk|^4}t\right) \exp(i\bk\cdot(x,z)).
	\]
	Hence the decay rate can be expected to be somewhat optimal. Moreover, in the case of the torus,  the rate of decay can be as large as desired, the cost being the regularity required on $\theta_0'$. Note that for $r=0$, $\alpha=2$, we find the decay rate announced in \cref{thm:stab}.

	\paragraph{Difficulties with Dirichlet boundary conditions.}
	Let us now explain the main differences between \eqref{eq:ST-torus} on $\T^2$ and the original system \eqref{eq:STi} on $\Omega=\T\times(0,1)$.
	The strategy will be identical.
	We first prove a uniform bound for $\theta'$ in $H^4(\Omega)$, and then use interpolation inequalities together with the energy estimate to obtain the decay estimate \eqref{eq:intro-decay}. However  the derivation of the different bounds will be substantially more involved.
	
	We shall prove the uniform $H^4(\Omega)$ bound for $\theta'$ directly from the equation without spectral analysis. More precisely, the estimate \eqref{eq:intro-Hs} remains valid for $s=0$ since $\psi|_{\p\Omega}=\p_\bn\psi|_{\p\Omega}=0$. Higher order uniform estimates in $H^s(\Omega)$ fail in general due to non-vanishing terms on the boundary. The question is therefore when the integration by parts done in \eqref{eq:intro-Hs} can be performed.
	The traces of $\theta'$ and $\p_\bn\theta'$ being zero, the traces of $\Delta^2\psi$ and $\p_\bn\Delta^2\psi$ are also vanishing (see \cref{sub:traces}) so integrations by parts in \eqref{eq:intro-Hs} can be done for $s=4$ provided $G=0$. Therefore a uniform $H^4$ bound can be deduced when $G=0$.
	When $G$ is nonzero, some traces no longer vanish. The strategy will be to treat them perturbatively, \ie not performing integration by parts on $\Delta^2(G\psi)\Delta^3\psi$.
A similar interpolation argument (\cref{lem:interpol}) allows us to then deduce the analogue of \eqref{eq:intro-decay} \ie that $\|\theta'\|_{L^2}$ is bounded by $(1+t)^{-1}$.

However, note that the higher decay \eqref{decay-torus-Hs} for $r>0$ and $\alpha>2$ does not hold in general, as \cref{thm:BL-nonlinear} shows. Indeed, the decay rate is prescribed by the boundary layer part of the solution, for which we have $\|\Delta^{\frac{r}{2}} \thbl\|_{L^2}\propto (1+t)^{-1+ \frac{r}{4} - \frac{1}{8}}$. Hence the $H^s$ norm of $\theta'$ for $s\geq 5$ is not expected to be bounded.

 {Finally, let us mention that proving some time integrability on the velocity field is crucial in order to obtain the convergence of $\bar \theta$.
As a consequence, the linear decay from \eqref{eq:intro-decay} is not entirely sufficient to complete the proof of \cref{thm:stab}. In previous works, this higher decay on the velocity field was obtained either thanks to high regularity bounds, or by taking advantage of the Fourier representation of the solution. Since none of these tools are available here, we rely on a different argument, involving bounds on the time derivative of $\theta'$.
}

	\begin{remark}[About the spectral decomposition]
	Since the equation is no longer set on the torus, but rather in the domain $\Omega=\T\times(0,1)$ endowed with boundary conditions, we can no longer perform a
	(discrete) Fourier transform in the vertical variable.
	However it is possible to analyze explicitly the eigenfunctions of the operator
	\beq\label{def:L}
	L:\theta\in L^2\mapsto \p_x \psi\in L^2,\quad \text{where } \Delta^2 \psi=\p_x \theta,\quad \psi\vert_{\p\Om}=\p_\bn\psi\vert_{\p\Om}=0,
	\eeq
	and show that the eigenvalues $(\lambda_\bk)_{\bk\in \Z\times \N}$ of the operator $L$ behave asymptotically as $k_x^2/|\bk|^4$ (see \cref{lem:basis}), so that the estimate \eqref{eq:intro-decay} remains true. Details on the spectral analysis are presented in the PhD thesis of Antoine Leblond \cite{leblondPhD}.
	
	\end{remark}

	\paragraph{Bootstrap.}
	The last step of the proof consists in bringing the previous linear analysis into the full nonlinear system. Intuitively, the strategy is the following: denote by $(0,T^*)$ the maximal time interval over which $\|\theta'\|_{L^2}\leq B(1+t)^{-1}$ and $\|\theta'\|_{H^4}\leq B$ are valid with a constant $B$. In fact more estimates need to be included in the bootstrap argument for technical reasons, see \eqref{eq:boot-hyp}.
	On this time interval, the quadratic terms can be treated perturbatively, provided $\|\theta_0\|_{H^4}$ is sufficiently small. Hence the bootstrap estimates hold with a constant which is better than $B$, and thus $T^*=\infty$. It follows that $\theta'$ converges towards zero in $L^2$, and that the time derivative of $\bar \theta$ is integrable. Hence $\bar\theta$ has a limit in $L^2$ as $t\to\infty$. This is the main part of the proof which is detailed in \cref{sub:bootstrap}.
	
	\paragraph{Identification of the limit.}
	Since $\theta'$ converges to zero in any $H^m$ for $m<4$ as $t\to\infty$ and $\bar\theta$ has a limit in $L^2$ as $t\to\infty$, the whole density $\rho=\rho_s+\theta=\rho_s+\bar\theta+\theta'$ converges to some limit $\rho_\infty=\rho_s+\bar\theta_\infty$ in $L^2$ and $\rho_\infty$ depends only on $z$. The term $\p_z\theta$ is small compared to $\p_z\rho_s=-1$, and so is its limit $\p_z\bar\theta_\infty$. Whence $\rho_\infty$ is strictly decreasing with respect to $z$, as is $\rho_s$.
	The transport of the density by the divergence-free field $\bu$ ensures that the level sets of $\rho$ are preserved by the time evolution, and by strong convergence this is also the case for the limit $\rho_\infty$. According to rearrangement theory, $\rho_\infty$ is therefore a rearrangement of $\rho_0$. One can show that there exists a unique decreasing vertical rearrangement of $\rho_0$, hence $\rho_\infty$ is uniquely determined.
	This part of the proof is detailed in \cref{sub:finalstate}.

	\paragraph{Linear boundary layers for system \eqref{eq:ST-linear}.}\label{scheme:linearBL}
	Let us now give a sketch of the proof of \cref{thm:BL-linear}. 
	We start with rather simple observations:
	\begin{itemize}
		\item First, it follows from the equation that
		$
		\p_t \theta\vert_{\p\Om}=\p_t \p_\bn \theta\vert_{\p\Om}=0.
		$
		Therefore, for all $t\geq 0$, 
		\[
		\theta\vert_{\p\Om}(t)= \theta_0\vert_{\p\Om},\quad \p_\bn  \theta\vert_{\p\Om}(t)= \p_\bn \theta_0\vert_{\p\Om}.
		\]

\item Taking the horizontal average of the evolution equation, we find that $\p_t \bar \theta=0$, and thus $\bar \theta(t)=\bar \theta_0$. Hence we focus on  the long-time behavior of $\theta'$.
  
		\item Let us denote by $(b_\bk)_{\bk\in \Z\times \N^*}$ the basis of eigenvectors of  the operator $L$ defined in \eqref{def:L} (see \cref{lem:basis}.) Then  we can always write
		\[
		\theta'(t)=\sum_{\bk\in \Z^* \times \N} \exp(-\lambda_\bk t) \widehat{\theta'_\bk}(t=0) b_\bk,
		\]
		with $( \widehat{\theta'_\bk}(t=0))_{\bk\in  \Z^* \times \N}\in \ell^2 $.
        We recall that $\lambda_\bk$ behaves asymptotically as $|k_x|^2/|\bk|^4$.
		It then follows from Lebesgue's dominated convergence theorem that $\theta'(t)\to 0$ in $L^2$ as $t\to \infty$.

	\end{itemize}
	Therefore $\theta'(t)$ vanishes in $L^2$ while keeping a constant - and non-zero - value on the boundary. As a consequence, 
	it is reasonable to expect that boundary layers are formed in the vicinity of $z=0$ and $z=1$ as $t\to \infty$. 
We then plug the Ansatz \eqref{BL-Ansatz-linear} into \eqref{eq:ST-linear} and identify the profiles $\Theta^0_\mtop$, $\Theta^0_\mbot$. 
	The role of $\Thtop^0$ (resp. of $\Thbot^0$) is to lift the trace of $\theta_0'$ at the top boundary $z=1$ (resp. at the bottom boundary $z=0$).
We find that these two profiles satisfy an ODE, with boundary conditions given  by
	\begin{align*}
		\Thtop^0(x,Z=0) &= \theta'_0(x,z=1),& \p_Z \Thtop^0(x,Z=0) &=0,\\
		\Thbot^0(x,Z=0) &= \theta'_0(x,z=0),& \p_Z \Thbot^0(x,Z=0) &=0.
	\end{align*}
	In a similar way, the next order boundary layer terms $\Thtop^1$ and $\Thbot^1$ lift the traces of $\p_\bn \theta_0$ on $\p\Om$.
	Hence the first step is to construct explicitly the boundary layer profiles in terms of $\theta_0$.
	By construction, the remainder $\theta' - \thbl$ vanishes on the boundary, together with its normal derivative. 
	We can then apply the decay analysis presented above to the remainder $\theta' - \thbl$, and we find that $\|(\theta' - \thbl )(t)\|_{L^2}\lesssim (1+t)^{-1}$.

	\paragraph{Boundary layers for system \eqref{eq:ST}.}
	We now turn towards \cref{thm:BL-nonlinear}. Note that the boundary layer term in \cref{BL-Ansatz-NL} is smaller than in \cref{BL-Ansatz-linear}. This is directly linked to the fact that under the assumptions of \cref{thm:BL-nonlinear}, $\theta=\rho-\rho_s$ vanishes on the boundary, together with its normal derivative.
	Therefore, the boundary layer term $\thbl$ (or rather $\Delta^2 \thbl$) now lifts the traces of $\Delta^2 \theta'$ and $\p_\bn \Delta^2 \theta'$.
	The overall strategy is the same as the one described above: we first identify the boundary layer part of the solution by rigorously constructing the boundary layer profiles $\Theta^j_\mbot$ and $\Theta^j_\mtop$. We then prove some decay estimates on the remainder $\threm= \theta'-\thbl$, noticing that $\Delta^2 \threm$ satisfies assumptions that are very close to the ones of \cref{thm:stab}.
	Note that the higher decay we obtain on $\threm$ is the main reason behind the strong regularity requirements on $\rho_0$.

	However, there are several new conceptual and technical  difficulties compared with \cref{thm:BL-linear}. The main one lies in the fact that the traces $\Delta^2\theta'\vert_{\p\Om}$ and $\p_\bn \Delta^2\theta'\vert_{\p\Om}$ are not constant with respect to time. They merely have a finite limit as $t\to \infty$.
	Hence we need to find an asymptotic expansion in powers of $(1+t)^{-1/4}$ for $\Delta^2\theta'\vert_{\p\Om}$ and $\p_\bn \Delta^2\theta'\vert_{\p\Om}$ as $t\gg 1$.
	The main boundary layer profiles $\Theta^0_\mbot$ and $\Thtop^0$ will lift the first term in this expansion (\ie the long time limit of   $\Delta^2\theta'\vert_{\p\Om}$), whereas the 
	next order profiles $\Thtop^j, \Thbot^j$ for $j\geq 2$ will lift the lower order terms. 
    Furthermore, in order to prove that $\Delta^2\theta'\vert_{\p\Om}$ converges in $H^s$ as $t\to \infty$ for some sufficiently large $s$, we shall need high regularity bounds on $\theta'$.
    Eventually, the proof of \cref{thm:BL-nonlinear} involves two nested bootstrap arguments: one on $\theta'$, which allows us to construct the boundary layer term $\thbl$ on the interval on which the bootstrap assumption is satisfied, and a second one on the remainder $\theta'-\thbl$, on a possibly smaller interval.

	\paragraph{Notation.} Throughout the paper, we write $A\lesssim B$ whenever there exists a universal positive constant $C$ such that $A\leq C B$.

	\section{Long time stability of stratified profiles\texorpdfstring{: proof of \cref{thm:stab}}{}}
	\label{sec:stab}

	This section is devoted to the proof of \cref{thm:stab}.
	The proof follows the steps highlighted in the introduction: we decompose $\theta$ into $\theta=\bar\theta + \theta'$, and we prove that $\theta'$ vanishes in $L^2$ with algebraic decay, while $\bar\theta$ converges in $L^2$ towards a profile $\bar\theta_\infty(z)$.
	To that end, we first study the linearized Stokes-transport system around a solution $\theta$ close to an affine profile.
	Thanks to a crucial interpolation inequality (see \cref{lem:interpol}), which somehow replaces the spectral decomposition in the periodic setting, we  quantify the $L^2$ decay of solutions of the linearized equation with a source term (see \cref{prop:bb-decay}).
	We then use a bootstrap argument to show that the decay predicted by the linear analysis persists for the nonlinear evolution.
	This allows us to prove that $\theta'(t)\to 0 $ and that $\bar\theta \to \bar\theta_\infty$ in $H^s(\Om)$ as $t\to \infty$, for all $s<4$.
	Eventually, we identify the asymptotic profile $\bar\theta_\infty$ in terms of the initial data.
	
	The organization of this section is the following. 
	We start in \cref{sub:traces} with some preliminary remarks concerning the traces of $\theta$ and $\p_\bn \theta$. 
	We then turn towards the analysis of the linearized system in \cref{sub:linearized}.
	The bootstrap argument is presented in  \cref{sub:bootstrap}.
	Eventually, we prove in \cref{sub:finalstate} that $\rho_\infty$ is the rearrangement of the initial data $\rho_0$.
	
	\subsection{Vanishing traces for \texorpdfstring{$\theta'$ and $\p_\bn \theta'$}{θ' and ∂\_n θ'}}
	\label{sub:traces}
	
 We prove here the following preliminary result:
	
	\begin{lemma}
		Let $\theta_0\in H^m(\Om)$ with $m\geq 3$, and let $\theta\in L^\infty_\text{loc}(\R_+, H^m)$ be the solution of \eqref{eq:STi}.
		Assume that $\theta_0=\p_\bn\theta_0=0$ on $\p\Om$. Then for all $t\geq 0$, 
		\[
		\theta(t)\vert_{\partial\Omega}= \p_\bn \theta(t)\vert_{\partial\Omega}=0.
		\]
		If additionally $\p_z^2 \bar\theta_0=0$ on $\p\Omega$, then $\p_z^2 \bar\theta(t)\vert_{\partial\Omega}=0$ for all $t\geq 0$.
		\label{lem:traces}
	\end{lemma}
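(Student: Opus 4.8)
The plan is to track the boundary values of $\theta$, $\p_\bn\theta$, and $\p_z^2\bar\theta$ along the evolution, exploiting that the transport operator $\bu\cdot\nabla$ has a velocity field tangent to $\p\Omega$ (since $\bu = \bzero$ there, and in particular $u_2 = 0$ on $\p\Omega$). First I would restrict the first equation of \eqref{eq:STi} to $\p\Omega$: because $\bu$ vanishes on $\p\Omega$, the transport term $\bu\cdot\nabla\theta'$ has vanishing trace provided $\theta'$ is smooth enough, and $u_2|_{\p\Omega} = 0$ so the right-hand side $(1-\p_z\bar\theta)u_2$ also vanishes on $\p\Omega$. This should give $\p_t(\theta'|_{\p\Omega}) = 0$ as an ODE, and since $\theta_0'|_{\p\Omega} = \theta_0|_{\p\Omega} - \bar\theta_0|_{\p\Omega} = 0$ (using $\theta_0|_{\p\Omega}=0$ and that $\bar\theta_0$ is the average of something vanishing on $\p\Omega$, hence $\bar\theta_0|_{\p\Omega}=0$), we get $\theta'(t)|_{\p\Omega} = 0$ for all $t$; similarly $\bar\theta(t)|_{\p\Omega} = 0$ from the $\bar\theta$ equation (its right-hand side $\overline{\bu\cdot\nabla\theta'}$ has vanishing trace), so $\theta(t)|_{\p\Omega} = 0$.

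Next, for the normal derivative, I would differentiate the equation for $\theta$ (i.e.\ $\p_t\theta + \bu\cdot\nabla\theta = u_2$) in the normal direction and restrict to $\p\Omega$. Writing $\bn = \pm\be_z$ near the two components of $\p\Omega$, the term $\p_\bn(\bu\cdot\nabla\theta)$ on $\p\Omega$ expands as $(\p_\bn\bu)\cdot\nabla\theta + \bu\cdot\nabla(\p_\bn\theta)$; on $\p\Omega$ the second summand vanishes because $\bu = \bzero$ there, and in the first summand $\nabla\theta$ on $\p\Omega$ reduces to $(\p_\bn\theta)\bn$ since the tangential derivative of $\theta$ along $\p\Omega$ is the tangential derivative of its (vanishing) trace, hence zero; moreover $(\p_\bn\bu)\cdot\bn = \p_\bn u_2$ (up to sign) equals $\div\,\bu$ minus the tangential derivative of the tangential velocity component, both of which vanish on $\p\Omega$ (the latter because $u_1|_{\p\Omega}=0$). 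So $\p_\bn(\bu\cdot\nabla\theta)|_{\p\Omega} = 0$. Similarly $\p_\bn u_2|_{\p\Omega} = 0$ as just argued. Hence $\p_t(\p_\bn\theta|_{\p\Omega}) = 0$, and since $\p_\bn\theta_0|_{\p\Omega} = 0$ we conclude $\p_\bn\theta(t)|_{\p\Omega} = 0$ for all $t$; the average/complement parts $\p_\bn\bar\theta$ and $\p_\bn\theta'$ then also vanish on $\p\Omega$ by projecting.

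For the last assertion, $\p_z^2\bar\theta(t)|_{\p\Omega} = 0$, I would differentiate the $\bar\theta$ equation in \eqref{eq:STi} twice in $z$ and evaluate on $\p\Omega$. The point is that $\p_z^2\overline{\bu\cdot\nabla\theta'}$ on $\p\Omega$ must be shown to vanish: expanding the two $z$-derivatives produces terms with at least one factor among $\bu$, $\p_z\bu$, $\p_z^2\bu$ paired with derivatives of $\theta'$, and one uses $\bu|_{\p\Omega}=0$, $\p_z u_1|_{\p\Omega}=0$ (from $\div\,\bu=0$ one actually gets $\p_z u_2 = -\p_x u_1$, and on $\p\Omega$, $\p_x u_1 = \p_x(u_1|_{\p\Omega}) = 0$, so $\p_z u_2|_{\p\Omega}=0$ too), together with the already-established $\theta'|_{\p\Omega} = \p_\bn\theta'|_{\p\Omega}=0$ to kill each term — or, more cleanly, one notes that $\p_z^2(\bu\cdot\nabla\theta')$ restricted to $\p\Omega$ is a horizontal divergence of quantities vanishing on $\p\Omega$ plus genuinely-zero terms, so its horizontal average vanishes. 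Then $\p_t(\p_z^2\bar\theta|_{\p\Omega}) = 0$ and the hypothesis $\p_z^2\bar\theta_0|_{\p\Omega}=0$ closes the argument. The main obstacle here is bookkeeping: making the restriction-to-boundary and $z$-differentiation commute rigorously requires enough regularity (which is why $m\geq 3$ is assumed, giving $\bu\in H^{m+2}$ hence enough trace regularity), and one must be careful that all the ODEs above are genuine ODEs in $t$ with locally integrable (indeed continuous) coefficients so that uniqueness of the trivial solution applies; I would also double-check the sign conventions for $\bn$ on the top versus bottom component so the same computation covers both.
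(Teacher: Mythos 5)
Your proof is correct and follows essentially the same route as the paper's: restrict the equation to $\p\Omega$ using $\bu|_{\p\Omega}=\bzero$, then take one normal derivative and use that tangential derivatives of vanishing traces vanish together with $\p_z u_2|_{\p\Omega}=-\p_x u_1|_{\p\Omega}=0$, and finally handle $\p_z^2\bar\theta$ by averaging (the paper works with the full $\theta$-equation and uses $\int_\T\p_z^2 u_2|_{\p\Omega}=-\int_\T\p_x\p_z u_1|_{\p\Omega}=0$, while you work with the $\bar\theta$-equation where $u_2$ is absent since $\overline{u_2}=0$; both are fine). One small slip: your assertion $\p_z u_1|_{\p\Omega}=0$ is false in general — incompressibility only yields $\p_z u_2|_{\p\Omega}=0$ — but this does not create a gap, since in the expansion of $\p_z^2(\bu\cdot\nabla\theta')$ the terms carrying $\p_z u_1$ or $\p_z^2 u_1$ are multiplied by $\p_x\p_z\theta'$ or $\p_x\theta'$, which vanish on $\p\Omega$ thanks to the already established traces, so each term still dies (as your alternative "horizontal divergence" remark also shows).
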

	\begin{remark}
		If $\rho_0 \in H^m_0(\Omega)$ then the solution $\rho(t)$ of \eqref{eq:ST} belongs to  $H^m_0(\Omega)$ for all times. Indeed, the solution of the transport equation can be written as
		\begin{equation*}
			\rho(t) = \rho_0(\bX(t)^{-1}),
		\end{equation*}
		where $\bX  : \R_+\times\Omega \rightarrow \Omega$ is the characteristic function associated to $\bu$, defined as the solution of the ordinary differential equation
		\begin{equation*}
			\left\{
			\ba
			\dfrac\rd{\rd t}\bX(t) &= \bu(t,\bX(t)), \\
			\bX(0) &= \Id_\Omega.
			\ea
			\right.
		\end{equation*}
		We recall that $\bX(t)$ is a diffeomorphism of $\Omega$ for all times $t\in\R_+$. Since $\bu(t) \in H^1_0(\Omega)$ due to the homogeneous Dirichlet condition, the boundary $\p\Omega$ is stable for the characteristic function at all time $t>0$.
		In other words, $\bX(t)|_{\p\Omega} = \Id_{\p\Omega}$, and consequently $\bX(t)^{-1}|_{\p\Omega} = \Id_{\p\Omega}$. It follows that if $\rho_0\in H^1_0(\Omega)$, then $ \rho(t)|_{\p\Omega}=0$ for all $t\geq 0$. The claim for higher values of $m$ follows easily by induction.

		Note that the assumptions of \cref{lem:traces} are  different since $\rho_0=1-z +\theta_0$ does not vanish on the boundary. 
		
	\end{remark}

	\begin{proof}
We recall (see \cref{thm:wp}) that $\theta\in C(\R_+, H^m)\cap C^1(\R_+, H^{m-1})$. Therefore, taking the trace of \eqref{eq:ST}, we get
		\begin{equation*}
			\p_t\theta|_{\p\Omega} + \bu|_{\p\Omega}\cdot\nabla\theta|_{\p\Omega} = u_z|_{\p\Omega},
		\end{equation*}
		where $\bu|_{\p\Omega}=\bzero$. Hence $\p_t\theta|_{\p\Omega} = 0$ and the trace of $\theta$ is constant in time, equal to $0$. Since horizontal derivatives preserve this property, we even have  $\p_x^\ell\theta|_{\p\Omega} = 0$ for any $\ell$. 
		Let us now consider the normal derivative. We recall that $\p_\bn$ coincides (up to a sign) with $\p_z$.
		Applying one vertical derivative to the equation,
		\begin{equation*}
			\p_t\p_z\theta + \p_z\bu\cdot\nabla\theta + \bu\cdot\nabla\p_z\theta = \p_z u_z
		\end{equation*}
		where
		\begin{equation*}
			\p_z\bu|_{\p\Omega}\cdot\nabla\theta|_{\p\Omega} = \p_zu_x|_{\p\Omega}\p_x\theta|_{\p\Omega} + \p_zu_z|_{\p\Omega}\p_z\theta|_{\p\Omega}.
		\end{equation*}
		We recall that $\p_x\theta|_{\p\Omega} = 0$ and we use the divergence free condition to observe $\p_zu_z|_{\p\Omega}=-\p_xu_x|_{\p\Omega}=0$. In the end we get $\p_z\theta|_{\p\Omega}=0$ for all times, hence $\theta\in H^2_0(\Omega)$. Trying to go further, applying the same ideas, we get
		\begin{equation*}
			\p_t\p_z^2\theta|_{\p\Omega} = \p_z^2u_z|_{\p\Omega}.
		\end{equation*}
		However, $\p_z^2 u_z$ does not vanish on $\p\Om$, and therefore we cannot iterate the argument.
		Nevertheless, we get
		\begin{equation*}
			\p_t\p_z^2\bar\theta|_{\p\Omega} = \frac1{2\pi}\int_{\T}\p_z^2u_z|_{\p\Omega}=-\frac1{2\pi}\int_\T\p_x\p_zu_x|_{\p\Omega} = 0.
		\end{equation*}
		Note that for higher orders of derivation, we cannot infer any cancellation in general.
			\end{proof}
	
	\begin{definition}
		In the rest of the paper, we will set
		\[
		G(t,z)=\p_z \bar\theta(t,z).
		\]
	Under the assumptions of \cref{lem:traces}, we infer that $G\vert_{\p\Om}=\p_z G\vert_{\p\Om}=0$.
	\end{definition}

	\subsection{Study of the linearized system}
	\label{sub:linearized}
	This subsection is concerned with the study of the linear system
	\begin{equation} \label{eq:bb}
		\left\{
		\begin{aligned}
			\p_t\theta' &= (1-G)\p_x\psi +  S \quad \text{ in } (0, +\infty)\times \Om,\\
			\Delta^2 \psi &= \p_x\theta' \quad \text{ in } (0, +\infty)\times \Om,\\
			\psi|_{\p\Omega} &= \p_\bn\psi|_{\p\Omega} = 0,\quad 
			\theta'\vert_{t=0}=\theta_0',
		\end{aligned}
		\right.
	\end{equation}
	which is satisfied by $(\theta',\psi)$  in the first place, with $G=\p_z\bar\theta$ %
	and $S=-\left(\nabla^\perp\psi\cdot\nabla\theta'\right)'$. 
	It will also be satisfied for various derivatives of $(\theta', \psi)$  with different source terms $S$. The term $G$ will always be $\p_z\bar\theta$. 
	
	Our goal is to analyze the long time behavior of $\theta'$, under suitable decay assumptions on $S$. 
	For later purposes, we have decomposed our results into several separate statements, whose proofs are postponed to the end of the section.
	The first one is a uniform $L^2$ bound on the solutions when the source term is time integrable:
	\begin{lemma}[Uniform $L^2$ bound on solutions of the linearized system] \label{prop:bb-uniformbound}
		Let $G\in L^\infty(\R_+, H^2)$, $S\in L^\infty(\R_+, L^2)$, and $\theta_0'\in L^2$. Let $\theta'\in L^\infty(\R_+, L^2)$ be the unique  solution of \eqref{eq:bb}. Assume that $S$ can be decomposed as $ S=S_\perp+S_\para$ satisfying for some $\sigma,\delta>0$ and any $t\geq 0$
		\begin{equation} \label{eq:bb-S}
		\int_\Omega S_\perp(t,x)\theta'(t,x)\ud x = 0, \qquad \| S_\parallel(t)\|_{L^2} \lesssim \frac{\sigma}{(1+t)^{1+\delta}}.
		\end{equation}
		
		Thus, there exists a universal constant $\gamma_0 \in (0,1)$ such that if 
		\begin{equation} \label{eq:bb1-G}
			\|G\|_{H^2} \leq \gamma_0,
		\end{equation}
		then
		\begin{equation*}\label{eq:bb-result}
			\|\theta'\|_{L^2} \leq \|\theta_0'\|_{L^2} + C_\delta\sigma.
		\end{equation*}
	\end{lemma}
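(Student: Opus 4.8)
The plan is to run a Grönwall-type energy estimate on $\|\theta'\|_{L^2}^2$ using the stream-function formulation, exploiting that the principal part of the right-hand side is dissipative and that the terms involving $G$ and $S_\parallel$ are small perturbations. First I would compute
\begin{equation*}
\frac{1}{2}\frac{d}{dt}\|\theta'\|_{L^2}^2 = \int_\Omega (1-G)(\p_x\psi)\,\theta' + \int_\Omega S\,\theta'.
\end{equation*}
Since $\Delta^2\psi=\p_x\theta'$ and $\psi|_{\p\Om}=\p_\bn\psi|_{\p\Om}=0$, I would integrate by parts in the leading term: $\int_\Omega(\p_x\psi)\theta' = -\int_\Omega \psi\,\p_x\theta' = -\int_\Omega\psi\,\Delta^2\psi = -\|\Delta\psi\|_{L^2}^2$, where the boundary terms in the two integrations by parts against $\Delta^2$ vanish because $\psi$ and $\p_\bn\psi$ vanish on $\p\Om$. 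This is exactly the $s=0$ case of the energy identity sketched in \eqref{eq:intro-Hs} in the introduction. For the $G$-term I would write $\int_\Omega G(\p_x\psi)\theta' = -\int_\Omega \Delta((G+ \text{correction})\psi)\,\Delta\psi$ — more precisely, by the same two integrations by parts, $\int_\Omega G(\p_x\psi)\theta' = -\int_\Omega \Delta(G\psi)\,\Delta\psi$, and then estimate $\|\Delta(G\psi)\|_{L^2}\lesssim \|G\|_{H^2}\|\psi\|_{H^2}\lesssim \|G\|_{H^2}\|\Delta\psi\|_{L^2}$ using elliptic regularity (the last bound because $\psi$ has vanishing Dirichlet and Neumann traces, so $\|\psi\|_{H^2}\lesssim\|\Delta\psi\|_{L^2}$, see \cref{sub:traces}).

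Collecting these, I obtain
\begin{equation*}
\frac{1}{2}\frac{d}{dt}\|\theta'\|_{L^2}^2 \leq -(1-\bar C\|G\|_{H^2})\|\Delta\psi\|_{L^2}^2 + \int_\Omega S_\perp\theta' + \int_\Omega S_\parallel\theta',
\end{equation*}
for a universal constant $\bar C$. The hypothesis $\int_\Omega S_\perp\theta'=0$ kills the first source term outright. Choosing $\gamma_0 := 1/(2\bar C)$ (or any value making $1-\bar C\gamma_0\geq 0$), condition \eqref{eq:bb1-G} guarantees $-(1-\bar C\|G\|_{H^2})\|\Delta\psi\|_{L^2}^2 \leq 0$, so the dissipative term can simply be dropped. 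For the remaining term I would use Cauchy–Schwarz: $\int_\Omega S_\parallel\theta' \leq \|S_\parallel\|_{L^2}\|\theta'\|_{L^2}$. Writing $y(t):=\|\theta'(t)\|_{L^2}$, this gives $\frac{d}{dt}y^2 \leq 2\|S_\parallel\|_{L^2}\,y$, i.e. $\frac{d}{dt}y \leq \|S_\parallel\|_{L^2}$ wherever $y>0$ (and a standard regularization/absolute-continuity argument handles the points where $y=0$). Integrating in time and invoking the bound $\|S_\parallel\|_{L^2}\lesssim \sigma(1+t)^{-1-\delta}$ from \eqref{eq:bb-S},
\begin{equation*}
y(t) \leq y(0) + \int_0^t \|S_\parallel(s)\|_{L^2}\,ds \leq \|\theta_0'\|_{L^2} + C\sigma\int_0^\infty \frac{ds}{(1+s)^{1+\delta}} = \|\theta_0'\|_{L^2} + C_\delta\sigma,
\end{equation*}
with $C_\delta = C/\delta$, which is precisely \eqref{eq:bb-result}.

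I expect the only genuinely delicate points to be bookkeeping rather than conceptual. The first is justifying the integrations by parts: one must check that $\psi$ has enough regularity for $\int_\Omega\psi\,\Delta^2\psi = \|\Delta\psi\|_{L^2}^2$ and $\int_\Omega\Delta(G\psi)\Delta\psi$ to be legitimate, which follows from elliptic regularity for the bilaplacian with the stated boundary conditions (given $\theta'\in L^2$ one gets $\psi\in H^4\cap H^2_0$-type regularity, and $G\in H^2$ suffices for $G\psi\in H^2$). The second is the elliptic estimate $\|\psi\|_{H^2(\Om)}\lesssim\|\Delta\psi\|_{L^2(\Om)}$ under $\psi|_{\p\Om}=\p_\bn\psi|_{\p\Om}=0$ — standard, but worth stating. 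The third is the rigorous passage from the differential inequality for $y^2$ to the one for $y$ near zeros of $y$; this is routine (work with $\sqrt{y^2+\eta^2}$ and let $\eta\to0$, or note $y$ is absolutely continuous). Everything else is Cauchy–Schwarz and an explicit time integration, so there is no real obstacle — the lemma is essentially the $s=0$ slice of the introduction's energy scheme made quantitative in the presence of a time-integrable forcing.
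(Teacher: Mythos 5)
Your proposal is correct and follows essentially the same route as the paper: the $L^2$ energy identity, two integrations by parts using $\psi\in H^2_0$ to produce $-\int_\Omega\Delta((1-G)\psi)\Delta\psi\leq -(1-C\|G\|_{H^2})\|\Delta\psi\|_{L^2}^2$, dropping the dissipative term once $\gamma_0$ is small, and integrating $\frac{d}{dt}\|\theta'\|_{L^2}\lesssim\|S_\parallel\|_{L^2}\lesssim\sigma(1+t)^{-1-\delta}$ in time. The extra care you take (the elliptic bound $\|\psi\|_{H^2}\lesssim\|\Delta\psi\|_{L^2}$, the regularization at zeros of $\|\theta'\|_{L^2}$) is only a more explicit version of what the paper leaves implicit.
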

	
	\begin{remark}
		
		The term $S_\perp $ will often have the structure $S_\perp=\bu\cdot\nabla\theta'$: indeed, provided $\bu$ and $\theta'$ have sufficient regularity, the divergence free and homogeneous Dirichlet conditions ensure that
		\begin{equation*}
			\int_\Omega(\bu\cdot\nabla\theta')\theta' = \frac12\int_\Omega\bu\cdot\nabla|\theta'|^2 = -\frac12\int_\Omega\div\,\bu |\theta'|^2 + \frac12\int_{\p\Omega}\bu\cdot\bn |\theta'|^2 = 0.
		\end{equation*}

	\end{remark}

	Our second result, which is at the core of \cref{thm:stab}, gives a quantitative algebraic decay on $\theta'$:
	\begin{proposition} \label{prop:bb-decay}
			There exists a universal constant $\gamma_0\in(0,1)$ 
		such that the following result holds.	Let $T>0$, $G\in L^\infty(\R_+, H^2)$, $S\in L^\infty(\R_+, L^2)$, and $\theta_0'\in L^2$ such that $\p_x^{-2} \theta_0' \in H^4$. Let $\theta'\in C(\R_+, H^2)$ be the unique  solution of \eqref{eq:bb}. Assume that $\theta'$ and $\p_\bn\theta'$ vanish on $\p\Omega$, and that
		$ S$ decomposes  into $S=S_\perp + S_\para + S_\Delta$ with for some $\sigma,\delta >0$ and all $t\in [0, T]$,
		\begin{equation} \label{eq:bb2-S}
			\int_\Omega S_\perp (t,\bx)\theta' (t,\bx)\ud \bx= 0, \qquad \|S_\para(t)\|_{L^2} \leq \frac \sigma{(1+t)^{1+\delta}}, \qquad \|S_\Delta(t)\|_{L^2} \lesssim \frac{\|\Delta\psi\|_{L^2}}{(1+t)^{1/2}}.
		\end{equation}
		Assume moreover that $G$ satisfies \eqref{eq:bb1-G}, and that there exist $A,\alpha\geq0$ such that for all $t\in [0, T]$
		\begin{equation} \label{eq:bb2-A}
			\|\Delta^2\p_x^{-2}\theta'(t)\|_{L^2} \leq \frac{A}{(1+t)^\alpha}.
		\end{equation}
		Then
		\begin{equation*}
			\|\theta'(t)\|_{L^2}\lesssim \frac{\|\theta_0'\|_{L^2}+A+\sigma}{(1+t)^{\min(1+\alpha,\delta)}}\quad \forall t\in [0,T].
		\end{equation*}
	\end{proposition}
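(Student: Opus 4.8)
The strategy is to run the energy–interpolation argument sketched in the introduction, but now with the source-term decomposition $S=S_\perp+S_\para+S_\Delta$ and with the improved time weight coming from \eqref{eq:bb2-A}. First I would establish the basic $L^2$ energy identity: applying $\int_\Omega \cdot\,\theta'$ to the first equation of \eqref{eq:bb} and using $\Delta^2\psi=\p_x\theta'$ together with $\psi|_{\p\Om}=\p_\bn\psi|_{\p\Om}=0$, two integrations by parts turn the principal term $\int_\Omega (1-G)\p_x\psi\,\theta' = \int_\Omega (1-G)\p_x\psi\,\Delta^2\psi$ into $-\int_\Omega \Delta((1-G)\psi)\,\Delta\psi$, which is bounded above by $-(1-\bar C\|G\|_{H^2})\|\Delta\psi\|_{L^2}^2$ exactly as in \eqref{eq:intro-Hs} with $s=0$; here the hypothesis $G|_{\p\Om}=\p_z G|_{\p\Om}=0$ from the Definition after \cref{lem:traces} is what kills the boundary terms. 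Choosing $\gamma_0$ so that $\bar C\gamma_0\le \tfrac12$, and using \eqref{eq:bb-S}–\eqref{eq:bb2-S} ($\int S_\perp\theta'=0$, $\|S_\para\|_{L^2}\lesssim \sigma(1+t)^{-1-\delta}$, and $\|S_\Delta\|_{L^2}\lesssim \gamma_0^{1/2}\|\Delta\psi\|_{L^2}(1+t)^{-1/2}$ so that $|\int S_\Delta\theta'|\le \|S_\Delta\|_{L^2}\|\theta'\|_{L^2}$ is absorbed, using $\gamma_0$ small and Young), I get
\begin{equation*}
\frac{d}{dt}\|\theta'\|_{L^2}^2 + \|\Delta\psi\|_{L^2}^2 \lesssim \frac{\sigma}{(1+t)^{1+\delta}}\|\theta'\|_{L^2}.
\end{equation*}

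Next comes the interpolation step, which is the crux. I want to convert the dissipation $\|\Delta\psi\|_{L^2}^2$ into something that controls $\|\theta'\|_{L^2}^2$ with a favorable time weight. The idea, following \cref{lem:interpol} (the channel analogue of \eqref{eq:intro-interp}), is the Gagliardo–Nirenberg-type bound $\|\Delta^2\psi\|_{L^2}^2 \lesssim K^{-1}\|\Delta\psi\|_{L^2}^2 + K^{2}\|\Delta^4\psi\|_{L^2}^2$ valid for all $K>0$ — but here I must be careful about which boundary conditions the higher-order interpolation requires, which is exactly where the vanishing of $\theta'$ and $\p_\bn\theta'$ on $\p\Om$ (hence of $\Delta^2\psi$ and $\p_\bn\Delta^2\psi$, as recalled in \cref{sub:traces}) enters. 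Rearranged, and using $\Delta^2\psi=\p_x\theta'$ so that $\Delta^4\psi=\Delta^2\p_x\theta'$ and $\|\Delta^4\psi\|_{L^2}=\|\Delta^2\p_x\theta'\|_{L^2}$, this gives $\|\Delta\psi\|_{L^2}^2 \gtrsim K\|\p_x\theta'\|_{L^2}^2 - K^{3}\|\Delta^2\p_x\theta'\|_{L^2}^2$. Then I bound $\|\p_x\theta'\|_{L^2}^2$ from below by (a constant times) $\|\theta'\|_{L^2}^2$ — using $\theta'$ has zero horizontal average so $\|\theta'\|_{L^2}\lesssim \|\p_x\theta'\|_{L^2}$ by Poincaré in $x$ — and I bound $\|\Delta^2\p_x\theta'\|_{L^2}$ in terms of the hypothesis \eqref{eq:bb2-A}: indeed $\|\Delta^2\p_x\theta'\|_{L^2} = \|\Delta^2\p_x^{-1}\theta' \cdot (\text{two }\p_x)\|$, more precisely writing $\theta'=\p_x^{2}(\p_x^{-2}\theta')$ one gets $\|\Delta^2\p_x\theta'\|_{L^2}\lesssim \|\Delta^2\p_x^{-2}\theta'\|_{H^3}$-type control; cleanly, since $\p_x$ commutes with $\Delta$ and the inversion, $\|\Delta^2\p_x\theta'\|_{L^2}\lesssim \|\Delta^2\p_x^{-2}\theta'\|_{L^2}+\text{l.o.t.}$ — the upshot being a bound $\|\Delta^2\p_x\theta'\|_{L^2}^2 \lesssim A^2(1+t)^{-2\alpha}$ after also invoking the uniform $H^4$ control of $\theta'$ which \cref{prop:bb-uniformbound} and the hypotheses provide. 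Putting this together with the choice $K\simeq (1+t)^{-1}$:
\begin{equation*}
\frac{d}{dt}\|\theta'\|_{L^2}^2 + \frac{c}{1+t}\|\theta'\|_{L^2}^2 \lesssim \frac{A^2}{(1+t)^{3+2\alpha}} + \frac{\sigma}{(1+t)^{1+\delta}}\|\theta'\|_{L^2}.
\end{equation*}

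Finally I would close the ODE inequality. Writing $y(t)=\|\theta'(t)\|_{L^2}^2$ and using Young on the last term ($\frac{\sigma}{(1+t)^{1+\delta}}\sqrt{y}\le \frac{c}{2(1+t)}y + \frac{C\sigma^2}{(1+t)^{1+2\delta}}$), I reach $y' + \frac{c}{2(1+t)}y \lesssim (A^2+\sigma^2)(1+t)^{-1-2\min(1+\alpha,\delta)}$ after noting $3+2\alpha = 1+2(1+\alpha)$; then Grönwall with the integrating factor $(1+t)^{c/2}$ — choosing the universal constant $c$ large enough (by taking $\gamma_0$ small the constant $c$ in the interpolation step can be made as large as we like, since $\|\Delta\psi\|_{L^2}^2$ appears with coefficient essentially $1-\bar C\gamma_0$, but more to the point the gain is tuned through $K$) so that $c/2 > 2\min(1+\alpha,\delta)+ \text{something}$ — yields $y(t)\lesssim (\|\theta_0'\|_{L^2}^2+A^2+\sigma^2)(1+t)^{-2\min(1+\alpha,\delta)}$, i.e. the claimed $\|\theta'\|_{L^2}\lesssim (\|\theta_0'\|_{L^2}+A+\sigma)(1+t)^{-\min(1+\alpha,\delta)}$. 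I expect the main obstacle to be the interpolation step: getting the precise inequality $\|\Delta\psi\|_{L^2}^2 \gtrsim K\|\theta'\|_{L^2}^2 - K^3\|\Delta^2\p_x\theta'\|_{L^2}^2$ in the channel (as opposed to on $\T^2$, where it is pure Fourier) requires the spectral/elliptic estimates for the biharmonic operator with clamped boundary conditions and the correct use of the vanishing traces of $\theta'$, $\p_\bn\theta'$ — this is presumably the content of \cref{lem:interpol}, which I would invoke rather than reprove, while being careful that its hypotheses (the boundary conditions on $\psi$ and on $\Delta^2\psi$) match what we have here. The secondary technical point is the bookkeeping that turns \eqref{eq:bb2-A} into a bound on $\|\Delta^2\p_x\theta'\|_{L^2}$ with the right power of $(1+t)$, which should follow by commuting $\p_x$ through and interpolating against the uniform high-regularity bound.
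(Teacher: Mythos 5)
Your overall skeleton is the same as the paper's: the $L^2$ energy inequality with the three-part source (absorbing $S_\Delta$ by Young into the dissipation, at the price of a $(1+t)^{-1}\|\theta'\|_{L^2}^2$ term which you drop in your intermediate display but which must be kept and absorbed later), then an interpolation step converting the dissipation $\|\Delta\psi\|_{L^2}^2$ into a damping term with an arbitrarily large constant via the choice $K\simeq \kappa(1+t)^{-1}$, then Gr\"onwall. Note in passing that it is this choice of $K$, not the smallness of $\gamma_0$, that makes the damping constant large; taking $\gamma_0$ small only makes the dissipation coefficient close to $1$.

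There is, however, a genuine gap in the interpolation step. You invoke the inequality in its unweighted (torus) form $\|\Delta^{2}\psi\|_{L^{2}}^2\lesssim K^{-1}\|\Delta\psi\|_{L^{2}}^2+K^{2}\|\Delta^{4}\psi\|_{L^{2}}^2$, which after rearranging forces you to control $\|\Delta^{4}\psi\|_{L^2}=\|\Delta^{2}\p_x\theta'\|_{L^2}$. But hypothesis \eqref{eq:bb2-A} only controls $\|\Delta^{2}\p_x^{-2}\theta'\|_{L^2}$, which is three horizontal derivatives weaker, and your claimed reduction ``$\|\Delta^{2}\p_x\theta'\|_{L^2}\lesssim\|\Delta^{2}\p_x^{-2}\theta'\|_{L^2}+\text{l.o.t.}$'' is false: since $\Delta^{2}\p_x\theta'=\p_x^{3}\bigl(\Delta^{2}\p_x^{-2}\theta'\bigr)$, you cannot gain three $\p_x$'s for free. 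Nor does \cref{prop:bb-uniformbound} supply any ``uniform $H^4$ control'': its conclusion is an $L^2$ bound only, and the proposition must be proved under exactly its stated hypotheses (it is later applied to various derivatives of $\theta'$, where $A$ bounds precisely $\Delta^{2}\p_x^{-2}$ of the unknown, not $\Delta^{2}\p_x$ of it). The repair is to apply \cref{lem:interpol} with $\alpha=0$, which is exactly how the lemma is designed: since $\theta'=\p_x^{-1}\Delta^{2}\psi$ (zero horizontal mean) and $\p_x^{-3}\Delta^{4}\psi=\Delta^{2}\p_x^{-2}\theta'$, it gives directly $\|\theta'\|_{L^{2}}^2\lesssim K^{-1}\|\Delta\psi\|_{L^{2}}^2+K^{2}\|\Delta^{2}\p_x^{-2}\theta'\|_{L^{2}}^2$, i.e. precisely the quantity that \eqref{eq:bb2-A} controls, with no Poincar\'e in $x$ and no extra regularity. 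With that substitution, your energy inequality, the choice $K\simeq\kappa(1+t)^{-1}$ with $\kappa$ large, and the Gr\"onwall argument yield the stated decay $(1+t)^{-\min(1+\alpha,\delta)}$ as in the paper.
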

	In order to prove this quantitative decay, we shall need to analyze the structure of the dissipation term
	\[
	-\int_{\Om } \p_x \psi \theta'=\int_\Om |\Delta \psi|^2.
	\]
	In previous works for different but related models \cite{castro2019global}, at this stage, an explicit spectral decomposition of the solution was used, relying on Fourier series.
	Note that such a spectral decomposition is also available for the operator $\Delta^{-2} \p_x^2$ (see \cref{lem:basis}).
	However, since we cannot interpolate for an arbitrary regularity, we choose here to use a different approach.
	We replace this spectral analysis with the following result, which can be seen as an interpolation Lemma. It is noteworthy that in spite of its deceitfully simple form (and proof), this Lemma provides the correct scaling for the solutions.
	
	\begin{lemma} \label{lem:interpol}
		For any $\ell\geq0$, and for all $\psi\in H^{8+\ell-2}(\Omega)$ satisfying
		\[
		\Delta^{2}\psi|_{\partial\Omega}=\partial_\bn\Delta^{2}\psi|_{\partial\Omega}=0\,,
		\]
		we have for all $K>0$
		\[
		\|\p_x^{\ell-1}\Delta^{2}\psi\|_{L^{2}}^2\lesssim \frac1{K}\Vert\p_x^\ell\Delta\psi\Vert_{L^{2}}^2+K^{2}\Vert\p_x^{\ell-3}\Delta^{4}\psi\Vert_{L^{2}}^2.
		\]
	\end{lemma}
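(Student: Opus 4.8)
The plan is to derive the inequality from a single integration by parts plus Young's inequality, exploiting the boundary conditions $\Delta^2\psi|_{\p\Omega}=\p_\bn\Delta^2\psi|_{\p\Omega}=0$ to kill the boundary terms. The key observation is that $\|\p_x^{\alpha-1}\Delta^2\psi\|_{L^2}^2$ is a "middle" quantity that can be split by pairing $\p_x^{\alpha}\Delta\psi$ against $\p_x^{\alpha-2}\Delta^3\psi$, and then one further integration by parts relates $\p_x^{\alpha-2}\Delta^3\psi$ to $\p_x^{\alpha-3}\Delta^4\psi$. Concretely, I would write
\[
\|\p_x^{\alpha-1}\Delta^2\psi\|_{L^2}^2=\int_\Omega \p_x^{\alpha-1}\Delta^2\psi\,\p_x^{\alpha-1}\Delta^2\psi
=\int_\Omega \p_x^{\alpha}\Delta\psi\,\p_x^{\alpha-2}\Delta^3\psi,
\]
where moving two $\p_x$'s costs nothing on $\T\times(0,1)$ (periodicity in $x$, no boundary in $x$), and moving the Laplacian from one factor to the other is licit because $\Delta^2\psi$ and $\p_\bn\Delta^2\psi$ vanish on $\p\Omega$, so that the boundary terms $\int_{\p\Omega}\p_\bn(\Delta^2\psi)\,\Delta\psi$ and $\int_{\p\Omega}\Delta^2\psi\,\p_\bn\Delta\psi$ both disappear (one directly, the other after integrating by parts once more and using $\p_\bn\Delta^2\psi|_{\p\Omega}=0$; the $\p_x$ factors commute with $\Delta$ and with taking traces so these cancellations are unaffected by the $\p_x^{\alpha-1}$, $\p_x^\alpha$, $\p_x^{\alpha-2}$ weights). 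Then Cauchy--Schwarz gives
\[
\|\p_x^{\alpha-1}\Delta^2\psi\|_{L^2}^2\le \|\p_x^\alpha\Delta\psi\|_{L^2}\,\|\p_x^{\alpha-2}\Delta^3\psi\|_{L^2}.
\]

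Next I would estimate $\|\p_x^{\alpha-2}\Delta^3\psi\|_{L^2}$ by interpolating between $\|\p_x^{\alpha-1}\Delta^2\psi\|_{L^2}$ and $\|\p_x^{\alpha-3}\Delta^4\psi\|_{L^2}$ — indeed the same bracketing trick yields
\[
\|\p_x^{\alpha-2}\Delta^3\psi\|_{L^2}^2=\int_\Omega\p_x^{\alpha-1}\Delta^2\psi\,\p_x^{\alpha-3}\Delta^4\psi\le\|\p_x^{\alpha-1}\Delta^2\psi\|_{L^2}\,\|\p_x^{\alpha-3}\Delta^4\psi\|_{L^2},
\]
valid with the same boundary cancellations. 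Substituting this into the previous line gives
\[
\|\p_x^{\alpha-1}\Delta^2\psi\|_{L^2}^2\le\|\p_x^\alpha\Delta\psi\|_{L^2}\,\|\p_x^{\alpha-1}\Delta^2\psi\|_{L^2}^{1/2}\,\|\p_x^{\alpha-3}\Delta^4\psi\|_{L^2}^{1/2},
\]
hence $\|\p_x^{\alpha-1}\Delta^2\psi\|_{L^2}^{3/2}\le\|\p_x^\alpha\Delta\psi\|_{L^2}\,\|\p_x^{\alpha-3}\Delta^4\psi\|_{L^2}^{1/2}$, i.e.
\[
\|\p_x^{\alpha-1}\Delta^2\psi\|_{L^2}^2\le\|\p_x^\alpha\Delta\psi\|_{L^2}^{4/3}\,\|\p_x^{\alpha-3}\Delta^4\psi\|_{L^2}^{2/3}.
\]
Finally, Young's inequality with exponents $3$ and $3/2$ in the form $ab\le \tfrac1K a^3 \cdot\text{(const)}+K^2 b^{3/2}\cdot\text{(const)}$, applied to $a=\|\p_x^\alpha\Delta\psi\|_{L^2}^{4/9}$-type factors, produces exactly
\[
\|\p_x^{\alpha-1}\Delta^2\psi\|_{L^2}^2\lesssim\frac1K\|\p_x^\alpha\Delta\psi\|_{L^2}^2+K^2\|\p_x^{\alpha-3}\Delta^4\psi\|_{L^2}^2,
\]
for every $K>0$, after rescaling $K$ by a harmless constant.

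I expect the only genuinely delicate point to be the bookkeeping of boundary terms in the integrations by parts, i.e.\ checking that each time the Laplacian is transferred between factors the arising $\int_{\p\Omega}$ term involves either $\Delta^2\psi$ or $\p_\bn\Delta^2\psi$ on $\p\Omega$ (possibly after one extra integration by parts, and with harmless $\p_x$-derivatives that commute with the trace since $\p\Omega=\T\times\{0,1\}$), so that the hypothesis $\Delta^2\psi|_{\p\Omega}=\p_\bn\Delta^2\psi|_{\p\Omega}=0$ is exactly what is needed — no more, no less — and that the regularity $\psi\in H^{8+\alpha-2}(\Omega)$ is what makes all these manipulations rigorous (by a density argument if one wishes to be scrupulous). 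Everything else — Cauchy--Schwarz, the iterated interpolation, Young's inequality — is routine. Note the degenerate case $\alpha<3$ where negative powers of $\p_x$ appear: these are the antiderivatives with zero horizontal average, well defined on functions with vanishing horizontal mean, and since $\Delta^2\psi=\p_x\theta'$ in the application all such objects have zero $x$-average, so the operators $\p_x^{-1}$ are well-defined and commute with $\Delta$ as claimed.
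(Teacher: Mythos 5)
Your proposal is correct and follows essentially the same route as the paper: the same two integration-by-parts identities pairing $\p_x^{\alpha}\Delta\psi$ with $\p_x^{\alpha-2}\Delta^{3}\psi$ and then $\p_x^{\alpha-1}\Delta^{2}\psi$ with $\p_x^{\alpha-3}\Delta^{4}\psi$, using the vanishing of $\Delta^{2}\psi$ and $\p_\bn\Delta^{2}\psi$ on $\p\Omega$ (and periodicity in $x$) to kill the boundary terms, followed by Cauchy--Schwarz, the combination giving the exponents $4/3$ and $2/3$, and Young's inequality. The remarks on the commutation of $\p_x$ with traces and on $\p_x^{-1}$ acting on mean-free functions are exactly the bookkeeping the paper leaves implicit.
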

	\begin{proof}
		Since $\Delta^{2}\psi$ and $\partial_\bn\Delta^{2}\psi$ vanish on the boundary $\partial\Omega$, we have after three integrations by parts
		\begin{align*}
			\|\p_x^{\ell-1}\Delta^{2}\psi\|_{L^{2}}^{2} & =-\int_{\Omega}\p_x^{\ell}\Delta^{2}\psi\p_x^{\ell-2}\Delta^{2}\psi=\int_{\Omega}\nabla\p_x^{\ell}\Delta\psi\cdot\nabla\p_x^{\ell-2}\Delta^{2}\psi\\
			& =-\int_{\Omega}\p_x^{\ell}\Delta\psi\p_x^{\ell-2}\Delta^{3}\psi\leq\Vert\p_x^{\ell}\Delta\psi\Vert_{L^{2}}\Vert\p_x^{\ell-2}\Delta^{3}\psi\Vert_{L^{2}}.
		\end{align*}
		On another hand, we also have by integrations by parts
		\begin{align*}
			\Vert\p_x^{\ell-2}\Delta^{3}\psi\Vert_{L^{2}}^{2} & =-\int_{\Omega}\p_x^{\ell-1}\Delta^{3}\psi\p_x^{\ell-3}\Delta^{3}\psi=\int_{\Omega}\nabla\p_x^{\ell-1}\Delta^{2}\psi\cdot\nabla\p_x^{\ell-3}\Delta^{3}\psi\\
			& =-\int_{\Omega}\p_x^{\ell-1}\Delta^{2}\psi\p_x^{\ell-3}\Delta^{4}\psi\leq\Vert\p_x^{\ell-1}\Delta^{2}\psi\Vert_{L^{2}}\Vert\p_x^{\ell-3}\Delta^{4}\psi\Vert_{L^{2}}.
		\end{align*}
	Hence, using the second bound in the first inequality, we obtain
        \[
        \|\p_x^{\ell-1}\Delta^2\psi\|_{L^2}^2\leq \|\p_x^\ell\Delta\psi\|_{L^2}\|\p_x^{\ell-1}\Delta^2\psi\|_{L^2}^{\frac12}\|\p_x^{\ell-3}\Delta^4\psi\|_{L^2}^{\frac12}.
        \]
Gathering the similar terms on the left-hand side and applying Young's inequality yields, for any constant $K>0$,
		\begin{align*}
		\Vert\p_x^{\ell-1}\Delta^{2}\psi\Vert_{L^{2}}^{2}&\leq\Vert\p_x^{\ell}\Delta\psi\Vert_{L^{2}}^{\frac{4}{3}}\Vert\p_x^{\ell-3}\Delta^{4}\psi\Vert_{L^2}^{\frac{2}{3}} \\
        &\lesssim \left(K^{-\frac23}\|\p_x^\ell\Delta\psi\|_{L^2}^{\frac43}\right)^{\frac32} + \left(K^{\frac23}\|\p_x^{\ell-3}\Delta^4\psi\|_{L^2}^{\frac23}\right)^3 \\
        &\lesssim\frac{1}{K}\Vert\p_x^{\ell}\Delta\psi\Vert_{L^{2}}^{2}+K^{2}\Vert\p_x^{\ell-3}\Delta^{4}\psi\Vert_{L^{2}}^2.
		\end{align*}
	\end{proof}

	Let us now turn towards the proof of \cref{prop:bb-uniformbound} and \cref{prop:bb-decay}.

	\begin{proof}[Proof of \cref{prop:bb-uniformbound}]
		The energy estimate in \eqref{eq:bb} writes
		\begin{equation*}
			\frac12\frac\rd{\rd t}\|\theta'\|_{L^2}^2 = \int_\Omega(1-G)\p_x\psi\theta' + \int_\Omega  S\theta'.
		\end{equation*}
		A few integrations by parts provide, since $\psi\vert_{\partial\Omega}=\p_\bn \psi\vert_{\partial\Omega}=0$,
		\begin{equation} \label{eq:bb-ipp}
			\begin{aligned}
				\int_\Omega(1-G)\p_x\psi\theta' &= -\int_\Omega(1-G)\psi\Delta^2\psi 
								\\
				&=-\int_\Omega\Delta((1-G)\psi)\Delta\psi .
							\end{aligned}
			\end{equation}
		Using the Sobolev embeddings $H^2\subset L^\infty$ and $H^2\subset W^{1,4}$, we get
				\[
				\int_\Omega(1-G)\p_x\psi\theta' \leq -(1-C\|G\|_{H^2})\|\Delta\psi\|_{L^2}^2.\]
At this point we have
		\begin{equation} \label{eq:bb-energy}
			\frac12\frac\rd{\rd t}\|\theta'\|_{L^2}^2 \leq -(1-C\gamma_0)\|\Delta\psi\|_{L^2}^2 + \int_\Omega S\theta'= -(1-C\gamma_0)\|\Delta\psi\|_{L^2}^2 + \int_\Omega S_\para\theta'.
		\end{equation}
		So if $\gamma_0$ is small enough, in a universal way, the first term in the right-hand side  is non-positive. Therefore
		\begin{equation*}
			\frac\rd{\rd t}\|\theta'(t)\|_{L^2} \leq  \| S_\para(t)\|_{L^2} \leq  \frac\sigma{(1+t)^{1+\delta}}
		\end{equation*}
		and since $\delta>0$ this inequality integrates as
		\begin{equation*}
			\|\theta'\|_{L^2} \leq \|\theta'_0\|_{L^2} + C_\delta\sigma.
		\end{equation*}
	\end{proof}
	\begin{proof}[Proof of \cref{prop:bb-decay}]
		Back to \eqref{eq:bb-energy} and plugging the decomposition of $S$ we get
		\begin{equation*}
			\frac\rd{\rd t}\|\theta'\|_{L^2}^2 + (1-C\gamma_0) \|\Delta\psi\|_{L^2}^2 \leq \left(\|S_\para\|_{L^2} + \|S_\Delta\|_{L^2}\right)\|\theta'\|_{L^2}.
		\end{equation*}
        Then assumption \eqref{eq:bb2-S} and Young's inequality provide
        \begin{align*}
            \|S_\para\|_{L^2}\|\theta'\|_{L^2} &\leq \frac\sigma{(1+t)^{\frac12+\delta}}\frac{\|\theta'\|_{L^2}}{(1+t)^{\frac12}} \lesssim \frac{\sigma^2}{(1+t)^{1+2\delta}} + \frac{\|\theta'\|_{L^2}^2}{1+t}, \\
            \|S_\Delta\|_{L^2}\|\theta'\|_{L^2} &\lesssim \|\Delta\psi\|_{L^2}\frac{\|\theta'\|_{L^2}}{(1+t)^{\frac12}} \lesssim \gamma_0\|\Delta\psi\|_{L^2}^2 + \frac{1}{\gamma_0}\frac{\|\theta'\|_{L^2}^2}{1+t}.
        \end{align*}
        Hence if $\gamma_0$ is small enough, the dissipative term $\gamma_0\|\Delta\psi\|^2$ can be absorbed, and we have for some $c_0 \in (0,1)$,
		\begin{equation*}
			\frac\rd{\rd t}\|\theta'\|_{L^2}^2 + c_0\|\Delta\psi\|_{L^2}^2 \lesssim \frac{\sigma^2}{(1+t)^{1+2\delta}} + \frac{\|\theta'\|_{L^2}^2}{1+t}.
		\end{equation*}
		We now use the interpolation \cref{lem:interpol} with $\ell=0$, recalling that $\Delta^2\psi|_{\p\Omega}=\p_x\theta'|_{\p\Omega}=0$ and $\p_\bn\Delta^2\psi|_{\p\Omega}=\p_\bn\p_x\theta'|_{\p\Omega}=0$.  Choosing $K= \kappa/c_0(1+t)^{-1}$ with $\kappa>0$ arbitrary large independently of the data, we obtain
		\begin{equation*}
			\frac\rd{\rd t}\|\theta'\|_{L^2}^2 + \frac\kappa{1+t}\|\theta'\|_{L^2}^2 \lesssim \kappa^3\frac{\|\Delta^2\p_x^{-2}\theta'\|_{L^2}^2}{(1+t)^3} + \frac{\sigma^2}{(1+t)^{1+2\delta}}.
		\end{equation*}
		Plugging assumption \eqref{eq:bb2-A} provides 
		\begin{equation*}
			\frac\rd{\rd t}\|\theta'\|_{L^2}^2 + \frac\kappa{1+t}\|\theta'\|_{L^2}^2 \lesssim \frac{(\kappa^3 A^2+\sigma^2)}{(1+t)^{\min(3+2\alpha,1+2\delta)}}
		\end{equation*}
		which for a suitable choice of $\kappa$  integrates into
		\begin{equation*}
			\|\theta'(t)\|_{L^2} \lesssim \frac{\|\theta_0'\|_{L^2} + A+\sigma}{(1+t)^{\min(1+\alpha,\delta)}}\quad \forall t\in [0,T].
		\end{equation*}
	\end{proof}
	
	\begin{remark}[Stability for more general stationary profiles]\label{general-profiles}
Let us now explain how our results can be generalized to other stably stratified profiles $\rho_s$.
Let $\rho_s\in H^6(0,1)$ such that $\sup \p_z \rho_s<0$. 
The linear evolution equation on $\theta'$ can be written as
\begin{equation*}
	\p_t\theta' = -\p_z\rho_s\p_x\psi.
\end{equation*}
Multiplying the above equation by $-\theta'/\p_z \rho_s$, we obtain
\[
	\frac12\frac\rd{\rd t}\int_\Om \frac{1}{-\p_z \rho_s} \theta'(t,x)^2\ud x = \int_\Omega \p_x \psi \theta'=-\int_\Omega |\Delta \psi|^2.
\]
Since $\p_z \rho_s$ is bounded from above and below by negative constants, $-\int_\Om  (\theta')^2/\p_z \rho_s$ is equivalent to the $L^2$ norm, and the main linear estimate remains the same.
However, additional commutators stem from the nonlinear terms when we use the weight $-1/\p_z \rho_s$. For instance,
\begin{equation*}
\int_\Om \bu\cdot \nabla \theta' \ \frac{\theta'}{-\p_z \rho_s} = \frac{1}{2}\int_\Om (\theta')^2 \bu \cdot \nabla \frac{1}{\p_z\rho_s}.
\end{equation*}
These commutators will enter the terms $S_\parallel$ and $S_\bot$ from \cref{prop:bb-uniformbound} and \cref{prop:bb-decay}. Also, the constant $\gamma_0$ involved in the smallness condition \eqref{eq:bb1-G} on $G$ will now depend on $\p_z\rho_s$, but the result still holds. We leave the details to the reader, and stick to the case of a linearly stratified profile for simplicity. 

Let us now consider the case of non-stably stratified profiles. First, if $\p_z\rho_s>0$, we have the opposite sign in front of the dissipative term $\int_\Omega|\Delta\psi|^2$: at the linearized level, the perturbation grows. For the nonlinear equation, starting from a small perturbation, $\|\theta'(t)\|_{L^2}$ will have a transient growth for small times, until its norm becomes of order one and the nonlinear term can no longer be neglected. 
In fact, when $\p_z \rho_s$ is constant --- say $\p_z\rho_s=1$ --- the equation satisfied by $\theta$ is
\[
\p_t \theta + \bu\cdot \nabla \theta + u_2=0,
\]
and therefore
\[
\frac{1}{2}\frac{\rd}{\rd t} \|\theta(t)\|_{L^2}^2  =-\int_\Om \bu_2\rho = \int_\Om (\Delta \psi)^2\geq 0.
\]
In this case, the $L^2$ norm of $\theta$ is increasing on the whole interval $[0, +\infty)$, but remains  bounded since the $L^2$ norm of $\rho$ is conserved.
\\
Let us indicate a few facts about the long time behavior of solutions in the general case $\rho_0\in L^\infty$ (see Section 3.6 in \cite{leblondPhD} for a proof of these results). The velocity $\bu$ belongs to $L^2([0, +\infty), H^1_0(\Om))$. Furthermore the $\omega$-limit set in $H^{-1}$ of $\rho_0$ is non-empty, and contained in the set of stratified rearrangements of $\rho_0$. However it is not known in general whether this  $\omega$-limit set is a singleton.\\
We now go back to the case where $\rho_0$ is a small perturbation of a stratified state $\rho_s$.
When $\p_z\rho_s$ is not of constant sign, we cannot conclude \emph{a priori}, even at the linearized level. Indeed, if a density profile is not monotonous, then we cannot guarantee the proper sign in front of the integral $\partial_z\rho|\Delta\psi|^2$. Therefore, if a profile admits a non-monotonous function in any of its neighborhoods, in arbitrary high regularity, and in particular in $H^6$, the proof of our stability result does not hold. 
In particular, when $\p_z\rho_s\leq 0$ and $\p_z \rho_s$ vanishes at a single point $z_0\in (0,1)$, this point is also an inflection point and therefore $\rho_s(z)= \rho_s(z_0) + O((z-z_0)^3)$ in a neighborhood of $z_0$. Perturbing $\rho_s$ by a function of the type $\varepsilon (z-z_0)^2\chi(z-z_0)$ with  a cut-off function $\chi\in C^\infty_0(\R)$ breaks the monotony. Thus, even in the case when $\rho_s$ is monotonous, but has a vanishing derivative at single point, we cannot conclude.

	\end{remark}

	\subsection{Bootstrap argument}
	\label{sub:bootstrap}
	
	The purpose of this subsection is to prove, thanks to a bootstrap argument, that under the assumptions of \cref{thm:stab}, the solution $\theta'$ of \eqref{eq:STi} enjoys the same decay rates as the ones predicted by the linear analysis (see \cref{prop:bb-decay}).
	More precisely, we shall prove the following result:

	\begin{proposition} \label{prop:stab-precise}
		Let $\theta_0\in H^6(\Omega)$ such that $\theta_0\vert_{\p\Om}=\p_\bn\theta_0\vert_{\p\Om}=0$.
		There exists $\eps_0>0$ such that if $\|\theta_0\|_{H^6}\leq \eps\leq\eps_0$ the solution of \eqref{eq:STi} satisfies
		\begin{equation} \label{eq:stab-precise-bound}
			\|\p_x^3\theta'(t)\|_{L^2} \lesssim \frac\eps{1+t},\quad \|\p_x\theta'(t)\|_{H^4}\lesssim \eps, \quad \|\bar\theta(t)\|_{H^5} \lesssim \eps,\quad \forall t>0.
		\end{equation}
	\end{proposition}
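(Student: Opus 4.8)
The plan is to run a bootstrap/continuation argument on the quantities appearing in \eqref{eq:stab-precise-bound}. Define
\[
T^* = \sup\Bigl\{\, T>0 \;:\; \|\p_x^3\theta'(t)\|_{L^2}\leq B\frac{\eps}{1+t},\ \ \|\p_x\theta'(t)\|_{H^4}\leq B\eps,\ \ \|\bar\theta(t)\|_{H^5}\leq B\eps \ \ \forall t\in[0,T]\,\Bigr\}
\]
for a constant $B$ to be fixed large (universal), and $\eps_0=\eps_0(B)$ small. By local well-posedness and continuity of $t\mapsto\theta(t)$ in $H^6$, $T^*>0$, and the bootstrap is closed once we show that on $[0,T^*)$ each of the three bounds in fact holds with $B$ replaced by, say, $B/2$ (plus a term $\lesssim\eps$ with universal constant, absorbed by taking $B$ large), provided $\eps_0$ is small enough depending only on $B$. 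First I would record the smallness of $G=\p_z\bar\theta$: from $\|\bar\theta\|_{H^5}\leq B\eps$ we get $\|G\|_{H^2}\leq\|G\|_{H^4}\leq B\eps\leq\gamma_0$ once $\eps_0\leq\gamma_0/B$, so \eqref{eq:bb1-G} is available throughout $[0,T^*)$, and likewise the nonlinear velocity is controlled via $\Delta^2\psi=\p_x\theta'$, $\|\bu\|_{H^{m+2}}\lesssim\|\theta'\|_{H^m}$.

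The three estimates are then obtained by feeding the equations satisfied by $\p_x\theta'$, $\p_x\Delta^2\theta'$ (i.e.\ $\p_x\theta'$ in $H^4$), $\p_x^3\theta'$, and $\bar\theta$ into \cref{prop:bb-uniformbound} and \cref{prop:bb-decay}. For the uniform $H^4$ bound on $\p_x\theta'$: differentiating the first line of \eqref{eq:STi} and applying $\p_x\Delta^2$, the resulting equation has the form \eqref{eq:bb} with $\theta'$ replaced by $\p_x\Delta^2\theta'$ and source $S$ collecting (a) commutator and transport terms of the type $\bu\cdot\nabla(\p_x\Delta^2\theta')$, which are the $S_\perp$ part since they integrate to zero against $\p_x\Delta^2\theta'$ by the divergence-free/non-penetration structure (using $\theta',\p_\bn\theta'=0$ on $\p\Om$, hence $\p_x\Delta^2\theta'$ still has the needed vanishing traces — this is exactly \cref{lem:traces} and \cref{sub:traces}), (b) lower-order commutators $[\p_x\Delta^2,\bu\cdot\nabla]\theta'$ which are controlled in $L^2$ by $\|\bu\|_{H^6}\|\theta'\|_{H^6}\lesssim\eps\cdot B\eps$ times decaying factors coming from the decay of $\|\p_x^3\theta'\|_{L^2}$, giving an $S_\para$ contribution $\lesssim\sigma(1+t)^{-1-\delta}$ with $\sigma\lesssim\eps$, and (c) the term produced by the $\p_z\bar\theta$-coupling $(1-\p_z\bar\theta)u_2$ differentiated, whose worst piece is $\p_x\Delta^2(\p_z\bar\theta\,\psi)$-type and is placed either into $(1-G)\p_x\psi$ exactly as in the linear analysis or into $S_\Delta$ with the bound $\lesssim\gamma_0^{1/2}\|\Delta\psi\|_{L^2}(1+t)^{-1/2}$ using smallness of $\bar\theta$ and Hardy/trace control near the boundary. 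Applying \cref{prop:bb-uniformbound} gives $\|\p_x\Delta^2\theta'\|_{L^2}\leq\|\p_x\Delta^2\theta'_0\|_{L^2}+C\eps\lesssim\eps$, i.e.\ $\|\p_x\theta'\|_{H^4}\lesssim\eps$; combined with $\|\theta'\|_{L^2}\lesssim\eps$ (the $s=0$ energy estimate, \cref{prop:bb-uniformbound} with no derivatives) and interpolation this upgrades to the second bound in \eqref{eq:stab-precise-bound}.

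For the decay of $\|\p_x^3\theta'\|_{L^2}$: apply $\p_x^3$ to the $\theta'$ equation and invoke \cref{prop:bb-decay} with $\theta'\rightsquigarrow\p_x^3\theta'$. Its hypothesis \eqref{eq:bb2-A} reads $\|\Delta^2\p_x^{-2}(\p_x^3\theta')\|_{L^2}=\|\p_x\Delta^2\theta'\|_{L^2}\lesssim\eps$, which is precisely the uniform $H^4$ bound just established — so $A\lesssim\eps$, $\alpha=0$. The source terms again split: transport commutators into $S_\perp$, the $G$-coupling into the linear term plus $S_\Delta$ (here the $\gamma_0^{1/2}\|\Delta\psi\|_{L^2}(1+t)^{-1/2}$ structure is what makes the argument close, since $\gamma_0$ is small and $\|\Delta\psi\|_{L^2}$ is the dissipation we are integrating), and genuinely quadratic lower-order remainders into $S_\para$ with $\sigma\lesssim\eps^2\leq\eps$ and $\delta$ some fixed positive number (any $\delta\in(0,1)$ works, using the already-known $(1+t)^{-1}$-type decay of lower Sobolev norms of $\theta'$ obtained by interpolating $\|\theta'\|_{L^2}$-decay against the uniform $H^4$ bound — so there is a mild induction on the order of the $x$-derivative, starting from $\|\theta'\|_{L^2}$ which decays like $(1+t)^{-1}$ directly from \cref{prop:bb-decay} with $A\lesssim\eps$). \cref{prop:bb-decay} then yields $\|\p_x^3\theta'\|_{L^2}\lesssim(\eps+A+\sigma)(1+t)^{-\min(1,\delta)}$; choosing $\delta\geq1$ (permissible, since the quadratic remainders decay at least like $(1+t)^{-2}$ once $\|\theta'\|_{L^2}\lesssim\eps(1+t)^{-1}$ is in hand — this is the step where the specific exponents must be bookkept carefully) gives the rate $(1+t)^{-1}$, hence the first bound. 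Finally, for $\bar\theta$: from the second line of \eqref{eq:STi}, $\p_t\bar\theta=-\overline{\bu\cdot\nabla\theta'}=-\overline{\div(\theta'\bu)}$, so $\|\p_t\bar\theta(t)\|_{H^5}\lesssim\|\bu\|_{H^6}\|\theta'\|_{H^6}\lesssim\|\p_x\theta'\|_{H^4}\,\|\p_x^3\theta'\|_{L^2}^{c}\|\theta'\|_{H^6}^{1-c}\lesssim\eps^2(1+t)^{-1-\eta}$ for some $\eta>0$ by interpolation (the average kills the $x$-derivative-free direction, and $\bu$ has zero horizontal average so only $\theta'$, never $\bar\theta$, enters the product); integrating in time gives $\|\bar\theta(t)\|_{H^5}\leq\|\bar\theta_0\|_{H^5}+C\eps^2\lesssim\eps$, the third bound. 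In each case the constant obtained is $\|\cdot_0\|+C\eps^2$ with $C$ universal, which is $\leq B\eps/2$ once $\eps_0$ is small relative to $B$, closing the bootstrap and giving $T^*=\infty$.

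The main obstacle is the treatment of the boundary-coupling term in the high-order estimates: unlike the torus toy model, applying $\p_x\Delta^2$ or $\p_x^3$ to $(1-\p_z\bar\theta)u_2$ produces terms on which integration by parts against $\p_x\Delta^2\psi$ or $\p_x^3\psi$ generates boundary contributions that do \emph{not} vanish, precisely because $\p_z^2\theta'$ need not vanish on $\p\Om$. The resolution — and this is why the statement only asks for $\p_x$-derivatives in $H^4$ and for $\p_x^3$ (not $\p_z^3$) decay, and why the traces lemma \cref{lem:traces} is stated with the specific conclusion $\p_z^2\bar\theta|_{\p\Om}=0$ — is to keep all such dangerous pieces inside the source $S$ (as $S_\Delta$) rather than integrating them by parts, exploiting that they come multiplied by $G=\p_z\bar\theta$ with $G,\p_z G$ vanishing on $\p\Om$, so a Hardy inequality near the boundary converts the missing boundary smallness into the factor $\gamma_0^{1/2}\|\Delta\psi\|_{L^2}$ demanded by \eqref{eq:bb2-S}. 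Getting the power of $(1+t)$ in $S_\Delta$ to be exactly $(1+t)^{-1/2}$ — no worse — requires using the already-bootstrapped $(1+t)^{-1}$ decay of $\|\p_x^3\theta'\|_{L^2}$ together with interpolation, which is the reason the three estimates must be closed simultaneously rather than sequentially.
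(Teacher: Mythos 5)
Your overall architecture (bootstrap on $\|\p_x^3\theta'\|_{L^2}$, $\|\p_x\theta'\|_{H^4}$ and the average, closed through \cref{prop:bb-uniformbound} and \cref{prop:bb-decay}) matches the paper, but there is a genuine gap at the step you yourself flag as "the step where the specific exponents must be bookkept carefully". With only your three bootstrap quantities, the best available decay for the stream function comes from elliptic regularity, $\|\psi\|_{H^4}\lesssim \|\p_x\theta'\|_{L^2}\lesssim \eps(1+t)^{-1}$, hence $\|\nabla\psi\|_{L^\infty}\lesssim \eps(1+t)^{-1}$, while $\|\p_x^4\theta'\|_{L^2}\leq \|\p_x^2\theta'\|_{H^2}\lesssim \eps(1+t)^{-1/2}$. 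The worst term in $S_\para=-\p_x^3(\bu\cdot\nabla\theta')$, namely $\p_z\psi\,\p_x^4\theta'$, is then only $O(\eps^2(1+t)^{-3/2})$, i.e. $\delta=1/2$ in \eqref{eq:bb2-S}, and \cref{prop:bb-decay} returns $\|\p_x^3\theta'\|_{L^2}\lesssim (1+t)^{-1/2}$, which does not close the $(1+t)^{-1}$ bound. This is precisely why the paper's bootstrap \eqref{eq:boot-hyp} contains the extra hypothesis $\|\p_t\p_x\theta'\|_{L^2}\leq B(1+t)^{-2}$: rewriting $\p_x\psi=(\p_t\theta'+(\bu\cdot\nabla\theta')')/(1-G)$ upgrades $\|\p_x\psi\|_{L^2},\|\p_x^2\psi\|_{L^2}$ to $O((1+t)^{-2})$, whence $\|\nabla\psi\|_{L^\infty}\lesssim (1+t)^{-3/2}$ and $\|\p_x\psi\|_{L^\infty}\lesssim(1+t)^{-7/4}$, which is what makes $S_\para=O((1+t)^{-2})$ and $\delta=1$ attainable (and that extra hypothesis is itself closed via a separate application of \cref{prop:bb-decay} with $\alpha=1$, $\delta=2$, see \cref{lem:dtL2}). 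Your proposal never produces these enhanced rates, so the decay estimate cannot be closed as written.

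A second, related gap is in your treatment of $\|\bar\theta\|_{H^5}$: you bound $\|\p_t\bar\theta\|_{H^5}\lesssim \|\bu\|_{H^6}\|\theta'\|_{H^6}$, but $\|\theta'\|_{H^6}$ is not among your bootstrap quantities and is \emph{not} uniformly bounded (the data is only in $H^6$, and high-order integrations by parts fail because of nonvanishing boundary terms). The paper must first prove the growth control $\|\theta\|_{H^6}\lesssim B(1+t)^{1/2}$ by an estimate on the full transport equation (\cref{lem:H6}), and even then the integrability of $\|\p_tG\|_{H^4}\lesssim \|\p_x\psi\|_{L^\infty}\|\theta'\|_{H^6}+\|\p_x\psi\|_{H^6}\|\theta'\|_{L^\infty}$ again uses $\|\p_x\psi\|_{L^\infty}\lesssim(1+t)^{-7/4}$, i.e. the time-derivative bootstrap bound; with only $\|\p_x\psi\|_{L^\infty}\lesssim(1+t)^{-1}$ the product is $O((1+t)^{-1/2})$ and not time-integrable. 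So both the missing bootstrap quantity and the missing $H^6$ growth lemma are needed; your "Hardy near the boundary / $S_\Delta$" device addresses the boundary-term issue but not this loss of decay.
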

	
	\begin{remark}
		The interplay between horizontal derivatives of $\theta$  and the considered regularities is consistent with the operator $\Delta^{-2}\p_x^2$ from the linearized system
		\begin{equation*}
			\p_t\theta' = \p_x\psi = \Delta^{-2}\p_x^2 \theta'.
		\end{equation*}
		Note that $\Delta^{-2}$ denotes the operator solving the bilaplacian $\Delta^2\psi=f$ equation endowed with the boundary condition $\psi|_{\p\Omega}=\p_\bn\psi|_{\p\Omega}=0$.
	\end{remark}
	A proof of \cref{prop:stab-precise} is provided in the rest of this section. Remarks motivating the necessity of the bootstrap hypothesis and the method in general are included throughout. 
	We also present our understanding of the obstacle to the iteration of this method to higher regularity on the perturbation.

	\paragraph{Bootstrap assumption and general argument.}
	Let $0<B<1$. For some $C_0> 1$, to be chosen later, let  $\theta_0 \in H^2_0\cap H^6$ such that $\|\theta_0\|_{H^6} \leq B/C_0$. In particular $\|\p_x^3\theta'_0\|_{L^2} \leq B/C_0$, $\|\p_x\theta_0'\|_{H^4}\leq B/C_0$, and $\|\p_z\bar\theta_0\|_{H^2} \leq B/C_0$. Let us note $\psi_0 :=\psi|_{t=0}$. We also have, according to \cref{lem:bilap}, with universal positive constants gathered under the same notation $C$,
	\begin{equation*}
		\|\psi_0\|_{H^4} \leq C\|\p_x\theta'_0\|_{L^2} \leq CB/C_0,
	\end{equation*}
	and therefore, 
	\begin{equation*}
		\begin{aligned}
			\|\p_t\p_x\theta'\vert_{t=0}\|_{L^2} &\leq \|1-\p_z\bar\theta_0\|_{L^\infty}\|\p_x^2\psi_0\|_{L^2} + \|\p_x(\nabla^\perp\psi_0\cdot\nabla\theta'_0)\|_{L^2} \\
                        &\leq (1+\|\p_z\bar\theta_0\|_{H^2})\|\p_x^2\psi_0\|_{L^2} + \|\p_x(\nabla^\perp\psi_0\cdot\nabla\theta'_0)\|_{L^2} \\
                        &\leq C(B/C_0 + (B/C_0)^2).
		\end{aligned}
	\end{equation*}
        Up to a choice of $C_0>1$ large enough, we find that all the bounds here above are strictly smaller than $B$. Therefore, by continuity of the Sobolev norms of $\theta$, ensured by \cref{thm:wp}, there exists a maximal time  $T^*\in\R_+\cup\{+\infty\}$ such that the following inequalities are satisfied on $[0,T^*)$:
	\begin{equation} \label{eq:boot-hyp}
		\begin{aligned}
			&\|\p_x^3\theta'(t)\|_{L^2}\leq \frac B{1+t}, & \|\p_x\theta'(t)\|_{H^4}\leq B,\\
			&\|\p_z\bar\theta(t)\|_{H^2}\leq B, &
			\|\p_t\p_x\theta'(t)\|_{L^2}\leq \frac B{(1+t)^2}.
		\end{aligned}
	\end{equation}
	We recall that these decay rates follow the behavior of the linearized system, see \cref{prop:bb-decay}.

	Let us assume by contradiction that $T^*<+\infty$. We show below by a bootstrap argument that hypothesis \eqref{eq:boot-hyp}, combined with \cref{prop:bb-uniformbound} and \cref{prop:bb-decay}, actually leads to an improvement of the inequalities, satisfied with some new constant $0<\underline{B} <B$ which contradicts  the maximality of $T^*$. Whence $T^*=+\infty$ and inequalities \eqref{eq:boot-hyp} hold for all times. 
	
	\begin{remark}
	Let us anticipate a little on the choice of the constant $B$. We will choose $B\leq \gamma_0$, so that assumption  \eqref{eq:bb1-G} is satisfied on $(0, T^*)$.
	\end{remark}

	\paragraph{Preliminary bounds.} \label{ssub:decays}
	
	Throughout the proof we require estimates on $\theta'$ and $\psi$ derived from the bootstrap hypothesis \eqref{eq:boot-hyp}. 
	For the sake of readability, we introduce the following short-hand notation: 
	\begin{equation*}
		\underbrace{\|f\|^\alpha \|g\|^\beta  }_{\alpha r+\beta r'}\qquad \text{when} \qquad \|f\| \lesssim \frac{B}{(1+t)^r} \text{ and }  \|g\| \lesssim \frac{B}{(1+t)^{r'}}.
	\end{equation*}

	First, from an integration by parts, since $\theta'\vert_{\p \Om}= \p_\bn \theta'\vert_{\p \Om}=0$ (see \cref{lem:traces})
	\begin{equation*} \label{eq:boot-intcrit}
		\|\p_x^2\theta'\|_{H^2}^2 \lesssim\int_\Omega \p_x^2\Delta\theta'\p_x^2\Delta\theta' = -\int_\Omega \p_x^3\theta'\p_x\Delta^2\theta' \leq \|\p_x^3\theta'\|_{L^2}\|\p_x\theta'\|_{H^4}.
	\end{equation*}
	We deduce, by assumption \eqref{eq:boot-hyp}, for all $t\in (0, T^*)$
	\begin{equation}\label{est:dx2thetaH2}
		\|\p_x^2\theta'(t)\|_{H^2} \lesssim \underbrace{\|\p_x^3\theta'(t)\|_{L^2}^{1/2}\|\p_x\theta'(t)\|_{H^4}^{1/2}}_{\frac{1}{2}\times 1 + \frac{1}{2}\times 0} \lesssim \frac B{(1+t)^{1/2}}.
	\end{equation}
	We also get by interpolation, for any $0\leq m\leq4$, for all $t\in (0, T^*)$
	\begin{align}\label{est:dx-theta-Hm}
		\|\p_x\theta'(t)\|_{H^m} \lesssim \underbrace{\|\p_x\theta'(t)\|_{L^2}^{1-m/4}\|\p_x\theta'(t)\|_{H^4}^{m/4} }_{(1-\frac{m}{4})\times 1 + \frac{m}{4}\times 0}\lesssim \frac{B}{(1+t)^{1-m/4}}.
	\end{align}
	We will frequently use \emph{Agmon's inequality} in dimension 2, namely
	\begin{equation*} \label{eq:agmon}
		\forall f \in H^1_0\cap H^2(\Omega), \qquad \|f\|_{L^\infty} \lesssim \|f\|_{L^2}^{1/2}\|f\|_{H^2}^{1/2},
	\end{equation*}
	together with the following direct consequence
	\begin{equation*} \label{eq:agmon2}
		\forall f\in  H^2_0\cap H^4(\Omega), \qquad \|\nabla f\|_{L^\infty} \lesssim \|f\|_{L^2}^{1/2}\|f\|_{H^4}^{1/2}.
	\end{equation*}
	We infer in particular, for all $t\in (0, T^*)$
	\begin{align}\label{est:dx2-theta-infty}
		\|\p_x^2\theta'(t)\|_{L^\infty} \lesssim \underbrace{\|\p_x^2\theta'(t)\|_{L^2}^{1/2}\|\p_x^2\theta'(t)\|_{H^2}^{1/2}}_{\frac{1}{2}\times 1 + \frac{1}{2}\times \frac{1}{2}} &\lesssim \frac{B}{(1+t)^{3/4}}, \\
	\notag \|\nabla\p_x\theta'(t)\|_{L^\infty} \lesssim \underbrace{\|\p_x\theta'(t)\|_{L^2}^{1/2}\|\p_x\theta'(t)\|_{H^4}^{1/2}}_{\frac{1}{2}\times 1 + \frac{1}{2}\times 0} &\lesssim \frac{B}{(1+t)^{1/2}}.
	\end{align}
	We also need estimates on $\psi$. Any Sobolev norm of order larger than 4 inherits the decays from $\theta'$ thanks to \cref{lem:bilap}, providing for $t\in (0, T^*)$
	\begin{align}\label{est:dx-psi-immediat}
		\|\p_x^2\psi(t)\|_{H^4} \lesssim \|\p_x^3\theta'(t)\|_{L^2} &\lesssim \frac{B}{1+t}, \\
		\|\p_x\psi(t)\|_{H^6} \lesssim \|\p_x^2\theta'(t)\|_{H^2} &\lesssim \frac{B}{(1+t)^{1/2}}. \notag
	\end{align}
	We also need  higher order decays on $\p_x\psi$ in $L^2(\Omega)$. We access this quantity thanks to the control of $\p_t\theta'$ by rewriting
	\begin{equation*}
		\p_x\psi=\frac{\p_t\theta'+(\bu\cdot\nabla\theta')'}{1-G}.
	\end{equation*}
	We know that $\|G\|_{L^\infty} \lesssim \|G\|_{H^2} \leq B$ so it is enough to have $B$ smaller than a universal constant to ensure that the inverse of $(1-G)$ is well-defined, which allows us to estimate $\p_x\psi$ and $\p_x^2\psi$ in $L^2$. We illustrate the computation for $\p_x\psi$ since the same reasoning applies for $\p_x^2\psi$ with a few extra terms.
	\begin{align*}
		\|\p_x\psi(t)\|_{L^2} &\lesssim \|\p_t\theta'(t)\|_{L^2} + \|\bu\cdot\nabla\theta'(t)\|_{L^2} \\
		&\lesssim \|\p_t\theta'(t)\|_{L^2} + \|\p_z\psi(t)\|_{L^\infty}\|\p_x\theta'(t)\|_{L^2}+ \|\p_x\psi(t)\|_{L^2}\|\p_z\theta'(t)\|_{L^\infty} \\
		&\lesssim \underbrace{\|\p_t\theta'(t)\|_{L^2}}_2 + \underbrace{\|\p_x\theta'(t)\|_{L^2}^2}_{2\times2} + \|\p_x\psi(t)\|_{L^2}\frac B{(1+t)^{1/2}}.
	\end{align*}
	Hence for $B>0$ small enough once again we get
	\begin{equation*}
		\|\p_x^2\psi(t)\|_{L^2} \lesssim \frac B{(1+t)^2}.
	\end{equation*}
	By interpolation and Agmon inequalities we deduce, in the same fashion as above, the following decay estimates, with the latest valid for $0\leq m\leq 4,$
	\begin{align}\label{est:dx-psi-OK}
		&\|\p_x^2\psi(t)\|_{H^2}\lesssim  \frac B{(1+t)^{3/2}}, \qquad \|\p_x^2\psi(t)\|_{L^\infty} \lesssim \frac{B}{(1+t)^{7/4}}, \qquad \|\nabla\p_x^2\psi(t)\|_{L^\infty}\lesssim \frac{B}{(1+t)^{3/2}}, \\
		&\|\nabla^2\psi(t)\|_{L^\infty} \lesssim \frac B{(1+t)^{5/4}},\quad
		\|\p_x^2\psi(t)\|_{H^m} \lesssim \|\p_x^2\psi(t)\|_{L^2}^{1-m/4}\|\p_x^2\psi(t)\|_{H^4}^{m/4} \lesssim \frac{B}{(1+t)^{2-m/4}}. \notag
	\end{align}
	\begin{remark}
		Let us come back briefly to the derivation of estimates \eqref{est:dx-psi-OK}, which ensure that $\psi$ decays \emph{faster} than $\theta'$. 
        Note that such a fast decay is necessary to close the estimates: indeed, $\|\bu(t)\|_{W^{1,\infty}}$ should be time integrable in order that $\theta(t)$ converges as $t\to \infty$.
  		Formally, one needs to take an anti-derivative in space of the equation, \ie apply the operator $\Delta^{-2} \p_x^2$ to \eqref{eq:STi}.
		However, because of the nonlinear term, this is a rather tedious operation.
		Therefore we rather derive estimates on $\p_t \theta' $, and use \eqref{eq:STi} in order to infer estimates on $\psi$. Note that the two operations (taking a time derivative or an anti-space derivative) are equivalent at main order, since the linear operator is $\p_t - \Delta^{-2} \p_x^2$.
		This idea, although simple, seems to us to be new.
	\end{remark}
	
	\paragraph{$H^6$ bound on the solution and $H^4$ bound on $G$.}
	
	In our nonlinear bootstrap argument, we shall need some high Sobolev bound on the solution. In order to lighten the proof of the bootstrap as much as possible, we isolate in the present paragraph this technical step.
	
	\begin{lemma}
		\label{lem:H6}
		Let $\theta_0\in H^6(\Omega)$ such that $\theta_0\vert_{\p\Om}=\p_\bn\theta_0\vert_{\p\Om}=0$.
		Let $T^*$ be the maximal time on which the assumptions \eqref{eq:boot-hyp} are satisfied.
		Then for all $t\in (0,T^*)$,
		\begin{align*}
			\|\theta'(t)\|_{H^6} &\lesssim B(1+t)^{1/2},&
			\| G(t)\|_{H^4} &\leq B/C_0 + C B^2,&
			\|\p_t G(t)\|_{L^\infty} &\lesssim \frac{B^2}{(1+t)^2}.
		\end{align*}
		
	\end{lemma}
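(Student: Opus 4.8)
The plan is to prove the three bounds of \cref{lem:H6} by energy estimates applied to suitable derivatives of the equation \eqref{eq:STi}, using the bootstrap hypotheses \eqref{eq:boot-hyp} and the preliminary decay estimates derived above to treat all nonlinear terms perturbatively.

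\textbf{The $H^6$ bound on $\theta'$.} First I would apply $D^6$ (a generic sixth-order derivative, treated as a combination of $\p_x$ and $\p_z$ derivatives) to the first equation of \eqref{eq:STi} and test against $D^6\theta'$. The linear term $(1-\p_z\bar\theta)u_2$ produces, after integration by parts using $\psi|_{\p\Om}=\p_\bn\psi|_{\p\Om}=0$, a contribution of the form $-\int_\Om \Delta(D^6((1-G)\psi))\,D^6\Delta\psi$ which we can bound (as in \eqref{eq:bb-ipp}, but now with $G$ and $\psi$ controlled in higher norms via the preliminary estimates) by $C\|\Delta^{1/2}D^6\psi\|_{L^2}^2$; importantly this does not come with a good sign at order $6$ because the required traces of $D^6\psi$ do not vanish, so one simply keeps it as a non-negative term times a small constant. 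The key point is that $\|D^6\psi\|$ is controlled, via \cref{lem:bilap} (elliptic regularity for $\Delta^{-2}\p_x$), by $\|\p_x^3\theta'\|_{L^2}$ plus lower-order pieces, all already known to decay at least like $(1+t)^{-1}$; hence this term contributes at worst something integrable in $t$ after one more use of Young. The transport nonlinearity $(\bu\cdot\nabla\theta')'$ is handled by the usual commutator/Leibniz decomposition: the worst term is $\bu\cdot\nabla D^6\theta'$, whose contribution to the energy vanishes by incompressibility and the non-penetration condition; every other term distributes at most five derivatives on one factor and at least one on $\bu$, and using the preliminary $L^\infty$ bounds on $\nabla\bu$, $\nabla^2\psi$, etc., together with $\|\p_x\theta'\|_{H^4}\lesssim\eps$, one obtains a bound of the form $\frac{d}{dt}\|\theta'\|_{H^6}^2 \lesssim \frac{B}{1+t}\|\theta'\|_{H^6}^2 + (\text{integrable or }O(1))$. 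Grönwall then yields $\|\theta'(t)\|_{H^6}\lesssim B(1+t)^{C B}$, and choosing $B$ small so that $CB\leq 1/2$ gives the claimed $(1+t)^{1/2}$ growth.

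\textbf{The $H^4$ bound on $G$.} Recall $G=\p_z\bar\theta$ and $\bar\theta$ solves $\p_t\bar\theta = -\overline{\bu\cdot\nabla\theta'}$, so $\p_t G = -\p_z\overline{\bu\cdot\nabla\theta'}$. I would integrate this in time: $\|G(t)\|_{H^4} \leq \|G(0)\|_{H^4} + \int_0^t \|\p_z\overline{\bu\cdot\nabla\theta'}(s)\|_{H^4}\,ds$. The initial term is $\leq \|\theta_0\|_{H^6}/C_0 \leq B/C_0$ (after possibly enlarging $C_0$, noting $\|G(0)\|_{H^4}\le \|\theta_0\|_{H^5}$). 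For the integral, one estimates $\|\bu\cdot\nabla\theta'\|_{H^5}$ by Leibniz and the preliminary estimates: each resulting term is a product of a norm of $\psi$ (or $\nabla\psi$) and a norm of $\theta'$, and since $\|\p_x\theta'\|_{H^m}\lesssim B(1+t)^{m/4-1}$ while $\|\p_x\psi\|_{H^m}\lesssim B(1+t)^{m/4-2}$, the decisive terms decay like $B^2(1+t)^{-2}$ (with a couple of terms only as good as $B^2(1+t)^{-1}$, but one must check these actually carry enough horizontal derivatives to be $(1+t)^{-2}$ — this is where care with the $\p_x$-counting in the $\|\cdot\|\|\cdot\|_{r+r'}$ bookkeeping matters). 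Hence the integrand is $\lesssim B^2(1+t)^{-2}$, integrable, giving $\|G(t)\|_{H^4}\leq B/C_0 + CB^2$ as claimed.

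\textbf{The $\|\p_t G\|_{L^\infty}$ bound} follows along the same lines: $\p_t G = -\p_z\overline{\bu\cdot\nabla\theta'}$, and by Agmon's inequality \eqref{eq:agmon} plus the $H^6$ bound just obtained and the preliminary decays, $\|\p_z\overline{\bu\cdot\nabla\theta'}\|_{L^\infty}\lesssim \|\bu\cdot\nabla\theta'\|_{H^3}\lesssim B^2(1+t)^{-2}$, since each term pairs a decaying $\psi$-factor with a $\theta'$-factor and the slowest piece is again $B^2(1+t)^{-2}$. \textbf{The main obstacle} I anticipate is the careful derivative bookkeeping in the $H^6$ estimate: one must verify that the top-order term $\|D^6\psi\|$ genuinely inherits the $(1+t)^{-1}$-type decay from $\|\p_x^3\theta'\|_{L^2}$ through \cref{lem:bilap} (the mismatch between "$6$ generic derivatives on $\psi$" and "$3$ horizontal derivatives on $\theta'$" is exactly what elliptic regularity for $\Delta^{-2}\p_x$ buys us), and that no nonlinear term forces a growth rate worse than $(1+t)^{1/2}$ — in particular the borderline terms must be shown to pair a sufficiently-decaying factor against the $H^6$-energy factor so that Grönwall with a $\frac{B}{1+t}$ coefficient suffices.
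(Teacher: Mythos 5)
Your plan for the $H^6$ bound runs into exactly the obstruction the paper is built to avoid: if you apply $D^6$ to the first equation of \eqref{eq:STi}, the term $(1-G)\p_x\psi$ produces contributions like $D^6G\,\p_x\psi$, i.e.\ it requires $G=\p_z\bar\theta\in H^6$, hence $\bar\theta\in H^7$, which is not available (the bootstrap only gives $\|G\|_{H^2}\leq B$, and the very lemma you are proving only yields $\|G\|_{H^4}$). The paper's key move is to estimate the \emph{full} perturbation equation $\p_t\theta+\bu\cdot\nabla\theta=u_2$, where $G$ never appears as a coefficient to be differentiated six times and the transport term is handled by the commutator estimate \eqref{eq:commu} with only $\|\nabla\theta\|_{L^\infty}$ and $\|\theta\|_{H^6}$. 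Your alternative of integrating by parts à la \eqref{eq:bb-ipp} at order $6$ and "keeping the unsigned term with a small constant" is precisely the route the remark after the lemma shows fails: the boundary integrals of $D^6\psi$ do not vanish and can only be bounded by $B^2(1+t)^{1/4}$, giving at best $(1+t)^{3/4}$ growth. Moreover your decay bookkeeping for the source is wrong: $\|D^6\p_x\psi\|_{L^2}$ is \emph{not} controlled by $\|\p_x^3\theta'\|_{L^2}$; elliptic regularity (\cref{lem:bilap}) gives $\|\p_x\psi\|_{H^6}\lesssim\|\p_x^2\theta'\|_{H^2}\lesssim B(1+t)^{-1/2}$, which is not integrable. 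The $(1+t)^{1/2}$ growth comes from integrating this source in time (with an integrable Gr\"onwall coefficient $B(1+t)^{-5/4}$), not from a Gr\"onwall exponent $(1+t)^{CB}$ with $B$ small, so your mechanism does not produce the stated bound.

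The $G$ estimates have a similar quantitative gap. Estimating $\|\p_tG\|_{H^4}\leq\|\bu\cdot\nabla\theta'\|_{H^5}$ directly gives terms like $\|\p_z\psi\|_{L^\infty}\|\p_x\theta'\|_{H^5}\lesssim B(1+t)^{-3/2}\cdot B(1+t)^{1/2}=B^2(1+t)^{-1}$, which is \emph{not} integrable, so your claimed $B^2(1+t)^{-2}$ integrand does not come out of this computation and the uniform $H^4$ bound on $G$ does not close as written. The paper first rewrites the average using an integration by parts in $x$, $\p_t G=-\p_z^2\int_\T\p_x\psi\,\theta'$, so that every $\psi$-factor carries a horizontal derivative with $\|\p_x\psi\|_{L^\infty}\lesssim B(1+t)^{-7/4}$; paired with $\|\theta'\|_{H^6}\lesssim B(1+t)^{1/2}$ this yields the integrable rate $B^2(1+t)^{-5/4}$. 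Likewise, your direct bound $\|\p_tG\|_{L^\infty}\lesssim\|\bu\cdot\nabla\theta'\|_{H^3}$ only gives about $(1+t)^{-3/2}$; the stated rate $(1+t)^{-2}$ (which is needed later in \cref{lem:dtL2}) is obtained by interpolating $\|\p_tG\|_{L^\infty}\lesssim\|\p_tG\|_{L^2}^{3/4}\|\p_tG\|_{H^4}^{1/4}$ with the $L^2$ rate $(1+t)^{-9/4}$ and the $H^4$ rate $(1+t)^{-5/4}$, again using the rewritten form of $\p_tG$. So all three bounds require structural ingredients (full-$\theta$ energy estimate, the $\p_z^2\int_\T\p_x\psi\,\theta'$ form of $\p_tG$, and the $L^2$--$H^4$ interpolation) that are missing from your argument.
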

	\begin{proof}
		We cannot estimate $\theta'$ in $H^6$ directly from its evolution equation since it requires an assumption on $ G=\p_z\bar\theta \in H^6$, and therefore on $\theta$ in $H^7$. To get around this, we directly perform an estimate from the whole perturbed evolution equation, namely
		\begin{equation*}
			\p_t\theta + \bu\cdot\nabla\theta = u_z .
		\end{equation*}
		For any derivative of order 6 (and less) of the previous equation, we obtain
		\begin{equation*} \label{eq:pls-est6}
			\frac12\frac\rd{\rd t}\|\p^6\theta\|_{L^2}^2 + \int_\Omega [\p^6,\bu\cdot\nabla]\theta \p^6\theta = \int_\Omega \p^6\p_x\psi \p^6\theta,
		\end{equation*}
		where the commutator comes from the incompressibility assumption and the no-slip boundary condition. Hence we get
		\begin{equation*}
			\frac\rd{\rd t}\|\theta\|_{H^6} \lesssim \|\p_x\psi\|_{H^6}+\|[\p^6,\bu\cdot\nabla]\theta\|_{L^2}.
		\end{equation*}
		The first term is dealt with thanks to the bilaplacian regularization (\cref{lem:bilap}) and estimate \eqref{est:dx2thetaH2},
		\begin{equation*}
			\|\p_x\psi\|_{H^6} \lesssim \|\p_x^2\theta'\|_{H^2} \lesssim \frac{B}{(1+t)^{1/2}}.
		\end{equation*}
		Notice that this 1/2-algebraic decay, issued from the linear system, is critical to prove the $1/2$-algebraic growth control of $\theta$ in $H^6(\Omega)$. 
		Concerning the nonlinear term,  we rely on the following \emph{tame estimate}, valid for any $m\in\N$,
		\begin{equation} \label{eq:tame}
			\forall f,g \in H^m\cap L^\infty(\Omega), \qquad \|fg\|_{H^m} \lesssim \|f\|_{L^\infty}\|g\|_{H^m} + \|f\|_{H^m}\|g\|_{L^\infty},
		\end{equation}
		which leads to
		\begin{equation} \label{eq:commu}
			\forall f \in H^m\cap W^{1,\infty}(\Omega), g\in H^{m-1}\cap L^\infty(\Omega), \quad \|[\partial^m,f]g\|_{L^2} \lesssim \|\nabla f\|_{L^\infty}\|g\|_{H^{m-1}} + \|f\|_{H^m}\|g\|_{L^\infty}.
		\end{equation}
		Hence, we decompose the nonlinear commutator into
		\begin{equation*}
			[\p^6,\bu\cdot\nabla]\theta = -[\p^6,\p_z\psi\p_x]\theta + [\p^6,\p_x\psi\p_z]\theta.
		\end{equation*}
		Each part can be estimated, thanks to \eqref{eq:commu}, as follows
		\begin{align*}
			\|[\p^6,\p_z\psi]\p_x\theta\|_{L^2} 
			&\lesssim \|\nabla\p_z\psi\|_{L^\infty}\|\p_x\theta\|_{H^5} + \|\p_z\psi\|_{H^6}\|\p_x\theta\|_{L^\infty} \\
			&\lesssim \underbrace{\|\nabla^2\psi\|_{L^\infty}}_{5/4}\|\theta\|_{H^6} + \underbrace{\|\p_x\theta'\|_{H^3}\|\p_x\theta'\|_{L^\infty}}_{1/4+3/4} \\
			&\lesssim \frac{{B}}{(1+t)^{5/4}}\|\theta\|_{H^6} + \frac{{B}^2}{1+t},
		\end{align*}
		and
		\begin{align*}
			\|[\p^6,\p_x\psi]\p_z\theta\|_{L^2} 
			&\lesssim \|\nabla\p_x\psi\|_{L^\infty}\|\p_z\theta\|_{H^5} + \|\p_x\psi\|_{H^6}\|\p_z\theta\|_{L^\infty} \\
			&\lesssim \underbrace{\|\nabla\p_x\psi\|_{L^\infty}}_{3/2}\|\theta\|_{H^6} + \underbrace{\|\p_x^2\theta'\|_{H^2}}_{1/2}\|\nabla\theta\|_{L^\infty} \\
			&\lesssim \frac{B}{(1+t)^{3/2}}\|\theta\|_{H^6} + \frac{B^2}{1+t} + \frac{B^2}{(1+t)^{1/2}}
		\end{align*}
		where we observed in particular that
		\begin{equation*}
			\|\nabla\theta\|_{L^\infty}\leq \|\nabla\theta'\|_{L^\infty} + \|G\|_{L^\infty} \lesssim \frac{{B}}{(1+t)^{1/2}} + B.
		\end{equation*}
		In the end, gathering and summing up all these bounds provides
		\begin{equation*}
			\frac\rd{\rd t}\|\theta\|_{H^6} \lesssim \frac{B}{(1+t)^{5/4}}\|\theta\|_{H^6} + \frac{B^2}{(1+t)^{1/2}},
		\end{equation*}
		and we get
		\begin{equation*}
			\|\theta\|_{H^6} \lesssim \|\theta_0\|_{H^6} +{B}(1+t)^{1/2} \lesssim {B^2}(1+t)^{1/2}.
		\end{equation*}

		Eventually, let us prove decaying bounds on $\p_t G$ and uniform bounds on $G$. 
		We recall that $G=\p_z\bar\theta$ depends only on the variables $t$ and $z$. From the evolution equation and one integration by parts we observe (omitting the factors $1/2\pi$ for clarity)
		\begin{equation*}
			\p_t\bar\theta = -\overline{\bu\cdot\nabla\theta'}=\int_\T \left(\p_z\psi\p_x\theta'-\p_x\psi\p_z\theta' \right)=  -\p_z\int_\T\p_x\psi\theta',
		\end{equation*}
		so we can write
		\begin{equation*}
			\p_t G= -\p_z^2\int_\T\p_x\psi\theta'.
		\end{equation*}
		
		The same arguments as above lead to
		\begin{equation*}
			\|\p_tG\|_{L^2(0,1)} \lesssim \underbrace{\|\p_x\psi\|_{L^\infty}\|\theta'\|_{H^2}}_{7/4+1/2} + \underbrace{\|\p_x\psi\|_{H^2}\|\theta'\|_{L^\infty}}_{3/2+3/4} \lesssim \frac{B^2}{(1+t)^{2+1/4}}.
		\end{equation*}
		Using the $H^6$ estimate, we also have
		\begin{equation*}
			\|\p_tG\|_{H^4(0,1)} \lesssim \underbrace{\|\p_x\psi\|_{L^\infty}\|\theta'\|_{H^6}}_{7/4-1/2} + \underbrace{\|\p_x\psi\|_{H^6}\|\theta'\|_{L^\infty}}_{1/2+3/4} \lesssim \frac{B^2}{(1+t)^{1+1/4}}.
		\end{equation*}
		Since the right-hand side of the above inequality is time-integrable, we infer that 
		\begin{equation*}
			\|G\|_{H^4(0,1)} \leq \|G(0)\|_{H^4(0,1)} + \int_0^t \|\p_tG\|_{H^4(0,1)} \leq B/C_0 + CB^2.
		\end{equation*}
		Moreover, for all $t\in (0, T^*)$,
		\begin{equation*} \label{eq:boot-dtGoo}
			\|\p_tG\|_{L^\infty}\lesssim \underbrace{\|\p_tG\|_{L^2}^{3/4}\|\p_tG\|_{H^4}^{1/4}}_{3/4\times9/4 +1/4\times5/4} \lesssim \frac{B^2}{(1+t)^2}.
		\end{equation*}
		
	\end{proof}
	
	\begin{remark}
	In the estimate of $\p^6\theta$,
		it would be tempting to proceed to the same computations as in \eqref{eq:bb-ipp} in order to exhibit a dissipative term, which would allow us to ignore its contribution as for lower order derivatives. Doing so requires to control the boundary integrals, which do not vanish \emph{a priori} in this case,
		\begin{equation*}
			\begin{aligned}
				\int_\Omega \partial^6\p_x\psi \partial^6\theta &= -\int_\Omega \partial^6\psi \Delta^2 \partial^6\psi \\
				&= -\|\Delta \partial^6\psi\|_{L^2}^2 + \int_{\p\Omega} (\p_\bn \partial^6\psi\Delta \partial^6\psi -\partial^6\psi \p_\bn \partial^6\Delta\psi).
			\end{aligned}
		\end{equation*}
		For instance, trying to bound the integral involving the higher order of $z-$derivatives on $\psi$ provides at best
			\begin{equation*}
			\left|\int_{\p\Omega}\partial^6\psi\p_z\Delta \partial^6\psi\right| \lesssim \|\p_x\psi\|_{H^{6 + \frac{1}{2} + \delta}}\|\p_x^{-1}\psi\|_{H^{9 + \frac{1}{2} + \delta}} \lesssim B^2(1+t)^{1/4+ \frac{\delta}{2}}.
		\end{equation*}
		This estimate ensures no better growth control than $\|\theta'\|_{H^6} \lesssim (1+t)^{3/4}$, which is not enough to close the bootstrap and get the control by $(1+t)^{1/2}$.  
	\end{remark}

	\paragraph{Improvements of the bootstrap bounds.}

	We now improve the uniform bound on $\theta'$ and $\p_x\theta'$ in $H^4(\Omega)$, relying on the linear analysis from \cref{sub:linearized}. Since $\|\theta'\|_{H^4} \leq \|\p_x\theta'\|_{H^4}$, it is enough to treat $\p_x\theta'$. Also we have according to \cref{lem:bilap} the inequality
	\begin{equation*}
		\|\p_x\theta'\|_{H^4} \lesssim \|\Delta^2\p_x\theta'\|_{L^2}
	\end{equation*}
	since $\p_x\theta'$ belongs in particular to $H^2_0\cap H^4(\Omega)$ as detailed in \cref{sub:traces}, so it is enough to deal with $\p_x\Delta^2\theta'$ in $L^2(\Omega)$. 
	
	\begin{lemma}[Uniform bound for $\|\p_x \theta'\|_{H^4}$] 
		
		\label{lem:H4}
		As long as the bootstrap hypothesis \eqref{eq:boot-hyp} holds we have
		\begin{equation} \label{eq:boot-H4}
			\|\p_x\Delta^2\theta'(t)\|_{L^2} \leq B/C_0 +CB^2.
		\end{equation}
	\end{lemma}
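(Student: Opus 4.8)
The idea is to control the single quantity $N(t):=\|\p_x\Delta^2\theta'(t)\|_{L^2}$, which by \cref{lem:bilap} dominates $\|\p_x\theta'\|_{H^4}$ (recall $\p_x\theta'\in H^2_0\cap H^4(\Om)$), and to show $\frac\rd{\rd t}N\lesssim B^2(1+t)^{-5/4}$ on $[0,T^*)$; integrating then gives \eqref{eq:boot-H4}. Since $G=\p_z\bar\theta$ is independent of $x$, differentiating \eqref{eq:STi} shows that $(\p_x\theta',\p_x\psi)$ solves the linearized system \eqref{eq:bb} with the same $G$ and source $\p_x S$, $S=-(\bu\cdot\nabla\theta')'$; moreover $\p_x^j\theta'|_{\p\Om}=\p_\bn\p_x^j\theta'|_{\p\Om}=0$ for every $j$ (horizontal derivatives preserve the conclusions of \cref{lem:traces}), hence so do $\Delta^2\psi=\p_x\theta'$ and $\p_\bn\Delta^2\psi$. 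Applying $\Delta^2$ to the $\p_x\theta'$-equation and testing against $\p_x\Delta^2\theta'$ gives
\begin{equation*}
\frac12\frac\rd{\rd t}N^2 = \int_\Om \Delta^2\big[(1-G)\p_x^2\psi\big]\,\p_x\Delta^2\theta' + \int_\Om \Delta^2\p_x S\,\p_x\Delta^2\theta'.
\end{equation*}

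For the linear integral the essential point is that one must not expand $\Delta^2$ into monomials (a term $G\,\p_x^6\psi$ would appear, and one cannot integrate it by parts because high-order normal traces of $\psi$, e.g. $\p_z^2\psi|_{\p\Om}$, are not controlled). Instead I would keep the Laplacian powers together via $\Delta^2(\p_x^2\psi)=\p_x^3\theta'$ and write $\Delta^2[(1-G)\p_x^2\psi]=(1-G)\p_x^3\theta'+[\Delta^2,G]\p_x^2\psi$. Two integrations by parts across the factor $(1-G)\p_x^3\theta'$ — using only the vanishing of $\p_x^3\theta'$ and $\p_\bn\p_x^3\theta'$ on $\p\Om$, plus periodicity in $x$ — turn $\int_\Om(1-G)\p_x^3\theta'\,\p_x\Delta^2\theta'$ into $-\int_\Om(1-G)|\Delta\p_x^2\theta'|^2$ plus terms carrying at least one factor $G',G'',\dots$; since $\|G\|_{L^\infty}\lesssim\|G\|_{H^2}\leq B$ is small, this is $\leq-\frac12\|\Delta\p_x^2\theta'\|_{L^2}^2$, exactly as in \eqref{eq:bb-ipp}. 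Every remaining contribution (from $[\Delta^2,G]\p_x^2\psi$ and from the $G',G'',\dots$ terms) is genuinely of lower order in $\psi$, so by the tame estimate \eqref{eq:tame}, the bound $\|G\|_{H^4}\lesssim B$ of \cref{lem:H6}, and the preliminary decay estimates ($\|\p_x^2\psi\|_{H^4}\lesssim B(1+t)^{-1}$, $\|\p_x^2\psi\|_{L^\infty}\lesssim B(1+t)^{-7/4}$, $\|\nabla\p_x^2\psi\|_{L^\infty}\lesssim B(1+t)^{-3/2}$) these are $\lesssim B^2(1+t)^{-5/4}$, hence integrable after multiplication by $N\lesssim B$.

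For the nonlinear integral I would use $S=-(\bu\cdot\nabla\theta')'$ together with $\int_\Om(\bu\cdot\nabla g)g=0$ (divergence-free field, $\bu\cdot\bn=0$ on $\p\Om$) to rewrite $\int_\Om\Delta^2\p_x S\,\p_x\Delta^2\theta'=-\int_\Om[\Delta^2\p_x,\bu\cdot\nabla]\theta'\,\p_x\Delta^2\theta'$, then expand the commutator with \eqref{eq:commu}. The difficulty is that the only bound on the top derivatives of $\theta'$ is the \emph{growing} estimate $\|\theta'\|_{H^6}\lesssim B(1+t)^{1/2}$ from \cref{lem:H6}; the saving feature is that $\Delta^2\p_x$ always carries a $\p_x$, so whenever $\theta'$ picks up the purely-vertical derivative $\p_z^5\theta'$ the velocity simultaneously picks up that $\p_x$, with coefficient $\|\p_x^2\psi\|_{L^\infty}\lesssim B(1+t)^{-7/4}$, which more than absorbs the growth, while in every other term $\theta'$ keeps a $\p_x$ (norm $\leq\|\p_x\theta'\|_{H^4}\lesssim B$) and the differentiated velocity contributes $\lesssim\|\nabla^2\psi\|_{L^\infty}\lesssim B(1+t)^{-5/4}$. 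Hence $\|[\Delta^2\p_x,\bu\cdot\nabla]\theta'\|_{L^2}\lesssim B^2(1+t)^{-5/4}$.

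Collecting the bounds gives
\begin{equation*}
\frac12\frac\rd{\rd t}N^2 + \frac12\|\Delta\p_x^2\theta'\|_{L^2}^2 \lesssim \frac{B^3}{(1+t)^{5/4}},
\end{equation*}
and, dropping the dissipation and integrating in time, $N(t)\leq N(0)+CB^2$, i.e. \eqref{eq:boot-H4}. The two steps I expect to be delicate are absorbing the leading $G$-term through integration by parts — a crude Cauchy--Schwarz bound of $\int_\Om\Delta^2[G\p_x^2\psi]\,\p_x\Delta^2\theta'$ only gives a logarithmically growing bound for $N$ — and taming the commutator against the merely growing $H^6$ estimate; together with the one-order gap between the controlled norm $\|\p_x\Delta^2\theta'\|_{L^2}$ and the available dissipation $\|\Delta\p_x^2\theta'\|_{L^2}$, this is precisely what obstructs iterating the method to higher regularity.
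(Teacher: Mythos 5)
Your argument is correct and is essentially the paper's: there, $\Delta^2\p_x\theta'$ is viewed as a solution of \eqref{eq:bb} with $S_\perp=\bu\cdot\nabla\Delta^2\p_x\theta'$ and $S_\para=-[\Delta^2\p_x,G]\p_x\psi-[\Delta^2\p_x,\bu\cdot\nabla]\theta'$, and \cref{prop:bb-uniformbound} is invoked — whose proof is precisely the integration by parts with the vanishing traces of $\p_x^2\theta'$, $\p_z\p_x^2\theta'$ and the absorption by smallness of $\|G\|_{H^2}$ that you redo by hand — while $\|S_\para\|_{L^2}\lesssim B^2(1+t)^{-1-1/8}$ is obtained with the same tame estimates, the uniform $H^4$ bound on $G$ from \cref{lem:H6}, the decay of $\p_x^2\psi$, and the $(1+t)^{1/2}$ growth of $\|\theta'\|_{H^6}$. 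Just conclude as announced in your plan, via $\frac{\rd}{\rd t}N\lesssim\|S_\para\|_{L^2}$ after dividing by $N$, since integrating your displayed inequality for $N^2$ only gives $N(t)\leq N(0)+CB^{3/2}$ rather than the stated $N(0)+CB^{2}$.
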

	\begin{proof}
		In view of the application of \cref{prop:bb-uniformbound} to $\Delta^2\p_x\theta'$, we observe that its evolution is governed by the equation
		\begin{equation*}
			\p_t\Delta^2\p_x\theta' = (1-G)\p_x\Delta^2\p_x\psi - [\Delta^2\p_x,G]\p_x\psi - \Delta^2\p_x(\bu\cdot\nabla\theta')',
		\end{equation*}
		which is of the form \eqref{eq:bb} with $\Delta^2\p_x\psi=\p_x^2\theta'$ and $\p_z\Delta^2\p_x\psi = \p_z\p_x^2\theta'$ vanishing on the boundary $\p\Omega$ and with
		\begin{equation*}
			S = \underbrace{-[\Delta^2\p_x,G]\p_x\psi - [\Delta^2\p_x,\bu\cdot\nabla]\theta'}_{ S_\para}  \underbrace{-\bu\cdot\nabla\Delta^2\p_x\theta'}_{ S_\perp}.    
		\end{equation*}
		We already know that $\|G\|_{H^2}$ satisfies the smallness assumption \eqref{eq:boot-hyp} for $B>0$ small enough. We show that $ S_\para$ presents an algebraic decay strictly larger than $1$, as in \eqref{eq:bb-S}. To do so we apply the tame estimate \eqref{eq:commu} to the two commutator terms.
		Let us emphasize that we need to be thorough by substituting $\bu = \nabla^\perp\psi$ such that the transport operator can be written as
		\begin{equation*}
			\bu\cdot\nabla=-\p_z\psi\p_x+\p_x\psi\p_z.
		\end{equation*}
		Hence the nonlinear term presents formally only a vertical derivative of order 1. This makes a difference in the estimates and allows us to reach more optimal decay rates.

		On the one hand we get for the perturbation due to $G$, using \eqref{est:dx-psi-OK}
		\begin{equation*}
			\|[\Delta^2,G]\p_x^2\psi\|_{L^2} \leq \|G\|_{H^4}\underbrace{\|\p_x^2\psi\|_{L^\infty}}_{7/4} + \|\nabla G\|_{L^\infty}\underbrace{\|\p_x^2\psi\|_{H^3}}_{5/4} \lesssim \frac{B^2}{(1+t)^{5/4}},
		\end{equation*}
	Note that we used here the uniform $H^4$ bound on $G$ from \cref{lem:H6}.
		On the other hand the contribution of $[\Delta^2\p_x,\bu\cdot\nabla]\theta'$ splits into four terms as follows
		\begin{align*}
			[\Delta^2\p_x,\bu\cdot\nabla]\theta' = &-\Delta^2(\p_x\p_z\psi\p_x\theta') + \Delta^2(\p_x^2\psi\p_z\theta') \\ &-[\Delta^2,\p_z\psi]\p_x^2\theta' + [\Delta^2,\p_x\psi]\p_z\p_x\theta'.
		\end{align*}
		We estimate each term accordingly. We have for instance, using the bootstrap assumption \eqref{eq:boot-hyp}, the preliminary bounds \eqref{est:dx2-theta-infty} and \eqref{est:dx-psi-OK}, and \cref{lem:H6},
		\begin{align*}
			\|\Delta^2(\p_x^2\psi\p_z\theta')\|_{L^2} &\lesssim \|\p_x^2\psi\|_{H^4}\|\p_z\theta'\|_{L^\infty} + \|\p_x^2\psi\|_{L^\infty}\|\p_z\theta'\|_{H^4} \\
			&\lesssim \underbrace{\|\p_x^3\theta'\|_{L^2}\|\nabla\theta'\|_{L^\infty}}_{1+1/2} + \underbrace{\|\p_x^2\psi\|_{L^\infty}\|\theta'\|_{H^5}}_{7/4-1/4} \lesssim \frac{B^2}{(1+t)^{3/2}}.
		\end{align*}
	The limiting decay comes from one of the commutators, which we estimate thanks to \eqref{eq:commu} together with the bounds \eqref{est:dx-psi-OK}, \eqref{est:dx-theta-Hm} and \eqref{est:dx2-theta-infty}
	\begin{equation*}
        \| [\Delta^2,\p_z\psi]\p_x^2\theta'\|_{L^2} \lesssim \underbrace{ \|\nabla\p_z \psi\|_{\infty} \|\p_x^2 \theta'\|_{H^3} }_{\frac{5}{4}+0} + \underbrace{\|\psi\|_{H^5} \|\p_x^2 \theta\|_{L^\infty}}_{\frac{3}{4} + \frac{3}{4}} \lesssim \frac{B^2}{(1+t)^{5/4}}.
    \end{equation*}	
        Gathering these estimates provides
		\begin{equation*}
			\| S_\para\|_{L^2} \lesssim \frac{B^2}{(1+t)^{5/4}},
		\end{equation*}
		and \cref{prop:bb-uniformbound} applies, ensuring
		\begin{equation*}
			\|\Delta^2\p_x\theta'\|_{L^2} \leq\|\Delta^2\p_x\theta'_0\|_{L^2} + CB^2.
		\end{equation*}
	\end{proof}
	
	\begin{lemma}[Decay of $\|\p_x^3 \theta'\|_{L^2}$]
				\label{lem:L2}
		As long as the bootstrap hypothesis \eqref{eq:boot-hyp} holds, we have
		\begin{equation*}
			\|\p_x^3\theta'\|_{L^2} \lesssim \frac{B/C_0 + B^2}{1+t}.
		\end{equation*}
	\end{lemma}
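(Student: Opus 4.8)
\textbf{Plan of proof of \cref{lem:L2}.}
The idea is to apply \cref{prop:bb-decay} to the function $\p_x^3\theta'$, which solves an equation of the form \eqref{eq:bb}. Indeed, applying $\p_x^3$ to the first equation of \eqref{eq:STi} gives
\[
\p_t \p_x^3\theta' = (1-G)\p_x(\p_x^3\psi) + S,\qquad \Delta^2(\p_x^3\psi)=\p_x(\p_x^3\theta'),
\]
with
\[
S = \underbrace{-[\p_x^3,G]\p_x\psi}_{} \; + \; \underbrace{\bu\cdot\nabla\p_x^3\theta'}_{S_\perp} \; + \; \underbrace{-[\p_x^3,\bu\cdot\nabla]\theta'}_{},
\]
where $S_\perp$ has zero scalar product with $\p_x^3\theta'$ thanks to the divergence-free and non-penetration conditions (as in the remark following \cref{prop:bb-uniformbound}). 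It then remains to split the remaining terms into an $S_\para$ part satisfying a $(1+t)^{-1-\delta}$ bound and an $S_\Delta$ part controlled by $(1+t)^{-1/2}\|\Delta(\p_x^3\psi)\|_{L^2}$, to check the smallness of $G$ in $H^2$ (already in \eqref{eq:boot-hyp}), and to verify assumption \eqref{eq:bb2-A}, namely that $\|\Delta^2\p_x^{-2}(\p_x^3\theta')\|_{L^2}=\|\Delta^2\p_x\theta'\|_{L^2}$ is uniformly bounded --- this is exactly the content of \cref{lem:H4}, so $A\lesssim \|\theta_0'\|_{H^5}+B^2$ and $\alpha=0$. Applying \cref{prop:bb-decay} with $\alpha=0$ and $\delta$ slightly positive then yields $\|\p_x^3\theta'\|_{L^2}\lesssim (\|\theta_0'\|_{H^5}+B^2)(1+t)^{-1}$, which is the claim.

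The bulk of the work is the commutator estimates needed to identify $S_\para$ and $S_\Delta$. Writing $\bu\cdot\nabla = -\p_z\psi\,\p_x + \p_x\psi\,\p_z$ as emphasized in the excerpt, I would expand $[\p_x^3,\bu\cdot\nabla]\theta'$ into terms in which at least one $\p_x$ falls on $\psi$ or $\theta'$, and estimate each factor using the preliminary bounds from \cref{ssub:decays} and the $H^6$ bound on $\theta'$ from \cref{lem:H6}. For the $G$-commutator, since $G$ depends only on $(t,z)$ and is small and well-controlled in $H^4$, $[\p_x^3,G]\p_x\psi = 0$ actually vanishes because $G$ is $x$-independent --- so in fact that term drops entirely. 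Wait: $[\p_x^3,G]=0$ since $\p_x$ commutes with multiplication by a function of $z$ only; this simplifies matters. So only $[\p_x^3,\bu\cdot\nabla]\theta'$ contributes, and it should be possible to bound all its pieces by $B^2(1+t)^{-1-\eta}$ for some $\eta>0$ (hence $S_\para$ with $\sigma\lesssim B^2$), with possibly one borderline term that one keeps as $S_\Delta$ using that $\|\p_x\psi\|_{L^2}\lesssim\|\Delta\p_x^3\psi\|_{L^2}^{?}$-type control; more precisely a term like $\p_x\psi\,\p_z\p_x^3\theta'$ pairs against $\p_x^3\theta'$ and, after one integration by parts moving $\p_z$, is controlled by $\|\nabla\p_x\psi\|_{L^\infty}\|\p_x^3\theta'\|_{L^2}^2$, which by the $(1+t)^{-3/2}$ decay of $\|\nabla\p_x\psi\|_{L^\infty}$ is better than needed and can safely go into $S_\perp$-type handling or be absorbed.

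The main obstacle I anticipate is bookkeeping the decay exponents so that every contribution to $S_\para$ genuinely beats $(1+t)^{-1}$ (i.e.\ has $\delta>0$), given that several of the basic quantities decay only like $(1+t)^{-1/2}$ (e.g.\ $\|\p_x\theta'\|_{H^4}$ is merely bounded, $\|\p_x^2\theta'\|_{H^2}\lesssim(1+t)^{-1}$, $\|\p_x\psi\|_{H^6}\lesssim (1+t)^{-1/2}$). The saving grace is that each commutator term in $[\p_x^3,\bu\cdot\nabla]\theta'$ distributes at least one extra $\p_x$ onto a factor, converting a merely-bounded quantity into a decaying one; combined with Agmon's inequality \eqref{eq:agmon}--\eqref{eq:agmon2} and the tame estimate \eqref{eq:tame}, every term picks up total decay at least $(1+t)^{-1-1/4}$, comfortably giving $\delta=1/4$. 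Once this is checked, \cref{prop:bb-decay} closes the estimate immediately, and the output $\|\theta_0'\|_{H^5}+B^2$ (rather than $\|\theta_0'\|_{H^5}+B$) is what makes the bootstrap improve the constant.
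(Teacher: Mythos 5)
Your overall strategy is the paper's: apply \cref{prop:bb-decay} to $\p_x^3\theta'$, note that the $G$-term produces no commutator since $G$ is $x$-independent, verify \eqref{eq:bb2-A} with $\alpha=0$ and $A\lesssim\|\theta_0'\|_{H^5}+B^2$ via \cref{lem:H4}, and control the transport term as a source. (The paper does not even split off $S_\perp$: it simply takes $S=S_\para=-\p_x^3(\bu\cdot\nabla\theta')$; your orthogonal split is fine but not needed.)

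However, there is a genuine gap in the final step. The conclusion of \cref{prop:bb-decay} is a decay rate $(1+t)^{-\min(1+\alpha,\delta)}$. With $\alpha=0$, taking ``$\delta$ slightly positive'' or $\delta=1/4$, as you propose, yields only $\|\p_x^3\theta'\|_{L^2}\lesssim(1+t)^{-1/4}$, not $(1+t)^{-1}$; the rate is capped by $\delta$, not by $1+\alpha$. To obtain the stated $(1+t)^{-1}$ decay you must show that the source decays like $(1+t)^{-2}$ in $L^2$, i.e.\ $\delta=1$. This stronger bound is in fact available under \eqref{eq:boot-hyp}: writing $\bu\cdot\nabla=-\p_z\psi\,\p_x+\p_x\psi\,\p_z$ and using the preliminary decays, every term of $\p_x^3(\bu\cdot\nabla\theta')$ carries total decay at least $2$ — for instance $\|\p_x^3\p_z\psi\|_{L^2}\|\p_x\theta'\|_{L^\infty}\lesssim B^2(1+t)^{-3/2-3/4}$, $\|\p_z\psi\|_{L^\infty}\|\p_x^4\theta'\|_{L^2}\lesssim\|\nabla\psi\|_{L^\infty}\|\p_x^2\theta'\|_{H^2}\lesssim B^2(1+t)^{-3/2-1/2}$, and the borderline term $\|\p_x^4\psi\|_{L^2}\|\p_z\theta'\|_{L^\infty}\lesssim\|\p_x^2\psi\|_{H^2}\|\nabla\theta'\|_{L^\infty}\lesssim B^2(1+t)^{-3/2-1/2}$ — which is exactly the computation the paper performs. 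So your aimed-for exponent $1+1/4$ is both weaker than what the bootstrap bounds give and insufficient for the claim; as written, the proposal does not establish the $(1+t)^{-1}$ rate. The $S_\Delta$ mechanism you invoke is not needed here (and would not repair a too-weak $\delta$ anyway, since the rate is still $\min(1+\alpha,\delta)$).
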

	\begin{proof}
		Note that $\p_x^3\theta'$ satisfies \eqref{eq:bb} with the source term
		\begin{equation*}
			S=S_\para=-\p_x^3(\bu\cdot\nabla\theta').
		\end{equation*}
		We can bound the whole term $S=S_\para$ as follows,
		\begin{equation*}\label{eq:boot-SL2}
			\begin{aligned}
				\| S\|_{L^2}
				&\leq \|\p_x^3(\p_z\psi\p_x\theta')\|_{L^2} + \|\p_x^3(\p_x\psi\p_z\theta')\|_{L^2}\\
				&\lesssim \|\p_x^3\p_z\psi\|_{L^2}\|\p_x\theta'\|_{L^\infty}+\|\p_z\psi\|_{L^\infty}\|\p_x^4\theta'\|_{L^2} \\
				&\quad+ \|\p_x^4\psi\|_{L^2}\|\p_z\theta'\|_{L^\infty}+\|\p_x\psi\|_{L^\infty}\|\p_x^3\p_z\theta'\|_{L^2} \\
				&\lesssim \underbrace{\|\p_x^2\psi\|_{H^2}\|\p_x\theta'\|_{L^\infty}}_{3/2+3/4>2} + \underbrace{\|\nabla\psi\|_{L^\infty}\|\p_x^2\theta'\|_{H^2}}_{3/2+1/2= 2}\\
				&\quad+\underbrace{\|\p_x^2\psi\|_{H^2}\|\nabla\theta'\|_{L^\infty}}_{3/2+1/2=2} +\underbrace{\|\p_x\psi\|_{L^\infty}\|\p_x^2\theta'\|_{H^2}}_{7/4+1/2>2} \lesssim \frac{B^2}{(1+t)^2}.
			\end{aligned}
		\end{equation*}
		Assumption \eqref{eq:bb2-S} is satisfied with $\delta=1$. Additionally, the norm of $(\Delta^2\p_x^{-2})\p_x^3\theta' = \Delta^2\p_x\theta'$ is bounded according to \eqref{eq:boot-H4}, so assumption \eqref{eq:bb2-A} is satisfied with $A= \|\Delta^2\p_x\theta'_0\|_{L^2} + CB^2$ and $\alpha=0$. Moreover, the traces of $\p_x^3\theta'$ and $\p_\bn\p_x^3\theta'$ vanish as a direct consequence of \cref{lem:traces}. Therefore $\min(1+\alpha,\delta)=1$ and \cref{prop:bb-decay} provides
		\begin{equation*}
			\|\p_x^3\theta'\|_{L^2} \lesssim \left(\| \p_x^3 \theta'_0\|_{L^2} + \|\Delta^2\p_x\theta'\|_{L^\infty((0,t),L^2)} + B^2\right)\frac{1}{1+t}.
		\end{equation*}
		Using inequality \eqref{eq:boot-H4}, we obtain the desired estimate.
	\end{proof}
	
	\begin{lemma}[Stronger decay of $\|\p_t \p_x \theta'(t)\|_{L^2}$] \label{lem:dtL2}
		Under assumptions \eqref{eq:boot-hyp} we have, for all $t\in (0, T^*)$,
		\begin{equation*}
			\|\p_t\p_x\theta'\|_{L^2} \lesssim \frac{B/C_0+ B^2}{(1+t)^2}.
		\end{equation*}
	\end{lemma}
	\begin{proof}
	The pair $(\p_t\p_x \theta',\p_t \p_x \psi)$ satisfies  \eqref{eq:bb}  with
		\begin{equation*}
		S = -\p_tG\p_x^2\psi - \p_t\p_x(\bu\cdot\nabla\theta').
		\end{equation*}
		Note that $\p_t\p_x\theta'$ and $\p_\bn\p_t\p_x\theta'$ vanish on the boundary, from \cref{lem:traces}. 
		In order to apply \cref{prop:bb-decay}, we have to bound $(\Delta^2\p_x^{-2})\p_t\p_x\theta'$ in $L^2(\Omega)$.
    Going back to \eqref{eq:STi}, we have
		\begin{equation*}
			\Delta^2\p_x^{-1}\p_t\theta' = \Delta^2((1-G)\psi) - \Delta^2\p_x^{-1}(\bu\cdot\nabla\theta')),
		\end{equation*}
		the norm of which can be estimated as
		\begin{equation*}
			\ba
			\|\Delta^2\p_x^{-1}\p_t\theta'\|_{L^2} &\leq (1+\|G\|_{H^4})\|\Delta^2\psi\|_{L^2} + \|\Delta^2\p_x^{-1}(\bu\cdot\nabla\theta')\|_{L^2}\\
			&\lesssim \|\p_x \theta'\|_{L^2} + \underbrace{\|\psi\|_{H^5}\|\nabla\theta'\|_{L^\infty}}_{3/4+1/2}+\underbrace{\|\nabla\psi\|_{L^\infty}\|\theta'\|_{H^5}}_{3/2-1/4} \\
			&\lesssim \frac{B/C_0 + B^2}{1+t} + \frac{B^2}{(1+t)^{5/4}}.
			\ea
		\end{equation*}
		Hence assumption \eqref{eq:bb2-A} is satisfied with $\alpha=1$ and $A=C(B/C_0 + B^2)$.
		
		Let us set $ S_\perp:=\bu\cdot\nabla\p_t\p_x\theta'$, indeed orthogonal to $\p_t\p_x\theta'$ in $L^2(\Omega)$.
We further define
\begin{align*}
	S_\para &:= -\p_tG\p_x^2\psi -\p_x\bu\cdot\nabla\p_t\theta',\\
	S_\Delta&:= \p_t\p_x\bu\cdot\nabla\theta'-\p_t\bu\cdot\nabla\p_x\theta',
\end{align*}		
so that $S_\perp + S_\para + S_\Delta=S$. Let us now check 	that $S_\para$ and $S_\Delta$ satisfy the assumptions of \cref{prop:bb-decay}.
		
		The first term in $S_\para$ can be bounded directly as follows, using \cref{lem:H6}
		\begin{align*}
			\|\p_tG\p_x^2\psi\|_{L^2} \leq \underbrace{\|\p_tG\|_{L^\infty}\|\p_x^2\psi\|_{L^2}}_{2+2} \lesssim \frac{B^2}{(1+t)^4}.
		\end{align*}
		The second requires for instance a bound on $\p_t\theta'$ in $H^1(\Omega)$, obtained directly from the evolution equation,
		\begin{align*}
			\|\p_t\theta'\|_{H^1} &\lesssim (1+\gamma_0)\|\p_x\psi\|_{H^1} + \|\bu\cdot\nabla\theta'\|_{H^1} \\
			&\lesssim (1+\gamma_0)\|\p_x\psi\|_{H^1} + \|\bu\|_{H^1}\|\nabla\theta'\|_{L^\infty}+\|\bu\|_{L^\infty}\|\nabla\theta'\|_{H^1} \\
			&\lesssim (1+\gamma_0)\underbrace{\|\p_x\psi\|_{H^1}}_{7/4} + \underbrace{\|\psi\|_{H^2}\|\nabla\theta'\|_{L^\infty}}_{3/2+1/2}+\underbrace{\|\nabla\psi\|_{L^\infty}\|\theta'\|_{H^2}}_{3/2+1/2} \lesssim \frac{B}{(1+t)^{7/4}}.
		\end{align*}
		Hence
		\begin{equation*}
			\|\p_x\bu\cdot\nabla\p_t\theta'\|_{L^2}\lesssim\|\p_x\bu\|_{L^\infty}\|\p_t\theta'\|_{H^1} \lesssim \underbrace{\|\nabla\p_x\psi\|_{L^\infty}\|\p_t\theta'\|_{H^1}}_{3/2+7/4} \lesssim\frac{B^2}{(1+t)^{13/4}},
		\end{equation*}
		and $S_\para$ satisfies the assumption \eqref{eq:bb2-S} with $\sigma=CB^2$ and $\delta=9/4$. Continuing our computations,
		\begin{align*}
			\|\p_t\p_x\bu\cdot\nabla\theta'\|_{L^2} \lesssim\|\nabla\p_t\p_x\psi\|_{L^2}\underbrace{\|\nabla\theta'\|_{L^\infty}}_{1/2} \lesssim \frac{B\|\Delta\p_t\p_x\psi\|_{L^2}}{(1+t)^{1/2}},
		\end{align*}
		and the same consideration applies for
		\begin{equation*}
			\|\p_t\bu\cdot\nabla\p_x\theta'\|_{L^2} \lesssim \|\nabla\p_t\psi\|_{L^2}\underbrace{\|\nabla\p_x\theta'\|_{L^\infty}}_{1/2} \lesssim \frac{B\|\Delta\p_t\p_x\psi\|_{L^2}}{(1+t)^{1/2}}.
		\end{equation*}
		Hence
		\begin{equation*}
			S_\Delta =\p_t\p_x\bu\cdot\nabla\theta'-\p_t\bu\cdot\nabla\p_x\theta'
		\end{equation*}
		indeed satisfies assumption \eqref{eq:bb2-S}. Finally  \cref{prop:bb-decay} applies with $\min(1+\alpha,\delta)=2$ and we obtain
		\begin{equation*}
			\|\p_t\p_x\theta'\|_{L^2}\lesssim \frac{B/C_0 +B^2}{(1+t)^2}.
		\end{equation*}
		
	\end{proof}

	\paragraph{Conclusion.}
	
	Let us close the bootstrap argument. Assuming $\|\theta_0\|_{H^6} \leq B/C_0$, we had, by continuity in time of the Sobolev norms of $\theta$ ensured by \cref{thm:wp}, existence of a maximal time $T^* \in\R_+\cup\{+\infty\}$ such that \eqref{eq:boot-hyp} is satisfied for any $t\in[0,T^*)$, reported here
	\begin{equation} \label{eq:recall}
		\begin{aligned}
			&\|\p_x^3\theta'\|_{L^2}\leq \frac B{1+t}, & \|\p_x\theta'\|_{H^4}\leq B,\\
			&\|G\|_{H^2}\leq B, &
			\|\p_t\p_x\theta'\|_{L^2}\leq \frac B{(1+t)^2}.
		\end{aligned}
	\end{equation}
	These decay estimates induce, as shown in \cref{lem:H6,lem:H4,lem:L2,lem:dtL2} that \eqref{eq:recall} holds for another constant $\underline{B}$ defined as
	\begin{equation*}
		\underline{B} = \frac{CB}{C_0} + CB^2,
	\end{equation*}
	where $C>0$ is universal. By choosing $C_0$ large enough and $B$ small enough, we have $\underline{B} < B$
	and inequalities \eqref{eq:recall} are strictly satisfied for any $t\in[0,T^*)$. Therefore $T^*$ must be $+\infty$, otherwise the continuity of $t\mapsto \|\theta(t)\|_{H^6}$ would imply the existence of a larger validity time interval for \eqref{eq:recall}. In the end, these bounds are valid for all times, and setting $\eps_0 := B/C_0$ closes the demonstration of \cref{prop:stab-precise}.
	
		\begin{remark}[Generalisation at any order]\label{rmk:higher-reg}
		Motivated by the fact that the following perturbated subproblem is stable under horizontal derivation
		\begin{equation*}
			\left\{
			\begin{aligned}
				& \p_t\p_x^\ell\theta' = (1-G)\p_x^{\ell+1}\psi \\
				& \Delta^2\p_x^{\ell-1}\psi = \p_x^\ell\theta',\\
				& \p_x^{\ell-1} \psi|_{\p\Omega} = \p_\bn\p_x^{\ell-1}\psi|_{\p\Omega} = 0,
			\end{aligned}
			\right.
		\end{equation*}
		we could expect to propagate arbitrarily high horizontal regularity on $\theta'$. Nevertheless, our proof relies on the control
		\begin{equation*}
			\|\theta\|_{H^6} \lesssim (1+t)^{1/2},
		\end{equation*}
		which we can obtain thanks to the classical divergence free condition on $\bu$ canceling the extra-derivative term. Let us try to do the same on $\p_x^\ell\theta$; we write
		\begin{equation*}
			\p_t\p_x^\ell\theta + \sum_{k=0}^{\ell-1} C_{\ell,k} \p_x^{\ell-k}\bu\cdot\nabla\p_x^k\theta + \bu\cdot\nabla\p_x^\ell\theta= \p_x^{\ell+1}\psi,
		\end{equation*}
		and multiply by $\p_x^\ell \theta'$.
		Then the  estimation does not close, even though one of its terms does not contribute, just as in the initial equation. Indeed, we have
		\begin{equation*}
			\frac12\frac\rd{\rd t}\|\p_x^\ell\theta\|_{H^6}^2 + \sum_{k=0}^{\ell-1}C_{\ell,k}\int_\Omega \partial^6(\p_x^{\ell-k}\bu\cdot\nabla\p_x^k\theta)\partial^6\p_x^\ell\theta + \frac12\underbrace{\int_\Omega \bu \cdot\nabla|\partial^6\p_x^\ell\theta|^2}_{=0} \leq \|\p_x^\ell\psi\|_{H^6}\|\p_x^\ell\theta\|_{H^6}.
		\end{equation*}
		Note that crossed derivatives integrands, such as
		\begin{equation*}
			\int_\Omega \p_x^\ell\bu\cdot\nabla \partial^6\theta \partial^6\p_x^\ell\theta
		\end{equation*}
		 do not lead to a vanishing integral. Hence deriving an estimate on $\p_x^\ell \theta'$ in $H^6$ requires first the derivation of an estimate of $\theta$ in $H^{6+\ell}$, in the spirit of \cref{lem:H6}.
	We will derive such estimates in \cref{sec:nonlinear-BL} (see \cref{lem-high-reg-NLBL}), at the price of much stronger and more complicated bootstrap assumptions (see \eqref{hyp:bootstrap-theta'-NLBL}). 
	 
	\end{remark}
	
	\subsection{Convergence as \texorpdfstring{$t\to \infty$}{t → ∞} and identification of the asymptotic profile}
	\label{sub:finalstate}
	
	Regarding the asymptotic behavior of the density for the Stokes-transport system without any assumption on the type of initial data, we can only say that if $\rho$ converges toward some $\rho_\infty$ in $H^{-1}$, this limit profile is stratified. Indeed, the energy balance \eqref{eq:ep} ensures that $\bu\in L^2(\R_+,H^1)$, and since $\bu$ is also $\Lip(\R_+,H^1)$ by linearity of the Stokes system, we infer that $\|\bu(t)\|_{H^1} \to 0$ as $t\to\infty$, but without any information about its decay rate. At least we have
	\begin{equation*}
		\|\nabla p + \rho\be_z\|_{H^{-1}} \lesssim \|\bu\|_{H^1} \underset{t\to\infty}{\longrightarrow} 0.
	\end{equation*}
	The $H^{-1}$ convergence of $\rho$ leads to the existence of $p_\infty$ such that
	\begin{equation*}
		\nabla p_\infty = -\rho_\infty\be_z.
	\end{equation*}
	Observing that $\p_xp_\infty=0$ and that the domain $\Omega=\T\times(0,1)$ is convex ensures that $p_\infty$ and $\rho_\infty$ are both independent of the horizontal coordinate $x$.
	
	In the context of a small perturbation of the stationary profile $\rho_s(z)=1-z$ we obtained explicit decay rates for Sobolev norms of $\bu$. We show that these decays are sufficient to ensure the strong convergence of $\rho$ toward a limit profile $\rho_\infty$. Moreover, the smallness of the perturbation $\theta$ does not affect the vertical monotonicity of the whole density $\rho$, from which we deduce that $\rho_\infty$ is exactly the vertical rearrangement of $\rho_0$.
	
	\begin{proposition} \label{prop:acv}
		Under the assumptions of \cref{thm:stab}, the whole density $\rho$ converges in $H^m$ for any $m<4$ towards its vertical decreasing rearrangement.
	\end{proposition}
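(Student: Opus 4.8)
The plan is to upgrade the estimates of \cref{prop:stab-precise} into a strong convergence statement for $\rho$ and then pin down the limit by rearrangement theory. I proceed in three steps.

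\textbf{Step 1 (convergence of $\rho$).} Write $\rho = \Theta + \bar\theta + \theta'$. Since $\theta'$ has zero horizontal average, the Poincar\'e inequality in $x$ gives $\|\theta'\|_{L^2}\leq\|\p_x^3\theta'\|_{L^2}\lesssim\eps(1+t)^{-1}$ by \cref{prop:stab-precise}; combined with the uniform bound $\|\theta'\|_{H^4}\lesssim\eps$ and the interpolation $\|\theta'\|_{H^m}\lesssim\|\theta'\|_{L^2}^{1-m/4}\|\theta'\|_{H^4}^{m/4}$, this yields $\|\theta'(t)\|_{H^m}\to 0$ for every $m<4$. For the average, $\p_t\bar\theta = -\overline{\bu\cdot\nabla\theta'} = -\p_z\int_\T\p_x\psi\,\theta'$, and the preliminary decay bounds of \cref{sub:bootstrap} together with the $H^4$ control of $G=\p_z\bar\theta$ from \cref{lem:H6} show that $t\mapsto\|\p_t\bar\theta(t)\|_{L^2}$ and $t\mapsto\|\p_t G(t)\|_{H^4}$ decay strictly faster than $(1+t)^{-1}$, hence are integrable in time. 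Thus $\bar\theta(t)$ converges in $H^5(\Om)$ to some $\bar\theta_\infty = \bar\theta_\infty(z)$, and consequently $\rho(t)\to\rho_\infty := \Theta + \bar\theta_\infty$ in $H^m(\Om)$ for all $m<4$, with $\rho_\infty$ a function of $z$ alone.

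\textbf{Step 2 (strict monotonicity and equimeasurability).} By \cref{prop:stab-precise}, $\|\theta(t)\|_{H^4}\leq\|\theta'(t)\|_{H^4}+\|\bar\theta(t)\|_{H^5}\lesssim\eps$ uniformly in $t$, and the two-dimensional embedding $H^4(\Om)\hookrightarrow C^1(\overline\Om)$ gives $\|\p_z\theta(t)\|_{L^\infty}\lesssim\eps$. Taking $m\in(3,4)$ in Step 1, the convergence $\rho(t)\to\rho_\infty$ holds in $C^1(\overline\Om)$, so $\p_z\rho_\infty = -1+\p_z(\rho_\infty-\Theta)\leq -1+C\eps_0<0$ on $(0,1)$ once $\eps_0$ is small; in particular $\rho_\infty$ is strictly decreasing and has no level set of positive measure. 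On the other hand, $\bu$ is divergence free and, by the Dirichlet condition, satisfies $\bu\cdot\bn = 0$ on $\p\Om$, so the flow $\bX(t)$ (as in the remark following \cref{lem:traces}) is a measure-preserving diffeomorphism of $\Om$; hence $\rho(t) = \rho_0\circ\bX(t)^{-1}$ is equimeasurable with $\rho_0$, i.e.\ $|\{\rho(t)>\lambda\}| = |\{\rho_0>\lambda\}|$ for all $t\geq0$ and all $\lambda$. Fixing $t_n\to\infty$ and a subsequence along which $\rho(t_n)\to\rho_\infty$ almost everywhere, the absence of atoms of $\rho_\infty$ gives $\mathbf{1}_{\{\rho(t_n)>\lambda\}}\to\mathbf{1}_{\{\rho_\infty>\lambda\}}$ a.e.\ for every $\lambda$, and dominated convergence on the finite-measure domain $\Om$ yields $|\{\rho_\infty>\lambda\}| = |\{\rho_0>\lambda\}|$ for all $\lambda$.

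\textbf{Step 3 (identification).} Thus $\rho_\infty$ is a strictly decreasing function of $z$ equimeasurable with $\rho_0$. By the layer-cake formula the decreasing vertical rearrangement $\rho_0^\ast(z) := \int_0^\infty\mathbf{1}_{0\leq z\leq|\{\rho_0>\lambda\}|}\ud\lambda$ is non-increasing in $z$ and equimeasurable with $\rho_0$. Two non-increasing functions on $(0,1)$ sharing the same distribution function agree almost everywhere (each equals the monotone rearrangement of their common distribution), so $\rho_\infty = \rho_0^\ast$ a.e., hence everywhere by continuity of $\rho_\infty$. This proves \cref{prop:acv}.

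\textbf{Expected main obstacle.} The delicate point is the passage to the limit for the distribution function: strong $L^2$ convergence only gives convergence of distribution functions at continuity points. Establishing the strict monotonicity of $\rho_\infty$ beforehand removes this issue, since it excludes atoms and lets dominated convergence apply at every level $\lambda$ (alternatively one works at continuity points and exploits right-continuity of distribution functions). The remaining technical care is to check that $\bar\theta$, and not merely $\theta'$, converges, which rests on the time-integrability of $\p_t\bar\theta$ coming from the bootstrap bounds.
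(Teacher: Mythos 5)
Your proof is correct and follows essentially the same route as the paper: strong convergence of $\rho$ to a stratified, strictly decreasing limit, preservation of the distribution function under the divergence-free flow, and uniqueness of the decreasing vertical rearrangement. The only (harmless) differences are that you obtain the convergence by combining the decay of $\theta'$ with the time-integrability of $\p_t\bar\theta$, whereas the paper bounds $\|\p_t\rho\|_{H^m}$ directly, and that your passage to the limit for the level sets (a.e.\ convergence plus absence of level sets of positive measure) is spelled out more carefully than in the paper's one-line appeal to strong $L^q$ convergence.
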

	
	\begin{proof}%
		The proof is divided in the following steps:
		\paragraph{Convergence.}
		It is enough to show that $\p_t\rho$ belongs to $L^1(\R_+,H^m)$ for $m<4$, which implies the strong convergence of $\rho(t)$ in $H^m$ and existence of a limit $\rho_\infty$. Let us estimate $\p_t\rho$ in $H^m$ for any $0\leq m \leq 4$, using the tame estimates \eqref{eq:tame}
		\begin{align*}
			\|\p_t\rho\|_{H^m} & = \|\bu\cdot\nabla\rho\|_{H^m} \leq \|\p_z\psi\p_x\rho\|_{H^m} + \|\p_x\psi\p_z\rho\|_{H^m} \\
			& \lesssim \|\nabla\psi\|_{L^\infty}\|\p_x\rho\|_{H^m} + \|\psi\|_{H^{m+1}}\|\p_x\rho\|_{L^\infty} + \|\p_x\psi\|_{L^\infty}\|\rho\|_{H^{m+1}}+\|\p_x\psi\|_{H^m}\|\p_z\rho\|_{L^\infty}.
		\end{align*}
		Recalling that
		$\p_z\rho = -1+G+\p_z\theta'$ is bounded in $H^5(\Omega)$, that $\p_x\rho = \p_x\theta'$ decays as $(1+t)^{-1+m/4}$ for $m\leq 4$, as well as the  decay estimates \eqref{est:dx-psi-immediat} and \eqref{est:dx-psi-OK}, we find
		\begin{equation*}
			\|\p_t\rho\|_{H^m} \lesssim \frac{\|\rho_0\|_{H^6}^2}{(1+t)^{2-m/4}},
		\end{equation*}
		which is integrable for any $m<4$, hence the convergence.
		
		\paragraph{Stratified limit.}
		Since $\rho$ converges, so do $\theta' = (\rho -\rho_s)'$ and $\bar\theta = \overline{\rho-\rho_s}$. We obtained in \eqref{eq:stab-precise-bound} that $\theta'$ vanishes in $H^m$ for $m<4$, and therefore the limit $\rho_\infty$ is stratified. 
		Hence $\rho_\infty$ can be written as the sum of $\rho_s$ and the limit $\bar\theta_\infty$ of $\bar\theta$. In view of \eqref{eq:stab-precise-bound} this limit satisfies in particular $\|\p_z\bar\theta_\infty\|_{L^\infty} \leq C\eps_0$, with the notation of \cref{thm:stab}. At least for $\eps_0>0$ such that $C\eps_0 < 1 = -\p_z\rho_s$, we know that $\sup_{(0,1)}\p_z\rho_\infty < 0$, which means that $\rho_\infty$ is strictly decreasing with respect to $z$.
		
		\paragraph{Rearrangement.}
		The divergence free character of the velocity field $\bu$ ensures that all $L^q$ norms and the cumulative distribution function of $\rho(t)$ are preserved along time, in the sense
		\begin{equation} \label{eq:cfeq}
			\forall \lambda \geq 0, \qquad |\{\rho(t)>\lambda\}| = |\{\rho_0>\lambda\}|.
		\end{equation}
		This property transfers to the limit state $\rho_\infty$ by $L^q$ strong convergence of $\rho$. According to rearrangement theory such as that developed in \cite[Chapter 3]{liebloss}, we say that two maps are rearrangements of each other if they have the same level sets, in the sense of \eqref{eq:cfeq}. Adapting slightly the construction of \cite{liebloss}, we know there exists a unique vertical decreasing rearrangement of $\rho_0:\Omega\to\R_+$, which can be defined as
		\begin{equation*}
			\rho_0^*(z) :=\int_0^\infty \boldsymbol{1}_{0\leq z\leq|\{\rho_0>\lambda\}|}\ud\lambda.
		\end{equation*}
		In the end, we know that $\rho_\infty$ is a decreasing rearrangement of $\rho_0$, therefore it is $\rho_0^*$ by uniqueness.
	\end{proof}
	
	\begin{remark}
Note that the above argument extends immediately to the settings investigated by Elgindi \cite{elgindi} and Castro, C\'{o}rdoba and Lear \cite{castro2019global} for the incompressible porous media problem, as mentioned in the Introduction.
		\end{remark}

	Notice that we actually have $\|\p_z\theta\|_{L^\infty} \lesssim \eps_0$ for all times. Therefore the total density has a strictly negative vertical derivative, for all $x \in \T$ and for all times $t\in\R_+$, since
	\begin{equation*}
		\p_z\rho(t,x,\cdot) = -1 + \p_z\theta(t,x,\cdot),
	\end{equation*}
	and the density reordering is essentially horizontal. This is a rare case in which we can describe the asymptotic profile. This intuition of having heavy fluids sinking under the lighter ones prompts to wonder if, at least in a weak sense, the density profile should always converge toward the vertical rearrangement of the initial datum, unless it is already stratified. This question remains open, both for the Stokes-transport equation and for the incompressible porous media.
	
	\section{Formation of large time boundary layers in the linear setting\texorpdfstring{: proof of \cref{thm:BL-linear}}{}}
	\label{sec:linear-BL}
	
	The purpose of this section is to prove \cref{thm:BL-linear}. We consider the linear problem
	\beq
	\label{linear-sans_G}
	\begin{cases}
		\p_t \theta=\p_x \psi\quad \text{in } (0,+\infty)\times \Om,\\
		\Delta^2 \psi=\p_x \theta \quad \text{in } \Om,\quad 
		\psi\vert_{\p\Om}=\p_\bn \psi\vert_{\p\Om}=0,\\
		\theta(t=0)=\theta_0,
	\end{cases}
	\eeq
	with $\theta_0\in H^6(\Om)$ arbitrary. 
	The difference with the linear analysis of \cref{sub:linearized}, and in particular with \cref{prop:bb-decay}, lies in the fact that we do not assume that $\theta_0$ and $\p_\bn \theta_0$ vanish on the boundary.
	As a consequence, as explained in the scheme of the proof in the Introduction, boundary layers are created as $t\to \infty$ close to $z=0$ and $z=1$, and the purpose of this section is precisely to describe the mechanism driving the apparition of these boundary layers.
	We will therefore decompose $\theta$ as the sum of an interior term decaying like $t^{-1}$ in $L^2$, and some boundary layer terms which lift the traces of $\theta$ and $\p_\bn\theta$ on the boundary. This will lead us to \cref{thm:BL-linear}.
	We will then return to our nonlinear system \eqref{eq:STi} in \cref{sec:nonlinear-BL}.

In fact, we will prove a more precise version of \cref{thm:BL-linear}:
\begin{proposition}
Let $\theta_0\in H^s(\Om)$ for some $s>0$ sufficiently large.
Let $\theta\in C(\R_+, H^s)$ be the unique solution of \eqref{linear-sans_G}.
There exists a boundary layer profile $\thbl$, of the form
\[
\thbl=\sum_{j=0}^4 (1+t)^{-j/4} \left(\Thbot^j(x,Z_\mbot) + \Thtop^j(x,Z_\mtop)\right),
\]
with $Z_\mbot=z(1+t)^{1/4}$ and $Z_\mtop= (1-z)(1+t)^{1/4}$, such that $\thint=\theta-\thbl$ satisfies, for all $t\geq 0$,
\[
 \|\p_x^2 \thint(t)\|_{L^2} \lesssim \frac{\|\theta_0\|_{H^s}}{1+t},\quad
 \|\thint(t)\|_{H^4} \lesssim {\|\theta_0\|_{H^s}},\quad
 \| \Delta^{-2} \p_x^2 \thint(t)\|_{L^2} \lesssim \frac{\|\theta_0\|_{H^s}}{(1+t)^2}.
\]
Furthermore, there exists a constant $c>0$ such that $\|\Theta^j_a(\cdot, Z)\|_{H^4(\T)}\lesssim \|\theta_0\|_{H^s} \exp(-c Z^{4/5})$ for all $Z>0$.

\end{proposition}

	The organization of this section is the following. 
After motivating the Ansatz \eqref{BL-Ansatz-linear}, we formally derive the equation satisfied by the boundary layer profiles.
 We then construct the boundary layer part of the solution, denoted by $\thbl$, and we establish some properties.
	Eventually, we prove that $\theta-\thbl$ satisfies the assumptions of \cref{prop:bb-decay}, and we conclude.

	\subsection{Motivation for the Ansatz and derivation of the boundary layer equations}
We recall (see page \pageref{scheme:linearBL}) that a simple spectral decomposition suggests that the solution $\theta$ has strong variations close to the boundaries, and that $\theta(t)\vert_{\p\Om}=\theta_0\vert_{\p\Om}$, $\p_\bn\theta(t)\vert_{\p\Om}=\p_\bn\theta_0\vert_{\p\Om}$.
	Hence we take an Ansatz of the form
	\[
	\ba
	\theta(t)\simeq &\ \thint + \Thbot^0\left(x, (1+t)^\alpha z\right) + \Thtop^0(x, (1+t)^\alpha(1-z)) \\&+ (1+t)^{-\alpha } \Thbot^1 \left(x, (1+t)^\alpha z\right) + (1+t)^{-\alpha } \Thtop^1(x,(1+t)^{\alpha }(1-z))+\text{l.o.t.}\ea
	\]
	for some $\alpha>0$ to be determined, where
	\begin{gather*}
		\thint\vert_{\p\Om} = \p_z \thint\vert_{\p\Om}=0,\\
		\Thbot^j(x,Z)\to 0 \quad \text{and} \quad \Thtop^j (x,Z) \to 0 \quad \text{as} \quad Z\to \infty.
	\end{gather*}
	The role of $\Thtop^0$ (resp. of $\Thbot^0$) is to lift the trace of $\theta_0$ at the top boundary $z=1$ (resp. at the bottom boundary $z=0$).
	Hence we take
	\begin{align*}
		\Thtop^0(x,Z=0) &=\theta_0(x,z=1),& \p_Z \Thtop^0(x,Z=0) &=0,\\
		\Thbot^0(x,Z=0) &=\theta_0(x,z=0),& \p_Z \Thbot^0(x,Z=0) &=0.
	\end{align*}
	In a similar way, the next order boundary layer terms $\Thtop^1$ and $\Thbot^1$ lift the traces of $\p_\bn \theta_0$ on $\p\Om$, \ie
	\begin{align*}
		\Thtop^1(x,Z=0) &= 0,& \p_Z \Thtop^{{1}}(x,Z=0)&=-\p_z \theta_0(x,z=1),\\
		\Thbot^{{1}}(x,Z=0) &= 0,& \p_Z \Thbot^{{1}}(x,Z=0)&=\p_z \theta_0(x,z=0).
	\end{align*}
	Similarly, we assume that
	\[\ba
	\psi(t)\simeq &\ \psiint + (1+t)^{-4\alpha} \Psbot^0\left(x, (1+t)^\alpha z\right) +  (1+t)^{-4\alpha} \Pstop^0(x, (1+t)^\alpha(1-z))\\& +  (1+t)^{-5\alpha}\Psbot^1 \left(x, (1+t)^\alpha z\right) + (1+t)^{-5\alpha} \Psbot^1(x,(1+t)^{\alpha }(1-z)) + \text{l.o.t.}\ea
	\]
	where 
	\[\ba
	\p_{Z}^4 \Psi_a^j=\p_x \Theta_a^j, \\
 \Psi_a^j=\p_Z\Psi_a^j=0\text{ on }Z=0, \\
 \Psi_a^j\to 0 \text{ as }Z\to \infty,\quad a\in\{\mtop,\mbot\}.
 \ea
	\]
	Plugging these Ansatz into \eqref{linear-sans_G}, we find that at main order
	\[
	\alpha (1+t)^{-1} Z\p_Z \Theta_a^0=(1+t)^{-4\alpha}\p_x \Psi_a^0.
	\]
	Consequently, identifying the powers of $(1+t)$, we take $\alpha=1/4$, which is precisely the Ansatz \eqref{BL-Ansatz-linear}. Hence the equation for $\Psi_a^0$, $a\in\{\mtop,\mbot\}$ becomes
	\beq\label{eq:Psi_i}
	\begin{cases}
		\frac{1}{4} Z \p_Z^5 \Psi^0_a= \p_x^2 \Psi^0_a \quad \text{in }\T\times (0,+\infty),\\
		\Psi^0_{a|Z=0}=\p_Z \Psi^0_{a|Z=0}=0,\\
		\p_Z^4 \Psi^0_{a|Z=0}= \gamma^0_a(x),\quad \p_Z^5 \Psi^0_{a|Z=0}=0,\\
		
		\lim_{Z\to \infty} \Psi^0_a(x,Z)=0, 
	\end{cases}
	\eeq
	where $\gamma^0_{\mbot}(x)=\p_x  \theta_0(x,z=0)$, $\gamma^0_{\mtop}(x)=\p_x  \theta_0(x,z=1)$.
	Note that the above boundary conditions are redundant: indeed, if $\p_Z \Psi^0_{a|Z=0}=0$, then it follows from the equation (after one differentiation with respect to $Z$) that $\p_Z^5 \Psi^0_{a|Z=0}=0.$ Hence in the following subsection we will drop the condition $\p_Z^5 \Psi^0_{a|Z=0}=0.$
	
	In a similar fashion, the equation for  $\Psi_a^1$, $a\in\{\mtop,\mbot\}$ is
	\beq\label{eq:Phi_i}
	\begin{cases} 
		-\p_Z^4 \Psi^1_a+ Z \p_Z^5 \Psi^1_a = 4 \p_x^2 \Psi^1_a\quad \text{in }\T\times (0,+\infty),\\
		\Psi^1_{a|Z=0}=\p_Z \Psi^1_{a|Z=0}=0,\\
		\p_Z^4 \Psi^1_{a|Z=0}= 0,\quad \p_Z^5 \Psi^1_{a|Z=0}=\gamma^1_a(x),\\
		\lim_{Z\to \infty} \Psi^1_a(x,Z)=0,
	\end{cases}
	\eeq
	where $\gamma^1_{\mbot}(x)=\p_x \p_z \theta_0(x,z=0)$, $\gamma^1_{\mtop}(x)=-\p_x  \p_z\theta_0(x,z=1)$. Once again, the condition $\p_Z^4 \Psi^1_{a|Z=0}= 0$ is redundant and is automatically satisfied when one takes the trace of the equation at $Z=0$, using the other boundary conditions.
	We now turn towards the well-posedness of \eqref{eq:Psi_i} and \eqref{eq:Phi_i}.

	\subsection{Construction of the main profiles}
	The well-posedness of equations \eqref{eq:Psi_i} and \eqref{eq:Phi_i} stems from the following result:
	\begin{lemma}
		\label{lem:WP-ODE-BL}
		Let $m\geq m_0>0$ and let $S\in C([0,+\infty))$, $\delta>0$ such that 
		\[
		\|S\|^2:=\int_0^1 \frac{S(Z)^2}{Z^2}\ud Z + \int_0^\infty S(Z)^2 \exp(\delta  Z^{4/5}) \ud Z <+\infty.
		\]
		Consider the ODE
		\beq\label{ODE}
		Z \p_Z^5 \Psi(Z) =-m \Psi(Z) + S(Z)\quad \text{in }\quad (0,+\infty),\quad
		\lim_{Z\to \infty} \Psi(Z)=0,   
		\eeq
		endowed with one of the following four boundary conditions:
		\begin{enumerate}[label=(\roman*)]
			\item $\Psi(0)=\p_Z\Psi(0)=\p_Z^4 \Psi(0)=0$;
			\item $\Psi(0)=\p_Z^3 \Psi(0)=\p_Z^4 \Psi(0)=0$;
			\item $\Psi(0)=\p_Z^2\Psi(0)= \p_Z^3 \Psi(0)=0$;
			\item $\Psi(0)=\p_Z\Psi(0)=\p_Z^2\Psi(0)= 0$.
		\end{enumerate}
		Then there exists a constant $c>0$ depending only on $m_0$ and $\delta$ such that
		\eqref{ODE} endowed with one of the four previous conditions has a unique solution $\Psi\in H^5_{\text{loc}}(\R_+)$ such that for all $k\in \{0,\cdots,5\}$,
		\[
		\int_0^\infty  |\p_Z^k \Psi(Z)|^2 \exp(c  Z^{4/5})\: \ud Z\leq C\|S\|^2<+\infty .
		\]
		As a consequence, for $k\leq 4$, there exists a constant $C$ such that
		\[
		\left|\p_Z^k \Psi(Z)\right| \leq C \|S\| \exp\left(-\frac{c}{4}Z^{4/5}\right)\quad \forall Z>0.
		\]
	\end{lemma}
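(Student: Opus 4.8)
The plan is to derive everything — existence, uniqueness, and the stated decay — from a single weighted a priori estimate. Concretely, I would first show that any solution $\Psi$ of \eqref{ODE} satisfying one of the four boundary conditions and decaying at $\infty$ (fast enough, a priori, to justify the integrations by parts; this is harmless for the approximating solutions used below) obeys
\[
\sum_{k=0}^{5}\int_0^\infty |\Psi^{(k)}(Z)|^2 \, e^{c Z^{4/5}}\,dZ \;\lesssim\; \|S\|^2 ,
\]
with $c>0$ depending only on $m_0$ and $\delta$. Uniqueness then follows at once by applying this to the difference of two solutions (so that $S=0$).

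The core is the estimate itself. I would fix a weight $w=w(Z)>0$ equal to $e^{cZ^{4/5}}$ for $Z\geq1$ and smooth, bounded near $Z=0$, with $c$ small to be chosen, multiply \eqref{ODE} by $\Psi w$, and integrate over $(0,\infty)$. Integrating $\int_0^\infty Zw\,\Psi\,\Psi^{(5)}$ by parts five times, the boundary terms at $Z=\infty$ vanish by the decay of $\Psi$ and its derivatives, while those at $Z=0$ vanish or carry a favourable sign — and this is exactly where the four admissible sets of boundary conditions at $0$ get treated case by case, each being tailored to kill the obstructing boundary terms. The bulk then produces $m\int_0^\infty w\Psi^2$ plus a sum $\sum_k\int_0^\infty c_k(Z)|\Psi^{(k)}|^2$ with $c_k$ built from $(Zw)',(Zw)'',\dots$; the exponent $4/5$ is forced by the WKB balance $(\rho')^5=\pm m/Z$, i.e. $\rho\sim Z^{4/5}$, for the approximate homogeneous solutions $\Psi\sim e^{\rho}$ (which also shows three of the five modes decay at $\infty$ and two grow). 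With $c$ small in terms of $m_0$, the positive terms absorb the errors, and using the equation to also reach $k=5$ one bounds the left-hand side by $\int_0^\infty w|S\Psi|$. The source is then handled by Cauchy--Schwarz: for $Z\geq1$ against $\int S^2 e^{\delta Z^{4/5}}$, which requires $c\leq\delta$; near $Z=0$ by writing $\int_0^1 S\Psi=\int_0^1 (S/Z)(Z\Psi)$ and using Hardy's inequality (legitimate since $\Psi(0)=0$) — and the degeneracy at $Z=0$, which forces $\Psi^{(5)}=(-m\Psi+S)/Z$ to be controlled near the origin, is precisely what makes the term $\int_0^1 S^2/Z^2$ in $\|S\|^2$ indispensable.

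For existence I would regularize: replace $Z$ by $Z+\eps$ and truncate to $(0,R)$, adding two boundary conditions at $Z=R$ chosen so that the same integration by parts still gives well-signed terms there; each regularized problem is a classical non-degenerate two-point boundary value problem, solvable because the corresponding homogeneous problem has only the trivial solution by the ($\eps,R$-uniform) estimate above, and one passes to the limit $\eps\to0$, $R\to\infty$ with the uniform weighted bounds. (Equivalently: build the three-dimensional space of homogeneous solutions decaying at $\infty$ by a fixed point near $Z=+\infty$, note that the Frobenius exponents at $Z=0$ are $0,1,2,3,4$ so all relevant solutions are $H^5_{\loc}$ near $0$ — the hypothesis $\int_0^1 S^2/Z^2<\infty$ ensuring the same for a particular solution — and match the three conditions at $Z=0$, invertibility of the resulting $3\times3$ system being equivalent to uniqueness.) Finally the pointwise bounds come from the weighted $L^2$ estimate via
\[
|\Psi^{(k)}(Z)|^2 = -2\int_Z^\infty \Psi^{(k)}\Psi^{(k+1)}\,dW \le 2\,e^{-cZ^{4/5}}\Big(\int_Z^\infty |\Psi^{(k)}|^2 e^{cW^{4/5}}\,dW\Big)^{\!1/2}\Big(\int_Z^\infty |\Psi^{(k+1)}|^2 e^{cW^{4/5}}\,dW\Big)^{\!1/2}
\]
for $k\leq 4$ (using that $W\mapsto e^{cW^{4/5}}$ is increasing), giving $|\Psi^{(k)}(Z)|\lesssim\|S\|\,e^{-(c/4)Z^{4/5}}$ after relabelling $c$.

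The hard part will be the weighted energy estimate: carefully bookkeeping the boundary and bulk terms produced by integrating a fifth-order operator against $\Psi w$, checking that each of the four boundary-condition sets indeed disposes of the boundary terms at $Z=0$, and calibrating the weight exponent and the constant $c$ so that the errors are absorbed with $c$ depending only on $m_0$ and $\delta$. The existence (regularization plus limit) and the passage to pointwise decay are comparatively routine.
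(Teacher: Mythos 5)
There is a genuine gap, and it sits exactly at the step you call the hard part: the weighted energy estimate obtained by multiplying \eqref{ODE} by $\Psi w$. Pairing a function with its \emph{fifth} derivative against a weight $g=Zw$ gives, after symmetric integration by parts (boundary terms aside),
\[
\int_0^\infty g\,\Psi\,\Psi^{(5)}\,dZ=-\frac52\int_0^\infty g'\,|\Psi''|^2\,dZ+\frac52\int_0^\infty g'''\,|\Psi'|^2\,dZ-\frac12\int_0^\infty g^{(5)}\,\Psi^2\,dZ,
\]
so the identity produced by your multiplier reads
\[
m\int_0^\infty w\,\Psi^2\,dZ-\frac52\int_0^\infty (Zw)'\,|\Psi''|^2\,dZ+\dots=\int_0^\infty S\,\Psi\,w\,dZ .
\]
The principal term $-\frac52\int (Zw)'|\Psi''|^2$ has the \emph{opposite} sign to $m\int w\Psi^2$, and since the weight must grow like $e^{cZ^{4/5}}$ one has $(Zw)'\gtrsim w$, so this is not an error term that smallness of $c$ can absorb: the quadratic form $\int (Z\Psi^{(5)}+m\Psi)\Psi w$ is indefinite, and Cauchy--Schwarz on the source term closes nothing. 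Consequently neither the a priori bound, nor uniqueness, nor your existence scheme (both the regularization and the Frobenius/matching variant reduce existence to this same estimate) follows from the proposed pairing.

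The paper's proof is built precisely to avoid this sign clash: one tests with first-order multipliers rather than $\Psi w$. For boundary conditions (ii) and (iv) the multiplier is $(\Psi w)'/Z$, which turns the fifth-order term into $\int \Psi^{(3)}(\Psi w)^{(3)}$ with positive principal part $\int w|\Psi^{(3)}|^2$, while the zeroth-order term becomes $-\frac m2\int (\Psi w)^2\,\partial_Z\bigl(\tfrac1{Zw}\bigr)\ge 0$ when $(Zw)'\ge 0$; for (i) and (iii) one combines two multipliers, $-\Psi^{(3)}w_1/Z$ and $-Z^{-1}\bigl((\Psi/Z)'w_2\bigr)'$, with weights calibrated so that $w_2+\tfrac32(w_1-Zw_1')\gtrsim e^{cZ^{4/5}}$. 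In both cases the intermediate cross terms are absorbed via a dedicated weighted interpolation lemma (\cref{lem:commu-weight}), and existence/uniqueness then follow from Lax--Milgram in the weighted Hilbert spaces defined by the coercive part of the energy (the $k=4,5$ bounds are recovered from the equation, as you anticipated). Note also that the authors could not treat the four boundary conditions with a single multiplier, which is a further sign that the uniform ``one multiplier, case-by-case boundary terms'' strategy you propose cannot be carried out as stated. Your final step (weighted $L^2$ decay implying the pointwise bound $|\Psi^{(k)}(Z)|\lesssim\|S\|e^{-cZ^{4/5}/4}$) is fine, but it rests on the estimate that is missing.
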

	The proof of \cref{lem:WP-ODE-BL} is postponed to \cref{ap:lemma32}, and relies on the use of the Lax-Milgram Lemma in weighted Sobolev spaces.
	As a corollary, we have the following result:
	\begin{corollary}
		For all $j\in \{0, 1\}$, there exists a unique solution $\chi_j\in C^\infty(0, +\infty)$ of the ODE
		\[
		Z \p_Z^5\chi_j - j \p_Z^4\chi_j+ 4 \chi_j=0\quad \text{on }(0, +\infty),
		\]
		endowed with the boundary conditions:
		\begin{itemize}
			\item $\chi_0(0)=\p_Z\chi_0(0)=0$, $\p_Z^4\chi_0(0)=1$;
			\item $\p_Z\chi_1(0)=\p_Z^4\chi_1(0)=0$, $\p_Z^5\chi_1(0)=1$,
		\end{itemize}
		and such that for $j=0,1$, $0\leq k\leq 5$,
		\[
		\int_0^\infty |\p_Z^K\chi_j (Z)| \exp(c  Z^{4/5})\: \ud Z<+\infty.
		\]
		Furthermore, $\p_Z^5\chi_0(0)=\chi_1(0)=0$.
		\label{def:chi_j}
	\end{corollary}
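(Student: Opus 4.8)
The plan is to deduce the corollary directly from \cref{lem:WP-ODE-BL} by treating the equations for $\chi_0$ and $\chi_1$ as instances of \eqref{ODE} with zero source term $S\equiv 0$, up to absorbing the $-j\chi_j^{(4)}$ term and adjusting the boundary conditions. First I would observe that for $j=0$ the equation is $Z\chi_0^{(5)}=-4\chi_0$, which is exactly \eqref{ODE} with $m=4\geq m_0$ and $S=0$. However, the boundary conditions in \cref{def:chi_j} are \emph{inhomogeneous} ($\chi_0^{(4)}(0)=1$), whereas \cref{lem:WP-ODE-BL} is stated with homogeneous conditions and a source. The standard fix is to introduce a smooth, compactly supported lift $\ell_0(Z)$ with $\ell_0(0)=\ell_0'(0)=0$, $\ell_0^{(4)}(0)=1$, and set $\chi_0=\ell_0+\tilde\chi_0$; then $\tilde\chi_0$ solves $Z\tilde\chi_0^{(5)}=-4\tilde\chi_0 + S_0$ with $S_0:=-4\ell_0 - Z\ell_0^{(5)}$, which is smooth, compactly supported, hence satisfies $\|S_0\|<\infty$ (the $S(Z)^2/Z^2$ integrability near $Z=0$ holds because $\ell_0$ vanishes to second order at $0$, so $S_0(Z)=O(Z^2)$ there). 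One applies \cref{lem:WP-ODE-BL} with boundary condition (i) $\tilde\chi_0(0)=\tilde\chi_0'(0)=\tilde\chi_0^{(4)}(0)=0$ to get a unique decaying $\tilde\chi_0$, and then $\chi_0=\ell_0+\tilde\chi_0$ is the desired solution, with the weighted bounds on $\chi_0^{(k)}$ for $0\leq k\leq 4$ inherited from those on $\tilde\chi_0^{(k)}$ and the compact support of $\ell_0$.

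For $j=1$ the equation is $Z\chi_1^{(5)}-\chi_1^{(4)}+4\chi_1=0$. Here the extra term $-\chi_1^{(4)}$ is not of the form covered verbatim by \eqref{ODE}, so I would either (a) rewrite $Z\chi_1^{(5)}-\chi_1^{(4)} = Z^{?}\,(\cdot)$ — note $Z\chi_1^{(5)}-\chi_1^{(4)}$ is \emph{not} a perfect derivative, so this does not simplify cleanly — or, more robustly, (b) remark that \cref{lem:WP-ODE-BL} as proved via Lax--Milgram in weighted spaces accommodates such a lower-order perturbation: the term $-\chi_1^{(4)}$ contributes a bounded (indeed, after integration by parts, coercive-compatible) perturbation of the bilinear form, so the same existence/uniqueness and weighted-decay conclusions hold. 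In the write-up I would simply invoke that the equations \eqref{eq:Psi_i} and \eqref{eq:Phi_i} — which contain precisely these $j=0$ and $j=1$ structures — are covered by \cref{lem:WP-ODE-BL}, as asserted in the text ("The well-posedness of equations \eqref{eq:Psi_i} and \eqref{eq:Phi_i} stems from the following result"). Then the same lift-and-subtract trick applies: pick $\ell_1$ smooth, compactly supported with $\ell_1'(0)=\ell_1^{(4)}(0)=0$, $\ell_1^{(5)}(0)=1$, set $\chi_1=\ell_1+\tilde\chi_1$, and solve the resulting homogeneous-BC problem of type (i) for $\tilde\chi_1$ (the boundary data $\chi_1'(0)=\chi_1^{(4)}(0)=0$ translate to $\tilde\chi_1'(0)=\tilde\chi_1^{(4)}(0)=0$, and one needs a third homogeneous condition; since we prescribe $\chi_1^{(5)}(0)=1$ via $\ell_1$, the natural third condition on $\tilde\chi_1$ is $\tilde\chi_1(0)=0$, giving type (i) again).

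Next I would establish the claimed extra cancellations $\chi_0^{(5)}(0)=0$ and $\chi_1(0)=0$. For $\chi_0^{(5)}(0)=0$: take the trace of $Z\chi_0^{(5)}+4\chi_0=0$ at $Z=0$ — this gives only $\chi_0(0)=0$, already known. To see $\chi_0^{(5)}(0)=0$ one differentiates once: $\chi_0^{(5)}+Z\chi_0^{(6)}+4\chi_0'=0$; evaluating at $Z=0$ and using $\chi_0'(0)=0$ gives $\chi_0^{(5)}(0)=0$ (this requires knowing $\chi_0$ is $C^5$ up to $Z=0$ with bounded $\chi_0^{(6)}$ near $0$, which follows from the $H^5_{\loc}$ regularity and elliptic bootstrapping away from the degeneracy, or simply from the explicit smoothness $\chi_0\in C^\infty(0,+\infty)$ together with one-sided Taylor expansion controlled by the ODE). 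For $\chi_1(0)=0$: take the trace of $Z\chi_1^{(5)}-\chi_1^{(4)}+4\chi_1=0$ at $Z=0$; the first term vanishes, $\chi_1^{(4)}(0)=0$ by assumption, so $4\chi_1(0)=0$, i.e. $\chi_1(0)=0$.

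The main obstacle is the rigorous handling of the degenerate coefficient $Z$ multiplying the top-order derivative $\chi_j^{(5)}$, which makes $Z=0$ a singular point of the ODE and is precisely why the boundary conditions in \cref{lem:WP-ODE-BL} are constrained to triples rather than a full five-dimensional Cauchy datum: one must verify that the lift $\ell_j$ is compatible with the \emph{admissible} boundary conditions (one of the four listed), that the source $S_j$ produced by the lift lies in the weighted space $\|\cdot\|<\infty$ (which forces $\ell_j$ to vanish to sufficiently high order at $Z=0$, matching exactly the two homogeneous conditions $\ell_j(0)=\ell_j'(0)=0$ in case (i)), and that the smoothness asserted ($\chi_j\in C^\infty(0,+\infty)$) together with the ODE genuinely propagates to one-sided derivatives at $Z=0$ needed for the cancellation identities. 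All of this is routine but delicate; I would state it as a direct corollary, giving the lift construction explicitly and referring to \cref{lem:WP-ODE-BL} for the hard analytic content, and to the appendix proof of that lemma for the weighted-space machinery.
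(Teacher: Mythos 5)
Your treatment of $\chi_0$ is essentially the paper's: lift the inhomogeneous condition $\chi_0^{(4)}(0)=1$ by a compactly supported function (the paper uses $Z^4\eta$), apply \cref{lem:WP-ODE-BL} with boundary conditions (i) to the lifted unknown, and obtain $\chi_0^{(5)}(0)=0$ by differentiating the ODE and taking the trace at $Z=0$; likewise your derivation of $\chi_1(0)=0$ from the trace of the equation matches the paper.

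The genuine gap is in your option (b) for $j=1$. \cref{lem:WP-ODE-BL} is stated (and proved) only for $Z\Psi^{(5)}=-m\Psi+S$; the equation for $\chi_1$ carries the extra term $-\chi_1^{(4)}$, and your claim that this is a ``bounded, coercive-compatible perturbation'' of the Lax--Milgram form is unsubstantiated. Near the degenerate point $Z=0$ the principal part $Z\p_Z^5$ loses a power of $Z$, so a fourth-order term is \emph{not} of lower order relative to it; in the weighted estimates of the Appendix, multiplying $-\Psi^{(4)}$ by the test function $(\Psi w)'/Z$ produces a singular, sign-indefinite contribution that is not absorbed by the coercive terms without a new argument. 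Moreover, invoking the sentence ``the well-posedness of \eqref{eq:Psi_i} and \eqref{eq:Phi_i} stems from \cref{lem:WP-ODE-BL}'' is circular here: the Corollary you are proving is precisely the step that explains \emph{how} the $j=1$ equation \eqref{eq:Phi_i} is reduced to the Lemma. The paper's actual device is to differentiate the equation once: since $\p_Z\bigl(Z\chi_1^{(5)}-\chi_1^{(4)}\bigr)=Z\chi_1^{(6)}$, the function $\phi:=\chi_1'$ solves $Z\phi^{(5)}+4\phi=0$ with $\phi(0)=\phi^{(3)}(0)=0$, $\phi^{(4)}(0)=1$, which after a lifting falls under boundary conditions (ii) of \cref{lem:WP-ODE-BL}. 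One then sets $\chi_1(Z)=-\int_Z^\infty\phi$, notes that $Z\chi_1^{(5)}-\chi_1^{(4)}+4\chi_1$ has zero derivative and decays, hence vanishes identically, and reads off $\chi_1(0)=0$ from the trace. Without this (or a genuine re-proof of the weighted Lax--Milgram estimate including the $-\Psi^{(4)}$ term, which would also have to respect the different admissible boundary-condition triples), your construction of $\chi_1$ does not go through; as a side effect, your proposed third boundary condition $\tilde\chi_1(0)=0$ also changes the boundary-value problem, so uniqueness for the Corollary's three-condition problem would still need the trace identity $\chi_1(0)=0$ to be established first.
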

	\begin{proof}
		Let us start with $\chi_0$. Let $\eta\in C^\infty_c(\R)$ such that $\eta\equiv 1$ in a neighborhood of zero. Then $\chi_0-Z^4\eta/4!$ satisfies \eqref{ODE} with the boundary conditions (i) and with a $C^\infty$ and compactly supported source term. 
		Hence the result follows from \cref{lem:WP-ODE-BL}.
		The $C^\infty$ regularity of $\chi_0$ follows easily from the ODE \eqref{ODE} and from an induction argument.
		Differentiating the ODE and taking the trace at $Z=0$, we obtain $\p_Z^5\chi_0(0)=- 4 \p_Z\chi_0(0)=0$.

		Concerning $\chi_1$, we first consider the solution of the ODE
		\[
		\ba 
		Z \p_Z^5 \phi + 4 \phi=0\quad \text{on } (0, +\infty),\\
		\phi(0)=\p_Z^3\phi(0)=0, \ \p_Z^4 \phi(0)=1,\ \phi(+\infty)=0.
		\ea
		\]
		The existence, uniqueness, and exponential decay of $\phi$ follow from a lifting argument and \cref{lem:WP-ODE-BL} with boundary conditions (ii).
		We then set $\chi_1(Z)=-\int_Z^\infty \phi$, and we observe that $\p_Z(Z \p_Z^5\chi_1 - \p_Z^4\chi_1 + 4 \chi_1)=0$. As a consequence, $Z \p_Z^5\chi_1(Z) - \p_Z^4\chi_1(Z) + 4 \chi_1(Z)=\text{const.}=0$ on $(0, + \infty)$, thanks to the decay properties of $\phi$ at infinity. Hence the existence, uniqueness and decay of $\chi_1$ follow. Taking the trace of the equation at $Z=0$, we find that $\chi_1(0)=0$.

	\end{proof}

	Let us now explain how we construct the boundary layer profiles $\Psi_a^j$ for $a\in\{\mtop,\mbot\}$  and $j=0,1$, which satisfy \eqref{eq:Psi_i} and \eqref{eq:Phi_i}. Taking the Fourier transform of \eqref{eq:Psi_i} with respect to $x$ and dropping the index $a$, we infer that $\widehat{\Psi^0_{k}}$ satisfies 
	\[\ba
	\frac{1}{4}Z \p_Z^5 \widehat{\Psi^0_{k}}= - k^2 \widehat{\Psi^0_{k}},\\
	\p_Z^4 \widehat{\Psi^0_{k|Z=0}}= \widehat{\gamma^0_{k}},\quad \widehat{\Psi^0_{k|Z=0}}=\p_Z \widehat{\Psi^0_{k|Z=0}}=0.
	\ea
	\]
	Considering the function $\chi_0$ defined in \cref{def:chi_j}, it is then easily checked that 
	\[
	\widehat{\Psi^0_{k}}= |k|^{-2}\widehat{\gamma^0_k}\; \chi_0(|k|^{1/2} Z)
	\]
	is a solution of the problem.
	We infer that
	\beq\label{formula-Fourier-PsiO}
	\Psi^0_a(x,Z):=\sum_{k\in \Z\setminus\{0\}}  |k|^{-2}\widehat{\gamma^0_{a,k}}\; \chi_0(|k|^{1/2} Z)e^{ikx}
	\eeq
	is a solution of \eqref{eq:Psi_i}.
	In a similar way, 
	\beq \label{formula-Fourier-Psi1}
	\Psi^1_a(x,Z):=\sum_{k\in \Z\setminus\{0\}}  |k|^{-5/2}\widehat{\gamma^1_{a,k}}\; \chi_1(|k|^{1/2} Z)e^{ikx}
	\eeq
	satisfies \eqref{eq:Phi_i}.

	As a consequence, we have the following estimates, which follow easily from  formulas \eqref{formula-Fourier-PsiO} and \eqref{formula-Fourier-Psi1}: 
	\begin{corollary}
		Let $\gamma^0_a,\gamma^1_a \in L^2(\T)$.
    Then  \eqref{eq:Psi_i} (resp. \eqref{eq:Phi_i}) has a unique solution $\Psi^0_a\in H^{9/4}_xL^2_Z\cap L^2_x H^{9/2}_Z $ (resp. $\Psi^1_a\in H^{11/4}_xL^2_Z\cap L^2_x H^{11/2}_Z$). 
		Furthermore,  for all $m>9/2$,
		\begin{equation*}
		\ba
		\| \Psi^0_a\|_{H^m_x L^2_z} \lesssim \|\gamma^0_a\|_{H^{m-\frac{9}{4}}} \lesssim \|\p_x \theta_0\|_{H^{m-\frac{7}{4}}(\Om)},\quad \| \Psi^0_a\|_{L^2_x H^m_z} \lesssim \|\gamma^0_a\|_{H^{\frac{m}{2} - \frac{9}{4}}} \lesssim \|\p_x\theta_0\|_{H^{\frac{m}{2}-\frac{7}{4}}(\Om)},\\
		\| \Psi^1_a\|_{H^m_x L^2_z} \lesssim \|\gamma^1_a\|_{H^{m-\frac{11}{4}}} \lesssim\|\p_x\theta_0\|_{H^{m-\frac{5}{4}}(\Om)}  ,\quad \| \Psi^1_a\|_{L^2_x H^m_z} \lesssim \|\gamma^1_a\|_{H^{\frac{m}{2} - \frac{11}{4}}} \lesssim \|\p_x \theta_0\|_{H^{\frac{m}{2}-\frac{5}{4}}(\Om)}  .
		\ea
		\end{equation*}
		Additionally, the profiles $\Psi^0_a$ and $\Psi^1_a$ have exponential decay: there exists a universal constant $\bar c >0$ such that for any $Z_0\geq 1$ and any $m\in \N$,
		\[\ba 
		\|\Psi^0_a\|_{H^{m}(\T \times (Z_0, +\infty))}\lesssim \|\theta_0\|_{H^1(\Om)}\exp(-\bar c Z_0^{4/5}),\\
		\|\Psi^1_a\|_{H^{m}(\T \times (Z_0, +\infty))}\lesssim \|\theta_0\|_{H^2(\Om)}\exp(-\bar c Z_0^{4/5}).
		\ea
		\]

		\label{cor:est-Psia-Sobolev}
	\end{corollary}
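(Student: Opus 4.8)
The statement is, as the authors note, a direct consequence of the explicit Fourier representations \eqref{formula-Fourier-PsiO} and \eqref{formula-Fourier-Psi1}, so the plan is to reduce every assertion to a one–dimensional computation on $\chi_0,\chi_1$ combined with the rescaling $Y=|k|^{1/2}Z$ and the trace theorem on $\Om$.

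The one genuinely non-bookkeeping step is to upgrade the decay information on the profiles. \Cref{def:chi_j} only records exponential integrability of $\chi_j^{(k)}$ for $k\le 4$. Writing the ODE as $\chi_j^{(5)}=(j\chi_j^{(4)}-4\chi_j)/Z$, I would check by induction that $\chi_j\in C^\infty([0,+\infty))$ and that for every $k\in\N$ there is $c_k>0$ with $|\chi_j^{(k)}(Z)|\lesssim\exp(-c_kZ^{4/5})$: near $Z=0$ the numerator $j\chi_j^{(4)}-4\chi_j$ vanishes (since $\chi_0(0)=0$ and $\chi_1(0)=\chi_1^{(4)}(0)=0$), so $\chi_j^{(5)}$ stays bounded and the induction on higher derivatives goes through; away from the origin $Z^{-1}$ and its derivatives are bounded, so differentiating the ODE preserves the exponential decay. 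In particular every $\|\chi_j^{(l)}\|_{L^2(0,\infty)}$ is finite, which is all that is needed below.

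Next come the Sobolev estimates, via Plancherel in $x$. From \eqref{formula-Fourier-PsiO}, $\widehat{\Psi^0_{a,k}}(Z)=|k|^{-2}\widehat{\gamma^0_{a,k}}\,\chi_0(|k|^{1/2}Z)$, and the substitution $Y=|k|^{1/2}Z$ gives $\|\widehat{\Psi^0_{a,k}}\|_{L^2_Z}^2=|k|^{-9/2}|\widehat{\gamma^0_{a,k}}|^2\|\chi_0\|_{L^2}^2$, hence $\|\Psi^0_a\|_{H^m_xL^2_Z}^2=\sum_{k}|k|^{2m}\|\widehat{\Psi^0_{a,k}}\|_{L^2_Z}^2\lesssim\|\gamma^0_a\|_{H^{m-9/4}(\T)}^2$. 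Since $\p_Z^l[\chi_0(|k|^{1/2}Z)]=|k|^{l/2}\chi_0^{(l)}(|k|^{1/2}Z)$ contributes $|k|^{l-1/2}\|\chi_0^{(l)}\|_{L^2}^2$ to $\|\widehat{\Psi^0_{a,k}}\|_{H^l_Z}^2$, and for $|k|\ge1$ the top-order term $l=m$ dominates, one gets $\|\Psi^0_a\|_{L^2_xH^m_Z}^2\lesssim\|\gamma^0_a\|_{H^{m/2-9/4}(\T)}^2$. The very same two computations with $|k|^{-5/2}$ and $\chi_1$ in place of $|k|^{-2}$ and $\chi_0$ yield $\|\Psi^1_a\|_{H^m_xL^2_Z}^2\lesssim\|\gamma^1_a\|_{H^{m-11/4}}^2$ and $\|\Psi^1_a\|_{L^2_xH^m_Z}^2\lesssim\|\gamma^1_a\|_{H^{m/2-11/4}}^2$. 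Taking $m=9/4$ and $m=9/2$ (resp. $m=11/4,\,11/2$) gives the claimed membership, and uniqueness follows by applying the uniqueness part of \cref{lem:WP-ODE-BL} modewise (as in the proof of \cref{def:chi_j}) and summing in $k$. It then remains to pass from $\|\gamma^j_a\|_{H^s(\T)}$ to $\|\theta_0'\|_{H^{s'}(\Om)}$: using the elementary trace bound $\|f(\cdot,0)\|_{H^s(\T)}+\|f(\cdot,1)\|_{H^s(\T)}\lesssim\|f\|_{H^{s+1}(\Om)}$ applied to $f=\p_x\theta_0'$ for $\gamma^0_a$ and to $f=\p_x\p_z\theta_0'$ for $\gamma^1_a$ gives exactly the asserted inequalities, e.g. $\|\gamma^0_a\|_{H^{m-9/4}}\lesssim\|\p_x\theta_0'\|_{H^{m-5/4}(\Om)}\lesssim\|\theta_0'\|_{H^{m-1/4}(\Om)}$.

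Finally, for the decay away from the boundary I would use the first step: on $\{Z>Z_0\ge1\}$ one has, for $|k|\ge1$, $|\chi_j^{(l)}(|k|^{1/2}Z)|\lesssim\exp(-c|k|^{2/5}Z^{4/5})\le\exp(-\tfrac c2 Z_0^{4/5})\exp(-\tfrac c2|k|^{2/5}Z^{4/5})$. Plugging this into the series for $\Psi^j_a$ and its derivatives and integrating in $Z$ over $(Z_0,\infty)$, the residual Gaussian-type factor $\exp(-\tfrac c2|k|^{2/5}Z^{4/5})$ dominates every polynomial weight in $k$; one may therefore discard all such weights and is left with $\|\Psi^0_a\|_{H^m(\T\times(Z_0,\infty))}^2\lesssim\exp(-\bar c Z_0^{4/5})\sum_{k}|\widehat{\gamma^0_{a,k}}|^2=\exp(-\bar c Z_0^{4/5})\|\gamma^0_a\|_{L^2(\T)}^2\lesssim\exp(-\bar c Z_0^{4/5})\|\theta_0'\|_{H^1(\Om)}^2$, and likewise $\|\Psi^1_a\|_{H^m(\T\times(Z_0,\infty))}^2\lesssim\exp(-\bar c Z_0^{4/5})\|\theta_0'\|_{H^2(\Om)}^2$ since $\gamma^1_a$ is (up to one tangential derivative) a trace of $\p_z\theta_0'$. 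The only delicate point in the whole argument is the first step, i.e. propagating the exponential decay to derivatives of $\chi_j$ of all orders through the singular ODE; the rest is careful accounting of the powers of $|k|$, the structural feature being that tangential regularity on $\gamma^j_a$ is twice as costly for $L^2_xH^m_Z$ as for $H^m_xL^2_Z$, a direct consequence of the $|k|^{1/2}$ scaling in the normal variable.
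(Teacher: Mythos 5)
Your route is the one the paper intends: it gives no detailed argument beyond ``follow easily from the formulas'', and your reduction via Plancherel in $x$, the rescaling $Y=|k|^{1/2}Z$, the decay of $\chi_0,\chi_1$ from \cref{lem:WP-ODE-BL}/\cref{def:chi_j}, and a trace estimate to pass from $\gamma^j_a$ to $\theta_0'$ is exactly the intended bookkeeping. The power counting for the four Sobolev estimates is correct (the squared prefactors $|k|^{-4}$, $|k|^{-5}$, the factor $|k|^{-1/2}$ from the substitution, and $|k|^{l}$ per $Z$-derivative combine to the stated exponents), and your preliminary step propagating the $\exp(-cZ^{4/5})$ decay to all derivatives of $\chi_j$ through the singular ODE is a legitimate (and necessary) addition, since \cref{def:chi_j} only records it for $k\le 4$.

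One step is not correct as written: in the exponential-decay estimate you ``discard all polynomial weights'' and conclude with $\|\gamma^0_a\|_{L^2(\T)}\lesssim\|\theta_0'\|_{H^1(\Om)}$ (resp.\ $\|\gamma^1_a\|_{L^2(\T)}\lesssim\|\theta_0'\|_{H^2(\Om)}$). This is not a trace inequality: $\gamma^0_a=\p_x\theta_0'\vert_{z=0}$, and for $\theta_0'\in H^1(\Om)$ the trace $\theta_0'\vert_{z=0}$ lies only in $H^{1/2}(\T)$, so $\gamma^0_a\in H^{-1/2}(\T)$, not $L^2(\T)$; a boundary-layer-type example $\theta_0'=\sum_k a_ke^{ikx}e^{-|k|z}$ shows $\|\gamma^0_a\|_{L^2}$ is not controlled by $\|\theta_0'\|_{H^1}$. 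The fix is immediate and costs nothing: do not throw away all the negative powers of $|k|$ — keep a factor $|k|^{-1}$ (available from the prefactor $|k|^{-2}$, or absorbed by the factor $\exp(-\tfrac c2|k|^{2/5}Z_0^{4/5})$), bound the tail sum by $\sum_k|k|^{-1}|\widehat{\gamma^0_{a,k}}|^2=\|\gamma^0_a\|_{H^{-1/2}(\T)}^2$, and then use $\|\p_x\theta_0'\vert_{z=0}\|_{H^{-1/2}(\T)}\le\|\theta_0'\vert_{z=0}\|_{H^{1/2}(\T)}\lesssim\|\theta_0'\|_{H^1(\Om)}$; the same adjustment with $\p_z\theta_0'$ gives the $H^2$ constant for $\Psi^1_a$. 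Finally, be slightly careful with uniqueness: \cref{lem:WP-ODE-BL} gives uniqueness in the weighted class, so to conclude uniqueness in $H^{9/4}_xL^2_Z\cap L^2_xH^{9/2}_Z$ you should note (mode by mode) that any decaying $H^5_{\mathrm{loc}}$ solution of the homogeneous ODE with the stated boundary conditions belongs to that weighted class, or argue directly via the Lax--Milgram formulation; this is a one-line remark but should not be left implicit.
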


	\subsection{Construction of an approximate solution}
	
	The idea is now to find a decomposition of $\theta$ as $\theta=\thbl + \thint$, where $\thbl$ is a solution of
	\[
	\p_t \thbl =\p_x^2 \Delta^{-2} \thbl + S_r,
	\]
	with a remainder term $S_r$ such that, for some $\delta>0$,
	\[\ba 
	S_r(t)=O((1+t)^{-2}) \text{ in } L^2(\Om),\quad S_r(t)=O((1+t)^{-1-\delta}) \text{ in } H^4(\Om),\\
	\p_t S_r(t)= O((1+t)^{-3}) \text{ in } L^2(\Om),
	\ea
	\]
	and a boundary layer profile $\thbl$ such that ${\thbl}\vert_{\p\Om}(t=0)={\theta_0}\vert_{\p\Om}$, ${\p_ n\thbl}\vert_{\p\Om}={\p_\bn\theta}\vert_{\p\Om}$.
	Recall that the operator $\Delta^{-2}$ is endowed with homogeneous conditions for the trace and the normal derivative on the boundary of $\p\Om$.

	As a consequence, the interior part $\thint$ satisfies
	\[
	\p_t \thint =\p_x^2 \Delta^{-2} \thint - S_r
	\]
	and the trace of $\thint$ vanishes on $\p\Om$, together with its normal derivative. Thus we may apply \cref{prop:bb-uniformbound} and \cref{prop:bb-decay}, and we obtain $\|\thint\|_{L^2}=O((1+t)^{-1})$, which will complete the proof of \cref{thm:BL-linear}.

	The main order part of $\thbl$ will be given by the profiles $\Theta^j_a$, $j=0,1$, $a\in \{\mtop,\mbot\}$ constructed in \cref{cor:est-Psia-Sobolev}.
	However, a few adjustments must be made in order to have a suitable decomposition:
 
$\bullet$ First, the profiles $\Theta^j_a$ must be truncated away from $z=0$ and $z=1$,  so that their (exponentially small) trace does not pollute the opposite boundary.
		Since $\Theta^j_a$ has exponential decay, this introduces a remainder of order $\exp(-ct^{1/5})$, which will be included in $S_r$. More precisely, the error terms generated by this truncation will be dealt with thanks to the following Lemma, whose proof is left to the reader:
		
		\begin{lemma}
			Let $\Psi \in L^2(\T \times (0, + \infty))$ such that there exist $c, C>0$ such that
			\[
			\int_{\T}\int_0^\infty |\Psi(x, Z)|^2 \exp(c Z^{4/5})\ud Z\ud x\leq C<+\infty.
			\]
			Let $\zeta \in L^\infty(0,1)$ such that $\supp \zeta \subset (1/4, 1).$ Then there exists a constant $c'>0$, depending only (and explicitly) on $c$, such that
			\[
			\left\|\Psi(x, (1+t)^{1/4} z)   \zeta(z) \right\|_{L^2(\T)} \lesssim C \| \zeta\|_\infty \exp(-c' (1+t)^{1/5}).
			\]
			
			\label{lem:reste-exponentiel}
		\end{lemma}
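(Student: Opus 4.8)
The plan is to reduce the estimate to a single change of variables in the $z$--integral: the support condition on $\zeta$ turns the smallness of the rescaled argument into a large \emph{lower} bound on the stretched variable $Z=(1+t)^{1/4}z$, and the weighted integrability of $\Psi$ then converts this into stretched-exponential decay in $t$. First I would write out the squared norm over $\Om=\T\times(0,1)$ and use $\mathrm{Supp}\,\zeta\subset(1/4,1)$ together with $\zeta\in L^\infty$:
\[
\left\|\Psi(x,(1+t)^{1/4}z)\,\zeta(z)\right\|_{L^2(\Om)}^2=\int_\T\int_0^1 |\Psi(x,(1+t)^{1/4}z)|^2|\zeta(z)|^2\ud z\ud x\leq \|\zeta\|_\infty^2\int_\T\int_{1/4}^1 |\Psi(x,(1+t)^{1/4}z)|^2\ud z\ud x.
\]
Then the substitution $Z=(1+t)^{1/4}z$ (so $\ud z=(1+t)^{-1/4}\ud Z$, and $z\in(1/4,1)$ corresponds to $Z\in\big((1+t)^{1/4}/4,(1+t)^{1/4}\big)$) turns the right-hand side into $\|\zeta\|_\infty^2(1+t)^{-1/4}\int_\T\int_{(1+t)^{1/4}/4}^{(1+t)^{1/4}}|\Psi(x,Z)|^2\ud Z\ud x$.

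Next I would exploit that on this new domain $Z\geq (1+t)^{1/4}/4$, hence $Z^{4/5}\geq 4^{-4/5}(1+t)^{1/5}$, so pointwise $1\leq \exp(cZ^{4/5})\exp\!\big(-c\,4^{-4/5}(1+t)^{1/5}\big)$. Inserting this and enlarging the integration domain to $Z\in(0,+\infty)$ gives
\[
\int_\T\int_{(1+t)^{1/4}/4}^{(1+t)^{1/4}}|\Psi(x,Z)|^2\ud Z\ud x\leq \exp\!\big(-c\,4^{-4/5}(1+t)^{1/5}\big)\int_\T\int_0^\infty |\Psi(x,Z)|^2\exp(cZ^{4/5})\ud Z\ud x\leq C\exp\!\big(-c\,4^{-4/5}(1+t)^{1/5}\big).
\]
Combining the three displays and taking square roots yields
\[
\left\|\Psi(x,(1+t)^{1/4}z)\,\zeta(z)\right\|_{L^2(\Om)}\leq \|\zeta\|_\infty\, C^{1/2}\,(1+t)^{-1/8}\exp\!\big(-\tfrac{c}{2}4^{-4/5}(1+t)^{1/5}\big),
\]
and absorbing the harmless polynomial prefactor $(1+t)^{-1/8}$ into the stretched exponential at the price of a slightly smaller rate produces the announced bound, with $c'=c'(c)$ any constant $<\tfrac{c}{2}4^{-4/5}$ (the implied constant then depending on $c'$ as well); this is of the claimed form, with $C^{1/2}$ in place of $C$, or simply $C$ if one assumes $C\geq 1$ without loss of generality.

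There is essentially no genuine obstacle here, which is why the paper defers it to the reader. The only mild points of care are: keeping track of the Jacobian factor $(1+t)^{-1/4}$ produced by the rescaling; the precise constant $4^{-4/5}$ coming from the lower endpoint $z=1/4$ of $\mathrm{Supp}\,\zeta$; and noting that all polynomial losses in $t$ are dominated by the stretched-exponential gain $\exp(-c'(1+t)^{1/5})$, so they do not affect the final statement.
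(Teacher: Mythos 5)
Your argument is correct and is precisely the elementary computation the paper has in mind when it leaves this proof to the reader: the support condition forces $Z=(1+t)^{1/4}z\geq \tfrac14(1+t)^{1/4}$, so the weight $\exp(cZ^{4/5})$ yields the factor $\exp(-c\,4^{-4/5}(1+t)^{1/5})$, and the Jacobian $(1+t)^{-1/4}$ only helps. The cosmetic points you flag ($C^{1/2}$ versus $C$, and the harmless prefactor) are immaterial since the bound is stated with $\lesssim$ and $c'$ is only required to depend on $c$.
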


$\bullet$ More importantly, the main order profiles $(\Theta^j_a,\Psi^j_a)$ for $j=0,1$ do not satisfy exactly
		\[
		\Delta^2 \left((1+t)^{-1} \Psbot^j(x,(1+t)^{1/4} z)\right)= \Thbot^j (x,(1+t)^{1/4} z).
		\]
		Indeed, when constructing $\Psi^0_a$, we only kept the main order terms in $\Delta^2$, \ie the $z$ derivatives. It turns out that the term $2\p_x^2\p_z^2$ in the bilaplacian generates an error term in the equation which is not $O((1+t)^{-2})$. As a consequence, we introduce lower order correctors, whose purpose is precisely to cancel this error term.
		We emphasize that the construction of such additional correctors is quite classical in multiscale problems. In order to determine the order at which the expansion can be stopped, we will rely on the following Lemma, whose proof is postponed to the end of this section: 
		
		\begin{lemma}\,
		\begin{itemize}
		    \item 	Let $f\in H^4(\T, L^2(\R_+))$ such that there exist constants $c,C_1>0$ such that for all $k\in \{0,\cdots,4\}$,
			\beq\label{hyp-f-exp-decay}
			|\p_x^k f(x,Z)|\leq C_1 \exp(-c Z^{4/5})\quad  \forall (x,Z)\in \T\times\R_+.
			\eeq
			Then there exists a constant $C$ depending only on $c$ such that
			\[
			\left\|\Delta^{-2} \left(f(x, (1+t)^{1/4} z)\chi(z)\right)\right\|_{L^2}\leq  \frac{C C_1}{(1+t)^{3/4}}.
			\]
			
			Furthermore, if
			\[
			\int_0^\infty Z^2 f(x,Z)\ud Z= \int_0^\infty Z^3 f(x,Z)\ud Z=0 \quad \forall x\in \T,
			\] 
			this estimate becomes
			\[
			\left\|\Delta^{-2} \left(f(x, (1+t)^{1/4} z)\chi(z)\right)\right\|_{L^2}\leq \frac{C C_1}{1+t}.
			\]
			
			\item Let $f\in  H^2(\T, L^2(\R_+))$ such that \eqref{hyp-f-exp-decay} holds for all $k\in \{0,1,2\}$. 
			Then there exists a constant $C$ depending only on $c$ such that
			\[
			\left\|\Delta^{-2} \left(f(x, (1+t)^{1/4} z)\chi(z)\right)\right\|_{L^2}\leq  \frac{C C_1}{(1+t)^{1/2}}.
			\]
		\end{itemize}

			\label{lem:taille_H-2}
		\end{lemma}

	With the two above Lemmas in mind, we define $\thbl$ in the following way. 
	Let $\chi\in C^\infty_c(\R)$ be a cut-off function such that $\chi\equiv 1$ on $(-1/4,1/4)$, and $\supp \chi\subset(-1/2,1/2)$.
	We look for $\thbl$ in the form
	\begin{align*}
		\thbl(t,x,z) &:= \begin{aligned}[t]
            & \sum_{j=0}^4 (1+t)^{-j/4} \Thbot^j(x,(1+t)^{1/4} z)\chi(z)\\
		      &+\sum_{j=0}^4 (1+t)^{-j/4} \Thtop^j(x,(1+t)^{1/4} (1-z))\chi(z-1)
        \end{aligned}\\
		&=: \thbl_\mbot + \thbl_\mtop 
	\end{align*}
	and
	\begin{align*}
		\psbl(t,x,z) &:= \begin{aligned}[t]
            & \sum_{j=0}^4 (1+t)^{-1-\frac{j}{4}} \Psbot^j(x,(1+t)^{1/4} z)\chi(z)\\
		      &+\sum_{j=0}^4 (1+t)^{-1-\frac{j}{4}} \Pstop^j(x,(1+t)^{1/4} (1-z))\chi(z-1)
        \end{aligned}\\
		&=: \psbl_\mbot + \psbl_\mtop .
	\end{align*}
	The profiles $\Theta_a^j,\Psi_a^j$ for $j=0,1$ and $a\in \{\mbot,\mtop\}$ were defined in the previous subsection, and we now proceed to define $\Theta_a^j,\Psi_a^j$ for $j\geq 2$. The reason why we stop the expansion at $j=4$ follows from \cref{lem:taille_H-2}, as we will see shortly.
	
	We focus on the part near $z=0$, since the part near $z=1$ works identically. Setting $Z=(1+t)^{1/4} z$, we have
	\begin{equation*}
		\frac{\p}{\p t}\thbl_\mbot
		= (1+t)^{-1}\sum_{j=0}^4(1+t)^{-j/4} \left[-\frac{j}{4}\Thbot^j(x,Z) + \frac{1}{4}Z\p_Z \Thbot^j(x,Z)\right] \chi(z)\\
	\end{equation*}
	For $j=0,1$, the bracketed term in the right-hand side is simply $\p_x \Psbot^j(x,(1+t)^{1/4} z)$.
	Similarly, we choose $\Psi_a^j$ for $j=2,3, 4$ and $a\in \{\mbot,\mtop\}$ so that \beq\label{def:psiaj-thetaaj}
	\p_x \Psi_a^j= -\frac{j}{4}\Theta_a^j + \frac{1}{4}Z\p_Z \Theta_a^j.
	\eeq
	With this choice, we have
	\[
	\p_t \thbl=\p_x \psbl.
	\]
	There remains to choose $\Theta_a^j$ so that $\p_x\psbl=\Delta^{-2}\p_x^2 \thbl + O((1+t)^{-2})$ in $L^2$. To that end, we observe
	\begin{equation*} \ba
		\Delta^2\psbl_\mbot
		={}&\sum_{j=0}^4 (1+t)^{-\frac{j}{4}} \p_Z^4 \Psbot^j(x,Z) \chi(z)+2 \sum_{j=0}^4(1+t)^{-\frac{1}{2}-\frac{j}{4}} \p_x^2\p_Z^2 \Psbot^j(x,Z)\chi(z)\\
		&+ \sum_{j=0}^4 (1+t)^{-1-\frac{j}{4}} \p_x^4\Psbot^j(x,Z)\chi(z)\\
		&+\sum_{j=0}^4\sum_{k=0}^3\begin{pmatrix}k\\4\end{pmatrix}
		(1+t)^{-1+\frac{k-j}{4}}\p_Z^k\Psbot^j(x,Z) \chi^{(4-k)}(z)\\
		&+2\sum_{j=0}^4\sum_{k=0}^1 \begin{pmatrix}k\\2\end{pmatrix}(1+t)^{-1+\frac{k-j}{4}}\p_x^2\p_Z^k\Psbot^j(x,Z)\chi^{(2-k)}(z)
		.
	\ea \end{equation*}
	The last two terms are handled by \cref{lem:reste-exponentiel} (anticipating that $\Psi^j_a$ will have exponential decay for $j=2,3,4$).

	We obtain
	  \begin{equation} \label{eq:Delta2Psbl} \ba
        \Delta^2\psbl_\mbot
        ={}&\p_x \thbl_\mbot + O(\exp(-c'(1+t)^{1/5}))\\
        &+ (1+t)^{-1/2}\left[ -\p_x \Thbot^2 + \p_Z^4 \Psbot^2 + 2\p_x^2\p_Z^2 \Psbot^0\right](x,Z)\chi(z)\\
        &+(1+t)^{-3/4} \left[-\p_x \Thbot^3 + \p_Z^4\Psbot^3 + 2\p_x^2\p_Z^2 \Psbot^1\right](x,Z)\chi(z)\\
        &+ (1+t)^{-1} \left[-\p_x \Thbot^4 + \p_Z^4\Psbot^4 + 2\p_x^2\p_Z^2 \Psbot^2 + \p_x^4 \Psi^0\right](x,Z)\chi(z)\\
        &+\sum_{j\geq 5}(1+t)^{-j/4}\Phi^j_\mbot(x,Z)\chi(z),
    \ea \end{equation}
	for some functions $\Phi^j_\mbot$ depending on the profiles $\Psbot^j$ (for instance $\Phi^5= 2\p_x^2 \p_Z^2 \Psbot^3 + \p_x^4\Psbot^1$).
	Thanks to \cref{lem:taille_H-2}, the 
	inverse bilaplacian of the last term has a size of order $(1+t)^{-2}$ in $L^2$. Hence it 
	will be included in the remainder $S_r$. 
	Note that the reason why we need to stop the expansion in $\thbl$ at $j=4$ is dictated by the above formula and by \cref{lem:taille_H-2}. If we stop the expansion for a lower $j$, then the remainder may be greater than $(1+t)^{-2}$ in $L^2$.

	Therefore we focus on the terms of order $(1+t)^{-j/4}$ with $j=2,3,4$. We treat the cases $j=2$ and $j=3$ simultaneously, and we will focus on the case $j=4$ later.
	
	$\bullet$ \textbf{Construction of $\Psi^j_a$ for $j=2,3$:}\\
	Remembering \eqref{def:psiaj-thetaaj}, we choose $\Theta_a^j$ and $\Psi_a^j$ for $a\in \{\text{bot, top}\}$ and $j=2,3$ so that
	\begin{equation*}
		\ba
		\p_x \Psi_a^j&= -\frac{j}{4}\Theta_a^j + \frac{1}{4}Z\p_Z \Theta_a^j,\\
		-\p_x \Theta_a^j + \p_Z^4 \Psi_a^j + 2\p_x^2\p_Z^2 \Psi_a^{j-2}&=0,\ea
			\end{equation*}
		endowed with the boundary conditions
		\[\lim_{Z\to \infty}\Psi_a^j=0,\quad
		\Psi_a^j(Z=0)=\p_Z \Psi_a^j(Z=0)=
		\Theta_a^j(Z=0)=\p_Z\Theta_a^j(Z=0)=0.
		\]
	As before, we note that the boundary conditions at $Z=0$ are redundant.
	Eliminating $\Theta_a^j$ from the equation, we find that $\Psi_a^j$ satisfies
	\begin{equation}
		\label{eq:Psbl2}
		\ba     
		Z\p_Z^5\Psi_a^j - j \p_Z^4 \Psi_a^j &= 4 \p_x^2 \Psi_a^j + S_a^j,\\
		\Psi_a^j(Z=0)=\p_Z \Psi_a^j(Z=0)&=0,\\
		\p_Z^4 \Psi_a^j=-2\p_x^2 \p_Z^2\Psi_a^{j-2}&=-\frac{1}{j}S_a^j\quad \text{at }Z=0,\\
		\p_Z^5 \Psi_a^j=-2\p_x^2 \p_Z^3\Psi_a^{j-2}&= -\frac{1}{j-1}\p_Z S_a^j\quad \text{at }Z=0,\\
		\lim_{Z\to \infty}\Psi_a^j&=0,
		\ea
	\end{equation}
	where $S_a^j=-2(Z\p_Z-j)\p_x^2 \p_Z^2 \Psi^{j-2}_a$. 
	Therefore
  \begin{equation*}
    \p_Z^j S_a^j=-2Z\p_Z^{j+3}\p_x^2 \Psi^{j-2}_a= -8 \p_x^4\p_Z^{j-2} \Psi^{j-2}_a.
  \end{equation*}
	
	As a consequence, $\p_Z^j {\Psi^j_a}$ is a solution of
	\begin{equation*} 
	\begin{cases}
	    	Z\p_Z^5\p_Z^j {\Psi_a^j} = 4 \p_x^2 \p_Z^j {\Psi_a^j}  -8 \p_x^4\p_Z^{j-2} \Psi^{j-2}_a,\\
	\p_Z^j {\Psi_a^j}=0\quad \text{at }Z=0,\\
	\p_Z^4 \Psi_a^j=-2\p_x^2 \p_Z^2\Psi_a^{j-2},\quad 
	\p_Z^5 \Psi_a^j= -2\p_x^2 \p_Z^3\Psi_a^{j-2}\quad \text{at }Z=0,\\
	\lim_{Z\to \infty}\p_Z^j\Psi_a^j=0.
	\end{cases}
	\end{equation*}
	Note that the boundary condition $\p_Z^j {\Psi^j_a}(Z=0)=0$ follows from the identity
	\[
	\p_x \p_Z^j \Psi_a^j=\frac{1}{4} Z \p_Z^{j+1} \Theta_a^j.
	\]
	Taking the Fourier transform with respect to $x$, we observe that $\widehat{\p_Z^j{\Psi_a}}(k)$ satisfies \eqref{ODE} with nonhomogeneous boundary conditions of type (iii) (for $j=2$) or (iv) (for $j=3$). Using the Fourier representations \eqref{formula-Fourier-PsiO} and \eqref{formula-Fourier-Psi1} for $\Psi^0$ and $\Psi^1$, we anticipate that $\Psi^2_a$ and $\Psi^3_a$ can be written as
	\beq \label{Fourier-Psi2-Psi3}
	\ba 
	\Psi^2_a(x,Z)=\sum_{k\in \Z\setminus \{0\}} |k|^{-1}\widehat{\gamma^0_{a,k}}\; \chi_2(|k|^{1/2} Z)e^{ikx},\\
	\Psi^3_a(x,Z)=\sum_{k\in \Z\setminus \{0\}} |k|^{-3/2}\widehat{\gamma^1_{a,k}}\; \chi_3(|k|^{1/2} Z)e^{ikx}.
	\ea
	\eeq
	with $\chi_2,\chi_3 \in C^\infty((0, +\infty))$ decaying like $\exp(-\bar c Z^{4/5})$. The precise construction of $\chi_2$ and $\chi_3$ will be performed below.
	We obtain the following result:
	\begin{lemma}
		Let $a\in \{\mtop, \mbot\}$ and $\gamma^0_a,\gamma^1_a\in L^2(\T)$.
		Consider the solutions $\Psi^0_a,\Psi^1_a$ of \eqref{eq:Psi_i}, \eqref{eq:Phi_i} given by \cref{cor:est-Psia-Sobolev}.
		
		Then there exist unique solutions $\Psi^2_a\in H^{5/4}_xL^2_Z\cap L^2_x H^{5/2}_Z$, $\Psi^3_a \in H^{7/4}_x L^2_Z \cap L^2_x H^{7/2}_Z$ of \eqref{eq:Psbl2}. Furthermore, for any $m\in \N$,
		\begin{equation*} \ba
    		\| \Psi^2_a\|_{H^m_x L^2_Z} &\lesssim \|\p_x \theta_0\|_{H^{m-\frac{3}{4}}(\Om)},
            & \| \Psi^2_a\|_{L^2_x H^m_Z} &\lesssim \|\p_x \theta_0\|_{H^{\frac{m}{2}-\frac{3}{4}}(\Om)},\\
    		\| \Psi^3_a\|_{H^m_x L^2_Z} &\lesssim\|\p_x \theta_0\|_{H^{m-\frac{1}{4}}(\Om)},
            & \| \Psi^3_a\|_{L^2_x H^m_Z}  &\lesssim \|\p_x \theta_0\|_{H^{\frac{m}{2}-\frac{1}{4}}(\Om)}  .
		\ea \end{equation*}
		Additionally, the profiles $\Psi^2_a$ and $\Psi^3_a$ have exponential decay: for any $Z_0\geq 1$, for any $m\in \N$,
		\[\ba 
		\|\Psi^2_a\|_{H^{m}(\T \times (Z_0, +\infty))}\lesssim \|\theta_0\|_{H^1(\Om)}\exp(-\bar c Z_0^{4/5}),\\
		\|\Psi^3_a\|_{H^{m}(\T \times (Z_0, +\infty))}\lesssim \|\theta_0\|_{H^2(\Om)}\exp(-\bar c Z_0^{4/5}).
		\ea
		\]
		
		\label{lem:psi2-psi3}
	\end{lemma}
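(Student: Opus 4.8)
The plan is to mimic the construction of $\Psi^0_a,\Psi^1_a$ in \cref{cor:est-Psia-Sobolev}: first build scalar profiles $\chi_2,\chi_3\in C^\infty([0,+\infty))$ with exponential decay, then \emph{define} $\Psi^2_a,\Psi^3_a$ by the Fourier series \eqref{Fourier-Psi2-Psi3} and check a posteriori that they solve \eqref{eq:Psbl2} with the stated bounds. The companion profiles $\Theta^j_a$ are recovered from $\p_x\Theta^j_a=\p_Z^4\Psi^j_a+2\p_x^2\p_Z^2\Psi^{j-2}_a$ by inverting $\p_x$ on the mean-zero horizontal modes, and their vanishing at $Z=0$ (together with that of $\p_Z\Theta^j_a$) follows by taking traces of this relation and of its first $Z$-derivative, using the conditions already imposed on $\Psi^j_a$ and $\Psi^{j-2}_a$; this is the redundancy mentioned in the main text.

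To construct $\chi_j$, I would use the reduction carried out just before \eqref{Fourier-Psi2-Psi3}: differentiating the rescaled equation for $\Psi^j_a$ and inserting $Z\chi_0^{(5)}+4\chi_0=0$ (resp.\ the analogue for $\chi_1$) shows that $\phi_j:=\chi_j^{(j)}$ solves $Z\phi_2^{(5)}+4\phi_2=-8\chi_0$ (resp.\ $Z\phi_3^{(5)}+4\phi_3=-8\chi_1'$) on $(0,+\infty)$, with $\phi_j\to0$ at $+\infty$ and nonhomogeneous boundary data at $Z=0$ of type (iii) ($j=2$), resp.\ (iv) ($j=3$), in the notation of \cref{lem:WP-ODE-BL}. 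This is exactly \eqref{ODE} with $m=4$ and source $S=-8\chi_0$, resp.\ $-8\chi_1'$. The key verification is that $S$ meets the weighted-norm hypothesis of \cref{lem:WP-ODE-BL}: $\int_0^\infty S^2 e^{\delta Z^{4/5}}\,\ud Z<\infty$ by the exponential decay $|\chi_0^{(k)}|,|\chi_1^{(k)}|\lesssim\exp(-cZ^{4/5})$ from \cref{def:chi_j}, and $\int_0^1 S^2 Z^{-2}\,\ud Z<\infty$ because $\chi_0(0)=0$ and $\chi_1'(0)=0$ (again \cref{def:chi_j}), so that $S=O(Z)$ near $0$. Lifting the nonhomogeneous boundary values by a $C^\infty_c$ cut-off — as for $\chi_1$ in \cref{def:chi_j} — and applying \cref{lem:WP-ODE-BL} gives a unique $\phi_j$ with $|\phi_j^{(k)}|\lesssim\exp(-\tfrac c4 Z^{4/5})$ for $0\le k\le4$, and a bootstrap in the ODE promotes this to $\phi_j\in C^\infty([0,+\infty))$ with all derivatives decaying exponentially. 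One then defines $\chi_j$ by integrating $\phi_j$ downwards $j$ times from $+\infty$ (so that $\chi_j$ and its derivatives decay at $+\infty$); the lifted boundary data must be — and, via the moment identities obtained by integrating the $\phi_j$-equation against $1$, $Z$ and $Z^2$, can be — chosen so that the relevant low-order moments of $\phi_j$ vanish, which is precisely what forces $\chi_j(0)=\chi_j'(0)=0$, i.e.\ $\Psi^j_a=\p_Z\Psi^j_a=0$ at $Z=0$.

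With $\chi_2,\chi_3$ and their derivatives up to order $4$ bounded by $\exp(-\bar cZ^{4/5})$ in hand, the series \eqref{Fourier-Psi2-Psi3} define $\Psi^2_a,\Psi^3_a$, which solve \eqref{eq:Psbl2} mode by mode; uniqueness in the stated classes follows from the mode-by-mode uniqueness in \cref{lem:WP-ODE-BL}. The Sobolev estimates are then obtained exactly as for \cref{cor:est-Psia-Sobolev}: by Parseval, the rescaling $\zeta=|k|^{1/2}Z$ (contributing $|k|^{-1/2}$ from $\ud Z=|k|^{-1/2}\ud\zeta$ and $|k|^{1/2}$ per $Z$-derivative), the uniform bounds $\|\chi_j^{(m)}\|_{L^2(\R_+)}<\infty$, and the lossy trace estimates for $\gamma^0_a,\gamma^1_a$ in terms of $\theta_0'$ already used in \cref{cor:est-Psia-Sobolev}; the memberships $\Psi^2_a\in H^{5/4}_xL^2_Z\cap L^2_xH^{5/2}_Z$ and $\Psi^3_a\in H^{7/4}_xL^2_Z\cap L^2_xH^{7/2}_Z$ are the borderline cases $\gamma^0_a,\gamma^1_a\in L^2(\T)$. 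For the decay on $\T\times(Z_0,+\infty)$, $Z_0\ge1$, one bounds $|\chi_j^{(k)}(|k|^{1/2}Z)|\lesssim\exp(-c|k|^{2/5}Z_0^{4/5})$ and splits it as $\exp(-\tfrac c2|k|^{2/5})\exp(-\tfrac c2 Z_0^{4/5})$ (using $|k|,Z_0\ge1$): the first factor makes the sum over $k$ converge against $(\widehat{\gamma^j_{a,k}})\in\ell^2$, costing one horizontal derivative of $\theta_0'$, and the second gives the claimed $\exp(-\bar cZ_0^{4/5})$.

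The main obstacle is the first step: checking that the reduced source ($\chi_0$, resp.\ $\chi_1'$) satisfies the weighted-norm hypothesis of \cref{lem:WP-ODE-BL} — which hinges on combining its exponential decay with its vanishing at $Z=0$ — and then calibrating the lifted nonhomogeneous boundary values so that, after integrating $\phi_j$ back up, the resulting $\chi_j$ simultaneously decays at $+\infty$ and satisfies $\chi_j(0)=\chi_j'(0)=0$. Everything else repeats, essentially verbatim, the arguments used for $\Psi^0_a$ and $\Psi^1_a$.
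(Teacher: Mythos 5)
Your construction retraces the paper's proof: reduce via the Fourier ansatz \eqref{Fourier-Psi2-Psi3} to two scalar profiles $\chi_2,\chi_3$, build $\phi_j=\chi_j^{(j)}$ by lifting and applying \cref{lem:WP-ODE-BL} with boundary conditions of type (iii) for $j=2$ and (iv) for $j=3$, integrate $\phi_j$ down from $+\infty$ to recover $\chi_j$, and read off the Sobolev and exponential-decay bounds from the Fourier formula exactly as in \cref{cor:est-Psia-Sobolev}; your explicit check that the reduced sources satisfy the weighted hypothesis of \cref{lem:WP-ODE-BL} (exponential decay plus $\chi_0(0)=0$, $\chi_1'(0)=0$ from \cref{def:chi_j}) is a point the paper leaves implicit, and it is correct. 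The only place where you deviate is the verification of $\chi_j(0)=\chi_j'(0)=0$, and there your framing is wrong even though the computation you propose can be made to work. You present the lifted boundary data for $\phi_j$ as something to be \emph{calibrated} so that the moments $\int_0^\infty Z^{j-1}\phi_j$ and $\int_0^\infty Z^{j-2}\phi_j$ vanish. There is no such freedom: the values $\p_Z^{4}\chi_j\vert_{Z=0}=-2\chi_{j-2}''(0)$ and $\p_Z^{5}\chi_j\vert_{Z=0}=-2\chi_{j-2}^{(3)}(0)$ are forced by the compatibility (trace) conditions in \eqref{eq:Psbl2}; picking different data to kill the moments would yield a $\Psi^j_a$ that fails the boundary conditions $\p_Z^4\Psi^j_a\vert_{Z=0}=-\tfrac1j S^j_a\vert_{Z=0}$, $\p_Z^5\Psi^j_a\vert_{Z=0}=-\tfrac1{j-1}\p_Z S^j_a\vert_{Z=0}$. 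What you should prove instead is that with these \emph{prescribed} data the moments vanish automatically; this does follow from your moment identities once you also integrate the equation $Z\chi_{j-2}^{(5)}-(j-2)\chi_{j-2}^{(4)}+4\chi_{j-2}=0$ against the same weights and insert the prescribed traces. The paper encodes the same fact more cleanly: since $\p_Z^j\bigl[Z\chi_j^{(5)}-j\chi_j^{(4)}+4\chi_j+2(Z\p_Z-j)\chi_{j-2}''\bigr]=0$ by construction, that expression is a polynomial of degree at most $j-1$ with exponential decay, hence identically zero, and taking its trace and the trace of its first derivative at $Z=0$, together with the prescribed data, gives $\chi_j(0)=\chi_j'(0)=0$. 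Either route closes the argument; just drop the idea that the boundary data is chosen.
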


	\begin{proof}
		In view of \eqref{Fourier-Psi2-Psi3}, it is sufficient to construct $\chi_2$ and $\chi_3$.
		We first construct the solution $\phi_j$ of
		\[
		\begin{cases}
		    	Z \p_Z^5\phi_j(Z)= -4 \phi_j(Z) - 8 \p_Z^{j-2} \chi_{j-2},\\
		\phi_j(0)=0,\quad \p_Z^{4-j} \phi_j(0)= - 2 \p_Z^2\chi_{j-2}(0),\quad \p_Z^{5-j} \phi_j(0)&= -2 \p_Z^3\chi_{j-2} (0),\\
		\lim_{Z\to \infty} \phi_j(Z)=0.
		\end{cases}
		\]
		Note that after a suitable lifting, $\phi_j$ satisfies \eqref{ODE}
		with the boundary conditions  (iii) from \cref{lem:WP-ODE-BL} (for $j=2$) or (iv) (for $j=3$). Hence the existence and uniqueness of $\phi_j$ (and its exponential decay) follow from \cref{lem:WP-ODE-BL}.
		Now, define $\chi_j$ as
		\[
		\p_Z^j \chi_j= \phi_j, \quad \p_Z^k \chi_j(+\infty)=0\text{ for }0\leq k\leq j-1.
		\]
		It follows that $\chi_j$ decays like $\exp(-\bar c Z^{4/5})$.
		Furthermore, by construction
		\[
		\p_Z^j\left[Z\p_Z^5 \chi_j - j \p_Z^4 \chi_j + 4 \chi_j - 2 (Z\p_Z - j) \p_Z^2\chi_{j-2}\right]=0.
		\]
		We deduce that $Z\p_Z^5 \chi_j - j \p_Z^4 \chi_j + 4 \chi_j + 2 (Z\p_Z - j) \p_Z^2\chi_{j-2}$ is a polynomial of order at most $j-1$, which has exponential decay at infinity. Therefore, the following equality holds
		\[
		Z\p_Z^5 \chi_j - j \p_Z^4 \chi_j + 4 \chi_j - 2 (Z\p_Z - j) \p_Z^2\chi_{j-2}=0.
		\]
		Taking the trace of the above identity at $Z=0$, we infer that $\chi_j(0)=0$. In a similar way, we also find that $\chi_j'(0)=0$. Now, defining $\Psi^j_a$ by \eqref{Fourier-Psi2-Psi3}, we obtain that $\Psi^j_a$ satisfies \eqref{eq:Psbl2}. The Sobolev estimates are then a consequence of the Fourier representation formula.
		\end{proof}

	$\bullet$ \textbf{Construction of $\Psi^4_a$:}\\
	The definitions of $\Psi^4_a$ and $\Theta^4_a$ are similar. We choose $\Psi^4_a$ such that
	\[
	\begin{cases}
	    	Z\p_Z^5\Psi_a^4 - 4 \p_Z^4 \Psi_a^4 = 4 \p_x^2 \Psi_a^4 + S_a^4,\\
	\Psi_a^4(0)=\p_Z \Psi_a^4(0)=0,\quad
	\p_Z^4 \Psi_a^4=-\frac{1}{4}S_a^4,\quad 
	\p_Z^5 \Psi_a^4= -\frac{1}{3}\p_Z S_a^4\quad \text{at }Z=0,\\
	\lim_{Z\to \infty}\Psi_a^4=0,
	    	\end{cases}
	\]
	where 
	\[
	S_a^4=-(Z\p_Z - 4) (2 \p_x^2 \p_Z^2 \Psi^2_a + \p_x^4 \Psi^0_a).
	\]
	Therefore the Fourier transform of $\p_z^4 \Psi^4_a$, after a suitable lifting, is a solution of \eqref{ODE}.
	The main difference with the construction of $\Psi^j_a$ for $j\leq 3$ lies in the fact that $\p_Z^4\Psi^4_a$ is not fully determined. Indeed, we lack a boundary condition on $\p_Z^k \Psi^4_a$ for some $k\geq 6$.
	Once again, this phenomenon (a high order corrector is under-determined) is quite common in multiscale problems. \label{def:psi4}
	In fact it turns out that $\Psi^4_a$ could be determined in a unique fashion if we were looking for a higher order expansion (see \cref{rmk:higher-order-expansion}). In this case, we should choose $\Psi^4_\mbot$ so that $\p_Z^4 \Theta^4_{\mbot}\vert_{Z=0}$ lifts the trace of $\Delta^2 \theta\vert_{z=0}$.
	In the present case, since we merely wish to close the first order expansion, we simply further require that $\p_Z^8 \Psi^4_a\vert_{|Z=0}=0$, so that the lifted Fourier transform of $\p_Z^4 \Psi^4_a$ satisfies the boundary conditions (i) of \cref{lem:WP-ODE-BL}.
	We conclude that $\Psi^4_a$ is well-defined and satisfies the same estimates as $\Psi^j_a$ for $j\leq 3$. The details of the proof are left to the reader.

	\subsection{Estimate of the remainder and conclusion}
	At this stage, we have constructed $\thbl$ such that for all $t\geq 0$,
	\[\ba
	\thbl(t)\vert_{\p\Om}=\theta(t)\vert_{\p\Om}=\theta_0\vert_{\p\Om},\\
	\p_\bn\thbl(t)\vert_{\p\Om}=\p_\bn\theta(t)\vert_{\p\Om}=\p_\bn\theta_0\vert_{\p\Om},
	\ea
	\]
	and
	\[
	\p_t \thbl = \Delta^{-2}\p_x^2\thbl +\Delta^{-2}\p_x T_r + O(\exp(-c(1+t)^{1/5})\quad \text{in } L^2,
	\]
	where $T_r=T_\mtop+ T_\mbot$ and
	\[
	T_\mbot:=\left[\sum_{j\geq 5}(1+t)^{-j/4}\Phi^j_\mbot(x,(1+t)^{1/4}z)\chi(z)\right],
	\]
	with a similar expression for $T_\mtop$.
	According to \cref{lem:taille_H-2},
	\[
	\|\Delta^{-2} \p_x T_r\|_{L^2}\lesssim \|\theta_0\|_{H^s} (1+t)^{-2},\quad \|\p_t \Delta^{-2} \p_x T_r\|_{L^2}\lesssim \|\theta_0\|_{H^s} (1+t)^{-3}.
	\]
	Furthermore,
	\[
	\|\Delta^{-2} \p_x T_r\|_{H^4}\lesssim\|\p_x T_r\|_{L^2}\lesssim   \|\theta_0\|_{H^s} (1+t)^{-5/4}
	\]
 for some finite (and computable) index $s>0$.
	Therefore $\thint=\theta-\thbl$ solves
	\[
	\p_t \thint = \p_x^2 \Delta^{-2} \thint -\Delta^{-2}\p_x T_r + O(\exp(-c(1+t)^{1/5}),
	\]
	and $\thint=\p_\bn\thint=0$ on $\p\Om$. 
We first apply \cref{prop:bb-uniformbound} to $\Delta^2 \thint$ and find that $\|\Delta^2 \thint(t)\|_{L^2} \lesssim \|\theta_0\|_{H^s}$ for all $t\geq 0$, for some finite $s$. 
 From there, we apply \cref{prop:bb-decay} to $\p_x^2 \thint$ with $\alpha=0$, and we deduce that $\|\p_x^2 \thint(t)\|_{L^2} \lesssim  \|\theta_0\|_{H^s} (1+t)^{-1}$. 
 As in \cref{sec:stab}, estimates on $\psiint$ can be obtained by deriving bounds on $\p_t \thint$. More precisely, applying \cref{prop:bb-decay} to $\p_t \thint$, we find that $\| \p_t \thint(t)\|_{L^2} \lesssim \|\theta_0\|_{H^s}(1+t)^{-2} $, and therefore $\| \p_x^2 \Delta^{-2} \thint\|_{L^2} \lesssim \|\theta_0\|_{H^s}(1+t)^{-2}$.
 This completes the proof of \cref{thm:BL-linear}.

	\begin{remark}[Construction of an approximation at any order]
		\label{rmk:higher-order-expansion}
		
		Since $\Delta^2 \theta$ solves the same equation as $\theta$, one can easily iterate this construction. More precisely, if $\theta_0\in H^{4k}$, it can be proved that there  exist sequences of profiles $(\Thbot^j,\Thtop^j)_{0\leq j\leq 4k}$ such that the following result holds:
		\[
		\theta(t,x)=\sum_{j=1}^{4k}(1+t)^{-j/4} \left[ \Thbot^j(x,(1+t)^{1/4} z)\chi(z) + \Thtop^j(x,(1+t)^{1/4} (1-z))\chi(z-1)\right] + \theta_\rem^j(t),
		\]
		and
		\[
		\|\theta_\rem^j(t)\|_{L^2}\lesssim \frac{1}{(1+t)^k},\quad \|\theta_\rem^j(t)\|_{H^{4k}}\lesssim 1.
		\]
		For instance, the role of $\Thbot^{4j}$ is to lift the trace of $\Delta^{2j} \theta$ at $z=0$, the one of $\Thtop^{4j+1}$ is to lift the one of $\p_z\Delta^{2j} \theta$ at $z=1$, etc.
		
		The details of the construction are very similar to the ones of the profiles $\Theta_a^j$ for $0\leq j\leq 3$ above and are left to the reader.
	\end{remark}

	\subsection{Proof of \texorpdfstring{\cref{lem:taille_H-2}}{Lemma 3.6}}

		We first define a function $f_1$ such that 
		\[
		\p_Z^4 f_1=f,
		\]
		and $\p_Z^k f_1(+\infty)=0$ for $0\leq k\leq 3$. Note that the exponential decay assumption on $f$ ensures that $f_1 $ exists, and $f_1\in W^{4,\infty}\cap H^4$. Moreover, for $0\leq m_1,m_2\leq 4$,
		\[
		|\p_x^{m_1}\p_Z^{m_2} f_1(x,Z)| \leq C \exp(-c Z^{1/5}),
		\]
		with possibly different constants $C$ and $c$.
		Setting $Z=(1+t)^{1/4} z$, we infer that
		\begin{equation*} \ba
			\Delta^2\left( (1+t)^{-1} f_1(x,Z) \chi(z)\right) ={}&f(x,Z)\chi(z) \\
            &+ 2(1+t)^{-1/2} \p_x^2\p_Z^2 f_1(x,Z)\chi(z) \\
			&+ (1+t)^{-1} \p_x^4 f_1(x,Z) \chi(z) + O(e^{-ct^{1/5}})\text{ in } L^2,
		\ea \end{equation*}
		where the  term $O(e^{-ct^{1/5}})$ stems from the commutator involving derivatives of $\chi$ (see \cref{lem:reste-exponentiel}).
		Note that $\p_x^2\p_Z^2 f_1$ satisfies the same decay assumptions as $f$, and therefore we can lift it by another corrector $f_2$ such that
		\[
		\p_Z^4 f_2= -2\p_x^2\p_Z^2 f_1,
		\]
		\ie $\p_Z^2 f_2=-2\p_x^2 f_1$. Therefore
		\[
		\Delta^2\left( \left((1+t)^{-1} f_1(x,Z) + (1+t)^{-3/2} f_2(x,Z)\right) \chi(z)\right)= f(x,Z)\chi(z) + O((1+t)^{-1})\quad \text{in }H^{-2}.
		\]
		The only remaining issue lies in the fact that $f_1,f_2$ and their normal derivatives do not vanish on the boundary.
		Hence we set $a_i(x)=f_i(x,0)$, $b_i(x)=\p_Z f_i(x,0)$, and we add a corrector
		\[
		f_3(t,x,z):=-\sum_{i=1,2} (1+t)^{-\frac{i-1}{2}}(a_i(x) + z(1+t)^{1/4} b_i(x))\chi(z).
		\]
		Now
		\begin{multline*}
			\Delta^2\left( \left((1+t)^{-1} f_1(x,Z) + (1+t)^{-3/2} f_2(x,Z)\right) \chi(z) + (1+t)^{-1} f_3\right)\\= f(x,Z)\chi(z) + O((1+t)^{-3/4})\quad \text{in }H^{-2},  
		\end{multline*}
		and for $k=0,1$
		\[
		\p_z^k\left(\left((1+t)^{-1} f_1(x,Z) + (1+t)^{-3/2} f_2(x,Z)\right) \chi(z) + (1+t)^{-1} f_3\right)\vert_{\p\Om}=0.
		\]
		It follows that
		\begin{multline*}
			\left((1+t)^{-1} f_1(x,Z) + (1+t)^{-3/2} f_2(x,Z)\right) \chi(z) + (1+t)^{-1} f_3\\= \Delta^{-2} (f(x,Z)\chi(z)) +  O((1+t)^{-3/4})\quad \text{in } H^2.
		\end{multline*}
		
		Let us now prove that when $\int_0^\infty Z^2 f(\cdot, Z)\ud Z=\int_0^\infty Z^3 f(\cdot, Z)\ud Z=0$, we gain an additional factor  $(1+t)^{-1/4}$. It can be easily checked that
		\[
		f_1\vert_{Z=0}=\frac{1}{6}\int_0^\infty Z^3 f(\cdot, Z)\ud Z=0,\quad \p_Z f_1\vert_{Z=0}=-\frac{1}{2}\int_0^\infty Z^2 f(\cdot, Z)\ud Z=0.
		\]
		
		Hence, with the notation above, $a_1=b_1=0$ and therefore $f_3=O((1+t)^{-1/4})$.
		With the same arguments, we infer that
		\begin{multline*}
			\left((1+t)^{-1} f_1(x,Z) + (1+t)^{-3/2} f_2(x,Z)\right) \chi(z) + (1+t)^{-1} f_3\\= \Delta^{-2} (f(x,Z)\chi(z)) +  O((1+t)^{-1})\quad \text{in } H^2.
		\end{multline*}
	\qed
	
	\begin{remark}\label{rmk:taile_H-2-autosim}
	    Note that the first statement of \cref{lem:taille_H-2} provides a better decay of the $H^{-2}$ norm, but the second one requires less horizontal derivatives on $f$. 
	    In the next section, we will also use the following variant: assume that there exists a sequence $(\gamma_k)_{k\in \Z}$ such that
	    \[
	    f(x,Z)=\sum_{k\in \Z\setminus\{0\}}\gamma_k e^{ikx} \varphi(|k|^{1/2} Z),
	    \]
	    where $\varphi \in C^\infty(\R)$ decays like $C_1\exp(-c Z^{4/5})$. Then, following the previous computations,
	    \[
	    \ba 
	    f_1(x,Z)&=\sum_{k\in \Z\setminus\{0\}}|k|^{-2}\gamma_k e^{ikx} \varphi^{(-4)}(|k|^{1/2} Z),\\
	    f_2(x,Z)&=2\sum_{k\in \Z\setminus\{0\}}|k|^{-1}\gamma_k e^{ikx} \varphi^{(-6)}(|k|^{1/2} Z),
	    \ea
	    \]
	    where $\p_Z^m \varphi^{(-m)}=\varphi$, and $\varphi^{(-m)}(+\infty)=0.$ Hence
	    \[
	    \left\| \Delta^{-2} (f(x,(1+t)^{1/4} z)\chi(z))\right\|_{L^2} \lesssim C_1 \left(\sum_{k\in \Z\setminus\{0\}} |k|^2 |\gamma_k|^2\right)^{1/2} (1+t)^{-3/4}.
	    \]
	   \end{remark}

	\section{Long time boundary layers in the nonlinear setting\texorpdfstring{: proof of \cref{thm:BL-nonlinear}}{}}
	\label{sec:nonlinear-BL}
	
	We now go back to the long time analysis of 
	\eqref{eq:STi} when 
	$\theta'_0=\p_\bn\theta'_0=0$ on $\p\Om$.
	We recall (see \cref{thm:stab}) that in this case, $\theta'(t)$ converges towards zero in $H^s$ for all $s<4$ as $t\to \infty$.
	
	A natural question is to investigate whether the algebraic decay rate provided by \cref{thm:stab} can be improved, possibly at the cost of a stronger regularity requirement on the initial data. In other words, if we assume that $\theta_0\in H^s$ with $s $ large, can we prove a uniform $H^s$ bound on a solution, and thereby a higher decay estimate on $\theta'$?
	
	As explained in the Introduction and in \cref{sec:stab}, such a result does not follow immediately from an induction argument. Indeed, the traces of $\Delta^2 \theta'$ and of $\p_\bn \Delta^2 \theta'$ do not vanish on the boundary (even when the traces of $\Delta^2 \theta'_0$ and of $\p_\bn \Delta^2 \theta'_0$ do), and therefore we cannot apply \cref{prop:bb-decay} to $\Delta^2 \theta'$.
	
	However, it turns out that when $\p_z^2 \bar \theta_0\vert_{\p\Om}=0$, we can use (a variant of) the  linear analysis of \cref{sec:linear-BL} to analyze the long time behavior of $\Delta^2 \theta'$. In other words, in this case, there are boundary layers in the vicinity of the boundary, but they are driven by a linear mechanism.
	\cref{thm:BL-nonlinear} will follow.

	To that end, the strategy is to consider the equation satisfied by $\Delta^2 \theta'$. As we have seen previously, the structure of the equation is overall the same. The main difference lies in the fact that the traces of $\Delta^2 \theta'$ and $\p_\bn \Delta^2 \theta'$ do not vanish on the boundary, which makes the situation rather close to the one described in \cref{thm:BL-linear}. Following the methodology of the previous section, we may lift them thanks to a  corrector which remains linear at main order.
	Modifying slightly the bootstrap argument from \cref{sec:stab} in order to account for these boundary layers, we eventually prove \cref{thm:BL-nonlinear}, or rather the following more precise version:

\begin{proposition}
There exists a universal constant $\eps_0$ such that the following statement holds.
Let $\theta_0\in H^{14}(\Om)$ such that $\theta_0\vert_{\p\Om}= \p_\bn \theta_0\vert_{\p\Om}=0$, and $\p_z^2 \bar \theta_0\vert_{\p\Om}=0$.
Assume that $\| \theta_0\|_{H^{14}}\leq \eps_0.$

There exists a boundary layer profile $\thbl\in L^\infty_\mathrm{loc}(\R_+, H^9(\Om))$, given by
\[
\thbl=\sum_{j=0}^4(1+t)^{-1-\frac{j}{4}} \left(\Thbot^j(x, Z_\mbot) + \Thtop^j(x,Z_\mtop)\right) ,%
\]
where %
$Z_\mbot=z(1+t)^{1/4}$, $Z_\mtop=(1-z)(1+t)^{1/4}$, and $\Theta^j_a\in H^9(\T\times \R_+)$, such that the following estimates hold on $\threm:=\theta'-\thbl$ for all $t\geq 0$:
\begin{align*}  
\|\p_x^4 \threm(t)\|_{L^2} &\lesssim \|\theta_0\|_{H^{14}} (1+t)^{-2},\\
\|\p_x^2 \Delta^2 \threm (t)\|_{L^2} &\lesssim \|\theta_0\|_{H^{14}} (1+t)^{-1},\\
\|\Delta^4 \threm(t)\|_{L^2} &\lesssim \|\theta_0\|_{H^{14}} ,\\
\|\p_x^6 \Delta^{-2} \threm(t)\|_{L^2} &\lesssim\|\theta_0\|_{H^{14}} (1+t)^{-3}.
\end{align*}

\label{prop:BL-NL}
    
\end{proposition}

\begin{remark}
    Note that the assumptions of \cref{prop:BL-NL} are slightly weaker than the ones of \cref{thm:BL-nonlinear}. Indeed, we do not require that $\theta_0\in H^3_0$, but rather that $\theta_0\in H^2_0$ and $\p_z^2 \bar \theta_0=0$. According to \cref{lem:traces}, these properties are propagated by the equation. Using the notation of \cref{sec:stab} and setting $G=\p_z \bar \theta$, we infer that $G$ and $\p_z G$ vanish at $z=0$ and $z=1$.
\end{remark}

	\subsection{General strategy}
	\label{sub:strategy-NLBL}

	Following the same strategy as in \cref{sec:linear-BL}, we  look for an Ansatz for $\theta'$ as a sum of a boundary layer part $\thbl$, whose role is to lift the trace of $\Delta^2 \theta'$ and $\p_\bn \Delta^2 \theta'$ on the boundary, and an interior part $\thint$, which vanishes at a high order on the boundary, and for which we will therefore be able to prove better decay estimates.
	Let us give a few additional details on these two parts:
	\begin{itemize}
		\item As in the previous section, the boundary layer term will be defined as an asymptotic expansion in powers of $(1+t)^{-1/4}$, and the width of the boundary layers will also be $(1+t)^{-1/4}$.
		The different terms of the expansion will be constructed recursively: the main order terms will lift the traces of $\Delta^2 \theta'$ and $\p_\bn \Delta^2 \theta'$ (or rather, their limits as $t\to \infty$), and the next order terms will correct error terms generated by the first order ones.
		The precise construction of the boundary layer is the purpose of \cref{sub:NLBL-construction} below.

    \item In fact, $\Delta^2 \threm=\Delta^2(\theta'-\thbl)$ is not identically zero on the boundary, but it is of order $(1+t)^{-1}$. Hence we construct  additional small correctors $\theta_c$, $\sigma^\mathrm{NL}_\mathrm{lift}$, which handle the remaining traces and part of the error term.
  
		\item Thanks to the design of the boundary layer, the remaining interior part $\thint={\threm} -\theta_c- \sigma^\mathrm{NL}_\mathrm{lift}$  is such that
		\[
		\thint=\p_\bn\thint=\Delta^2 \thint=\p_\bn \Delta^2 \thint=0\quad \text{on } \p\Om.
		\]
		As a consequence, $\Delta^4 \thint$ satisfies assumptions that are similar to those of \cref{prop:bb-uniformbound}, and it is reasonable to expect that $\|\Delta^4 \thint\|_{L^2}$ remains uniformly bounded.
    Applying \cref{prop:bb-decay} first to $\p_x^2 \Delta^2 \thint$, and then to $\p_x^4\thint$, we infer that
    $\|\p_x^2 \Delta^2 \thint\|_{L^2}= O((1+t)^{-1})$, and  $\|\p_x^4 \thint\|_{L^2}=O((1+t)^{-2})$. 
		We will use a bootstrap argument to propagate these bounds; the corresponding argument is described in \cref{sub:bootstrap-thint}.

	\end{itemize}

	Before constructing $\thbl $ and proving the decay estimates on $\thint$, some preliminary (and somewhat technical) steps are in order.
	The  traces of $\Delta^2 \theta'$ and $\p_\bn \Delta^2 \theta'$ need to be decomposed as an asymptotic expansion in powers of  $(1+t)^{-1/4}$, in order to identify the relevant boundary conditions for the terms in the expansion of $\thbl$.
	This is performed in \cref{lem:decomp-gamma} below, whose proof  involves some high regularity bounds on $\theta$.
 This is the main reason for the requirement $\theta_0\in H^{14}$ from \cref{thm:BL-nonlinear}.
	As a consequence, the organisation of the rest of this section is the following. In \cref{sub:high-reg-NLBL}, we prove some quantitative $H^{s}$ bounds ($s\leq 14$)  on $\theta'$ under our bootstrap assumption.
	In \cref{sub:decomp-gamma}, we provide a decomposition of $\Delta^2 \theta'\vert_{\p\Om}$ and $\p_\bn \Delta^2 \theta'\vert_{\p\Om}$ under the bootstrap assumption.
	The main results of each section are given in the beginning of the corresponding section. The reader wishing to avoid the technicalities may jump to \cref{sub:NLBL-construction}, in which we construct the boundary layer, using the decomposition of \cref{sub:decomp-gamma} together with arguments from \cref{sec:linear-BL}.
	Eventually, we close the bootstrap argument in \cref{sub:bootstrap-thint}.

	\smallskip

		\smallskip
		
		Let us now introduce the bootstrap assumption that will be used throughout this section.
		We shall decompose $\theta'$ as $\theta'=\thbl + \threm$.
		As explained above, the remainder $\threm$ does not satisfy $\Delta^2 \threm \vert_{\p\Om}=\p_\bn \Delta^2 \threm \vert_{\p\Om}{=0}$, and therefore $\threm$ will be further decomposed into  a sum  of correctors and an interior term.
		
		The term $\thbl$ will take the form
		\begin{equation}\label{forme-thbl}
		\thbl= \frac{1}{1+t} \Theta_\mtop (x,(1+t)^{1/4}(1-z)) + \frac{1}{1+t} \Theta_\mbot (x,(1+t)^{1/4}z) +\text{l.o.t.}
		\end{equation}
		with boundary layer profiles $\Theta_\mtop,\Theta_\mbot $ such that
		\begin{equation*}\label{hyp:bootstrap-thbl}
		\|\Theta_a\|_{H^9(\T\times \R_+)} \leq B
		\end{equation*}
		for some constant $B>0$.
		Note that the amplitude of the boundary layer term $\thbl$ is $O((1+t)^{-1})$, whereas we recall that the amplitude of the boundary layer term in \cref{sec:linear-BL} was $O(1)$ (compare \eqref{forme-thbl} with 
   \eqref{BL-Ansatz-linear}). This is directly linked to the fact that $\theta'\vert_{\p \Om}= \p_\bn\theta'\vert_{\p \Om}=0$ in this Section, while these quantities were non-zero in \cref{sec:linear-BL}. However we keep the same notation for the sake of simplicity.
		
		The remainder term $\threm$ will satisfy the bootstrap assumptions
		\beq \label{hyp:bootstrap-thiint}\ba
		\sup_{t\in [0,T]} (1+t)^2\|\p_x^4 \threm (t)\|_{L^2} + \|\Delta^4 \threm(t)\|_{L^2}\leq B,\\
		\sup_{t\in [0,T]}  (1+t)^3\|\p_t\p_x^4 \threm (t)\|_{L^2} + (1+t)^3 \|\p_x^5 \psrem\|_{L^2}\leq B,
		\ea
		\eeq
		where $\psrem=\Delta^{-2} \p_x \threm$.
		
		As a consequence, our bootstrap assumptions on $\theta'$ read as follows:
		\beq
		\label{hyp:bootstrap-theta'-NLBL}
		\ba
		\forall t \in (0,T),\quad \forall k\in \{4,\cdots,8\} ,\quad \|\p_x^k \theta'\|_{L^2}& \leq B (1+t)^{-9/8} + B (1+t)^{ \frac{k-8}{2}},\\
		\forall t \in (0,T),\quad \forall k\in \{0,\cdots,8\},\quad  \|\p_z^k \theta'\|_{L^2} &\leq B (1+t)^{-\frac{9}{8} + \frac{k}{4}}, \quad \| \p_x^4 \Delta^2 \theta'\|_{L^2}\leq B,\\
		\forall t \in (0,T), \quad \|\p_x^5 \psi(t)\|_{L^2} &\leq B (1+t)^{-17/8}.
		\ea
		\eeq
		Note that these assumptions imply in particular that for $0\leq k\leq 3$
		\beq\label{est:psi-W3infty}
		\|\psi\|_{W^{k,\infty}} \lesssim B (1+t)^{-\frac{17}{8} + \frac{k+1}{4}}\quad \forall t\in [0,T].
		\eeq
		Let us prove inequality \eqref{est:psi-W3infty} in the case $k=3$ (the other cases are treated in a similar fashion and left to the reader.)
		By the Gagliardo-Nirenberg-Sobolev inequality,
		\[
		\|\psi\|_{W^{3,\infty}}\lesssim \|\psi\|_{L^2}^{1/5} \|\psi\|_{H^5}^{4/5} + \| \psi\|_{L^2}.
		\]
		By \eqref{hyp:bootstrap-theta'-NLBL}, $\|\psi\|_{L^2}\lesssim \|\p_x^5 \psi\|_{L^2} \lesssim B (1+t)^{-17/8}$, while
		\[
		\|\p_z^5 \psi\|_{L^2}\lesssim \|\p_z \p_x \theta'\|_{L^2}\lesssim \|\p_x^2 \theta'\|_{L^2}^{1/2} \|\p_z^2 \theta'\|_{L^2}^{1/2} \lesssim B (1+t)^{-7/8}.
		\]
		Estimate \eqref{est:psi-W3infty} follows.
		
  We also infer from \eqref{hyp:bootstrap-theta'-NLBL} some interpolated inequalities (which may be sub-optimal when compared to the bootstrap assumption on $\p_x^4 \Delta^2 \theta'$, depending on the values of $k,\ell$): for all $k,\ell\geq 0$ such that $k+ \ell \leq 8$, we have
		\beq\label{est:dxdztheta'-interpol}
		\ba
		\|\p_x^k \p_z^\ell \theta'\|_{L^2}&\leq \|\p_x^{k+\ell} \theta'\|_{L^2}^{\frac{k}{k+\ell}} \|\p_z^{k+\ell} \theta'\|_{L^2}^{\frac{\ell}{k+\ell}} \\
		&\lesssim B (1+t)^{-\frac{9}{8} + \frac{\ell}{4}} + B (1+t)^{\frac{k}{2} + \frac{\ell}{4} - 4 \frac{k}{k+\ell} - \frac{9}{8} \frac{\ell}{k+\ell}}.
		\ea
		\eeq

	\subsection{High regularity bounds under the bootstrap assumption}
	\label{sub:high-reg-NLBL}
		The purpose of this subsection is to prove the following estimates, which are the analogue of \cref{lem:H6} in higher regularity (see also \cref{rmk:higher-reg}):
		\begin{lemma}
			\label{lem-high-reg-NLBL}
			Let $\theta=\theta'+\bar \theta$ be a solution of \eqref{eq:STi}, and assume that $\theta_0\in H^{14}$ satisfies the assumptions of \cref{thm:BL-nonlinear}.
			Let $T>0$ be such that the bounds \eqref{hyp:bootstrap-theta'-NLBL} hold on $(0,T)$ for some constant $B\in (0,1)$. Assume furthermore that $\|\theta_0\|_{H^{14}}\leq B$.
			Then for all $t\in [0,T]$,
			\[
			\ba 
			\| \bar \theta(t)\|_{H^6}\lesssim B,\quad \| \bar \theta(t)\|_{H^9}\lesssim B (1+t)^{1/2}, \quad 
			\|\theta\|_{H^{14}} \lesssim B (1+t)^{5/2},\\
			\|\p_x^8 \Delta^2 \theta'\|_{L^2 }\lesssim B, \quad 
			\|\p_x^{10}\theta'\|_{L^2}\lesssim B (1+t)^{-1},\quad
			\|\p_x^{10} \psi\|_{L^2}\lesssim B (1+t)^{-2}.
			\ea
			\]

		\end{lemma}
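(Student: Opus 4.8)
The plan is to establish the five stated bounds in a specific order, since each one feeds into the next. First I would control $\|\bar\theta(t)\|_{H^6}$. Recall $\p_t\bar\theta = -\overline{\bu\cdot\nabla\theta'} = -\p_z\overline{\p_x\psi\,\theta'}$, so that $\p_z^6\p_t\bar\theta = -\p_z^7\overline{\p_x\psi\,\theta'}$; using the tame estimate \eqref{eq:tame}, the $W^{3,\infty}$ bound \eqref{est:psi-W3infty} on $\psi$, the bootstrap bounds \eqref{hyp:bootstrap-theta'-NLBL} on $\p_z^k\theta'$ and $\p_x^5\psi$, and the $H^{14}$ growth bound proved next, one gets $\|\p_t\bar\theta\|_{H^6}\lesssim B^2 (1+t)^{-1-\eta}$ for some $\eta>0$, which is time-integrable; integrating from $\|\bar\theta_0\|_{H^6}\leq B$ gives the uniform $H^6$ bound. (As in \cref{lem:H6}, one should be careful that the $H^6$ bound on $\bar\theta$ and the $H^{14}$ growth bound are proved together by a simultaneous argument, or that the $H^6$ bound uses only a weaker a priori $H^{14}$ control.)

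Next, the $H^{14}$ growth estimate $\|\theta\|_{H^{14}}\lesssim B(1+t)^{5/2}$. As in the proof of \cref{lem:H6}, I would \emph{not} differentiate the equation for $\theta'$ alone (that would cost a derivative on $G$), but rather work with the full equation $\p_t\theta + \bu\cdot\nabla\theta = u_2$, apply $\p^{14}$ (any order-$14$ derivative), test against $\p^{14}\theta$, and use the divergence-free structure $\bu\cdot\nabla = -\p_z\psi\,\p_x + \p_x\psi\,\p_z$ so that the commutator contains only one vertical derivative on $\theta$. This yields
\[
\frac{\rd}{\rd t}\|\theta\|_{H^{14}} \lesssim \|\p_x\psi\|_{H^{14}} + \|[\p^{14},\bu\cdot\nabla]\theta\|_{L^2}.
\]
The linear term is controlled by the bilaplacian gain $\|\p_x\psi\|_{H^{14}}\lesssim \|\p_x^2\theta'\|_{H^{12}}$, which by interpolation between $\|\p_x^{10}\theta'\|_{L^2}$ (to be shown) and $\|\theta'\|_{H^{14}}$ decays like $(1+t)^{-1/2+}$ modulo a mild growth; the commutator term is handled by \eqref{eq:commu}, putting the $L^\infty$ norm (via Agmon and \eqref{est:psi-W3infty}) on the lower-order factor and the full $H^{14}$ norm on the other. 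One then gets a Gronwall inequality of the form $\frac{\rd}{\rd t}\|\theta\|_{H^{14}} \lesssim (1+t)^{-1}\|\theta\|_{H^{14}} + B^2(1+t)^{3/2}$, which integrates to the claimed $(1+t)^{5/2}$ growth (the power $5/2$ being dictated by the $(1+t)^{-1}$ coefficient in front of $\|\theta\|_{H^{14}}$ and the $(1+t)^{3/2}$ source). One must check carefully that every intermediate decay rate is strong enough; as remarked after \cref{lem:H6}, the critical point is that the linear part produces a decay which, combined with the nonlinear growth, stays below the target power.

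For the remaining three bounds I would invoke the linear machinery of \cref{sub:linearized}. The quantity $\p_x^8\Delta^2\theta'$ solves an equation of the form \eqref{eq:bb} with source $S = -[\Delta^2\p_x^8,G]\p_x\psi - [\Delta^2\p_x^8,\bu\cdot\nabla]\theta' + \bu\cdot\nabla\Delta^2\p_x^8\theta'$; the last term is $S_\perp$ (orthogonal to $\p_x^8\Delta^2\theta'$ in $L^2$ by the divergence-free/non-penetration argument), and the two commutators form $S_\para$. Using the tame estimates \eqref{eq:tame}--\eqref{eq:commu}, the decomposition $\bu\cdot\nabla = -\p_z\psi\,\p_x + \p_x\psi\,\p_z$, the uniform $H^4$ bound on $G$ (from the $H^6$ bound on $\bar\theta$ just proven), the $H^{14}$ growth bound, and the bootstrap decays \eqref{hyp:bootstrap-theta'-NLBL}, one checks $\|S_\para\|_{L^2}\lesssim B^2(1+t)^{-1-\delta}$ for some $\delta>0$ --- the mild $(1+t)^{5/2}$-type growth of the highest-order factor being beaten by the decay of the low-order factors, since derivatives of $\theta'$ and $\psi$ of order $\le 8$--$10$ carry genuine negative powers. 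Then \cref{prop:bb-uniformbound} applied to $\p_x^8\Delta^2\theta'$ (whose trace and normal derivative vanish on $\p\Om$ by \cref{lem:traces}, since $\p_x^8\Delta^2\theta' = \Delta^2(\p_x^8\theta')$ and $\p_x^8\theta'\in H^2_0$, so $\Delta^2(\p_x^8\theta')=\p_x^9\psi$ and $\p_\bn$ of it both vanish) gives $\|\p_x^8\Delta^2\theta'\|_{L^2}\lesssim B$. Since $\Delta^2\p_x^{-2}(\p_x^{10}\theta') = \p_x^8\Delta^2\theta'$ is now bounded by $A=CB$ with $\alpha=0$, and the source for $\p_x^{10}\theta'$ (namely $-\p_x^{10}(\bu\cdot\nabla\theta')$) is purely $S_\para$ with decay $(1+t)^{-2}$, i.e. $\delta=1$, \cref{prop:bb-decay} yields $\|\p_x^{10}\theta'\|_{L^2}\lesssim B(1+t)^{-1}$; finally $\|\p_x^{10}\psi\|_{L^2}\lesssim\|\p_x^{-2}\Delta^2\p_x^{10}\psi\|_{L^2}$ plus the identity $\p_x\psi = (\p_t\theta' + (\bu\cdot\nabla\theta')')/(1-G)$ rewrites $\p_x^{10}\psi$ in terms of $\p_t\p_x^9\theta'$ and nonlinear terms, giving the extra power, so $\|\p_x^{10}\psi\|_{L^2}\lesssim B(1+t)^{-2}$. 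The main obstacle, as always in this paper, is bookkeeping: verifying that in every commutator term the product of the (growing) high-regularity factor and the (decaying) low-regularity factor lands strictly below the threshold decay $(1+t)^{-1}$ for $S_\para$ and does not exceed the prescribed growth $(1+t)^{5/2}$ for the $H^{14}$ estimate --- this is where the choice to expose only one vertical derivative in the transport term via $\bu=\nabla^\perp\psi$ is essential.
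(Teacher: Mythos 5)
Your overall strategy (energy estimates on the full transport equation for the top norms, plus the machinery of \cref{prop:bb-uniformbound} and \cref{prop:bb-decay} for the mixed and decaying quantities) is the same as the paper's, but the order in which you establish the bounds does not close. The central gap is circularity: to control the linear term $\|\p_x\psi\|_{H^{14}}\lesssim\|\p_x^2\theta'\|_{H^{12}}$ in your $H^{14}$ Gronwall estimate you interpolate against $\|\p_x^{10}\theta'\|_{L^2}\lesssim B(1+t)^{-1}$, but that decay is itself a conclusion of the lemma which, in your own plan (and in the paper), comes from \cref{prop:bb-decay} applied to $\p_x^{10}\theta'$ with assumption \eqref{eq:bb2-A} furnished by the uniform bound on $\p_x^8\Delta^2\theta'$; and your proof of that uniform bound in turn invokes ``the $H^{14}$ growth bound''. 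Since the bootstrap assumptions \eqref{hyp:bootstrap-theta'-NLBL} only control pure derivatives of order at most $8$ (plus $\p_x^5\psi$), none of the three top estimates can be run directly from them. The paper resolves this with an explicit ladder of intermediate estimates, each rung using only the previous ones: $\|\theta\|_{H^{10}}\lesssim B(1+t)^{11/8}$ (whose linear term $\|D^{10}\p_x\psi\|_{L^2}\lesssim B(1+t)^{3/8}$ \emph{is} bootstrap-controlled), then $\|\p_x^2\Delta^4\theta\|_{L^2}\lesssim B(1+t)$, then $\|\p_x^6\Delta^2\theta'\|_{L^2}\lesssim B$, then $\|\theta\|_{H^{12}}\lesssim B(1+t)^2$ and $\|\p_x^4\Delta^4\theta\|_{L^2}\lesssim B(1+t)$, and only then $\|\p_x^8\Delta^2\theta'\|_{L^2}\lesssim B$, $\|\theta\|_{H^{14}}\lesssim B(1+t)^{5/2}$, and finally the $\p_x^{10}$ decays. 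Your plan omits this ladder, and without it the commutators at the $\p_x^8\Delta^2$ level (which spread up to $12$--$14$ derivatives over $\psi$ and $\theta$) and the linear term at the $H^{14}$ level have no available bound. (Incidentally, the $H^6$ bound on $\bar\theta$ needs no $H^{14}$ input at all: it follows from the bootstrap alone, so the simultaneity you worry about is unnecessary.)

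Two further points. First, at the $\p_x^8\Delta^2$ level (already at $\p_x^6\Delta^2$ in the paper) the top commutator $(\nabla^\perp\p_x^8\Delta^2\psi)\cdot\nabla\theta'$ cannot be placed in $S_\para$ with integrable decay; the paper absorbs it into the dissipation, bounding it by $B(1+t)^{-3/4}\|\p_x^9\Delta\theta'\|_{L^2}$ and running a Gronwall argument with the dissipation term kept, and similarly the source for $\p_x^{10}\theta'$ must be split as $S_\para+S_\perp+S_\Delta$, not treated as ``purely $S_\para$'' as you claim. Second, your Gronwall inequality $\frac{\rd}{\rd t}\|\theta\|_{H^{14}}\lesssim(1+t)^{-1}\|\theta\|_{H^{14}}+B^2(1+t)^{3/2}$ has a non-integrable coefficient, so it yields growth of order $(1+t)^{5/2+C B}$ rather than $(1+t)^{5/2}$; the paper arranges an integrable coefficient $B(1+t)^{-1-\delta}$ (via $\|\psi\|_{W^{3,\infty}}\lesssim B(1+t)^{-9/8}$) precisely so that Gronwall only produces a constant. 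A minor repair: your justification of the trace condition via ``$\Delta^2(\p_x^8\theta')=\p_x^9\psi$'' is not an identity of the system; what is true and needed for \cref{prop:bb-decay} applied to $\p_x^{10}\theta'$ is that $\p_x^{10}\theta'$ and $\p_\bn\p_x^{10}\theta'$ vanish on $\p\Om$, which follows from \cref{lem:traces}, while \cref{prop:bb-uniformbound} requires no trace condition on the unknown at all, since the associated stream function is clamped by definition.
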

		
		\begin{proof}
			First, recalling that
			\[
			\p_t \bar \theta= - \overline{\nabla^\bot \psi\cdot \nabla \theta'}
			\]
			and using the bootstrap assumptions \eqref{hyp:bootstrap-theta'-NLBL} and \eqref{est:psi-W3infty} together with the tame estimates \eqref{eq:tame}, we infer that
			\[
			\|\p_t \p_z^6 \bar \theta\|_{L^2}\lesssim B^2 (1+t)^{-5/4},
			\]
			and thus $\| \bar \theta(t)\|_{H^6} \lesssim \|\theta_0\|_{H^6} + B^2 \lesssim B$. A similar argument also shows that  $\| \bar \theta(t)\|_{H^7} \lesssim B + B^2 \ln(1+t)$.%
			
			Let us then compute the equation satisfied by $\partial^{10}\theta$, where $\p\in \{\p_x,\p_z\}$.  We have
			\[
			\p_t \partial^{10} \theta + \bu \cdot \nabla \partial^{10} \theta= \partial^{10}\p_x \psi - \left[\partial^{10}, \bu\cdot \nabla\right] \theta.
			\]
			Multiplying by $\partial^{10 } \theta$ and integrating by parts, we obtain
			\[
			\frac{\rd}{\rd t} \| \partial^{10 } \theta\|_{L^2} \leq 2 \| \partial^{10} \p_x \psi\|_{L^2} + 2 \left\| \left[\partial^{10}, \bu\cdot \nabla\right] \theta\right\|_{L^2}.
			\]
				Using the bootstrap assumptions \eqref{hyp:bootstrap-theta'-NLBL}, we have
			\[
			\| \partial^{10} \p_x \psi\|_{L^2} \lesssim B (1+t)^{3/8} \quad \forall t \in [0,T].
			\]
			As for the commutator term, using the tame estimates \eqref{eq:commu} together with the identity $\bu=\nabla^\bot \psi$, we obtain, for any $k\geq 5$,
\beq \label{est:commu-Dk}
\ba 
\left\| \left[  \p^k , \bu\cdot \nabla\right]\theta\right\|_{L^2} &\lesssim\|\psi\|_{W^{2,\infty}} \|\theta\|_{H^k} + \|\p_z \psi\|_{H^{k}} \|\p_x \theta\|_{\infty} + \|\p_x \psi\|_{H^{k}} \|\p_z \theta\|_{\infty}\\
&\lesssim B (1+t)^{-11/8} \|\theta\|_{H^k} + \|\p_x \theta\|_{H^{k-3}} \|\p_x \theta\|_{\infty}  + \|\p_x^2 \theta\|_{H^{k-4}} \|\p_z \theta\|_\infty.
\ea
\eeq			
		In particular, using the estimates \eqref{est:dxdztheta'-interpol}, we get, for $k=10$, 
\[
\left\| \left[  \p^{10} , \bu\cdot \nabla\right]\theta\right\|_{L^2} \lesssim  B (1+t)^{-11/8} \|\theta\|_{H^{10}} + B^2 (1+t)^{3/8}.
\]	
Assuming that $B<1$, we obtain
			\[
			\frac{\rd}{\rd t}  \| \theta\|_{H^{10}}\lesssim B (1+t)^{3/8} +  B (1+t)^{-11/8}  \| \theta\|_{H^{10}}.
			\]
			The Gronwall Lemma then ensures that
			\beq\label{est:H10}
			\| \theta(t)\|_{H^{10} } \lesssim \| \theta_0\|_{H^{10}} + B (1+t)^{11/8} \lesssim  B (1+t)^{11/8} .
			\eeq

			We then use the same strategy to estimate $\|\p_x^2 \Delta^4 \theta\|_{L^2}$. The linear term in the right-hand side is now
			\[
			\p_x^3 \Delta^4 \psi= \p_x^4 \Delta^2 \theta'=O(B) \quad \text{in }L^2.
			\]
			The only difference in the treatment of the commutator term lies in the bound of terms of the form $\p_z^2 \psi \p_x^3 \p_z^7 \theta'$. For those, we use our first estimate on $\| \theta\|_{H^{10}}$ \eqref{est:H10} together with the bootstrap assumptions \eqref{hyp:bootstrap-theta'-NLBL} (see in particular \eqref{est:psi-W3infty}, \eqref{est:dxdztheta'-interpol}), and we obtain
			\[
			\| \p_z^2 \psi \p_x^3 \p_z^7 \theta'\|_{L^2}\lesssim \| \p_z^2 \psi \|_\infty \|\theta'\|_{H^{10}} \lesssim B^2 (1+t)^{-\frac{11}{8}+ \frac{11}{8}} \lesssim B^2.
			\]
			It follows that
			\[
			\frac{\rd}{\rd t} \|\p_x^2 \Delta^4 \theta\|_{L^2}\lesssim B + B (1+t)^{-11/8} \|\p_x^2 \Delta^4 \theta\|_{L^2},
			\]
			and therefore $ \|\p_x^2 \Delta^4 \theta\|_{L^2}\lesssim B (1+t)$.
			The next step is to prove that $\sup_{t\in[0,T]} \|\p_x^6 \Delta^2 \theta'\|_{L^2}\lesssim B$.
			To that end, we check that $\p_x^6 \Delta^2 \theta'$ satisfies the assumptions of  \cref{prop:bb-uniformbound}.
			The source term  is $S=\bu\cdot \nabla \p_x^6\Delta^2 \theta'+ \left[ \p_x^6\Delta^2, \bu\cdot \nabla\right] \theta$.
			Classically, the first term is orthogonal to $\theta'$. It is therefore sufficient to bound the commutator. The terms involving $\bar \theta$ can be treated as perturbations of the dissipation term $\|\p_x^7 \Delta \theta'\|_{L^2}^2$, and therefore we focus on $\left[ \p_x^6\Delta^2, \bu\cdot \nabla\right] \theta'$. 
			First, note that
			\[
			\left\| ( \nabla^\bot\p_x^6 \Delta^2 \psi) \cdot \nabla \theta'\right\|_{L^2}\leq \|\nabla \theta'\|_\infty \| \nabla \p_x^7 \theta'\|_{L^2} \lesssim B (1+t)^{-3/4}\| \p_x^7 \Delta \theta'\|_{L^2}
			\]
			The other terms can be estimated thanks to the bootstrap assumptions together with the preliminary bounds on $\|\theta\|_{H^{10}}$ and  $ \|\p_x^2 \Delta^4 \theta\|_{L^2}$. We obtain
			\[
			\left\| \left[ \p_x^6\Delta^2, \bu\cdot \nabla\right] \theta \right\|_{L^2}\lesssim B (1+t)^{-1-\delta} \|\p_x^6 \Delta^2 \theta'\|_{L^2} + B^2(1+t)^{-1-\delta} +  B (1+t)^{-\frac{1}2 -\delta}\| \p_x^7 \Delta \theta'\|_{L^2}
			\]
			for some $\delta>0$. The details are left to the reader. Using a Cauchy-Schwarz inequality, it follows that
			\[
			\frac{\rd}{\rd t} \|\p_x^6 \Delta^2 \theta'\|_{L^2}^2 + c \| \p_x^7 \Delta \theta'\|_{L^2}^2 \lesssim B^2 (1+t)^{-1-\delta} +  B (1+t)^{-1-\delta} \|\p_x^6 \Delta^2 \theta'\|_{L^2}^2. 
			\]
			The Gronwall Lemma then implies that $\sup_{t\in [0,T]} \|\p_x^6 \Delta^2 \theta'\|_{L^2} \lesssim \| \theta_0\|_{H^{10}} + B^2 \lesssim B$.
			
			We then follow the same strategy to obtain bounds on $\|\theta\|_{H^{12}}$, $\|\p_x^4 \Delta^4 \theta\|_{L^2}$ and $\|\p_x^8 \Delta^2 \theta\|_{L^2}$.
			We have
			\[
			\frac{\rd}{\rd t}\|\theta\|_{H^{12}} \lesssim \|\p_x^2 \theta'\|_{H^8} + \left\|\left[\p^{12}, \bu \cdot \nabla\right] \theta\right\|_{L^2}.
			\]
			The first term in the right-hand side is  bounded by $B(1+t)$. 
			The commutator is estimated thanks to \eqref{est:commu-Dk} together with 
			the bootstrap assumptions and our preliminary bounds on derivatives up to order 10.
		We obtain $\|\theta(t)\|_{H^{12}}\lesssim B (1+t)^2$. We then write
			\[
			\p_t \p_x^4 \Delta^4 \theta' + \bu \cdot \nabla \p_x^4 \Delta^4 \theta'= \p_x^6 \Delta^2 \theta' - \left[\p_x^4 \Delta^4, \bu\cdot \nabla\right] \theta.
			\]
			The first term in the right-hand side is bounded by $CB$ in $L^2$. We then check that the nonlinear term can be treated perturbatively, using the bounds on $\theta'$ obtained so far, and we infer that $\| \p_x^4 \Delta^4 \theta'(t)\|_{L^2}\lesssim B(1+t)$. Once again, we then use \cref{prop:bb-uniformbound} in order to prove that $\|\p_x^8 \Delta^2 \theta'(t)\|_{L^2}\lesssim B $, and that
			\[
			\frac{\rd}{\rd t}\|\theta(t)\|_{H^{14}} \lesssim  B (1+t)^{-1-\delta}\|\theta(t)\|_{H^{14}} + B (1+t)^{3/2}.
			\]
			The computations are very similar to the ones above and left to the reader, and lead to the estimate of $\|\theta(t)\|_{H^{14}}.$
			
			The last step is to prove additional decay on $\|\p_x^{10} \theta'\|_{L^2}$ and $\| \p_x^{11} \psi\|_{L^2}$.
			Setting $S=-\p_x^{10} (\bu\cdot \nabla \theta')$, we can decompose $S$ into $S=S_{\para} + S_\bot + S_{\Delta}$, with $S_\bot=\bu\cdot \nabla \p_x^{10} \theta'$, and
            \[
            \ba
                S_\Delta&:=-\sum_{k\leq 6} \begin{pmatrix}
                    10\\k
                \end{pmatrix}\p_z \p_x^{10-k} \psi \p_x^{k+1} \theta + \sum_{k\leq 5} \begin{pmatrix}
                    10\\k
                \end{pmatrix} \p_x^{11-k}\psi \p_x^k\p_z\theta\\
                S_\para&:= -\sum_{7\leq k\leq 9} \begin{pmatrix}
                      10\\k
                \end{pmatrix}\p_z \p_x^{10-k} \psi \p_x^{k+1} \theta + \sum_{6\leq k\leq 9}  \begin{pmatrix}
                    10\\k
                \end{pmatrix} \p_x^{11-k}\psi \p_x^k\p_z\theta,
            \ea
            \]
			so that
            \[
			\|S_{\|}\|_{L^2}\lesssim B^2 (1+t)^{-2} + B (1+t)^{-1-\delta}\|\p_x^{10} \theta\|_{L^2},\quad \|S_{\Delta}\|_{L^2}\lesssim B (1+t)^{-1/2} \|\p_x^{10}\Delta \psi\|_{L^2}.
			\]
			Hence for $B$ sufficiently small, $S$ satisfies the assumptions of \cref{prop:bb-decay}, and we obtain
			\[
			\|\p_x^{10} \theta'(t)\|_{L^2} \lesssim B (1+t)^{-1}.
			\]
			Differentiating the equation on $\p_x^{9} \theta$ with respect to time, we get
			\[
			\p_t \p_t\p_x^9 \theta'
			=(1-G)\p_t \p_x^{10} \psi 
			- \p_t \p_x^9 (\bu\cdot \nabla) \theta' 
			- \p_t G \p_x^{10}\psi.
			\]
			Estimating the norm of each term in the right-hand side and using \cref{prop:bb-decay}, we obtain, for all $t\in [0, T]$,
			\[
			\|  \p_t\p_x^9 \theta'(t)\|_{L^2}\lesssim \frac{1}{(1+t)^2}\left( \sup_{s\in [0, T]} (1+s){\|\p_s \p_x^7 \Delta^2 \theta'(s)\|_{L^2}} + B^2 \right).
			\]
			Writing
			\[
			\p_t \p_x^7 \Delta^2 \theta'= \p_x^9 \theta' -\p_x^7 \Delta^2 (\bu\cdot \nabla \theta),
			\]
			we find that $\|\p_t \p_x^7 \Delta^2 \theta'\|_{L^2} \lesssim B (1+t)^{-1}$, and thus $\|\p_t \p_x^9 \theta'\|_{L^2}\lesssim B (1+t)^{-2}$. Going back to the equation on $\p_x^9 \theta'$, we find eventually that
			\[
			\p_x^{10} \psi= \p_t \p_x^9 \theta' + \p_x^9(\nabla^\bot \psi\cdot \nabla \theta)= O((1+t)^{-2})\quad \text{in } L^2.
			\]
			
    Finally, plugging these estimates into the equation on $\overline{\theta}$ leads to the desired bound on $\|\overline{\theta}\|_{H^9}$.
			
		\end{proof}
		
		Let us now prove a useful (albeit technical) result concerning the trace of $\p_z^3 \theta'$:
		\begin{corollary}\label{coro:trace-dz3theta'}
			Under the assumptions of \cref{lem-high-reg-NLBL}, for all $t\in [0,T]$,
			\[
			\|\p_z^3 \theta'\vert_{z=0}\|_{H^{33/4}(\T)}\lesssim B (1+t)^{-1/8}.
			\]
			The same estimate holds for the trace at $z=1$.
		\end{corollary}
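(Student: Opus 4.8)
The plan is to derive the bound from an anisotropic trace inequality, fed with interpolated forms of the estimates of \cref{lem-high-reg-NLBL} and of the bootstrap assumption \eqref{hyp:bootstrap-theta'-NLBL}.

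First I would record the elliptic-regularity consequences of the high-order bounds. Since $\theta_0'=\p_\bn\theta_0'=0$ on $\p\Om$, \cref{lem:traces} gives $\theta'(t)\in H^2_0(\Om)$ for all $t$, and more generally $\p_x^k\theta'(t)\in H^2_0(\Om)$ for every $k$; hence the clamped-bilaplacian estimate turns $\|\p_x^8\Delta^2\theta'\|_{L^2}\lesssim B$ into $\|\p_x^8\theta'\|_{H^4(\Om)}\lesssim B$, and, iterating, converts the $\Delta^2$- and $\Delta^4$-bounds of \cref{lem-high-reg-NLBL} into genuinely anisotropic bounds $\|\p_x^a\p_z^b\theta'\|_{L^2(\Om)}\lesssim B(1+t)^{r(a,b)}$ for the range of exponents we need. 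The pieces that matter are the uniform bounds $\|\p_x^8\p_z^3\theta'\|_{L^2}+\|\p_x^9\p_z^3\theta'\|_{L^2}+\|\p_x^8\p_z^4\theta'\|_{L^2}\lesssim B$ (the operators $\p_x\p_z^3$ and $\p_z^4$ being of order at most $4$, hence controlled by $\|\p_x^8\theta'\|_{H^4}$), together with the decaying bounds $\|\p_z^4\theta'\|_{L^2}\lesssim B(1+t)^{-1/8}$ read off \eqref{hyp:bootstrap-theta'-NLBL} and $\|\p_x^a\p_z^4\theta'\|_{L^2}\lesssim B(1+t)^{-1/8}$ for small $a$ (combining the bootstrap with the bilaplacian regularity applied to $\p_x^a\theta'$), and the analogous $\|\p_z^3\theta'\|_{L^2}\lesssim B(1+t)^{-3/8}$.

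Next I would use the following trace estimate: writing $\widehat g_k(z)$ for the horizontal Fourier coefficients of a function $g$ on $\Om$, the one-dimensional trace bound $|\widehat g_k(0)|^2\lesssim\|\widehat g_k\|_{L^2(0,1)}\|\p_z\widehat g_k\|_{L^2(0,1)}+\|\widehat g_k\|_{L^2(0,1)}^2$, together with a Cauchy--Schwarz summation over $k$, yields for every $s\in\R$ and every $\sigma\geq 0$
\[
\|g\vert_{z=0}\|_{H^s(\T)}^2\lesssim \|g\|_{H^{s+\sigma}_xL^2_z}\,\|\p_z g\|_{H^{s-\sigma}_xL^2_z}+\|g\|_{H^s_xL^2_z}^2 .
\]
Applying this with $g=\p_z^3\theta'$ and $s=33/4$, the trace effectively costs one extra vertical derivative — it turns $\p_z^3$ into $\p_z^4$, which is why the final rate is $(1+t)^{-1/8}$, the $L^2(\Om)$-decay rate of $\p_z^4\theta'$ — and the free parameter $\sigma$ is used to redistribute horizontal regularity: one puts essentially all of it on the $\p_z^3$ factor (which, at horizontal order $\approx 8$, is merely bounded, being just below the threshold $9$ where the clean bound $\|\p_x^8\theta'\|_{H^4}\lesssim B$ still suffices), and keeps only a few horizontal derivatives on the $\p_z^4$ factor, where the temporal decay survives. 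Estimating the two factors and the remainder term by interpolation between these uniform and decaying pieces then gives the claim; the fact that $33/4<9$ is precisely what keeps the interpolation inside the regime covered by \cref{lem-high-reg-NLBL}, without calling on the lossy bound $\|\theta\|_{H^{14}}\lesssim B(1+t)^{5/2}$.

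The main obstacle is the anisotropic bookkeeping. In this regime vertical derivatives of $\theta'$ grow in time ($\|\p_z^k\theta'\|_{L^2}\sim(1+t)^{-9/8+k/4}$) while horizontal derivatives decay, so the interpolation must be carried out at exactly the right balance; in particular one must verify that the mixed norms $\|\p_x^a\p_z^4\theta'\|_{L^2}$ with $a$ of order $1$–$2$ still enjoy the $(1+t)^{-1/8}$ decay, which is where the bilaplacian elliptic regularity — combined with the finer estimates obtained in the proof of \cref{lem-high-reg-NLBL} on $\p_x^k\Delta^2\theta'$ and $\p_x^k\Delta^4\theta$ — is genuinely needed. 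Once these anisotropic estimates are in place, the displayed trace inequality reduces \cref{coro:trace-dz3theta'} to elementary interpolation.
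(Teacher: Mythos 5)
There is a genuine gap, and it sits exactly where your proposal tries to harvest the decay. Your elementary per-mode trace bound gives
\[
\|\p_z^3\theta'\vert_{z=0}\|_{H^{33/4}(\T)}^2\lesssim \|\p_z^3\theta'\|_{H^{33/4+\sigma}_xL^2_z}\,\|\p_z^4\theta'\|_{H^{33/4-\sigma}_xL^2_z}+\|\p_z^3\theta'\|_{H^{33/4}_xL^2_z}^2,
\]
so to reach the rate $(1+t)^{-1/8}$ for the trace you need the \emph{product} of the two factors to decay like $(1+t)^{-1/4}$ (the trace norm is the square root of the right-hand side); your heuristic ``the final rate is the $L^2$ decay rate of $\p_z^4\theta'$'' already miscounts this by a factor $1/2$ in the exponent. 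Worse, no admissible choice of $\sigma$ supplies the missing decay: the total horizontal regularity $2s=33/2$ must be split between the two factors, and the only uniform-in-time anisotropic bounds available (via the clamped-bilaplacian step $\|\p_x^8\theta'\|_{H^4}\lesssim B$, which is fine) stop at horizontal order $8$ on $\p_z^4\theta'$ and $9$ on $\p_z^3\theta'$. If you keep the $\p_z^3$ factor at order $\leq 9$, the $\p_z^4$ factor sits at order $\geq 15/2$, where interpolation between $\|\p_z^4\theta'\|_{L^2}\lesssim B(1+t)^{-1/8}$ and $\|\p_x^8\p_z^4\theta'\|_{L^2}\lesssim B$ yields only a $(1+t)^{-1/128}$-type decay; if instead you push the $\p_z^4$ factor down to order $1$--$2$, the $\p_z^3$ factor must carry horizontal order $\approx 14$--$16$, beyond anything controlled in \cref{lem-high-reg-NLBL} (and beyond $H^{14}$ data). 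Finally, the auxiliary claim $\|\p_x^a\p_z^4\theta'\|_{L^2}\lesssim B(1+t)^{-1/8}$ for small $a$ is not justified: ``bilaplacian regularity applied to $\p_x^a\theta'$'' bounds $\|\p_x^a\theta'\|_{H^4}$ \emph{by} $\|\p_x^a\Delta^2\theta'\|_{L^2}$, and a decaying bound on the latter is not among the hypotheses -- it is essentially the information the whole boundary-layer construction is being set up to produce (the trace of $\Delta^2\theta'$ tends to a nonzero limit), so invoking it here is circular; direct interpolation from the stated bounds gives only the weaker rate $(1+t)^{-\frac{1}{8}(1-\frac{a}{8})}$.

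The structural reason your split cannot close is that the decay does not live on $\p_z^3\theta'$ or $\p_z^4\theta'$ at high horizontal order; it lives on the \emph{zero-vertical-derivative} quantity $\|\theta'\|_{H^{10}_xL^2_z}\lesssim B(1+t)^{-1}$, and it must enter with a small exponent. This is precisely what the paper's one-line proof does: by the Lions--Magenes trace-interpolation theorem,
\[
\|\p_z^3\theta'\vert_{z=0}\|_{H^{s}(\T)}\lesssim \|\theta'\|_{H^{10}_xL^2_z}^{1/8}\,\|\p_z^4\theta'\|_{H^{8}_xL^2_z}^{7/8},\qquad s=\tfrac18\cdot 10+\tfrac78\cdot 8=\tfrac{33}{4},
\]
which gives $B\,(1+t)^{-1/8}$ directly from \cref{lem-high-reg-NLBL}. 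You could rescue your route by first proving decay of $\|\p_z^3\theta'\|_{H^{s+\sigma}_xL^2_z}$ through a per-Fourier-mode Gagliardo--Nirenberg interpolation between $\hat\theta_k$ and $\p_z^4\hat\theta_k$ (weights $1/4$--$3/4$) followed by H\"older in $k$, using $\|\p_x^{10}\theta'\|_{L^2}\lesssim B(1+t)^{-1}$ and $\|\p_x^8\Delta^2\theta'\|_{L^2}\lesssim B$ -- but that is just reconstructing the interpolation inequality the paper quotes, so as written the proposal does not constitute a proof.
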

		\begin{proof}
			Using Theorem 3.1 in Chapter 1 of \cite{LionsMagenes},
			\[
			\|\p_z^3 \theta'\vert_{z=0}\|_{H^s(\T)}\lesssim \|\theta'\|_{H^\beta_x L^2_z}^{1/8} \|\p_z^4 \theta'\|_{H^\gamma_x L^2_z}^{7/8},
			\]
			where $\frac{1}{8} \beta + \frac{7}{8}\gamma=s$. Taking $\beta=10$ and $\gamma=8$ and using the bounds of \cref{lem-high-reg-NLBL}, we obtain the desired result.
		\end{proof}

		\subsection{Decomposition of the traces of \texorpdfstring{$\Delta^2 \theta'$ and $\p_\bn \Delta^2 \theta'$}{Δ² θ' and ∂\_z Δ² θ'}}
		\label{sub:decomp-gamma}
		
		The role of the boundary layer is to lift the traces of $\Delta^2 \theta'$ and $\p_\bn \Delta^2 \theta'$ on the boundary.
		Therefore we first need to prove that these traces converge towards a (generically non trivial) limit.
	In fact, we will even need to have a rather precise asymptotic expansion of the traces in powers of $(1+t)^{-1/4}$.
	This is the main purpose of this section.

		The first result of this section concerns the long time behavior of $\Delta^2 \theta'\vert_{\p\Om}$ and $\p_\bn \Delta^2 \theta'\vert_{\p\Om}:$
		\begin{lemma}[Long-time behavior of $\p_z^k \Delta^2 \theta'\vert_{\p\Om}$ and of $\p_z^{2+k} G(t)\vert_{\p\Om}$, $k=0,1$]
			For $k=0,1$, let
			\begin{equation*}\label{def:gamma-j-a}
			\gamma^k_\mtop(t,x):=\p_z^k\Delta^2 \theta'(t,x, z=1),\quad \gamma^k_\mbot(t,x):=\p_z^k\Delta^2 \theta'(t,x, z=0).
			\end{equation*}
			Assume that $\theta_0\in H^{14}(\Om)$ and 
			$\theta_0=\p_\bn \theta_0=0$ on $\p\Om$, $\p_z^2 \bar\theta_0=0$ on $\p\Om$.
			Let $T>0$, $B \in(0,1)$ such that the bootstrap assumptions \eqref{hyp:bootstrap-theta'-NLBL} hold on $[0,T]$. Assume furthermore that $\|\theta_0\|_{H^{14}}\leq B$.
			Then there exist universal constants $B_0,\delta>0$ and functions $\gamma^0_{a,T}\in H^9(\T)$, $\gamma^1_{a,T} \in H^8(\T)$,%
			such that if $B\leq B_0$,
			\[
			\ba
			\| \gamma^0_{a,T} \|_{H^9(\T)}\lesssim \|\theta_0\|_{H^{14}} + B^2 \quad \text{and} \quad \| \gamma^0_{a}(t) - \gamma^0_{a,T} \|_{H^9(\T)}\lesssim B^2 \frac{1}{(1+t)^\delta}\quad \forall t\in [0,T],\\
			\| \gamma^1_{a,T} \|_{H^8(\T)}\lesssim \|\theta_0\|_{H^{14}} + B^2 \quad \text{and} \quad \| \gamma^{{1}}_{a}(t) - \gamma^{{1}}_{a,T} \|_{H^8(\T)}\lesssim B^2 \frac{1}{(1+t)^\delta}\quad \forall t\in [0,T].
			\ea
			\]

			In a similar fashion, for $k=2,3$, $a\in \{\mtop,\mbot\}$, there exists $g^k_{a,T}\in \R$ 
				such that
				\[
				\ba 
					|g^k_{a,T}| \lesssim \|\theta_0\|_{H^{14}} + B^2\quad \text{for }k\in \{2, 3\},\\
					\left| g^2_{\mbot,T} - \p_z^2 G(t,0)\right|\lesssim \frac{B^2}{(1+t)^{3/4}}\quad\forall t\in [0,T),\\
						\left| g^3_{\mbot,T} - \p_z^3 G(t,0)\right|\lesssim \frac{B^2}{(1+t)^{1/2}}\quad\forall t\in [0,T).
				\ea
				\]
				The same estimates hold for $g^k_{\mtop, T} - \p_z^k G(t,1)$.
			\label{lem:CV-gamma-G}
		\end{lemma}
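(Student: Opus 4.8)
The plan is to show that each of the trace quantities $\gamma^k_a(t,x)$, $k=0,1$, and $\p_z^k G(t,0)$, $\p_z^k G(t,1)$, $k=2,3$, has a time derivative whose $H^s(\T)$ (resp. absolute value) norm is time-integrable, and then to \emph{define} the limits $\gamma^k_{a,T}$ and $g^k_{a,\infty}$ as the corresponding (partial) time integrals, reading off the decay rate of the remainder from the tail of the integral. The uniform bound on the limits then follows from the initial value plus the integral of the time derivative. So the only genuine work is to estimate $\p_t \gamma^k_a$ and $\p_t\p_z^k G\vert_{\p\Om}$ under the bootstrap assumptions \eqref{hyp:bootstrap-theta'-NLBL}, using the high-regularity bounds of \cref{lem-high-reg-NLBL}.

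For $\gamma^k_a$, I would start from the evolution equation $\p_t \theta' = (1-G)\p_x\psi + S$ with $S=-(\bu\cdot\nabla\theta')'$, apply $\p_z^k\Delta^2$ and restrict to $z\in\{0,1\}$. Since $\bu=\bzero$ on $\p\Om$, the transport term contributes only through commutators, i.e.\ terms of the form $\p_z^{a}\bu \cdot \nabla \p_z^{b}\Delta^2\theta'$ evaluated on the boundary with $a\geq 1$; these are controlled by interpolated trace norms of $\psi$ and $\theta'$, for which \cref{lem-high-reg-NLBL} (notably the bounds $\|\theta'\|_{H^{14}}\lesssim B(1+t)^{5/2}$, $\|\p_x^8\Delta^2\theta'\|_{L^2}\lesssim B$, $\|\p_x^{10}\psi\|_{L^2}\lesssim B(1+t)^{-2}$, and the $W^{3,\infty}$ bound \eqref{est:psi-W3infty}) together with the trace inequality of \cite{LionsMagenes} (already used in \cref{coro:trace-dz3theta'}) give a decay of the form $B^2(1+t)^{-1-\delta}$ for some $\delta>0$. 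The linear term $(1-G)\p_x\psi$ similarly contributes boundary traces of $\p_z^k\Delta^2$ applied to $\psi$ times derivatives of $G$; using $\Delta^2\psi=\p_x\theta'$ these reduce to traces of high derivatives of $\theta'$, again with the same kind of algebraic decay. Integrating in time yields $\gamma^0_{a,T}:=\gamma^0_a(0)+\int_0^T\p_t\gamma^0_a$, and $\|\gamma^0_a(t)-\gamma^0_{a,T}\|_{H^s}\leq\int_t^T\|\p_t\gamma^0_a\|_{H^s}\lesssim B^2(1+t)^{-\delta}$; the $s$-range $s<8+5/8$ comes from the available interior regularity ($H^{14}$ on $\theta'$ gives $\Delta^2\theta'\in H^{10}$, minus a trace loss of $1/2$, minus a small amount for the interpolation), and for $\gamma^1_a$ one loses one more derivative, hence $H^{s-1}$.

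For $G=\p_z\bar\theta$ one uses $\p_t\bar\theta=-\overline{\bu\cdot\nabla\theta'}=-\p_z\int_\T\p_x\psi\,\theta'$, so $\p_t\p_z^k G=-\p_z^{k+2}\int_\T\p_x\psi\,\theta'$; evaluating on $\p\Om$ and expanding the $z$-derivatives by Leibniz, each term is a product of a trace of some $\p_z^a\p_x\psi$ and a trace of some $\p_z^b\theta'$ with $a+b\leq k+3$, estimated by Agmon/interpolation and the bootstrap bounds \eqref{hyp:bootstrap-theta'-NLBL} (which give $\|\p_z^j\theta'\|_{L^2}\lesssim B(1+t)^{-9/8+j/4}$ and the corresponding bounds on $\psi$ via $\Delta^2\psi=\p_x\theta'$). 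Counting powers gives $|\p_t\p_z^k G\vert_{\p\Om}|\lesssim B^2(1+t)^{-3+(k+3)/4}$, which is integrable for $k\leq 3$; defining $g^k_{a,\infty}$ as the time integral and bounding it by the data plus $B^2$ finishes this part.

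The main obstacle is bookkeeping: one must verify that in \emph{every} term produced by the commutator and Leibniz expansions the combined algebraic rate is strictly below $-1$ (so that the integral converges and produces a genuine $(1+t)^{-\delta}$ tail), which forces a careful allocation of derivatives between the $\psi$-factor and the $\theta'$-factor and uses the fact that the worst terms carry at most one $z$-derivative on the velocity (the same mechanism exploited in \cref{lem:H6} and \cref{lem:H4}). The high-regularity inputs of \cref{lem-high-reg-NLBL} are exactly calibrated so that the trace norms needed here are available with the stated growth rates; the delicate point is the interplay between the polynomial-in-$t$ growth of high Sobolev norms of $\theta'$ and the polynomial decay of its low-order $x$-derivatives, which combine to a net integrable rate precisely because of the $\Delta^{-2}\p_x^2$ scaling. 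As in the rest of the paper, the routine estimates are left to the reader.
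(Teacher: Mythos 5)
Your proposal follows essentially the same route as the paper: differentiate the boundary traces in time via the equation, use the vanishing of $\theta'$, $\p_z\theta'$, $\psi$, $\p_z\psi$, $G$, $\p_z G$ on $\p\Om$, estimate the surviving commutator/quadratic trace terms by Lions--Magenes interpolation together with the high-regularity bounds of \cref{lem-high-reg-NLBL}, and integrate the resulting integrable rate to define the limits and the decaying remainders. One minor caveat: the threshold $s<8+\tfrac58$ does not come from a naive count ($H^{14}\Rightarrow\Delta^2\theta'\in H^{10}$ minus trace loss) but from requiring the interpolated trace bound on $\p_x\p_z^2\psi\vert_{z=0}$ (with $\beta=10$ in the Lions--Magenes inequality) to decay strictly faster than $(1+t)^{-1}$, which is exactly the bookkeeping you flag as the main obstacle.
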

		The proof of \cref{lem:CV-gamma-G} is postponed to the end of this section.
		
		The second intermediate result of this section pushes further the decomposition of $\gamma^k_a(t)$. It holds under additional structural assumptions on $\thbl$ and $\threm=\theta'-\thbl$. More precisely, let us assume that there exist profiles $\Theta^j_a,\Psi^j_a$, %
  such that
		\beq\label{def:thbl}
		\ba 
		\thbl(t,x,z)=\sum_{j=0}^{4}(1+t)^{-1-\frac{j}{4}} \left( \Theta^j_\mbot(x,(1+t)^{1/4} z)  + \Theta^j_\mtop(x,(1+t)^{1/4} (1-z))\right) %
  \\
		\psbl(t,x,z)=\sum_{j=0}^{4}(1+t)^{-2-\frac{j}{4}} \left( \Psi^j_\mbot(x,(1+t)^{1/4} z)  + \Psi^j_\mtop(x,(1+t)^{1/4} (1-z))\right) %
		\ea
		\eeq
		where there exists a universal constant $c>0$ such that for all $Z_0\geq 0$,
		\beq
		\label{bound-Thetaj-thetac}
		\ba 
		\|\Theta^j_a\|_{H^9(\T\times (Z_0, +\infty))} +\|\Psi^j_a\|_{H^{11}(\T\times (Z_0, +\infty))}\lesssim \left(\|\theta_0\|_{H^{14}} + B^2\right) \exp(- c Z_0^{4/5}).
		\ea
		\eeq
		In the course of the proof, we shall also need the following assumption:
		\beq\label{hyp:traces-Thetaj}
		\p_Z^2 \Theta^j_a\vert_{Z=0}\in H^7(\T),\quad \p_Z^3 \Theta^j_a\vert_{Z=0}\in H^{15/2}(\T).
		\eeq
		\begin{remark}
The profiles $\Theta^j_a$, $\Psi^j_a$ are not the same as the ones of \cref{sec:linear-BL}. However we kept the same notation for convenience.
		\end{remark}
    \begin{definition}[Definition of $\gamma^{j,k}_a$]\label{def:gamma-jak}
		Let $\Theta^j_a, \Psi^j_a$ be the boundary layer profiles from \eqref{def:thbl}, with $a\in \{\mtop,\mbot\}$, $j\in \{0,\cdots, 4\}$.
		
		 Let $\eta_\mbot=1$ and $\eta_\mtop=-1$.
		We then define the following coefficients:
		\beq\label{def:gamma-j-al}
		\ba 
		\gamma^{0,2}_a= \;& 12  g^2_{a, T} \p_x \p_Z^2 \Psi^0_{a}|_{Z=0},\\
		\gamma^{0,3}_a=\;&8   g^2_{a, T} \p_x \p_Z^2 \Psi^1_{a}|_{Z=0}  - \frac{4\eta_a}{3} \left[\p_Z^4 \left\{ \Psi^0_a, \Theta^0_a\right\}_{x,Z}\right]'_{\vert Z=0},\\
		\gamma^{1,1}_a=\;&40 \eta_a g^2_{a, T} \p_x \p_Z^3 \Psi^0_{a}|_{Z=0} \\
		\gamma^{1,2}_a=\;&20 \left(  \eta_a g^2_{a, T} \p_x \p_Z^3 \Psi^1_{a}|_{Z=0} + g^{3}_{a,T} \p_x \p_Z^2 \Psi^0_{a}|_{Z=0}\right)\\
		&+2 \left[\p_Z^5 \left\{ \Psi^0_a, \Theta^0_a\right\}_{x,Z}\right]'_{\vert Z=0}.
		\ea
		\eeq
		where $\{\cdot,\cdot\}_{x,Z}$ denotes the Poisson bracket
		\begin{equation*}
			\{f,g\}_{x,Z} = \p_xf\p_ZG - \p_Zf\p_xg.
		\end{equation*}
		
		\end{definition}
\begin{remark}
The bounds \eqref{bound-Thetaj-thetac} ensure that
$\| \gamma^{j,k}_a\|_{H^5(\T)} \lesssim B^2 $.
We shall actually derive stronger regularity estimates in the course of the proof, as we construct explicitly the profiles $\Theta^j_a$ and $\Psi^j_a$.
\end{remark}

		\begin{lemma}[Decomposition of $\gamma^k_a$]
			Assume that $\theta_0\in H^{14}(\Om)$ and 
			$\theta_0=\p_\bn \theta_0=0$ on $\p\Om$, $\p_z^2 \bar{\theta}_0=0$ on $\p\Om$.
			Let $T>0$, $B\in (0,1)$ such that
				$ \| \theta_0\|_{H^{14}(\Om)}\leq B$ and  such that the bootstrap assumptions \eqref{hyp:bootstrap-theta'-NLBL} hold on $[0,T]$. 
    
    Assume furthermore that there exist profiles $\Theta^j_a,\Psi^j_a$ satisfying \eqref{bound-Thetaj-thetac} and \eqref{hyp:traces-Thetaj} such that $\threm=\theta'-\thbl$ satisfies 
    \eqref{hyp:bootstrap-thiint}, where $\thbl$ is defined by \eqref{def:thbl}.
			Define the coefficients $\gamma^{j,k}_a$ by \eqref{def:gamma-j-al}.
			
			Then for $j=0,1$, $a\in\{ \mtop,\mbot\} $, there exists %
			$\Gamma^j_{a,T}\in W^{1,\infty}((0,T); L^2(\T))$ such that for all $t\in [0,T]$,
			\[\ba
			\gamma^0_a(t)={}& \gamma^0_{a,T} + \gamma^{0,2}_a(1+t)^{-1/2} + \gamma^{0,3}_a(1+t)^{-3/4}\\&+ \Gamma^0_{a,T}(t) -\gamma^{0,2}_a(1+T)^{-1/2} - \gamma^{0,3}_a(1+T)^{-3/4} ,\\
			\gamma^1_a(t)={}&\gamma^1_{a,T} + \gamma^{1,1}_a(1+ t)^{-1/4} + \gamma^{1,2}_a(1+t)^{-1/2}\\&+ \Gamma^1_{a,T}(t) - \gamma^{1,1}_a(1+ T)^{-1/4} + \gamma^{1,2}_a(1+T)^{-1/2}.
			\ea
			\]
			where 
			for all $t\in [0,T]$, for $j=0,1$, $\ell=0,1,2$,
			\[\ba
			\left\| \p_t^\ell\Gamma^j_{a,T}(t)\right\|_{L^2(\T)} \lesssim B^2 (1+t)^{-1-\ell+ \frac{j}{4}},\quad
			\left\| \Gamma^j_{a,T}(t)\right\|_{H^4(\T)} \lesssim B^2 (1+t)^{-\frac{23}{24}+ \frac{j}{4}}.
			\ea
			\]
		
			\label{lem:decomp-gamma}
		\end{lemma}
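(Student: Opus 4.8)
The plan is to obtain the decomposition by tracking, order by order in powers of $(1+t)^{-1/4}$, the contributions of $\thbl$, $\psbl$ and $\threm$ to the traces $\gamma^k_a(t)=\p_z^k\Delta^2\theta'(t,x,z=a)$. The starting point is the splitting $\theta'=\thbl+\threm$, hence $\gamma^k_a = \p_z^k\Delta^2\thbl\vert_{z=a} + \p_z^k\Delta^2\threm\vert_{z=a}$, and similarly on the other boundary. First I would compute $\p_z^k\Delta^2\thbl\vert_{z=a}$ directly from the Ansatz \eqref{def:thbl}: applying $\Delta^2=\p_z^4+2\p_x^2\p_z^2+\p_x^4$ to $(1+t)^{-1-j/4}\Theta^j_a(x,(1+t)^{1/4}z)$ and using $Z=(1+t)^{1/4}z$ produces terms of the form $(1+t)^{-j/4}\p_Z^4\Theta^j_a$, $(1+t)^{-1/2-j/4}\p_x^2\p_Z^2\Theta^j_a$ and $(1+t)^{-1-j/4}\p_x^4\Theta^j_a$; taking the trace at $Z=0$ and the $\p_z$-derivative trace, and using the structural assumption \eqref{hyp:traces-Thetaj} that $\p_Z^2\Theta^j_a\vert_{Z=0}$ and $\p_Z^3\Theta^j_a\vert_{Z=0}$ are in the relevant Sobolev spaces, one collects the leading-order constants $\gamma^j_{a,k}$ explicitly in terms of $\p_Z^4\Theta^0_a$, $\p_Z^4\Theta^1_a$ and the correctors — these are precisely the claimed explicit formulas in terms of $\Psi^0_a,\Psi^1_a,\Theta^0_a$ once one recalls the defining relations \eqref{def:psiaj-thetaaj}, \eqref{eq:Psbl2} linking $\Theta^j_a$ to $\Psi^j_a$. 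The contribution of $\theta_c$ to the trace is controlled by \eqref{bound-Thetaj-thetac}: a trace/interpolation inequality (Theorem 3.1, Chapter 1 of \cite{LionsMagenes}, as already used in \cref{coro:trace-dz3theta'}) gives $\|\p_z^k\Delta^2\theta_c\vert_{z=a}\|_{L^2(\T)}\lesssim (\|\theta_0\|_{H^{14}}+B^2)(1+t)^{-2-1/8+\text{(small)}}$, which decays faster than $(1+t)^{-1}$ and is absorbed into $\Gamma^j_{a,T}$.

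Next I would deal with the remainder contribution $\p_z^k\Delta^2\threm\vert_{z=a}$. Here the key is the bootstrap bound \eqref{hyp:bootstrap-thiint}: $\|\Delta^4\threm\|_{L^2}\leq B$ and $\|\p_x^4\threm\|_{L^2}\lesssim B(1+t)^{-2}$, together with the high-regularity bounds of \cref{lem-high-reg-NLBL} (in particular $\|\p_x^8\Delta^2\theta'\|_{L^2}\lesssim B$ and the $H^{14}$ growth bound). Interpolating between these and again invoking the Lions–Magenes trace theorem, one obtains $\|\p_z^k\Delta^2\threm\vert_{z=a}\|_{H^4(\T)}\lesssim B^2(1+t)^{-23/24+k/4}$ with $k$ here playing the role of $j$ in the statement; the exponent $23/24$ is what the interpolation exponents produce (it is not optimal, merely $<1$, which is all we need to get integrable-in-time decay with a margin). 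The piece of $\p_z^k\Delta^2\threm\vert_{z=a}$ with the slower $L^2$ decay $(1+t)^{-1+j/4}$, and its first two time derivatives satisfying $\|\p_t^l(\cdot)\|_{L^2}\lesssim B^2(1+t)^{-1-l+j/4}$, is then defined to be $\Gamma^j_{a,T}(t)$; the time-derivative bounds come from differentiating the equation for $\threm$ and reusing \cref{lem-high-reg-NLBL} (this is exactly the kind of estimate carried out there for $\p_t\p_x^9\theta'$). Finally, since all the pieces other than the explicit monomials $\gamma^j_{a,k}(1+t)^{-k/4}$ have been shown to be either genuinely $t$-independent (the constants $\gamma^j_{a,T}$, coming from the limits established in \cref{lem:CV-gamma-G}) or collected into $\Gamma^j_{a,T}$, one rearranges: write $\gamma^0_a(t) = \gamma^0_{a,T} + \gamma^0_{a,2}(1+t)^{-1/2}+\gamma^0_{a,3}(1+t)^{-3/4} + \big(\text{remainder}(t)\big)$, and then set $\Gamma^0_{a,T}(t)$ equal to the remainder plus $\gamma^0_{a,2}(1+T)^{-1/2}+\gamma^0_{a,3}(1+T)^{-3/4}$ so that the displayed identity holds exactly on $[0,T]$; the $L^\infty((0,T))$ and $W^{1,\infty}$ control of $\Gamma^j_{a,T}$ follows from the estimates just listed, uniformly in $T$. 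The bound $\|\gamma^j_{a,k}\|_{L^2}\lesssim B^2$ is read off the explicit formulas combined with \eqref{bound-Thetaj-thetac}.

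The main obstacle I anticipate is bookkeeping rather than conceptual: one must verify that \emph{all} error terms generated by applying $\Delta^2$ and then taking two boundary traces — in particular the cross terms $2\p_x^2\p_z^2$ acting on the scaled profiles, which are exactly the terms that plagued \cref{lem:taille_H-2} — decay strictly faster than $(1+t)^{-1}$ after the trace is taken, so that they can be legitimately absorbed into $\Gamma^j_{a,T}$ without spoiling the $j/4$-type decay. This requires being careful about how much regularity the trace operator costs (half a derivative per unit codimension, hence the fractional Sobolev indices like $15/2$ and $33/4$) and checking that the high-regularity reservoir from \cref{lem-high-reg-NLBL} is large enough to pay for it; the hypothesis $\theta_0\in H^{14}$ is exactly calibrated for this. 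A secondary technical point is that the constants $\gamma^j_{a,T}$ are defined as $t\to\infty$ limits, so one must invoke \cref{lem:CV-gamma-G} to know they exist and satisfy $\|\gamma^j_{a,T}\|_{H^s}\lesssim\|\theta_0\|_{H^{14}}+B^2$ for the relevant range of $s$; everything else is then an exercise in matched asymptotic expansions of the type already performed in \cref{sec:linear-BL}.
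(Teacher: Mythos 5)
There is a genuine gap: your plan derives the decomposition \emph{statically}, by splitting $\gamma^k_a(t)=\p_z^k\Delta^2\thbl\vert_{z=a}+\p_z^k\Delta^2\threm\vert_{z=a}$ and estimating each trace at fixed time, whereas the decomposition of \cref{lem:decomp-gamma} is \emph{dynamical}. The paper returns to the evolution equations \eqref{eq:gamma_0}, \eqref{eq:gamma_1} for the traces (derived in the proof of \cref{lem:CV-gamma-G}), substitutes $\theta'=\thbl+\thint$, $\psi=\psbl+\psiint$ and the Taylor data $g^2_{a,\infty},g^3_{a,\infty}$ of $G$ into the right-hand side, identifies the leading time monomials (the term $-6\,\p_z^2G\vert_{z=0}\,\p_x\p_z^2\psi\vert_{z=0}$ contributes $(1+t)^{-3/2}$ and $(1+t)^{-7/4}$ pieces through $\Psi^0_a,\Psi^1_a$, and the boundary quadratic form contributes the Poisson-bracket term at order $(1+t)^{-7/4}$), and then \emph{integrates from $t$ to $T$}. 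This time integration is what produces the explicit coefficients \eqref{def:gamma-j-al} (the factors $12$, $8$, $40$, $20$, $\tfrac43$), the characteristic subtracted terms $\gamma^j_{a,k}(1+T)^{-k/4}$, and the definition of $\Gamma^j_{a,T}(t)$ as $\int_t^T$ of the lower-order integrands, from which the bounds on $\p_t^l\Gamma^j_{a,T}$ follow immediately. Your computation of $\p_z^k\Delta^2\thbl\vert_{z=a}$ from the Ansatz would instead give coefficients of the form $\p_Z^4\Theta^j_a\vert_{Z=0}$, $\p_x^2\p_Z^2\Theta^j_a\vert_{Z=0}$, which are not the claimed formulas and carry no $(1+T)$-dependent counterterms.

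The second, quantitative, problem is the remainder. Under \eqref{hyp:bootstrap-thiint} one only has $\|\Delta^4\threm\|_{L^2}\leq B$ (no decay) and $\|\p_x^4\threm\|_{L^2}\lesssim B(1+t)^{-2}$; interpolation and the trace theorem then give at best something like $\|\p_z^4\threm\vert_{z=0}\|_{L^2}\lesssim B(1+t)^{-7/8}$, which is of size $B$, not $B^2$, and slower than $(1+t)^{-1}$. So the trace of $\Delta^2\threm$ cannot be ``absorbed into $\Gamma^j_{a,T}$'': the lemma requires $\|\p_t^l\Gamma^j_{a,T}\|_{L^2}\lesssim B^2(1+t)^{-1-l+j/4}$, and the quadratic smallness $B^2$ is essential later, since $\Gamma^j_{a,T}$ feeds the nonlinear lifting corrector $\sigma_\text{lift}^\text{NL}$ which must be perturbative in the bootstrap of \cref{sub:bootstrap-thint}. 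In the paper the $B^2$ factor comes automatically because the right-hand sides of \eqref{eq:gamma_0}, \eqref{eq:gamma_1} are products of two boundary traces, each of size $O(B)$; a static trace estimate on $\threm$ cannot see this structure. Where your proposal does align with the paper is in the auxiliary ingredients (the Lions--Magenes trace interpolation, \cref{coro:trace-dz3theta'}, the use of \eqref{hyp:traces-Thetaj} to control $\p_Z^2\Theta^j_a\vert_{Z=0}$, $\p_Z^3\Theta^j_a\vert_{Z=0}$, and the bound $\|\p_z^3\threm\vert_{z=0}\|\lesssim B(1+t)^{-1/4}$), but these enter as estimates of the \emph{integrands} in $\int_t^T$, not as a substitute for the time integration itself.
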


		Let us now prove \cref{lem:CV-gamma-G,lem:decomp-gamma}.
		
		\begin{proof}[Proof of \cref{lem:CV-gamma-G}]
			We have
			\begin{equation}
			\label{eq:Delta2-theta}
            \ba
                \frac{\p}{\p t} \Delta^2 \theta'={}& (1-  G) \p_x^2 \theta' -4 \p_z G \p_z \p_x^3 \psi -2 \p_z^2 G \p_x^3 \psi - \sum_{k=1}^4 \begin{pmatrix}
                   4\\ k
                \end{pmatrix} \p_z^k G \p_x \p_z^{4-k} \psi \\
                &+\Delta^2 (\p_z \psi \p_x \theta')' -\Delta^2 (\p_x \psi \p_z \theta')'.
            \ea
			\end{equation}
			We now take the trace of the above equation at $z=0$, recalling that $G\vert_{z=0}=\p_z G\vert_{z=0}=0$ (see \cref{lem:traces}), $\psi\vert_{z=0}= \p_z\psi\vert_{z=0} =0$, and $\theta'\vert_{z=0}=\p_z \theta'\vert_{z=0}=0$.
			We obtain
			\beq 
			\ba 
			\label{eq:gamma_0}
			\frac{\rd}{\rd t}\gamma^0_\mbot={}&-6 \p_z^2 G\vert_{z=0} \p_x \p_z^2 \psi\vert_{z=0} \\
			&+ 6 \left(\p_z^3 \psi\vert_{z=0} \p_x \p_z^2 \theta'\vert_{z=0}\right)' + 4 \left(\p_z^2 \psi\vert_{z=0} \p_x \p_z^3 \theta'\vert_{z=0}\right)'\\
			&-6 \left( \p_x \p_z^2 \psi\vert_{z=0} \p_z^3 \theta'\vert_{z=0}\right)' - 4 \left( \p_x \p_z^3 \psi\vert_{z=0} \p_z^2 \theta'\vert_{z=0}\right)'.
			\ea
			\eeq
			We then estimate each term in the right-hand side using \cref{lem-high-reg-NLBL}. Note that $\p_z^2 G|_{z=0}$ is bounded in $L^\infty(\R_+ \times (0,1))$.
			We focus on the first term, which has the smallest decay. Using Theorem 3.1 in Chapter 1 of \cite{LionsMagenes}, we infer that for any $s>0$,
			\[
			\|\p_x \p_z^2 \psi\vert_{z=0}\|_{H^s}\leq  \|\p_z^2  \psi\vert_{z=0}\|_{H^{s+1}} \lesssim \|\psi\|_{H^\beta_x L^2_z}^{3/8} \|\p_z^4 \psi \|_{H^\gamma_xL^2_z }^{5/8},
			\]
			where $\beta,\gamma$ are such that $\frac{3}{8}\beta + \frac{5}{8}\gamma=s+1$. 
			Let us choose $\beta=\gamma=10$, $s=9$. According to \cref{lem-high-reg-NLBL}, $\|\psi\|_{H^{10}_x L^2_z}\lesssim B (1+t)^{-2}$.  
			As for the other term, using the short-hand notation from \cref{sec:stab},
				\[
					\|\p_z^4 \psi \|_{H^{10}_x L^2_z}\lesssim \|\theta\|_{H^{11}_xL^2_z} \lesssim \underbrace{\|\theta\|_{H^{10}_x L^2_z}^{1/2} \|\theta\|_{H^{12}_x L^2_z}^{1/2} }_{\frac{1}{2}\times 1 + \frac{1}{2}\times 0}\lesssim B (1+t)^{-1/2}.
			\]
		Hence
		\[
		\|\p_x \p_z^2 \psi\vert_{z=0}\|_{H^9}\lesssim B (1+t)^{-17/16}.
		\]

			The quadratic terms, involving traces of derivatives of $\psi$ and of $\theta'$, have a higher decay. Let us estimate for instance $\p_z^3 \psi \p_x \p_z^2 \theta'$ at $z=0$. We have, for any $s>1/2$,
			\[\ba
			\|\p_z^3 \psi\vert_{z=0} \p_x \p_z^2 \theta'\vert_{z=0}\|_{H^s(\T)}\lesssim{}
            &\|\p_z^3 \psi\vert_{z=0}\|_{L^\infty(\T)} \|\p_x \p_z^2 \theta'\vert_{z=0}\|_{H^s(\T)} \\
            &+ \|\p_z^3 \psi\vert_{z=0}\|_{H^s(\T)} \|\p_x \p_z^2 \theta'\vert_{z=0}\|_{L^\infty(\T)}  .
			\ea\]
			Using once again Theorem 3.1 in Chapter 1 of \cite{LionsMagenes}, we find that
			\[
			\|\p_x \p_z^2 \theta'\vert_{z=0}\|_{L^\infty(\T)} \lesssim \|\theta'\|_{H^4_x L^2_z}^{11/16} \|\p_z^8 \theta'\|_{L^2_x L^2_z}^{5/16}\lesssim B (1+t)^{-1/2},
			\]
			and we recall that $\|\psi\|_{W^{3,\infty}}\lesssim B (1+t)^{-9/8}$ (see \eqref{est:psi-W3infty}). The same arguments together with \cref{lem-high-reg-NLBL} also imply
			\begin{align*}
				\|\p_x \p_z^2 \theta'\vert_{z=0}\|_{H^{19/2}(\T)} &\lesssim \|\theta'\|_{H^{12}_x L^2_z}^{3/8} \|\p_z^4 \theta'\|_{H^8_x L^2_z}^{5/8}\\
				&\lesssim \|\p_x^8 \Delta^2 \theta'\|_{L^2} \lesssim B,
			\end{align*}
			while
			\beq\label{est:trace-dz3psi}
			\|\p_z^3 \psi\vert_{z=0}\|_{H^s(\T)}\lesssim \|\psi\|_{L^2_z H^\beta_x}^{1/8} \| \p_z^4 \psi \|_{L^2_z H^\gamma_x }^{7/8} \lesssim \|\psi\|_{L^2_z H^\beta_x}^{1/8} \|  \theta' \|_{L^2_z H^{\gamma+1}_x }^{7/8},
			\eeq
			with $\frac{1}{8}\beta + \frac{7}{8}\gamma=s$. Taking $\beta=10$ and $\gamma=9$, we obtain, for some $s>9$,
			\begin{equation*}
			\|\p_z^3 \psi\vert_{z=0} \p_x \p_z^2 \theta'\vert_{z=0}\|_{H^s(\T)}\lesssim B^2 (1+t)^{-9/8}.
			\end{equation*}
			The other terms are treated in a similar fashion.
			We infer that there exists $\delta>0$ such that
			\[
		\left|	\frac{\ud}{\ud t} \|\gamma^0_a(t)\|_{H^9(\T)}\right| \lesssim \frac{B^2}{(1+t)^{1+\delta}}\quad \forall t \in (0,T).
			\]
		
			This completes the proof of the estimate on $\gamma^0_a$.
			
			The estimate for $\gamma^1_a$ follows from a similar argument. 
			Taking the vertical derivative of \eqref{eq:Delta2-theta}, we have
			\begin{equation*}
			\label{eq:d_z-Delta2-theta}
            \ba
                \frac{\p}{\p t} \p_z \Delta^2 \theta'={}& (1-  G) \p_z \p_x^2 \theta' -\sum_{k=1}^3  \begin{pmatrix}
    					3\\ k
				\end{pmatrix}\p_z^k G \p_z^{3-k} \p_x^3 \psi - \sum_{k=1}^5 \begin{pmatrix}
					5\\ k
				\end{pmatrix} \p_z^k G \p_x \p_z^{5-k} \psi \\
				&+\p_z\Delta^2 (\p_z \psi \p_x \theta')' -\Delta^2 (\p_x \psi \p_z \theta')'.
			\ea \end{equation*}
			Taking the trace of the above equation at $z=0$, we obtain
			\begin{equation*} \label{eq:gamma_1}
			\ba 
			\frac{\rd}{\rd t}\gamma^1_\mbot={}&-10 \p_z^2 G\vert_{z=0} \p_x \p_z^3 \psi\vert_{z=0} -10 \p_z^3 G\vert_{z=0} \p_x \p_z^2 \psi\vert_{z=0} \\
			&+ 6 \p_x^2 \left(\p_z^2 \psi\vert_{z=0} \p_x \p_z^2 \theta'\vert_{z=0} - \p_x \p_z^2 \psi\vert_{z=0} \p_z^2 \theta'\vert_{z=0} \right)\\
			&+ 10 \left(\p_z^3 \psi\vert_{z=0} \p_x \p_z^3 \theta'_{z=0} + \p_z^4 \psi_{z=0} \p_x \p_z^2 \theta'\vert_{z=0}\right)' + 5 \left(\p_z^2 \psi\vert_{z=0} \p_x \p_z^4 \theta'_{z=0}\right)'\\
			&-10 \left( \p_x \p_z^3 \psi\vert_{z=0} \p_z^3 \theta'\vert_{z=0} + \p_x \p_z^4 \psi_{z=0}  \p_z^2 \theta'\vert_{z=0}\right)' - 5 \left( \p_x \p_z^2 \psi\vert_{z=0} \p_z^4 \theta'\vert_{z=0}\right)'.
			\ea
			\end{equation*}
			The highest order term is the first one. We recall that $\p_z^2G$ and $\p_z^3 G$ are uniformly bounded by $C B$ in $L^\infty$, and that the trace of $\p_z^3 \psi$ in $H^9$ is evaluated thanks to \eqref{est:trace-dz3psi}. 
Once again, the quadratic terms have a higher decay and can be handled as perturbations.
			The trace of $\p_z^4 \theta'\vert_{z=0}$ can estimated thanks to $\gamma^0_\mbot.$ We then obtain, for some $\delta>0$,
				\[
				\left|	\frac{\ud}{\ud t} \|\gamma^1_a(t)\|_{H^8(\T)}\right| \lesssim \frac{B^2}{(1+t)^{1+\delta}}\quad \forall t \in (0,T).
				\]
			and the desired estimate for $\gamma^1_a$ follows.

			Let us now address the convergence of $\p_z^k G(t)\vert_{\p\Om}$ as $t\to \infty$. We recall that
			\[
			\ba
			\p_t \p_z^k G(t, z=0)= &- \p_z^{k+1} \overline{\bu \cdot \nabla \theta'|_{z=0}}\\
			=& \p_z^{k+1} \overline{\p_z\psi\p_x \theta' - \p_x \psi \p_z \theta'\vert_{z=0}}.
			\ea
			\]
			Since  $\psi\vert_{z=0}= \p_\bn\psi\vert_{z=0} =0$, and $\theta'\vert_{z=0}=\p_\bn \theta'\vert_{z=0}=0$, we have
			\[
			\p_t \p_z^2 G(t,z=0)= \overline{3\p_z^2 \psi\vert_{z=0} \p_x\p_z^2 \theta'\vert_{z=0} - \p_x\p_z^2 \psi\vert_{z=0} \p_z^2 \theta'\vert_{z=0}}.
			\]
			As above,
			\[
			\ba
			\| \p_z^2 \psi\vert_{z=0} \p_x\p_z^2 \theta'\vert_{z=0}\|_{L^1(\T)}\leq &\| \p_z^2 \psi\vert_{z=0}\|_{L^2(\T)} \|  \p_x\p_z^2 \theta'\vert_{z=0}\|_{L^2(\T)}\\
			\lesssim& \| \p_z^2 \psi\|_{H^{1/2}(\Om)} \|  \p_x\p_z^2 \theta'\|_{H^{1/2}(\Om)}\\
			\lesssim & B^2 (1+t)^{-7/4}.
			\ea
			\]
			The estimate on $\p_t \p_z^3 G(t,z=0)$ is similar and left to the reader.
			
		\end{proof}
		
		We now turn towards the decomposition of $\gamma^0_a$ and $\gamma^1_a$ for $a\in \{\mtop,\mbot\}$:
		
		\begin{proof}[Proof of \cref{lem:decomp-gamma}]
			We focus on $a=\mbot$ by symmetry, and we start with the decomposition of $\gamma^0_\mbot$.
			Let us go back to \eqref{eq:gamma_0}. The main term in the right-hand side is $-6\p_z^2 G\vert_{z=0} \p_x \p_z^2 \psi\vert_{z=0}$. Following \cref{lem:CV-gamma-G} and using the decomposition $\theta'=\thbl + \threm$, we write
			\begin{equation*} \ba
				\p_z^2 G\vert_{z=0} \p_x \p_z^2 \psi\vert_{z=0}={}&(1+t)^{-3/2} g^2_{\mbot, T} \p_x \p_Z^2 \Psi^0_{\mbot|Z=0} + (1+t)^{-7/4} g^2_{\mbot, T} \p_x \p_Z^2 \Psi^1_{\mbot|Z=0}\\
				&+ \sum_{j\geq 2} (1+t)^{-\frac{3}{2} -\frac{j}{4}} \p_z^2 G\vert_{z=0} \p_x \p_Z^2 \Psi^j_{\mbot|Z=0}\\
				&+  \sum_{j=0,1} (1+t)^{-\frac{3}{2} -\frac{j}{4}} \left(\p_z^2 G\vert_{z=0} -g^2_{\mbot, T}\right) \p_x \p_Z^2 \Psi^j_{\mbot|Z=0}\\
				&+ \p_z^2 G\vert_{z=0} \p_x \p_z^2 \psrem\vert_{z=0} + O(B^2 \exp(-c (1+t)^{1/5}),
			\ea \end{equation*}
   where the exponentially small term comes from the traces of derivatives of $\Psi^j_\mtop$ evaluated at $Z=(1+t)^{1/4}$.
			The assumptions of the Lemma and the bootstrap inequalities  \eqref{hyp:bootstrap-thiint} ensure that for all $t\in [0,T]$,
			\[
			\ba 
			\left\| \sum_{j\geq 2} (1+t)^{-\frac{3}{2} -\frac{j}{4}} \p_z^2 G\vert_{z=0} \p_x \p_Z^2 \Psi^j_{\mbot|Z=0}\right\|_{L^2(\T)}\lesssim B^2 (1+t)^{-2} ,\\
			\left\|  \p_z^2 G\vert_{z=0} \p_x \p_z^2 \psrem\vert_{z=0}\right\|_{L^2(\T)} \lesssim \| \p_z^2 G\|_\infty \|\p_x \threm\|_{L^2} \lesssim B^2 (1+t)^{-2}.
			\ea
			\]
			Furthermore, \cref{lem:CV-gamma-G} ensures that for all $t\in (0,T)$
			\[
			\left| \p_z^2 G\vert_{z=0} -g^2_{\mbot, T}\right| \lesssim B^2 (1+t)^{-3/4},
			\]
			and therefore
			\[
			\left\|  \sum_{j=0,1} (1+t)^{-\frac{3}{2} -\frac{j}{4}} \left(\p_z^2 G\vert_{z=0} -g^2_{\mbot, T}\right) \p_x \p_Z^2 \Psi^j_{\mbot|Z=0}\right\|_{L^2(\T)}\lesssim B^3 (1+t)^{-9/4}.
			\]
			We now address the  quadratic terms in \eqref{eq:gamma_0}, namely
			\[
			\mathcal B(\psi, \theta'):=6 \left\{\p_z^2 \psi, \p_z^2 \theta'\right\}'\vert_{z=0} + 4(\p_z^2 \psi \p_x \p_z^3 \theta' - \p_x \p_z^3 \psi \p_z^2 \theta')'\vert_{z=0}.
			\]
			Decomposing $\psi$ and $\theta'$ into their boundary layer and their remainder part, we find that  the main order quadratic term is
			\[\ba
			&(1+t)^{-\frac{7}{4}} \left[6\p_Z^3 \Psi^0_\mbot \p_x \p_Z^2 \Theta^0_\mbot + 4 \p_Z^2\Psi^0_\mbot \p_x \p_Z^3 \Theta^0_\mbot\right. \\
   &\qquad\qquad\qquad\left.- 6 \p_x \p_Z^2\Psi^0_\mbot  \p_Z^3 \Theta^0_\mbot - 4  \p_x \p_Z^3 \Psi^0_\mbot \p_Z^2 \Theta^0_\mbot\right]'\vert_{z=0}\\
			=:{}&(1+t)^{-7/4} \gamma^0_{\mbot, \mathrm{NL}},\ea
			\]
			while all the other terms are bounded in $L^2(\T)$ by $
			C B^2 (1+t)^{-2}.$
		      
        Defining $\gamma^{0,j}_a$ by \eqref{def:gamma-j-al}, we find that
				\[\ba 
				\p_t \left( \gamma^{0,2}_\mbot (1+t)^{-1/2}\right) &= -6 (1+t)^{-3/2} g^2_{\mbot, T} \p_x \p_Z^2 \Psi^0_{\mbot|Z=0} ,\\
				\p_t  \left( \gamma^{0,3}_\mbot (1+t)^{-3/4}\right) &= -6 (1+t)^{-7/4} g^2_{\mbot, T} \p_x \p_Z^2 \Psi^1_{\mbot|Z=0}+ (1+t)^{-7/4} \gamma^0_{\mbot, \mathrm{NL}},
				\ea
				\]
	where we recognize the main terms in $-6\p_z^2 G\vert_{z=0} \p_x \p_z^2 \psi\vert_{z=0}$.		
			Now, define $\Gamma^0_{\mbot, T}$ by
			\begin{equation*} \ba
				\Gamma^0_{\mbot, T}(t)={}&6\int_t^T \sum_{j\geq 2} (1+s)^{-\frac{3}{2} -\frac{j}{4}} \p_z^2G(s)\vert_{z=0} \p_x \p_Z^2 \Psi^j_{\mbot|Z=0}\ud s\\
				&+6 \int_t^T  \sum_{j=0,1} (1+s)^{-\frac{3}{2} -\frac{j}{4}} \left(\p_z^2 G(s)\vert_{z=0} -g^2_{\mbot, T}\right) \p_x \p_Z^2 \Psi^j_{\mbot|Z=0}\ud s\\
				&+  \int_t^T \mathcal B \left(\sum_{j=1}^4 (1+s)^{-2 - \frac{j}{4}} \Psi^j_{\mbot}(x, Z_\mbot) + \psrem, \theta'(s)\right)  \ud s\\
				&+  \int_t^T \mathcal B \left((1+s)^{-2 } \Psi^0_{\mbot}(x, Z_\mbot) , \sum_{j=1}^4 (1+s)^{-1 - \frac{j}{4}} \Theta^j_{\mbot}(x, Z_\mbot) + \threm\right)  \ud s\\
    &+ O(B^2\exp(-c (1+t)^{1/4})).
			\ea \end{equation*}
   The last --- exponentially small --- term comes once again from the trace of $\Psi^j_\mtop$ at the lower boundary, \ie at $Z=(1+t)^{1/4}$. We do not write its full expression  for the sake of readability.

			Note that the assumptions \eqref{hyp:bootstrap-thiint} on $\threm$ ensure that
			\[
			\|\p_z^3 \threm\vert_{z=0}\|_{L^2} \lesssim \|\threm\|_{L^2}^{9/16} \|\p_z^8 \threm\|_{L^2}^{7/16} \lesssim B (1+t)^{-9/8}.
			\]
			Recalling \cref{coro:trace-dz3theta'} and using the assumption $\p_Z^3 \Theta^j_{a}\vert_{Z=0} \in H^{15/2}(\T)$ (see \eqref{hyp:traces-Thetaj}), we also infer that
			\[
			\|\p_z^3 \threm\vert_{z=0}\|_{H^{15/2}}\lesssim B (1+t)^{-1/8}.
			\]
			Interpolating between these two estimates, we find in particular that
			\[
			\|\p_z^3 \threm\vert_{z=0}\|_{H^5} \lesssim B (1+t)^{-11/24}.
			\]
			The above estimates ensure that for $k=0,1$
			\[\ba 
			\left\| \p_t^k \Gamma^0_{\mbot, T}(t)\right\|_{L^2(\T)}\lesssim B^2(1+t)^{-k-1},\\
			\left\| \Gamma^0_{\mbot, T}(t)\right\|_{H^4(\T)}\lesssim B^2 (1+t)^{-23/24}.
			\ea
			\]
			Therefore we obtain the decomposition announced in the Lemma for $\gamma^0_a$.

			The decomposition of $\gamma^1_a$ follows from similar arguments and is left to the reader.
		\end{proof}

	\subsection{Iterative construction of the boundary layer profile}
	\label{sub:NLBL-construction}
		Let us now turn towards the construction of the boundary layer profile, and more generally, of an approximate solution.
		The purpose of this subsection is to prove the two following Lemmas. Our first result, which is truly the core of the construction,
		is valid under the bootstrap assumption \eqref{hyp:bootstrap-theta'-NLBL} on $\theta'$:
		
		\begin{lemma}
			Let $\theta_0\in H^{14}(\Om)$ such that $\|\theta_0\|_{H^{14}}\leq B <1$, and 
			$\theta_0\vert_{\p \Om}=\p_\bn \theta_0\vert_{\p \Om}=0$, $\p_z^2 \bar\theta_0\vert_{\p \Om}=0$.
			
			 Let $\theta=\bar \theta+\theta' $ be a solution of \eqref{eq:STi}, and assume that the bounds \eqref{hyp:bootstrap-theta'-NLBL} hold on $(0,T)$.
			Let $\gamma^0_{a,T}$, $\gamma^1_{a,T}$ be defined by \cref{lem:CV-gamma-G}.

			Then there exist profiles $\Theta^j_a\in H^8(\T\times \R_+)$, $\Psi^j_a\in H^9(\T\times \R_+)$, $j\in \{0,\cdots, 4\}$ and  a corrector $\theta_c \in H^9(\Om)$, depending only on $\gamma^0_{a,T}, \gamma^1_{a,T}, g^2_{a,T}$ and $g^3_{a,T}$, such that, defining $\thbl$ by \eqref{def:thbl}, $\gamma^j_{a,T}$ by \cref{lem:CV-gamma-G} and $\gamma^{j,k}_a$ by \eqref{def:gamma-j-al}, the following properties hold:
			\begin{enumerate}
				\item Bounds on the profiles: $\Theta^j_a$, $\Psi^j_a$ satisfy \eqref{bound-Thetaj-thetac};

\item Bound on the corrector: setting $\psi_c=\Delta^{-2}\p_x \theta_c$,
\[\ba 
\sup_{t\in [0,T]}\left(\|\p_x \Delta^4 \theta_c(t)\|_{L^2} + (1+t)^2 \|\p_x^5 \theta_c(t)\|_{L^2} \right)\lesssim \|\theta_0\|_{H^{14}} + B^2,\\
\sup_{t\in [0,T]}\left((1+t)^3\left(\|\p_t \p_x^4 \theta_c (t)\| + \|\p_x^5 \psi_c(t)\|_{L^2}\right) \right)\lesssim \|\theta_0\|_{H^{14}} + B^2
.\ea\]
    
				\item Traces at the top and bottom: at $z=0$,
				\begin{align}
                  \begin{split}
                    \Delta^2 (\thbl+ \theta_c)\vert_{z=0} ={}
                    & \gamma^0_{\mbot, T} + \gamma^{0,2}_{\mbot} (1+t)^{-1/2} + \gamma^{0,3}_{\mbot} (1+t)^{-3/4}\\
                    & - \gamma^{0,2}_{\mbot} (1+T)^{-1/2} - \gamma^{0,3}_{\mbot} (1+T)^{-3/4},
                  \end{split} \label{trace-Delta2-thbl}\\
                  \begin{split}
                  \p_z \Delta^2 (\thbl+\theta_c)\vert_{z=0} ={}
                    & \gamma^1_{\mbot, T} + \gamma^{1,1}_{\mbot,} (1+t)^{-1/4} + \gamma^{1,2}_{\mbot} (1+t)^{-1/2}\\
                    & - {\gamma^{1,1}_\mbot}(1+T)^{-1/4} - {\gamma^{1,2}_{\mbot}} (1+T)^{-1/2} .
                  \end{split} \label{trace-dzDelta2-thbl}
				\end{align}
				Similar formulas hold at $z=1$.
				
				\item Evolution equation: $\thbl+\theta_c$ satisfies
				\[
    \ba
				\p_t (\thbl+\theta_c) =& (1-G) \p_x^2 \Delta^{-2} (\thbl+\theta_c) \\
   & - \left(\nabla^\bot \Delta^{-2} \p_x (\thbl+\theta_c) \cdot \nabla (\thbl+\theta_c)\right)' + R^\BL,\ea
				\]
				and the remainder $R^\BL$ is such that for  $\ell=0,1$, for all $t\in [0,T]$,
				\[
				\| \p_t^\ell \p_x^4 R^\BL \|_{L^2}\lesssim B^2 (1+t)^{-3 -\ell},\quad
    \|\p_x^2 \Delta^2 R^\BL\|\lesssim B^2 (1+t)^{-2},\quad \|\Delta^4 R^\BL\|_{L^2}\lesssim B^2 (1+t)^{-\frac{9}{8}}.
				\]
				
			\end{enumerate}

			\label{lem:BL-main}
		\end{lemma}

\begin{remark}
	The reader may compare formulas \eqref{trace-Delta2-thbl}, \eqref{trace-dzDelta2-thbl} with the ones from \cref{lem:decomp-gamma}. The terms $\Gamma^j_{a,T}$ are  lifted neither by the boundary layer term $\thbl$ nor by the corrector $\theta_c$, and an additional corrector will be built to handle them, see \cref{lem:BL-NLcorrector} below.
\end{remark}	

		\begin{remark}
			Actually, all profiles $\Theta^j_a,\Psi^j_a$, and therefore $\thbl,\psbl$ depend on $T$ through $\gamma^0_{a,T},\gamma^1_{a,T}$.
			However, in order not to burden unnecessarily the notation, we will omit this dependency in the present Section. 
			The dependency will be restored in \cref{sub:bootstrap-thint} when we perform the final bootstrap argument.
			\label{rem:thbl_depends_on_T}
			
		\end{remark}

		Once the boundary layer part is constructed, under an additional bootstrap assumption on the remainder, we can define a nonlinear corrector:
		\begin{lemma}
				Let $\theta_0\in H^{14}(\Om)$ such that $\|\theta_0\|_{H^{14}}\leq B <1$, and 
		$\theta_0\vert_{\p \Om}=\p_\bn \theta_0\vert_{\p \Om}=0$, $\p_z^2 \bar\theta_0\vert_{\p \Om}=0$.
		
		Let $\theta= \bar \theta+\theta'$ be a solution of \eqref{eq:STi}, and assume that the bounds \eqref{hyp:bootstrap-theta'-NLBL} hold on $(0,T)$.
			
			Let $\thbl,\psbl$ be given by \cref{lem:BL-main}, and let $\threm=\theta'-\thbl$. Assume  that \eqref{hyp:bootstrap-thiint} holds on $(0,T)$, and define $\Gamma^j_{a,T}$ as in \cref{lem:decomp-gamma}.
			
			Then there exists $\sigma_\mathrm{lift}^\mathrm{NL}\in H^8(\Om)$ such that
			\[\ba 
			\Delta^2 \sigma_\mathrm{lift}^\mathrm{NL}\vert_{z=0}= \Gamma^0_{\mbot,T},\quad \p_z \Delta^2 \sigma_\mathrm{lift}^\mathrm{NL}\vert_{z=0}= \Gamma^1_{\mbot,T},\\
			\Delta^2 \sigma_\mathrm{lift}^\mathrm{NL}\vert_{z=1}= \Gamma^0_{\mtop,T},\quad \p_z \Delta^2\sigma_\mathrm{lift}^\mathrm{NL}\vert_{z=1}= \Gamma^1_{\mtop,T}\ea
			\]
			and for all $t\in [0,T]$,
			\[
			\ba 
			\| \Delta^4 \sigma_\mathrm{lift}^\mathrm{NL}\|_{L^2}\lesssim B^2 (1+t)^{-1/12} ,\quad \|\p_x^2 \Delta^2 \sigma_\mathrm{lift}^\mathrm{NL}\|_{L^2}\lesssim B^2 (1+t)^{-1+ \frac{1}{64}},\quad \|\p_x^4 \sigma_\mathrm{lift}^\mathrm{NL}\|_{L^2}\lesssim B^2 (1+t)^{-2}.
			\ea
			\]
			As a consequence, setting $\thapp:=\thbl + \theta_c+ \sigma_\mathrm{lift}^\mathrm{NL}$, we have
			\[
			\ba 
			\Delta^2 \thapp = \Delta^2 \theta', \quad \p_\bn \Delta^2 \thapp = \p_\bn \Delta^2 \theta'\quad \text{on }\p\Om.
			\ea
			\]
			Furthermore $\thapp$ is a solution of
			\[
			\p_t \thapp = (1-G) \p_x^2 \Delta^{-2} \thapp - \left(\nabla^\bot \Delta^{-2} \p_x \thapp \cdot \nabla \thapp\right)' + S_\rem,
			\]
			and the remainder $S_\rem$ is such that for all $k,m\geq 0$ with $k+m\leq 8$,
			\[
				\| \p_t^\ell \p_x^4 S_\rem \|_{L^2}\lesssim B^2 (1+t)^{-3 -\ell},\quad
    \|\p_x^2 \Delta^2 S_\rem\|\lesssim B^2 (1+t)^{-2},\quad \|\Delta^4 S_\rem\|_{L^2}\lesssim B^2 (1+t)^{-\frac{9}{8}}.
			\]

			\label{lem:BL-NLcorrector}
		\end{lemma}

		The main part of this section will be devoted to the proof of \cref{lem:BL-main}.
		The strategy will be very similar to the one of \cref{sec:linear-BL}, and we will often refer the reader to the computations therein.
		We begin with the construction of the profiles $\Theta^j_a$, $\Psi^j_a$\footnote{We recall that these profiles are different from the ones constructed in \cref{sec:linear-BL}, in spite of a similar notation.}.
		To that end,
		we plug the Ansatz \eqref{def:thbl} into \eqref{eq:STi} and identify the powers of $1+t$ in the vicinity of $z=0$ or $z=1$. Note that for $z\ll 1$ and $t\in [0,T]$, setting $Z=(1+t)^{1/4}z$ and using \cref{lem:CV-gamma-G},
		\beq\label{expansion-G}\ba 
		G(t,z)&=\frac{1}{2} \p_z^2 G(t,0) z^2 + \frac{1}{6} \p_z^3 G(t,0) z^3 + O(z^4)\\
		&= (1+t)^{-1/2}\frac{g^2_{\mbot,T}}{2} Z^2 +  (1+t)^{-3/4}\frac{g^3_{\mbot,T}}{6} Z^3 \\
		&\qquad + O((1+t)^{-1} Z^4 + (1+t)^{-5/4} (Z^2 + Z^3)) .
		\ea
		\eeq
		A similar expansion holds in the vicinity of $z=1$.
		Furthermore, in the vicinity of $z=0$, setting $S=-(\nabla^\bot \psi \cdot \nabla \theta')'$ and assuming that \eqref{hyp:bootstrap-thiint} holds,
		\begin{equation*}\label{expansion-S_BL}
		S=\sum_{0\leq i,j\leq 4} (1+t)^{-3 - \frac{i+j-1}{4}} \left( \p_Z \Psbot^i \p_x \Thbot^j - \p_x \Psbot^i \p_Z \Thbot^j\right)' + O((1+t)^{-15/4})\quad \text{in } L^2.
		\end{equation*}
		Following the computations of the previous section and identifying the coefficient of $(1+t)^{-2-\frac{j}{4}}$, we 
		obtain, for $j\in \{0,\cdots, 3\}$ (compare with \eqref{def:psiaj-thetaaj})
		\beq\label{eq:Thetaj-BL-EDP}
		- \left(1 + \frac{j}{4}\right) \Theta^j_a + \frac{1}{4}Z \p_Z \Theta^j_a = \p_x \Psi^j_a + S^j_a,
		\eeq
		where the source terms $S^j_a$ are defined by
		\beq\label{def:Sja}
		\ba
		S^0_a &=S^1_a=0,\\
		S^2_a &=-\frac{g^2_{a,T}}{2} Z^2\p_x \Psi^0_a,\\
		S^3_a &=-\frac{g^2_{a,T}}{2} Z^2\p_x \Psi^1_a- \eta_a \frac{g^3_{a,T}}{6} Z^3 \p_x \Psi^0_a + \eta_a \left( \p_Z \Psi_a^0 \p_x \Theta_a^0 - \p_x \Psi_a^0 \p_Z \Theta_a^0\right)',
		\ea
		\eeq
		with $\eta_\mbot=1,\eta_{\mtop}=-1$.

		Let us now proceed to define recursively the profiles $\Theta^j_a, \Psi^j_a$.

		\paragraph{Main order boundary layer terms: $\Theta^0_a$ and $\Theta^1_a$.}
		
		The role of the boundary layer profiles $\Theta^j_a$ for $j=0,1$ is to correct the traces of $\Delta^2 \theta'$ and $\p_z\Delta^2 \theta'$ on $\p \Om$ \emph{at main order}, \ie $\gamma^j_{a,T}$ (see \cref{lem:decomp-gamma}).
		Choosing $\Psi^j_a$  such that  $\p_Z^4 \Psi^j_a=\p_x\Theta^j_a$ for $j=0,1$ and recalling \eqref{eq:Thetaj-BL-EDP}, we are led to
		\begin{equation*}
		\begin{cases}
			Z \p_Z^5 \Theta^0_a=4\p_x^2 \Theta^0_a\quad \text{in } \T\times (0,+\infty)\\
			\p_Z^4\Theta^0_{a|Z=0}=\gamma^0_{a,T}, \quad \p_Z^5 \Theta^0_{a|Z=0}=0,\\
			\Theta^0_{a|Z=0}=0,\quad \p_Z  \Theta^0_{a|Z=0}=0,\quad \lim_{Z\to \infty} \Theta^0_a=0,
		\end{cases}
		\end{equation*}
		and
		\begin{equation*}\label{def:Theta1}
		\begin{cases}
			Z \p_Z^6 \Theta^1_a=4\p_x^2\p_Z \Theta^1_a\quad \text{in } \T\times (0,+\infty)\\
			\p_Z^4\Theta^1_{a|Z=0}=0, \quad \p_Z^5 \Theta^1_{a|Z=0}=\eta_a\gamma^1_{a,T},\\
			\Theta^1_{a|Z=0}=0,\quad \p_Z  \Theta^1_{a|Z=0}=0,\quad \lim_{Z\to \infty} \Theta^1_a=0.
		\end{cases}
		\end{equation*}
		Note that these systems are identical to \eqref{eq:Psi_i} and \eqref{eq:Phi_i} respectively. 
		As a consequence, as in the previous section (see \eqref{formula-Fourier-PsiO}), we find that
		\beq\label{formula-Fourier-0-NL}\ba 
		\Theta^0_a(x,Z)&=\sum_{k\in \Z\setminus\{0\}}|k|^{-2}\widehat{\gamma^0_{a,T}}(k) \chi_0(|k|^{1/2} Z)  e^{ikx},\\
		\Psi^0_a(x,Z)&=\sum_{k\in \Z\setminus\{0\}}\frac{1}{ik|k|^2}\widehat{\gamma^0_{a,T}}(k) \left[ \frac{1}{4} |k|^{1/2} Z \chi_0'(|k|^{1/2} Z) - \chi_0(|k|^{1/2} Z)  \right]e^{ikx},
		\ea
		\eeq
		where $\chi_0$ is defined in \cref{def:chi_j}. 
		Since $\|\gamma^0_{a,T}\|_{H^9}\lesssim \|\theta_0\|_{H^{14}}+ B^2$ according to \cref{lem:CV-gamma-G}, it follows that
		\[\ba
		\|\Theta^0_a\|_{H^{11}_x L^2_Z} + \|\Theta^0_a\|_{L^2_xH^{22}_Z} \lesssim \|\theta_0\|_{H^{14}}+ B^2,\\
		\|\Psi^0_a\|_{H^{12}_x L^2_Z} + \|\Psi^0_a\|_{L^2_xH^{24}_Z} \lesssim \|\theta_0\|_{H^{14}}+ B^2.\ea
		\]
		In a similar fashion, recalling the definition of $\chi_1$ from \cref{def:chi_j} (see also \eqref{formula-Fourier-Psi1}),
		\beq \label{formula-Fourier-1-NL}\ba 
		\Theta^1_a(x,Z)&=\eta_a\sum_{k\in \Z\setminus\{0\}}|k|^{-5/2}\widehat{\gamma^1_{a,T}}(k) \chi_1(|k|^{1/2} Z)  e^{ikx},\\
		\Psi^1_a(x,Z)&=\eta_a\sum_{k\in \Z\setminus\{0\}}\frac{1}{ik|k|^{5/2}}\widehat{\gamma^1_{a,T}}(k) \left[ \frac{1}{4} |k|^{1/2} Z \chi_1'(|k|^{1/2} Z) - \frac{5}{4}\chi_1(|k|^{1/2} Z)  \right]e^{ikx}.
		\ea
		\eeq
		Since $\|\gamma^1_{a,T}\|_{H^8}\lesssim \|\theta_0\|_{H^{14}}+ B^2$, we also have
		\[\ba
		\|\Theta^1_a\|_{H^{21/2}_x L^2_Z} + \|\Theta^1_a\|_{L^2_xH^{21}_Z} &\lesssim \|\theta_0\|_{H^{14}}+ B^2,\\
		\|\Psi^1_a\|_{H^{23/2}_x L^2_Z} + \|\Psi^1_a\|_{L^2_xH^{23}_Z} &\lesssim \|\theta_0\|_{H^{14}}+ B^2.\ea
		\]
		Let us now define the boundary terms {$\gamma^{0,j}_a$ and $\gamma^{1,j}_a$ by \cref{def:gamma-j-al}. It follows from the above expressions for $\Psi^j_a$ and $\Theta^j_a$  and from the boundary conditions $\chi_0(0)=\chi_0'(0)=0$ that
		\beq\label{est:gamma-j-al}
		\ba
		\|\gamma^{0,2}_a\|_{H^{10}(\T)} &\lesssim B^2,& \|\gamma^{0,3}_a\|_{H^{17/2}(\T)} &\lesssim B^2,\\
		\|\gamma^{1,1}_a\|_{H^{19/2}(T)} &\lesssim B^2,& \|\gamma^{1,2}_a\|_{H^{8}(\T)} &\lesssim B^2.
		\ea
		\eeq
		In a similar fashion, defining the source terms $S^2_a$, $S^3_a$ by \eqref{def:Sja}, we have
		\beq \label{est:Sja}
		\ba
		\|S^2_a\|_{H^{12}_x L^2_Z}  + \|S^2_a\|_{L^2_xH^{24}_Z} \lesssim B^2,\\
		\| S^3_a\|_{H^{10}_x L^2_Z} + \| S^3_a\|_{L^2_x H^{20}_Z} \lesssim B^2.
		\ea
		\eeq
		Note however that because of the quadratic term $\{\Psi^0_a,\Theta^0_a\}_{x,Z}$, $S^3_a$ does not have the same self-similar structure as $\Psi^j_a,\Theta^j_a$ for $j=0,1$, which is also shared by $S^2_a$.

		\paragraph{Correctors $\Theta^0_{c,a}$ and $\Theta^1_{c,a}$.}
		
		We recall that the coefficients $\gamma^{j,k}_a$ are defined  by   \eqref{def:gamma-j-al}, and are estimated in \eqref{est:gamma-j-al} above.
		The terms $\gamma^{0,2}_a(1+T)^{-1/2}$ and  $\gamma^{0,3}_a(1+T)^{-3/4}$ in \cref{lem:decomp-gamma} are constant in time, but smaller (for $T\gg 1$) than $\gamma^0_{a,T}$. Hence they give rise to a profile $\Theta^0_{c,a}$ whose construction is very similar to the one of $\Theta^0_a$, but whose size is much smaller.
		More precisely, we set
		\[\ba 
		\Theta^0_{c,a}(x,Z)&=\sum_{k\in \Z \setminus\{0\}}|k|^{-2} \left[ \widehat{\gamma^{0,2}_a} (k)(1+T)^{-1/2} 
		+ \widehat{\gamma^{0,3}_a}(k) (1+T)^{-3/4}\right]\chi_0(|k|^{1/2} Z) e^{ikx},\\
		\Theta^1_{c,a}(x,Z)&=\eta_a \sum_{k\in \Z \setminus\{0\}}|k|^{-5/2} \left[ \widehat{\gamma^{1,1}_a}(k) (1+T)^{-1/4} +\widehat{\gamma^{1,2}_a} (1+T)^{-1/2} \right]\chi_1(|k|^{1/2} Z) e^{ikx}.
		\ea
		\]
		Remembering \eqref{est:gamma-j-al}, we have, for $j=0,1$
		\[
    \ba
		\|\Theta^j_{c,a}\|_{H^{21/2}_x L^2_Z} +\|\Theta^j_{c,a}\|_{L^2_x H^{21}_Z} &\lesssim B^2 (1+T)^{-\frac{1}{2} + \frac{j}{4}},\\
		\|\p_Z^2 \Theta^j_{c,a}\vert_{Z=0}\|_{H^{19/2}(\T)} &\lesssim B^2 (1+T)^{-\frac{1}{2} + \frac{j}{4}}.
		\ea
    \]
		Analogously to $\Psi^0_a$ and $\Psi^1_a$, we also define
		\[
		\ba 
		\Psi_{c,a}^0&=\sum_{k\in \Z^*} \frac{1}{ik|k|^2} \left[ \widehat{\gamma^{0,2}_a} (k)(1+T)^{-\frac{1}{2}} 
		+ \widehat{\gamma^{0,3}_a}(k) (1+T)^{-\frac{3}{4}}\right] \left( \frac{1}{4} \xi \chi_0'(\xi) - \chi_0(\xi)\right)\vert_{\xi=|k|^{1/2} Z} e^{ikx},\\
		\Psi_{c,a}^1&=\eta_a\sum_{k\in \Z^*} \frac{1}{ik|k|^{5/2}}\left[ \widehat{\gamma^{1,1}_a}(k) (1+T)^{-\frac{1}{4}} +\widehat{\gamma^{1,2}_a} (1+T)^{-\frac{1}{2}} \right]\left( \frac{1}{4} \xi \chi_1'(\xi) - \chi_1(\xi)\right)\vert_{\xi=|k|^{1/2} Z} e^{ikx},
		\ea
		\]
		so that $\p_Z^4 \Psi_{c,a}^j=\p_x \Theta_{c,a}^j$, and we have
		\[
		\|\Psi^j_{c,a}\|_{H^{23/2}_x L^2_Z} +\|\Psi^j_{c,a}\|_{L^2_x H^{23}_Z}\lesssim B^2 (1+T)^{-\frac{1}{2} + \frac{j}{4}}.
		\]

		\paragraph{Lower order boundary layer terms: $\Theta^2_a$, $\Theta^3_a$ and $\Theta^2_{c,a}$.}
		
		We recall that $\Theta^j_a$, $\Psi^j_a$ must satisfy \eqref{eq:Thetaj-BL-EDP}, where the source term $S^j_a$ is given by \eqref{def:Sja}.
		Note that since $\Psi^0_a$, $\Psi^1_a$ and $\Theta^0_a$ have been constructed in the previous step, the source terms $S^2_a$ and $S^3_a$ are defined unequivocally and have exponential decay.
		Moreover, following \cref{lem:decomp-gamma} and noting that
		\[
		\Delta^2 \thbl\vert_{z=0}= \sum_{j=0}^3 (1+t)^{-j/4} \p_Z^4 \Theta^j_{\mbot|Z=0} + 2 \sum_{j=0}^3 (1+t)^{-\frac{1}{2}-\frac{j}{4}} \p_x^2 \p_Z^2  \Theta^j_{\mbot|Z=0} + O((1+t)^{-1}),
		\]
		we enforce the following boundary conditions:
		\beq\label{BC-Theta2-Theta3}
		\ba 
		\p_Z^4 \Theta^2_{a|Z=0}&=\gamma^{0,2}_a - 2 \p_x^2 \p_Z^2 \Theta^0_{a|Z=0},& \p_Z^5 \Theta^2_{a|Z=0}&=\eta_a \gamma^{1,1}_a - 2 \p_x^2\p_Z^3\Theta^0_{a|Z=0} ,\\
		\p_Z^4 \Theta^3_{a|Z=0}&=\gamma^{0,3}_a- 2 \p_x^2 \p_Z^2 \Theta^1_{a|Z=0},& \p_Z^5 \Theta^3_{a|Z=0}&=\eta_a \gamma^{1,2}_a- 2 \p_x^2\p_Z^3\Theta^1_{a|Z=0} ,
		\ea
		\eeq
		where the coefficients $\gamma^{j,k}_a$ are defined in \eqref{def:gamma-j-al} and estimated in \eqref{est:gamma-j-al}.
		There remains to specify the relationship between $\Psi^j_a$ and $\Theta^j_a$. In order that $\Delta^2 \psbl=\p_x \thbl$ at main order, following the computations of the previous section (see in particular \eqref{eq:Delta2Psbl}), we take, for $j=2,3$
		\[
		\p_Z^4 \Psi^j_a + 2 \p_x^2 \p_Z^2 \Psi^{j-2}_a=\p_x \Theta^j_a.
		\]
		Eliminating $\Psi^2_a$ from the equation on $\Theta^2_a$, we find that the system satisfied by $\Theta^2_a$ is
		\beq\label{eq:Theta2}
		\begin{cases}
			Z \p_Z^5 \Theta^2_a -2 \p_Z^4 \Theta^2_a= 4 \p_x^2 \Theta^2_a -2{g^2_{\mbot,T}}\p_Z^4(Z^2\p_x \Psi^0_a) - 8 \p_x^3 \p_Z^2 \Psi^0_a,\\
			\Theta^2_{a|Z=0}=\p_Z \Theta^2_{a|Z=0}=0,\\
			\Theta^2_a(Z)\to 0\quad \text{as } Z\to \infty,
		\end{cases}
		\eeq
		together with \eqref{BC-Theta2-Theta3}.
		Note that $2 \p_x \p_Z^2 \Psi^0_{a|Z=0}= - \p_Z^2 \Theta^0_{a|Z=0}$ and $4 \p_x \p_Z^3 \Psi^0_{a|Z=0}= \p_Z^3 \Theta^0_{a|Z=0} $, so that the boundary conditions are (once again) redundant. In other words, taking the trace of \eqref{eq:Theta2} at $Z=0$, we find $\p_Z^4 \Theta^2_{a|Z=0}=\gamma^{0,2}_a - 2 \p_x^2 \p_Z^2 \Theta^0_{a|Z=0}$. Differentiating twice more with respect to $Z$, we find that the Fourier transform of $\p_Z^2 \Theta^2_{a}$, after a suitable lifting, satisfies an equation of the form \eqref{ODE} with boundary conditions of the type (iii) from \cref{lem:WP-ODE-BL}.
		Using the explicit Fourier representation of $\Psi^0_a$ and $\Theta^0_a$ \eqref{formula-Fourier-0-NL}, we find that
		\begin{equation*}
		\|\Theta^2_a\|_{H^{10}_x L^2_Z} + \|\Theta^2_a\|_{L^2_x H^{20}_Z}\lesssim \|\theta_0\|_{H^{14}}+ B^2,\quad
		\| \Psi^2_a \|_{H^{11}_x L^2_Z} + \|\Psi^2_a \|_{L^2_x H^{22}_Z}\lesssim \|\theta_0\|_{H^{14}}+ B^2 .
		\end{equation*}
		In a similar fashion, $\Theta^3_a$ satisfies the system
		\begin{equation*} \label{eq:Theta3}
		\begin{cases}
			Z \p_Z^5 \Theta^3_a - 3 \p_Z^4 \Theta^3_a = 4\p_x^2 \Theta^3_a + 4\p_Z^4 S^3_a + 6 \p_x^2 \p_Z^2 \Theta^1_a - 2 Z \p_x^2 \p_Z^3 \Theta^1_a,\\
			\Theta^3_{a|Z=0}=\p_Z \Theta^3_{a|Z=0}=0,\\
			\Theta^3_a(Z)\to 0\quad \text{as } Z\to \infty,
		\end{cases}
		\end{equation*}
		together with \eqref{BC-Theta2-Theta3}. Once again, we find that the lifted Fourier transform of $\p_Z^3 \Theta^3_a$ satisfies an equation of the form \eqref{ODE} with boundary conditions of the type (iv) from \cref{lem:WP-ODE-BL}. Using the explicit Fourier representation of $\Theta^1_a$ \eqref{formula-Fourier-1-NL} together with the estimates on $S^3_a$ \eqref{est:Sja}, we find that
		\begin{equation*} 
		\|\Theta^3_a\|_{H^{19/2}_x L^2_Z} + \|\Theta^3_a\|_{L^2_x H^{19 }_Z}\lesssim \|\theta_0\|_{H^{14}}+ B^2,\quad
		\| \Psi^3_a \|_{H^{21/2}_x L^2_Z} + \|\Psi^3_a \|_{L^2_x H^{21}_Z}\lesssim \|\theta_0\|_{H^{14}}+ B^2 .
		\end{equation*}
		Note that the Fourier representation of $\Theta^2_a$ and of the linear part of $\Theta^3_a$ also ensure that for $j=2,3$,
		\beq \label{est-trace-Theta23}
		\| \p_Z^2 \Theta^j_a\vert_{Z=0}\|_{H^8(\T)} + \| \p_Z^3 \Theta^j_a\vert_{Z=0}\|_{H^{15/2}(\T)}\lesssim \|\theta_0\|_{H^{14}}+ B^2.
		\eeq
		Eventually, we define $\Theta^2_{c,a}$ analogously to $\Theta^2_a$ so that
		\[
		\begin{cases}
			Z \p_Z^5 \Theta^2_{c,a} - 2\p_Z^4 \Theta^2_{c,a} = 4\p_x^2 \Theta^2_{c,a} - 8 \p_x^3 \p_Z^2 \Psi^0_{c,a},\\
			\Theta^2_{c,a}\vert_{Z=0}=\p_Z \Theta^2_{c,a}\vert_{Z=0}=0,\\
			\p_Z^j\Theta^2_{c,a}\vert_{Z=0}= -2 \p_x^2 \p_Z^{j-2} \Theta^0_{c,a}\vert_{Z=0}\quad \forall j \in \{4,5\},\\
			\Theta^2_{c,a}(x,Z)\to 0\quad \text{as } Z\to \infty.
		\end{cases}
		\]
		Once again, note that the boundary conditions are redundant. We also define $\Psi^2_{c,a}$ by $\p_Z^4 \Psi^2_{c,a}= \p_x \Theta^2_{c,a} - 2 \p_x^2 \p_Z^2 \Psi^2_{c,a}$, with homogeneous boundary conditions at $Z=0$. We obtain
		\[
		\ba 
		\|\Theta^2_{c,a}\|_{H^{9}_x L^2_Z} +\|\Theta^2_{c,a}\|_{L^2_x H^{18}_Z}%
		&\lesssim B^2 (1+T)^{-\frac{1}{2} },\\
		\| \Psi^2_{c,a}\|_{H^{10}_x L^2_Z}+ \|\Psi^2_{c,a}\|_{L^2_x H^{20}_Z}&\lesssim B^2 (1+T)^{-\frac{1}{2} }.
		\ea
		\]

		\paragraph{Boundary layer corrector $\Theta^4_a$.}
		As in the previous section, we need to define a higher order boundary layer corrector $\Theta^4_a$, whose role is to ensure that
		\[
		\left\|\p_x^2 \Delta^{-2} \thbl - \p_x \psbl \right\|_{L^2}\lesssim B (1+t)^{-3}.
		\]
		To that end, we choose $\Theta^4_a$, $\Psi^4_a$ so that
		\[
		\ba 
		\p_Z^4 \Psi^4_a + 2 \p_x^2 \p_Z^2 \Psi^2_a + \p_x^4 \Psi^0_a&=\p_x \Theta^4_a,\\
		Z \p_Z \Theta^4_a - 8 \Theta^4_a&=4\p_x \Psi^4_a.
		\ea
		\]
		Eliminating $\Psi^4_a$ from the equation, we find
		\[
		Z \p_Z^5 \Theta^4_a - 4 \p_Z^4 \Theta^4_a= 4\p_x^2 \Theta^4_a - 8 \p_x^3 \p_Z^2 \Psi^2_a - 4 \p_x^5 \Psi^{0}_a.
		\]
		We enforce the boundary conditions (which are redundant):
		\[
		\Theta^4_a\vert_{Z=0}=\p_Z\Theta^4_a\vert_{Z=0}=0,\ \p_Z^4 \Theta^4_a\vert_{Z=0}=2\p_x^3 \p_Z^2 \Psi^2_a\vert_{Z=0},\quad \p_Z^5 \Theta^4_a\vert_{Z=0}=\frac{8}{3}\p_x^3 \p_Z^3 \Psi^2_a\vert_{Z=0},
		\]
		together with a decay assumption at infinity. Looking at the equation satisfied by the Fourier transform and applying \cref{lem:WP-ODE-BL}, we infer that there exists a (non unique) solution $\Theta^4_a$ of this equation such that
		\[
		\| \Theta^4_a\|_{H^{9}_x L^2_Z} + \| \Theta^4_a\|_{L^2_x H^{18}_Z} 
		\lesssim \|\theta_0\|_{H^{14}}+ B^2.
		\]
		As in the previous section (see the discussion on page \pageref{def:psi4}),  non-uniqueness comes from the fact that the Fourier transform of $\p_Z^4 \Theta^4_a$ satisfies an ODE of the form \eqref{ODE}, with boundary conditions at $Z=0$ for $\p_Z^4 \Theta^4_a$ and $\p_Z^5 \Theta^4_a$. However, the boundary conditions above do not prescribe any condition on $\p_Z^k \Theta^4_a$ for any $k\geq 6$. We lift this indetermination by requiring (somewhat arbitrarily) that $\p_Z^8 \Theta^4_a\vert_{Z=0}=0$. The solution thus obtained satisfies the previous Sobolev estimates, and its trace satisfies
		\beq \label{est-trace-Theta4}
		\| \p_Z^2 \Theta^4_a\vert_{Z=0}\|_{H^{8}(\T)} + \| \p_Z^3 \Theta^4_a\vert_{Z=0}\|_{H^{15/2}(\T)}\lesssim \|\theta_0\|_{H^{14}}+ B^2.
		\eeq
		
		\paragraph{Lift of the remaining traces of order $B$.}
		At this stage, we have defined $\Theta^j_a$, $\Psi^j_a$ for $0\leq j \leq 4$ together with $\Theta^j_{c,a}$, $\Psi^j_{c,a}$ for $0\leq j \leq 2$. 
		Let $\chi\in C^\infty_c(\R)$ be a cut-off function such that $\chi\equiv 1$ on $(-1/4,1/4)$ and $\supp \chi \subset (-1/2,1/2)$.
		Setting $\Theta^j_{c,a}=\Psi^j_{c,a}=0$ for $j\geq 3$ and  $Z_\mbot=(1+t)^{1/4} z$, $Z_\mtop=(1+t)^{1/4} (1-z)$, the main boundary layer term is given by
		\begin{align*}
			\thbl_\text{main}:={}&\sum_{j=0}^4 (1+t)^{-1-\frac{j}{4}} \left(\Theta^j_\mbot + \Theta^j_{c,\mbot}\right) (x,Z_\mbot) \chi(z)\\
			&+ \sum_{j=0}^4 (1+t)^{-1-\frac{j}{4}} \left(\Theta^j_\mtop + \Theta^j_{c,\mtop}\right) (x,Z_\mtop) \chi(1-z),\\
			\psbl_\text{main}:={}& \sum_{j=0}^4 (1+t)^{-2-\frac{j}{4}} \left(\Psi^j_\mbot + \Psi^j_{c,\mbot}\right) (x,Z_\mbot) \chi(z)\\
			&+ \sum_{j=0}^4 (1+t)^{-2-\frac{j}{4}} \left(\Psi^j_\mtop + \Psi^j_{c,\mtop}\right)  (x,Z_\mtop) \chi(1-z),
		\end{align*}
		
		By construction, we have
		\begin{align*}
			\Delta^2 \thbl_\text{main}\vert_{z=0}={}& \gamma^0_{\mbot, T} + \gamma^0_{\mbot, 2} (1+t)^{-1/2} + \gamma^0_{\mbot, 3} (1+t)^{-3/4} \\
            & - \gamma^0_{\mbot, 2} (1+T)^{-1/2} - \gamma^0_{\mbot, 3} (1+T)^{-3/4} \\
            &+2 (1+t)^{-5/4} \p_x^2 \p_Z^2 \Theta^3_{\mbot}\vert_{Z=0} + 2 (1+t)^{-3/2} \p_x^2 \p_Z^2 \Theta^4_{\mbot}\vert_{Z=0},\\
			\p_z \Delta^2 \thbl_\text{main}\vert_{z=0}={}& \gamma^1_{\mbot, T} + \gamma^1_{\mbot, 1} (1+t)^{-1/4} + \gamma^1_{\mbot, 2} (1+t)^{-1/2} \\
            &- \gamma^1_{\mbot, 1} (1+T)^{-1/4} - \gamma^1_{\mbot, 2} (1+t)^{-1/2} \\
			&+2 (1+t)^{-1} \p_x^2 \p_Z^3 \Theta^3_{\mbot}\vert_{Z=0} + 2 (1+t)^{-5/4} \p_x^2 \p_Z^3 \Theta^4_{\mbot}\vert_{Z=0}.
		\end{align*}
		Similar formulas hold at $z=1$. Comparing with \cref{lem:decomp-gamma,lem:BL-main}, we see that we need to lift the traces of $\p_x^2 \p_Z^k \Theta^j_a$ for $k=2, 3$ and $j\geq 3$.
		We  lift these remaining traces thanks to a corrector $\sigma_{\mathrm{lift}}^\mathrm{lin}$ which we define in Fourier in the following way. Let $\zeta_4,\zeta_5\in C^\infty_c(\R)$ such that $\zeta_j (Z)=Z^j/j!$ in a neighborhood of zero and such that $\supp \zeta_j \subset (-1/4, 1/4)$.
		In order to apply the last estimate of \cref{lem:taille_H-2}, we further choose $\zeta_j$ so that
		\beq\label{cancel-zeta}
		\int_0^\infty Z^k \zeta_j(Z)\ud Z=0\quad \forall k \in \{2,3\}.
		\eeq
		We then take
		\begin{align*}
			\widehat{\sigma_{\mathrm{lift}}^\mathrm{lin}}(t,k,z)={}& 2
			\sum_{l\geq 3, j=0,1} (1+t)^{-\frac{3}{2}-\frac{j+l}{4}}|k|^{-2-j}\widehat{\p_Z^{2+j} \Theta^l_\mbot}(k)\vert_{Z=0}\; \zeta_{4+j}(|k| z (1+t)^{1/4})\\
			&+ 
			2
			\sum_{l\geq 3, j=0,1} (1+t)^{-\frac{3}{2}-\frac{j+l}{4}}|k|^{-2-j}\widehat{\p_Z^{2+j} \Theta^l_\mtop}(k)\vert_{Z=0}\; \zeta_{4+j}(|k| (1-z) (1+t)^{1/4}),\\
		\end{align*}
	so that
\[
\ba 
\Delta^2 \sigma_{\mathrm{lift}}^\mathrm{lin}\vert_{z=0}&=-2 (1+t)^{-5/4} \p_x^2 \p_Z^2 \Theta^3_{\mbot}\vert_{Z=0} - 2 (1+t)^{-3/2} \p_x^2 \p_Z^2 \Theta^4_{\mbot}\vert_{Z=0},\\
\p_\bn \Delta^2 \sigma_{\mathrm{lift}}^\mathrm{lin}\vert_{z=0}&=-2 (1+t)^{-1} \p_x^2 \p_Z^3 \Theta^3_{\mbot}\vert_{Z=0} - 2 (1+t)^{-5/4} \p_x^2 \p_Z^3 \Theta^4_{\mbot}\vert_{Z=0}.
\ea
\]	

		The estimates on the traces $\Theta^j_a$ for $j\geq 2$ (see \eqref{est-trace-Theta23}, \eqref{est-trace-Theta4}) %
		ensure that for all $k,m\geq 0$ such that $k+m\leq 10$,
		\beq\label{est:theta-c-3}\ba
		\|\sigma_{\mathrm{lift}}^\mathrm{lin}\|_{H^m_x H^k_z} &\lesssim (\|\theta_0\|_{H^{14}}+ B^2) (1+t)^{-2-\frac{1}{8} + \frac{k}{4}},\\
		\|\p_t \sigma_{\mathrm{lift}}^\mathrm{lin}\|_{H^m_x H^k_z} &\lesssim (\|\theta_0\|_{H^{14}}+ B^2) (1+t)^{-3-\frac{1}{8} + \frac{k}{4}}. \ea
		\eeq
		We define an associated corrector $\phi_{\mathrm{lift}}^\mathrm{lin}=\Delta^{-2} \p_x \sigma_{\mathrm{lift}}^\mathrm{lin}$.
		According to \cref{lem:taille_H-2} and using \eqref{cancel-zeta}, 
		we have, for all $k,m\geq 0$ such that $k+m\leq 13$,
		\beq\label{est:psi-c-3}
		\ba 
		\| \phi_{\mathrm{lift}}^\mathrm{lin}\|_{H^m_x H^k_z} &\lesssim (\|\theta_0\|_{H^{14}}+ B^2) (1+t)^{-3 - \frac{1}{8} + \frac{k}{4}} \\
		\| \p_t \phi_{\mathrm{lift}}^\mathrm{lin}\|_{H^m_x H^k_z} &\lesssim (\|\theta_0\|_{H^{14}}+ B^2) (1+t)^{-4 - \frac{1}{8} + \frac{k}{4}}.
		\ea
		\eeq
		
		\paragraph{Evaluation of the remainder.}
		Let us now focus on the different remainder terms in the equation satisfied by $\thbl_\text{main}$, in view of defining one last linear corrector.
		\begin{itemize}
			\item \textit{Remainder stemming from the nonlinear term:} Using \cref{lem:reste-exponentiel} together with the estimates on $\Theta^j_a$, we have, 
			\begin{align*}
		&	\nabla^\bot \psbl_\text{main} \cdot \nabla \thbl_\text{main} \\
             ={}&   \sum_{0\leq j,k\leq 4} (1+t)^{-3-\frac{k+j-1}{4}} \Big\{\Psi^j_\mbot + \Psi^j_{c,\mbot}, \Theta^k_\mbot + \Theta^k_{c,\mbot}\Big\}_{x,Z}(x, Z_\mbot) \chi(z) \\
			&	- \sum_{0\leq j,k\leq 4} (1+t)^{-3-\frac{k+j-1}{4}} \{\Psi^j_\mtop + \Psi^j_{c,\mtop}, \Theta^k_\mtop + \Theta^k_{c,\mtop}\}_{x,Z}(x, Z_\mtop) \chi(1-z)\\
			&	+ O(\exp(-c(1+t)^{1/5})) \quad \text{in } H^9(\Om).
			\end{align*}
			In the above expansion, we put aside the terms corresponding to $k=j=0$, which are part of $S^3_a$ and are lifted by $\Theta^3_a$. If $j+k\geq 1$, we have,  when $0\leq s+r\leq 8$,
			\[
			\left\| \{\Psi^j_\mbot+\Psi^j_{c,\mbot}, \Theta^k_\mbot+ \Theta^k_{c,\mtop}\}_{x,Z}(x, (1+t)^{1/4} z) \chi(z) \right\|_{H^r_x H^{s}_z}\lesssim B^2 (1+t)^{\frac{s}{4}- \frac{1}{8}},
			\]
			and the same estimate holds for the top boundary layer. 
			We infer that
			\begin{equation*} \label{NL-BL}
			\ba
			\nabla^\bot \psbl_\text{main}\cdot \nabla \thbl_\text{main}={}& (1+t)^{-11/4} \{\Psi^0_\mbot, \Theta^0_\mbot\}_{x,Z}(x, Z_\mbot) \chi(z)\\& - (1+t)^{-11/4} \{\Psi^0_\mtop, \Theta^0_\mtop\}_{x,Z}(x, Z_\mtop) \chi(1-z)\\& + R_{\mathrm{NL}}, \ea
			\end{equation*}
			where for all $r,s\geq 0$, $r+s\leq 8$,
			\begin{equation*}\label{est:RNL}
			\|R_{\mathrm{NL}}\|_{H^r_x H^{s}_z}\lesssim B^2  (1+t)^{-3 +\frac{s}{4}- \frac{1}{8}}.
			\end{equation*}
			Note in particular that $\|R_{\mathrm{NL}}\|_{H^8}\lesssim B^2 (1+t)^{-1-\delta}$ with $\delta=1/8$.
			
			\item \textit{Remainder stemming from the Taylor expansion of $G$:} As explained in the construction of $\Theta^2_a$, $\Theta^3_a$, when defining the boundary layer term, we replaced $G$ by its Taylor expansion in the vicinity of $z=0$ and $z=1$. 
			Recalling \eqref{expansion-G}, we have, in the vicinity of $z=0$, setting $Z=(1+t)^{1/4} z$,
			\begin{align*}
				G \p_x \psbl_\text{main}={}&\frac{1}{2(1+t)^{1/2}}g^2_{\mbot,T} Z^2 \p_x \psbl_\text{main} + \frac{1}{6(1+t)^{3/4}} g^3_{\mbot,T} Z^3 \p_x \psbl_\text{main} \\
				&+ O((1+t)^{-1} (Z^2 + Z^4) \p_x \psbl_\text{main})\\
				={}&\frac{1}{2(1+t)^{5/2}}g^2_{\mbot,T} Z^2 \p_x \Psi^0_\mbot (x, Z) \chi(z) \\
				&+ (1+t)^{-11/4} \left(\frac{g^2_{\mbot,T} }{2} Z^2 \p_x \Psi^1_\mbot(x,Z) + \frac{g^3_{\mbot,T}}{6} Z^3\p_x \Psi^0_\mbot (x, Z) \right)\chi(z)\\
				&+ R_G,
			\end{align*}
			where the first two terms enter the definition of $\Theta^2_\mbot$ and $\Theta^3_\mbot$ respectively, and the remainder term $R_G$ satisfies
			\begin{equation*} \label{est-RG}
			\|R_G\|_{H^r_x H^{s}_z}\lesssim B^2  (1+t)^{-3 +\frac{s}{4}- \frac{1}{8}}\quad \text{if } 0\leq r+s\leq 8.
			\end{equation*}

			\item \textit{Remainder stemming from $\psbl - \Delta^{-2} \p_x \thbl$:} We now address the fact that $\Delta^2 \psbl_\text{main}$ is not equal to $\p_x \thbl_\text{main}$. More precisely, using the definition of $\Psi^j_a$, %
			we have, in $\Om\cap \{z\leq 1/2\}$,
			\begin{align*}
				\Delta^2 \psbl_\text{main} - \p_x \thbl_\text{main} ={}& 2\sum_{j=3,4} (1+t)^{-2-\frac{j-2}{4}} \p_x^2 \p_Z^2 \Psi^j_\mbot(x,(1+t)^{1/4} z) \chi(z)\\
				&+ \sum_{j\geq 1}(1+t)^{-2-\frac{j}{4}} \p_x^4 \Psi^j_\mbot(x,(1+t)^{1/4} z) \chi(z)\\
				&+2\sum_{j=1,2} (1+t)^{-2-\frac{j-2}{4}} \p_x^2 \p_Z^2 \Psi^j_{c,\mbot}(x,(1+t)^{1/4} z) \chi(z)\\
				&+ \sum_{j\geq 0}(1+t)^{-2-\frac{j}{4}} \p_x^4 \Psi^j_{c,\mbot}(x,(1+t)^{1/4} z) \chi(z)\\
				&+ O(\exp(-c(1+t)^{1/5}))\quad \text{in } H^8(\Om).
			\end{align*}
			A similar expression holds in $\Om\cap \{z\geq 1/2\}$, replacing bot with top and $z$ with $1-z$. The exponentially small remainder comes from the commutator of the bilaplacian with multiplication by $\chi$ (see \cref{lem:reste-exponentiel}), and from the estimates on $\Psi^j_a$, $\Psi^j_{c,a}$.
			We now apply \cref{lem:taille_H-2} and its variant \cref{rmk:taile_H-2-autosim}: more precisely, in order to avoid a high loss of horizontal derivatives, we apply the ``self-similar version'' from \cref{rmk:taile_H-2-autosim} to the term involving $\p_x^4 \Psi^1_\bot$, and the second statement from \cref{lem:taille_H-2} to all other terms.
			We obtain
			\begin{equation*}
			 \p_x \psbl_\text{main}- \Delta^{-2} \p_x^2 \thbl_\text{main} =: R_{\Delta^2},
      \end{equation*}
      with
      \begin{equation*}
        \sup_{t\in [0,T]}\left((1+t)^3 \|\p_x^5R_{\Delta^2}\|_{L^2} + (1+t)^{2+ \frac{3}{8}} \| \p_x^3 \Delta^2 R_{\Delta^2}\|_{L^2} +  (1+t)^{1+ \frac{3}{8}}\| \p_x \Delta^4 R_{\Delta^2}\|_{L^2} \right)\lesssim B.
			\end{equation*}
			Note that the decay of this remainder is similar to the one of $R_{\mathrm{NL}}$ and $R_G$, but its order of magnitude is $B$. Hence we call it a ``linear'' remainder. In order to simplify the forthcoming bootstrap argument, we will lift it thanks to another (linear) corrector.

			\item \textit{Remainder stemming from $\sigma_{\mathrm{lift}}^\mathrm{lin}$:} Recalling \eqref{est:theta-c-3}, \eqref{est:psi-c-3} and using a variant of \cref{rmk:taile_H-2-autosim}, we have, setting $R_{c,\mathrm{lin}}=\p_t \sigma_{\mathrm{lift}}^\mathrm{lin} - \Delta^{-2} \p_x^2 \sigma_{\mathrm{lift}}^\mathrm{lin}$, for $k+m\leq 10$,
			\begin{equation*}\label{est:reste-lineaire} 
    		\sup_{t\in [0,T]}\left((1+t)^3 \|\p_x^5R_{c,\mathrm{lin}}\|_{L^2} + (1+t)^2 \| \p_x^3 \Delta^2 R_{c,\mathrm{lin}}\|_{L^2} +  (1+t)^{9/8}\| \p_x \Delta^4 R_{c,\mathrm{lin}}\|_{L^2} \right)\lesssim B.
			\end{equation*}
			Once again, $R_{c,\mathrm{lin}}$ is a linear remainder, and shall be lifted before the bootstrap argument of the next subsection. We also have
			\[
			\| G \p_x^7 \Delta^{-2} \sigma_{\mathrm{lift}}^\mathrm{lin}\|_{L^2}\lesssim B^2 (1+t)^{-3}. 
			\]
			
		\end{itemize}
		
		In the remainders above, all terms of order $B^2(1+t)^{-3}$ in $L^2$ will be included in the remainder for the interior part (see \cref{sub:bootstrap-thint}), while the terms of order $B(1+t)^{-3}$ will be lifted   thanks to a linear corrector $\sigma^R$, which we now construct.
		
		\paragraph{Definition of $\sigma^R$.}
		Let $\sigma^R$ be the solution of
		\begin{equation*} 
		\ba
		\p_t \sigma^R &= \p_x^2 \Delta^{-2} \sigma^R - R_{\Delta^2} - R_{c,\mathrm{lin}},\\
		\sigma^R(t=0) &= 0.
		\ea
		\end{equation*}
		Note that $\p_t \sigma^R \vert_{\p\Om}= \p_t \p_\bn \sigma^R \vert_{\p\Om}=0$, and therefore $\sigma^R \vert_{\p\Om}= \p_\bn \sigma^R \vert_{\p\Om}=0$ for all $t>0$.
		Applying $\Delta^2$ to the above equation and taking the trace at $z=0$, we have, using the identity \eqref{eq:Thetaj-BL-EDP}
		\begin{align*}
			\p_t \Delta^2 \sigma^R\vert_{z=0}&=- \Delta^2 \left(R_{\Delta^2} + R_{c,\mathrm{lin}}\right)\vert_{z=0}\\
			&=-\p_x \Delta^2 \psbl_\text{main}\vert_{z=0} - \p_t \p_z^4 \sigma_{\mathrm{lift}}^\mathrm{lin}\vert_{z=0}\\
			&=-2 \sum_{j\geq 3} (1+t)^{-2-\frac{j-2}{4}} \p_x^3 \p_Z^2 \Psi^j_\mbot\vert_{Z=0} + 2 \sum_{j\geq 3} \frac{2+j}{4}(1+t)^{-2-\frac{j-2}{4}}\p_x^2 \p_Z^2\Theta^j_\mbot\vert_{Z=0}\\
			&=0.
		\end{align*}
		Hence $\Delta^2 \sigma^R\vert_{z=0}=0 $ for all $t\in (0,T)$. In a similar way, $\p_z\Delta^2 \sigma^R\vert_{z=0}=0 $ for all $t\in (0,T)$, and the same properties hold at $z=1$. 
		Applying first \cref{prop:bb-uniformbound} to $\p_x\Delta^4 \sigma^R$, and then \cref{prop:bb-decay} to $\p_x^3 \Delta^2 \sigma^R$, $\p_x^5 \sigma^R$, and $\p_t \p_x^4 \sigma^R$, we infer
		\begin{align*}
    		\|\p_x \Delta^4   \sigma^R\|_{L^2} & \lesssim \|\theta_0\|_{H^{14}}+ B^2,
            & \|\p_x^3 \Delta^2  \sigma^R\|_{L^2} &\lesssim (\|\theta_0\|_{H^{14}}+ B^2) (1+t)^{-1},\\
            && \| \p_x^5  \sigma^R\|_{L^2} &\lesssim (\|\theta_0\|_{H^{14}}+ B^2) (1+t)^{-2},\\
    		\| \p_t  \p_x^4\sigma^R\|_{L^2} &\lesssim (\|\theta_0\|_{H^{14}}+ B^2) (1+t)^{-3},
            & \| \p_x^6 \Delta^{-2} \sigma^R\|_{L^2 } &\lesssim (\|\theta_0\|_{H^{14}}+ B^2) (1+t)^{-3}.
		\end{align*}
		Furthermore, looking at the expressions of $R_{\Delta^2}$ and $R_{c,\mathrm{lin}}$ and recalling the estimates on $\Psi^j_a$, we can perform similar estimates for $z\chi(z)\p_z \sigma^R$ and $(1-z)\chi(1-z)\p_z \sigma^R$. For instance, estimating the commutators, we find that
		\begin{align*}
			\p_t \Delta^4 (z\chi(z) \p_z \sigma^R)={}& \Delta^4 (z\chi(z) \p_z\p_x^{2} \Delta^{-2} \sigma^R) -  \Delta^4 (z\chi(z) \p_z (R_{\Delta^2} + R_{c,\mathrm{lin}}))\\
			={}&\p_x^2 \Delta^2(z\chi(z) \p_z \sigma^R)\\
			&+ \left[ \Delta^4, z\chi(z) \p_z\right]  \p_x^{2} \Delta^{-2} \sigma^R + \p_x^2\left[z\chi(z)\p_z, \Delta^2\right]\sigma^R\\
			&-  \Delta^4 (z\chi(z) \p_z (R_{\Delta^2} + R_{c,\mathrm{lin}})),
		\end{align*}
		where
	\begin{align*}
        \left\|  \left[ \Delta^4, z\chi(z) \p_z\right]  \p_x^{2} \Delta^{-2} \sigma^R\right\|_{L^2} + \left\|\p_x^2\left[z\chi(z)\p_z, \Delta^2\right]\sigma^R\right\|_{L^2}
        &\lesssim \| \p_x^2 \Delta^2 \sigma^R\|_{L^2} \\
        &\lesssim   (\|\theta_0\|_{H^{14}}+ B^2) (1+t)^{-1},\\
        \| \Delta^4 (z\chi(z) \p_z (R_{\Delta^2} + R_{c,\mathrm{lin}}))\|_{L^2}
        &\lesssim (\|\theta_0\|_{H^{14}}+ B^2) (1+t)^{-9/8}.
	\end{align*}
	It follows that for all $t\in [0,T]$,
	\[
	\| \Delta^4 (z\chi(z) \p_z \sigma^R)\|_{L^2}\lesssim (\|\theta_0\|_{H^{14}}+ B^2)\ln (2+t).
	\]

		\paragraph{Conclusion.}
		Let
\[
\theta_c:=\sigma^\mathrm{lin}_\mathrm{lift} +   \sigma^R + \thbl_\mbot (\chi(z)-1) + \thbl_\mtop (\chi(1-z)-1) ,\quad \thbl=\thbl_\mbot + \thbl_\mtop,
\]
where
\[
\thbl_a=\sum_{j=0}^4 (1+t)^{-1-\frac{j}{4}} \left(\Theta^j_a + \Theta^j_{c,a}\right)(x, Z_a),\quad a\in \{\mtop,\mbot\}. 
\]
Then (up to a redefinition of $\Theta^j_a + \Theta^j_{c,a}$ as $\Theta^j_a$),
the bounds on the profiles $\Theta^j_a$ and  the corrector $\theta_c$, together with the boundary conditions on 
$\thbl+\theta_c$ announced in the statement of \cref{lem:BL-main} are all satisfied.

    Most of the remainder terms have already been evaluated. There only remains to evaluate the quadratic terms involving $\sigma_{\mathrm{lin}}^\mathrm{lift} $ and $\sigma^R$. We have for instance
		\[
		\left\|\p_x^2 \Delta^2( \nabla^\bot \psbl_\text{main}) \cdot \nabla \sigma^R\right\|_{H^r_xH^s_z} \lesssim  B^2 (1+t)^{-3/4} (1+t)^{-2+ \frac{1}{4}} \lesssim B^2 (1+t)^{-\frac{5}{2} }.
		\]
		For the $H^8$ estimate, we write,  for $z\leq 1/2$,
		\[
		\p_x \psbl_\text{main} \p_z \sigma^R= \frac{\p_x \psbl_\text{main}}{z} z\p_z \sigma^R.
		\]
		Both terms in the right-hand side belong to $H^8$, and we infer
		\[
		\left\| \nabla^\bot \psbl_\text{main} \cdot \nabla \sigma^R\right\|_{H^8} \lesssim  B^2 (1+t)^{-5/4}{\ln (2+t)}.
		\]
		The statement of  \cref{lem:BL-main} follows.
		\qed

		\begin{proof}[Proof of \cref{lem:BL-NLcorrector}]
			Assume that $\threm=\theta'-\thbl$ satisfies \eqref{hyp:bootstrap-thiint}, and define $\Gamma^j_{a,T}$ as in \cref{lem:decomp-gamma}. 
			According to  \cref{lem:decomp-gamma},
			\[
			\ba 
			\|\Gamma_{a, T}^j (t)\|_{L^2(\T)} &\lesssim B^2 (1+t)^{-1 + \frac{j}{4}},\\
			\|\p_t \Gamma_{a, T}^j (t)\|_{L^2(\T)} &\lesssim B^2 (1+t)^{-2 + \frac{j}{4}},\\
			\|\Gamma_{a, T}^j (t)\|_{H^4(\T)} &\lesssim B^2 (1+t)^{-\frac{23}{24} +  \frac{j}{4}}.
			\ea
			\]
			We now lift these traces thanks to a corrector $\sigma_\mathrm{lift}^\mathrm{NL}$, whose definition is similar to the one of $\sigma_{\mathrm{lift}}^\mathrm{lin}$, namely
			\begin{align*}
				\widehat{\sigma_\mathrm{lift}^\mathrm{NL}}(t,k,z)={}& 
				\sum_{j=0,1} (1+t)^{-1-\frac{j}{4}}|k|^{-4-j}\widehat{\Gamma^j_{\mbot,T}}(t,k)\zeta_{4+j}(|k| z (1+t)^{1/4})\\
				&+ \sum_{j=0,1} (1+t)^{-1-\frac{j}{4}}|k|^{-4-j}(-1)^j\widehat{\Gamma^j_{\mtop,T}}(t,k)\zeta_{4+j}(|k| (1-z) (1+t)^{1/4})
			\end{align*}
			where we recall that $\zeta_j\in C^\infty_c(\R)$, $\zeta(Z)=Z^j/j!\; $ in a neighborhood of zero, and $\zeta_j$ satisfies \eqref{cancel-zeta}.
			
			It follows from the  estimates on $\Gamma^j_{a,T}$ and from the formula defining $\sigma_\mathrm{lift}^\mathrm{NL}$  that for $\ell=0,1$,
			\begin{equation*}\label{est:theta-c-1}
			\ba 
			\|\p_t^\ell  \sigma_\mathrm{lift}^\mathrm{NL}\|_{H^m_x H^k_z} &\lesssim B^2 (1+t)^{-2 -\ell  + \frac{k}{4}- \frac{1}{8}}\quad \text{if }k+m\leq 9/2,\\
			\|\sigma_\mathrm{lift}^\mathrm{NL}\|_{H^{8}(\Om)} &\lesssim B^2 (1+t)^{-1/12},\\
			\| z\sigma_\mathrm{lift}^\mathrm{NL}\|_{H^{9}(\Om\cap \{z\leq 1/2\})} + \| (1-z)\sigma_\mathrm{lift}^\mathrm{NL}\|_{H^{9}(\Om\cap \{z\geq 1/2\})} &\lesssim B^2 (1+t)^{-1/12}.
			\ea
			\end{equation*}
			The function $\sigma_\mathrm{lift}^\mathrm{NL}$ has been designed so that
			\begin{align*}
    			\Delta^2\sigma_\mathrm{lift}^\mathrm{NL}\vert_{z=0} &= \Gamma^0_\mbot(t), &
                \p_\bn \Delta^2\sigma_\mathrm{lift}^\mathrm{NL}\vert_{z=0} &= \Gamma^1_\mbot(t),\\
    			\Delta^2\sigma_\mathrm{lift}^\mathrm{NL}\vert_{z=1} &= \Gamma^0_\mtop(t),&
                \p_\bn \Delta^2\sigma_\mathrm{lift}^\mathrm{NL}\vert_{z=1} &= \Gamma^1_\mtop(t).
			\end{align*}
			Furthermore, according to \cref{rmk:taile_H-2-autosim} and using \eqref{cancel-zeta}, 
			we have, for all $k,m\geq 0$ such that $k+m\leq 8$,
			\begin{equation*}\label{est:psi-c-1}
			\ba 
			\| \Delta^{-2} \sigma_\mathrm{lift}^\mathrm{NL}\|_{H^m_x H^k_z} &\lesssim B^2 (1+t)^{-3 - \frac{1}{8} + \frac{k}{4}} \\
			\| \p_t \Delta^{-2} \sigma_\mathrm{lift}^\mathrm{NL} \|_{H^m_x H^k_z} &\lesssim B^2 (1+t)^{-4 - \frac{1}{8} + \frac{k}{4}}.
			\ea
			\end{equation*}
			The statement of \cref{lem:BL-NLcorrector} follows immediately from these estimates and \cref{lem:decomp-gamma,lem:BL-main}.
			
		\end{proof}

\subsection{Bootstrap argument for \texorpdfstring{$\thint$}{θ\^{}int}}
\label{sub:bootstrap-thint}

	In this subsection, we complete the proof of \cref{thm:BL-nonlinear} thanks to a bootstrap argument (or rather, two nested bootstrap arguments).
	We start with an initial data $\theta_0\in H^{14} (\Om)$, with $\|\theta_0\|_{H^{14}(\Om)}\leq B $ and $\theta_0=\p_\bn \theta_0=0$ on $\p \Om$, $\p_z^2 \bar \theta_0=0$ on $\p \Om$.
	We assume that $B\leq B_0<1$, where $B_0$ is a small universal constant,
	so that \cref{thm:stab} holds.
	
	Let $\bar C\geq 2$ be a universal constant to be determined.
	We define
	\begin{equation*}\label{def:T1}
	T_1=\sup\left\{ T>0, \ \eqref{hyp:bootstrap-theta'-NLBL}  \text{ holds on }(0,T)\text{ with } B_1=\bar C \|\theta_0\|_{H^{14}}\right\}.
	\end{equation*}
	By continuity, $T_1>0$. For any $T\in (0,T_1)$, we define an associated boundary layer profile $\thbl_T$ (see \cref{lem:BL-main} and \cref{rem:thbl_depends_on_T}) together with a corrector $\theta_c$. 
	We recall that there exists a universal constant $C_1$ such that for all $m,k\geq 0$ with $k+m\leq 8$, for all $T\in (0, T_1)$, $t\in [0,T]$,
	\[
	\|\thbl_T(t)\|_{H^m_xH^k_z} \leq C_1 (\|\theta_0\|_{H^{14}} + B_1^2)(1+t)^{-1+\frac{k}{4} - \frac{1}{8}}\leq 2C_1 \|\theta_0\|_{H^{14}}(1+t)^{-1+\frac{k}{4} - \frac{1}{8}},
	\]
  provided $\bar C^2 B_0\leq 1$.
Similarly, for all $t\in [0, T_1]$,
 \[
(1+t)^2\|\p_x^5 \theta_c\|_{L^2} +  \|\p_x\Delta^4 \theta_c\|_{L^2}+ (1+t)^3\|\p_t \p_x^4 \theta_c\|_{L^2}+ (1+t)^3\|\p_x^5 \psi_c\|_{L^2}\leq 2C_1 \|\theta_0\|_{H^{14}}.
\]

	We then introduce a new time
	\beq 
	\label{def:T2}
	T_2=\sup\left\{ T\in (0, T_1), \ \threm=\theta'-\thbl_T \text{ satisfies }\eqref{hyp:bootstrap-thiint}\text{ on }(0,T)\text{ with } B_2=(2 \bar C + 3 C_1)\|\theta_0\|_{H^{14}}\right\}.
	\eeq
	On $(0,T_2)$, relying on \cref{lem:BL-NLcorrector}, we construct an approximate solution $\thapp$. We now set $\thint=\theta'-\thapp=\threm - \theta_c- \sigma_\mathrm{lift}^\mathrm{NL}$. Note that we can always choose $\| \theta_0\|_{H^{14}}$ small enough so that for all $t\in (0,T_2)$, for $0\leq k+m\leq 8$,
	\[
	\|\sigma_\mathrm{lift}^\mathrm{NL}\|_{H^m_x H^k_z}\leq \bar C  \|\theta_0\|_{H^{14}} (1+t)^{-2 + \frac{k}{4} - \frac{1}{8}}.
	\]
	Consequently, $\thint$ satisfies \eqref{hyp:bootstrap-thiint} with $B_3= (3 \bar C + 5 C_1) \|\theta_0\|_{H^{14}}$ on $(0, T_2)$. Note that $B_j\lesssim B$ for $j=1,2,3$.
	
	Our goal is now to prove that $T_1=T_2=+\infty$ for a suitable choice of $\bar C$, provided $\|\theta_0\|_{H^{14}}$ is sufficiently small.
	To that end, we check that $\Delta^2 \thint$ satisfies the assumptions  of \cref{prop:bb-decay}.

	By construction (see \cref{lem:BL-NLcorrector}),
	\[
	\thint=\p_\bn \thint=\Delta^2 \thint=\p_\bn \Delta^2 \thint=0\quad \text{on }\p\Om.
	\]
	
	Furthermore, defining the quadratic form
	\[
	Q(f,g)=-\left( \nabla^\bot \Delta^{-2} \p_x f \cdot \nabla g\right)', 
	\]
	we have, recalling \cref{lem:BL-NLcorrector},
	\beq\label{eq:threm}
	\p_t \thint= (1-G)\p_x^2 \Delta^{-2} \thint+ S^1_\rem,
	\eeq
	where, recalling the definition of $S_\rem $ from \cref{lem:BL-NLcorrector},
	\[
	S^1_\rem= - S_\rem +  Q(\thapp + \thint, \thint) + Q(\thint, \thapp).
	\]
	
	We claim that we have the following estimates on $S^1_\rem$:
	
	\begin{lemma}[Estimates on $S^1_\rem$]
		Let $T_2$ be defined by \eqref{def:T2}.
		
		\begin{itemize}
			\item $L^2$ and $H^4$ estimates:  for all $t\in [0, T_2)$,
		      \begin{align*}
			\| \p_x^4 S^1_\rem (t)\|_{L^2} &\lesssim B^2 \frac{1}{(1+t)^{3}},& \|\p_x^2 \Delta^2 S^2_\rem (t)\|_{L^2} &\lesssim B^2 \frac{1}{(1+t)^{2}};
			\end{align*}
		
			\item $H^8$ estimate: there exist $S^2_\para, S^2_\bot\in L^\infty([0, T_2), L^2 (\Om))$ such that $\Delta^4 S^1_\rem (t) =S^2_\para + S^2_\bot$, with
			\[
			\| S^2_\para (t)\|_{L^2} \lesssim B^2 \frac{1}{(1+t)^{9/8}}\quad \forall t\in [0, T_2)\quad \text{and}\quad \int_\Om S^2_\bot(t) \Delta^4 \thint(t)=0. 
			\]
			
			\item Estimates on the time derivative: for all $t\in [0, T_2)$,
			\[
			\| \p_t \p_x^4 S^1_\rem (t)\|_{L^2}\lesssim B^2 \frac{1}{(1+t)^{4}}.
			\]
		\end{itemize}
		
		\label{lem:est-Srem}

	\end{lemma}
	
	\begin{proof}
		We estimate each term separately. The estimates on $S_\rem$ have already been proved in the previous subsection (see \cref{lem:BL-NLcorrector}).
		Therefore we focus on the quadratic terms. It follows from the estimates of \cref{lem:BL-main,lem:BL-NLcorrector}, and from the bootstrap estimates \eqref{hyp:bootstrap-thiint} on $\threm$   that 
		\[
		\ba
		\left\| \p_x^4 Q(\thapp + \thint, \thint)\right\|_{L^2} &\lesssim B^2 (1+t)^{-3},\\
			\left\| \p_x^2 \Delta^2 Q(\thapp + \thint, \thint)\right\|_{L^2} &\lesssim B^2 (1+t)^{-2}.
		\ea
		\]
		For the $H^8$ estimate, the situation is slightly different, because $\Delta^4 Q(\thapp + \thint, \thint)$ involves derivatives of order 9 of $\thint$, for which we have no estimate.
		Therefore, as in \cref{sec:stab}, we decompose $\Delta^4Q(\thapp + \thint, \thint)$ into two parts, writing
		\[
		\ba
    		\Delta^4 Q(\thapp + \thint, \thint)={}& -\left( \nabla^\bot \Delta^{-2} \p_x (\thapp + \thint)\right) \cdot \nabla \Delta^4 \thint \\
            &- \p_z^8\;  \overline{\nabla^\bot \Delta^{-2} \p_x (\thapp + \thint) \cdot  \thint} \\
            &- \left[ \Delta^4 , \nabla^\bot \Delta^{-2} \p_x (\thapp + \thint) \cdot \nabla\right] \thint.
        \ea
		\]
		It can be easily checked that the term $\left[ \Delta^4 , \nabla^\bot \Delta^{-2} \p_x (\thapp + \thint) \cdot \nabla\right] \thint$ can be evaluated as above, and we have
		\[
		\left\| \left[ \Delta^4 , \nabla^\bot \Delta^{-2} \p_x (\thapp + \thint) \cdot \nabla\right] \thint\right\|_{L^2} \lesssim B^2 (1+t)^{-2+\frac{9}{4}} (1+t)^{-2 + \frac{1}{4}} \lesssim B^2 (1+t)^{-3/2}.
		\]
		Furthermore, since $\langle \Delta^4 \thint(t,\cdot, z)\rangle=0$ for all $t,z$,
		\[
		\int_\Om \p_z^8 \overline{\nabla^\bot \Delta^{-2} \p_x (\thapp + \thint) \cdot  \thint}\Delta^4 \thint=0.
		\]
		Eventually, integrating by parts the remaining term,
		\[\ba
		&-\int_\Om \left(\left( \nabla^\bot \Delta^{-2} \p_x (\thapp + \thint)\right) \cdot \nabla \Delta^4 \thint\right) \Delta^4 \thint\\
  &= \frac{1}{2}\int_\Om \nabla \cdot\left( \nabla^\bot \Delta^{-2} \p_x (\thapp + \thint)\right) |\Delta^4 \thint|^2 =0. 
\ea		
  \]
		Therefore, setting
		\[
		\ba 
		S^2_\bot &= -\nabla^\bot \Delta^{-2} \p_x (\thapp + \thint) \cdot \nabla \Delta^4 \thint - \p_z^8 \overline{\nabla^\bot \Delta^{-2} \p_x (\thapp + \thint) \cdot  \thint} ,\\
		S^2_\para &= \Delta^4 Q(\thint, \thapp)- \left[ \Delta^4 , \nabla^\bot \Delta^{-2} \p_x (\thapp + \thint) \cdot \nabla\right]  \thint,
		\ea 
		\]
		we obtain the desired $H^8$ estimates.
		
		We now need to estimate the time derivative of $\p_x^4 S^1_\rem$ in $L^2$.
		Note that the definition of  time $T_2$ (see \eqref{def:T2}) ensures that
		\[
		\| \p_t \p_x^4 \thint(t)\|_{L^2}\lesssim B (1+t)^{-3}\quad \forall t\in [0, T_2].%
		\]
		Setting $\psiint=\Delta^{-2} \p_x \thint$, it follows that
		\[
		\|\p_t \p_x^3 \psiint\|_{H^4}\lesssim B (1+t)^{-3}\quad \forall t\in [0, T_2].%
		\]
		From there, differentiating with respect to time $\p_x^4 S^1_\rem$, we obtain the desired estimate in $L^2$. The only problematic term is $\p_t \p_x^5 \psiint\p_z \thapp $, which we decompose as
		\[
		\p_t \p_x^5 \psiint\p_z \thapp \chi(z) + \p_t \p_x^5 \psiint\p_z \thapp (1-\chi(z) ),
		\]
		with $\chi\in C^\infty_c(\R)$ such that $\chi\equiv 1$ in a neighborhood of zero and $\chi(z)=0$ for $|z|\geq 1/2$. Let us consider the first term. Recalling that $\psiint(z=0)=0$, we write, using the Hardy inequality,
		\begin{align*}
			\| \p_t \p_x^5 \psiint\p_z \thapp \chi(z)\|_{L^2}&\leq \left\| \frac{1}{z} \p_t \p_x^5 \psiint\right\|_{L^2} \left\|z \p_z \thapp \chi(z)\right\|_{L^\infty}\\
			&\lesssim \left\|\p_t \p_x^5 \p_z\psiint\right\|_{L^2} \left\|z \p_z \thapp \chi(z)\right\|_{L^\infty}\\
			&\lesssim B (1+t)^{-3} \times B (1+t)^{-1} \lesssim B^2 (1+t)^{-4}.
		\end{align*}
		The term involving $ (1-\chi(z) )$ is treated similarly, exchanging the roles of $z=0$ and $z=1$.

	\end{proof}

	\paragraph{Conclusion.}

	We apply the operator $\Delta^4$ to \eqref{eq:threm}. We recall that by construction, $\Delta^2 \thint=\p_\bn \Delta^2 \thint=0$ on $\p\Om$. We obtain
	\[
	\p_t \Delta^4 \thint = (1-G) \p_x \Delta^2\thint + \Delta^4 S^1_\rem - [\Delta^4, G]\p_x \psiint .
	\]
	Let us now check that the assumptions of \cref{prop:bb-uniformbound} are satisfied. The decay assumptions on $\Delta^4 S^1_\rem$ follow from \cref{lem:est-Srem}. 
	Therefore it suffices to check that the decay of the  commutator term satisfies the desired bounds. Using \eqref{eq:commu} together with the bounds on $G$ (see \cref{lem-high-reg-NLBL}), we have, for all $t\in (0, T_2)$, 
    \begin{align*}
        \left\|  [\Delta^4, G]\p_x \psiint \right\|_{L^2} & \lesssim \| G\|_{W^{1,\infty}} \|\p_x \psiint\|_{H^7} + \|G\|_{H^8}\|\p_x \psiint \|_{\infty}\\
        &\lesssim B^2 (1+t)^{-5/4} + B^2 (1+t)^{1/2} (1+t)^{-11/4}  \lesssim B^2 (1+t)^{-5/4}.
    \end{align*}

	Therefore, according to \cref{prop:bb-uniformbound},  there exists a universal constant $C_2$ such that for all $t\in (0, T_2)$, setting $B= (3 + 2C_1) \bar C \|\theta_0\|_{H^{14}}$ (see \eqref{def:T2}),
	\[
	\|\Delta^4 \thint(t)\|_{L^2} \leq C_2 \left(\| \thint(t=0)\|_{H^8} + B^2\right) .
	\]
	From there, we apply \cref{prop:bb-decay} twice (first to $\p_x^2 \Delta^2 \thint$ and then to $\p_x^4\thint$), and we obtain, up to a change in the constant $C_2$, for all $t\in [0,T_2]$
	\begin{equation*}\label{est:Delta2-threm}
	\ba 
	\|\p_x^2 \Delta^2 \thint(t)\|_{L^2} &\leq  C_2 \left(\| \thint(t=0)\|_{H^8} + B^2\right) (1+t)^{-1},\\
	\| \p_x^4 \thint(t)\|_{L^2} &\leq C_2 \left( \| \thint(t=0)\|_{H^8} + B^2\right) (1+t)^{-2}.
	\ea
	\end{equation*}
	There remains to bound $\p_t \p_x^4 \thint$ and $\p_x^5\psiint$ in $L^2$. To that end, we differentiate \eqref{eq:threm} with respect to time, and we obtain
	\[
	\p_t \p_t \p_x^4\thint= (1-G) \p_x^5 \p_t \psiint + \p_t \p_x^4 S^1_\rem  - \p_t G \p_x^5 \psiint.
	\]
	The source term $\p_t \p_x^4 S^1_\rem$ is evaluated in \cref{lem:est-Srem}.
	As for the commutator term, we have
	\[
	\left\| \p_t G \p_x^5 \psiint\right\|_{L^2} \leq \|\p_t G\|_{L^\infty} \|\p_x^5 \psiint\|_{L^2} \lesssim B^3 (1+t)^{-3 + \frac12 -3} \lesssim B^3 (1+t)^{-4}.
	\]
	Using \cref{prop:bb-decay}, 
	we find that for any $t\in (0, T_2)$,
	\[
	\|\p_t \p_x^4 \thint\|_{L^2}\leq  C_2\left( \| \thint(t=0)\|_{H^8} + B^2\right) \frac{1}{(1+t)^3}.
	\]
	Using \eqref{eq:threm}, 
	\[
	\| \p_x^5 \psiint(t)\|_{L^2}\leq  C_2\left( \| \thint(t=0)\|_{H^8} + B^2\right) \frac{1}{(1+t)^3} \quad \forall t\in (0, T_2).
	\]
	Grouping these estimates with the ones on $\sigma_\mathrm{lift}^\mathrm{NL}$ from \cref{lem:BL-NLcorrector}, we infer that up to a change of the constant $C_2$, for any $t\in (0,T_2)$
	\[
	\ba 
(1+t)^2	\| \p_x^4 \threm(t)\|_{L^2} + \|\Delta^4 \threm(t)\|_{L^2} &\leq C_2
\left( \| \theta_0\|_{H^8} + B^2\right) ,\\
(1+t)^3\left(\| \p_t \p_x^4 \threm(t)\|_{L^2} + \|\p_x^5 \psrem(t)\|_{L^2}\right) &\leq C_2
\left( \| \theta_0\|_{H^8} + B^2\right) .
	\ea
	\]
	We now recall that $B_3= (3 \bar C + 5 C_1)  \|\theta_0\|_{H^{14}}$ for some constant $\bar C$ that remains to be chosen. We want to pick $\bar C$ so that
	\[
	C_2\left( \|\theta_0\|_{H^8} + (3 \bar C + 5 C_1)^2  \|\theta_0\|_{H^{14}}^2\right) \leq (\bar C + C_1)\|\theta_0\|_{H^{14}} .
	\]
	It is sufficient to take $\bar C$ such that $2 C_2\leq (\bar C + C_1)$, and $\|\theta_0\|_{H^{14}}$ sufficiently small.
	We then infer that the bounds within \eqref{def:T2} are satisfied with $B_2$ replaced by $B_2/2$. It follows that $T_2=T_1$.
	From there, recalling the estimates on $\thbl$, we deduce that there exists a universal constant $C_3 $ such that for all $t\in (0, T_1)$, for all $k\in \{4,\cdots, 8\}$,
	\begin{align*}
		\| \p_x^k \theta'\|_{L^2} &\leq \|\p_x^k \thbl\|_{L^2} + \|\p_x^k \threm\|_{L^2}\\
		&\leq C_3 \left( \|\theta_0\|_{H^{14}} +B^2\right) \left( (1+t)^{-9/8} + (1+t)^\frac{k-8}{2}\right).
	\end{align*}
	Similar estimates hold for $\p_z^k \theta'$ and $\p_x^5 \psi$ in $L^2$.
	Hence we further choose the constant $\bar C$ so that
	\[
	2 C_3  \left( \|\theta_0\|_{H^{14}} +B^2\right) \leq \bar C \|\theta_0\|_{H^{14}} 
	\]
	provided $\|\theta_0\|_{H^{14}} $ is sufficiently small. We conclude that $T_1=+\infty$. \cref{thm:BL-nonlinear} follows.

\appendix

\renewcommand{\be}{\mathbf{e}}

\section{Well-posedness of the Stokes-transport equation in Sobolev spaces} \label{app:wp}

The aim of this section is to prove the well-posedness of the Stokes-transport on the domain of interest of the present paper, namely $\Omega = \T\times(0,1)$. The proof is also valid on any regular enough bounded domain of $\R^d$ with $d=2$ or $3$. 
\begin{theorem} \label{thm:wp}
	Let $\Omega$ satisfy either
	\begin{enumerate}
		\item $\Omega = \T\times(0,1)$ or
		\item $\Omega$ is a simply connected compact subdomain of $\R^d, d=2,3$ regular enough.
	\end{enumerate}
	Let $m\geq3,\rho_0\in H^m(\Omega)$ (and $\Omega$ of regularity $\mcC^{m+2}$). The system
	\begin{equation} \label{eq:wp-st}
		\left\{
		\ba
		\p_t\rho + \bu\cdot\nabla\rho &= 0 \\
		-\Delta\bu+\nabla p &= -\rho\be_z  \\
		\div\,\bu &= 0 \\
		\bu|_{\p\Omega} &= \bzero \\
		\rho|_{t=0} &= 0,
		\ea
		\right.
	\end{equation}
	has a unique global solution for the present regularity
	\begin{equation*}
		(\rho,\bu) \in \mcC(\R_+;H^m(\Omega))\times \mcC(\R_+;H^{m+2}(\Omega)).
	\end{equation*}
	Moreover, the solution obeys the following energy estimate
	\begin{equation} \label{eq:wp-energy}
		\|\rho(t)\|_{H^m} \leq \|\rho_0\|_{H^m}\exp\left(C\int_0^t\|\nabla\bu(s)\|_{L^\infty}+\|\nabla\rho(s)\|_{L^\infty} \ud s\right).
	\end{equation}
\end{theorem}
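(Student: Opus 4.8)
The plan is to prove \cref{thm:wp} by a standard parabolic-hyperbolic coupling argument: the transport equation is purely hyperbolic, while the Stokes operator acts as a smoothing map $\rho\mapsto \bu$ gaining two derivatives. First I would establish the elliptic regularity estimate for the Stokes system: for $f\in H^k(\Omega)$, the unique solution $(\bu,p)$ of $-\Delta\bu+\nabla p = f$, $\div\bu=0$, $\bu|_{\p\Omega}=0$ satisfies $\|\bu\|_{H^{k+2}}\lesssim \|f\|_{H^k}$, provided $\Omega$ is smooth enough (here $\mathcal{C}^{m+2}$); this is classical for bounded domains and immediate by Fourier series in $x$ plus 1D elliptic theory on $(0,1)$ for the channel. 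Applying this with $f=-\rho\be_z$ gives the map $\mathbf{S}:H^m\to H^{m+2}\cap H^1_0$, $\rho\mapsto\bu$, which is linear and bounded; in particular, by Sobolev embedding with $m\geq 3$ (so $m+2\geq 5>1+d/2$), $\bu\in W^{1,\infty}(\Omega)$ with $\|\bu\|_{W^{1,\infty}}\lesssim\|\rho\|_{H^m}$.

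Next I would set up the existence proof by a fixed-point/iteration scheme: given $\rho^{(n)}$, define $\bu^{(n)}=\mathbf{S}(\rho^{(n)})$, solve the linear transport equation $\p_t\rho^{(n+1)}+\bu^{(n)}\cdot\nabla\rho^{(n+1)}=0$, $\rho^{(n+1)}|_{t=0}=\rho_0$, via the flow map $\bX^{(n)}$ of $\bu^{(n)}$ (which is well-defined and volume-preserving since $\div\bu^{(n)}=0$ and $\bu^{(n)}\in W^{1,\infty}$, and which preserves $\Omega$ since $\bu^{(n)}|_{\p\Omega}=0$). The key a priori bound is the $H^m$ transport estimate: commuting $\p^\alpha$ ($|\alpha|\le m$) with $\bu\cdot\nabla$ and using the tame commutator estimate \eqref{eq:commu} together with the divergence-free condition to kill the top-order term, one gets
\begin{equation*}
\frac{\rd}{\rd t}\|\rho\|_{H^m}\lesssim \left(\|\nabla\bu\|_{L^\infty}+\|\nabla\rho\|_{L^\infty}\right)\|\rho\|_{H^m}\lesssim \|\rho\|_{H^m}^2,
\end{equation*}
where the last inequality uses $\|\nabla\bu\|_{L^\infty}+\|\nabla\rho\|_{L^\infty}\lesssim\|\rho\|_{H^m}$ (from elliptic regularity and $m\ge 3$). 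This yields local-in-time uniform bounds on the iterates on some interval $[0,T_0]$ with $T_0\gtrsim\|\rho_0\|_{H^m}^{-1}$; a contraction estimate in the lower norm $L^2$ (or $H^{m-1}$) for the differences $\rho^{(n+1)}-\rho^{(n)}$, again using that $\mathbf{S}$ and the transport flow are Lipschitz, gives convergence to a local solution, and uniqueness follows from the same difference estimate. Continuity in time in $H^m$ follows by the usual Bona–Smith / weak-continuity-plus-norm-continuity argument, and then $\bu=\mathbf{S}(\rho)\in\mathcal{C}(\cdot;H^{m+2})$ by linearity of $\mathbf{S}$.

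For global existence, the crucial point is the \emph{linear} (in fact affine) structure of the Stokes equation in $\rho$: I would use the energy identity \eqref{eq:ep}, $\frac{\rd}{\rd t}\Ep(\rho)=-\|\nabla\bu\|_{L^2}^2$, which shows $\bu\in L^2(\R_+;H^1)$, but more directly the basic Stokes estimate $\|\nabla\bu\|_{L^2}\lesssim\|\rho\|_{L^2}\le\|\rho_0\|_{L^2}$ (the $L^\infty_tL^2_x$ bound on $\rho$ is conserved by pure transport), which gives a \emph{uniform in time} control $\|\bu\|_{H^2}\lesssim\|\rho_0\|_{L^2}$. Combined with Sobolev embedding this bounds $\|\bu\|_{W^{1,\infty}}$ uniformly in time — but $W^{1,\infty}$ of $\bu$ alone does not directly bound $\|\nabla\rho\|_{L^\infty}$. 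The honest route is: $\|\nabla\bu\|_{L^\infty}\lesssim\|\rho\|_{H^m}$ has a factor we cannot dispense with at top order, so one invokes the Grönwall inequality \eqref{eq:wp-energy}, $\|\rho(t)\|_{H^m}\le\|\rho_0\|_{H^m}\exp(C\int_0^t\|\nabla\bu\|_{L^\infty}+\|\nabla\rho\|_{L^\infty})$, and closes it using a lower-order bound on $\|\nabla\bu\|_{L^\infty}$: by interpolation and the uniform $H^2$ bound, $\|\nabla\bu\|_{L^\infty}\lesssim\|\bu\|_{H^2}^{1-\epsilon}\|\bu\|_{H^{m+2}}^{\epsilon}\lesssim\|\rho_0\|_{L^2}^{1-\epsilon}\|\rho\|_{H^m}^{\epsilon}$ for $\epsilon<1$ (using $m\ge 3$), and similarly $\|\nabla\rho\|_{L^\infty}$ is controlled sublinearly in $\|\rho\|_{H^m}$ by interpolation against the conserved $\|\rho\|_{L^2}$. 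Feeding this into the Grönwall argument produces at worst a double-exponential but finite bound on $\|\rho(t)\|_{H^m}$ for every finite $t$, hence no blow-up, hence a global solution.

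\textbf{Main obstacle.} The delicate step is the global bound: one must avoid the naive estimate $\frac{\rd}{\rd t}\|\rho\|_{H^m}\lesssim\|\rho\|_{H^m}^2$, which only gives local existence. The resolution exploits that the Stokes map is linear in $\rho$ and that $\|\rho\|_{L^2}$ (and all $L^q$ norms) are exactly conserved by the divergence-free transport, so every quantity controlling the growth of $\|\rho\|_{H^m}$ — namely $\|\nabla\bu\|_{L^\infty}$ and $\|\nabla\rho\|_{L^\infty}$ — can be made to depend on $\|\rho\|_{H^m}$ only sublinearly (with a power strictly less than one), after which Grönwall closes. Getting the interpolation exponents right, and checking that $m\ge 3$ is exactly what makes $\|\nabla\bu\|_{L^\infty}$ and $\|\nabla\rho\|_{L^\infty}$ controllable with a subcritical power, is the technical heart of the argument; everything else (elliptic regularity, the iteration scheme, uniqueness, time-continuity) is routine.
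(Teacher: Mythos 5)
Your local-in-time construction (Stokes regularity, iteration on the transport equation, contraction in a lower norm, time continuity) is fine and essentially parallels the paper's scheme, but the global-existence step contains a genuine gap. First, the assertion that the uniform bound $\|\bu\|_{H^2}\lesssim\|\rho_0\|_{L^2}$ controls $\|\bu\|_{W^{1,\infty}}$ by Sobolev embedding is false: $H^2(\Omega)\not\hookrightarrow W^{1,\infty}(\Omega)$ in dimensions $2$ and $3$. Second, and more seriously, your fallback by interpolation does not close. Interpolating $\|\nabla\rho\|_{L^\infty}\lesssim\|\rho\|_{L^2}^{1-\theta}\|\rho\|_{H^m}^{\theta}$ forces $\theta>(1+d/2)/m$, hence $\theta$ bounded away from zero (e.g. $\theta>2/3$ for $m=3$, $d=2$), and similarly for $\|\nabla\bu\|_{L^\infty}$ you keep a positive power of $\|\rho\|_{H^m}$. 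Feeding this into the energy inequality gives $\frac{\rd}{\rd t}\|\rho\|_{H^m}\lesssim\|\rho\|_{H^m}^{1+\theta}$ with $\theta>0$, which is superlinear: the comparison ODE blows up in finite time, so ``sublinear dependence of the coefficients'' is not enough for Grönwall — one would need at most logarithmic (Osgood-type) dependence. As written, your argument only reproduces local existence.

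The ingredient you are missing is the borderline Stokes estimate $\|\bu\|_{W^{1,\infty}}\lesssim\|\rho\|_{L^\infty}$ (see \eqref{eq:wp-stokes}, taken from the earlier well-posedness work cited in the paper), whose right-hand side is exactly conserved by the divergence-free transport, so that $\|\nabla\bu(t)\|_{L^\infty}\leq C\|\rho_0\|_{L^\infty}$ for all times. Then, instead of interpolating $\|\nabla\rho\|_{L^\infty}$ against $\|\rho\|_{H^m}$, one propagates it by the transport equation itself: differentiating $\p_t\rho+\bu\cdot\nabla\rho=0$ gives $\frac{\rd}{\rd t}\|\nabla\rho\|_{L^\infty}\leq\|\nabla\bu\|_{L^\infty}\|\nabla\rho\|_{L^\infty}$, hence \eqref{eq:wp-grad}, i.e. $\|\nabla\rho(t)\|_{L^\infty}\leq\|\nabla\rho_0\|_{L^\infty}e^{C\|\rho_0\|_{L^\infty}t}$, a bound depending only on the data. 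With this, the coefficient in \eqref{eq:wp-energy} is a known, locally integrable function of time, independent of $\|\rho\|_{H^m}$, and Grönwall yields the global (double-exponential) bound that rules out blow-up. Incidentally, you had the right reflex when you noted that a uniform Lipschitz bound on $\bu$ is what one wants — the point is that it does control $\|\nabla\rho\|_{L^\infty}$, not pointwise but through its exponential growth along the flow, which is all the $H^m$ estimate needs; the rest of your outline (iteration, uniqueness via the lower-norm estimate or via the weak theory, continuity in $H^m$) then goes through as in the paper.
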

The proof of this result follows rather classical techniques. It also relies on a previous work of one of the authors \cite{leblond}, including in particular the well-posedness in a weak sense of the system \eqref{eq:wp-st}.

\begin{remark}
    The well-posedness in the weak sense of \eqref{eq:wp-st} in $\T\times(0,1)$ is a direct consequence its the well-posedness in $\R\times(0,1)$ stated in \cite[Theorem 1.2]{leblond}. In this latter unbounded domain, the Poiseuille flows are avoided thanks to a zero flux condition on the velocity field. In the periodic case, this condition is no longer required as the periodicity of the solution prevents the existence of Poiseuille flows.
\end{remark}

\paragraph{A priori estimate.}
Formally, the energy estimate for any derivative of order $m$ can be written as
\begin{equation*}
	\frac12\frac\rd{\rd t}\|\p^m\rho\|_{L^2}^2 = - \int_\Omega [\p^m,\bu\cdot\nabla]\rho \p^m\rho,
\end{equation*}
due to the divergence free condition satisfied by $\bu$. We apply the tame estimate \eqref{eq:commu}, together with the continuous Sobolev embedding of $H^m(\Omega)$ in $L^\infty(\Omega)$ and the Stokes equation regularization estimate $\|\bu\|_{H^m}\lesssim\|\rho\|_{H^{m-2}}$ to get
\begin{equation*}
	\frac\rd{\rd t}\|\p^m\rho\|_{L^2}^2 \lesssim (\|\nabla\bu\|_{L^\infty} +\|\nabla\rho\|_{L^\infty})\|\rho\|_{H^m}^2.
\end{equation*}
One therefore obtains the same inequality with the complete $H^m$ norm on the left-hand-side, and the estimate \eqref{eq:wp-energy} follows. This energy estimate tells us that $\rho$ remains in $H^m(\Omega)$ as long as $\|\nabla\bu\|_{L^\infty}$ and $\|\nabla\rho\|_{L^\infty}$ are integrable in time. Regarding the properties we know from \cite{leblond} about the solutions of this equation it is enough to prove that the solution exists globally, and is unique. Let us recall from \cite[Theorem IV.6.1]{galdi} and \cite[Section 2.1]{leblond} that the source term and the solution of the Stokes equation satisfy for all times
\begin{equation} \label{eq:wp-stokes}
	\|\bu\|_{H^m}\lesssim\|\rho\|_{H^{m-2}}, \qquad \|\bu\|_{W^{1,\infty}}\lesssim\|\rho\|_{L^\infty}.
\end{equation}
Also, the uniform norm of $\rho$ is constant since $\rho $ is transported by an incompressible vector field. We also observe
\begin{equation} \label{eq:wp-grad}
	\|\nabla\rho\|_{L^\infty}\leq\|\nabla\rho_0\|_{L^\infty}\exp\left(C\int_0^t\|\nabla\bu(s)\|_{L^\infty}\ud s\right)\leq\|\nabla\rho_0\|_{L^\infty}\exp(C\|\rho_0\|_{L^\infty}t).
\end{equation}
Putting these considerations together leads to
\begin{equation*}
	\|\rho\|_{H^m}\leq\|\rho_0\|_{H^m}\exp\left(C\|\rho_0\|_{L^\infty}t + \frac{\|\nabla\rho_0\|_{L^\infty}}{\|\rho_0\|_{L^\infty}}\left(\exp(C\|\rho_0\|_{L^\infty}t)-1\right)\right).
\end{equation*}
This suggests that if $\rho_0\in H^m\cap W^{1,\infty}$, the solution exist globally in time in $H^m$. In particular, if $m$ is large enough so that $H^m(\Omega)\hookrightarrow W^{1,\infty}(\Omega)$, the Stokes-transport system is well-posed in $H^m$.

\paragraph{Proof.}
An iterative scheme allows us to formalize the previous considerations. Let $\rho^0 : t \mapsto \rho_0$ which  belongs to $C(\R_+,H^m(\Omega))$. Now if $\rho^N$ belongs to $C(\R_+,H^m(\Omega))$, which is true for $N=0$, we know that the Stokes system
\begin{equation*}
	\left\{
	\ba
	-\Delta\bu^N+\nabla p^N &= -\rho^N\be_z \\
	\div\,\bu^N &= 0 \\
	\bu^N|_{\p\Omega} &= \bzero,
	\ea
	\right.
\end{equation*}
admits for any time a unique solution $\bu^N(t) \in H^{m+2}(\Omega)$ obeying inequalities \eqref{eq:wp-stokes}.
By linearity of the problem, $\bu^N$ in $H^{m+2}(\Omega)$ inherits the continuity of $\rho^N$ in $H^m(\Omega)$. Then since $\bu^N$ belongs in particular to $C(\R_+,H^{m+2}(\Omega))$, the transport equation
\begin{equation*}
	\left\{
	\ba
	\p_t\rho^{N+1}+\bu^N\cdot\nabla\rho^{N+1} &= 0 \\
	\rho^{N+1}|_{t=0} &= \rho_0,
	\ea
	\right.
\end{equation*}
has a unique strong solution $\rho^{N+1} \in C(\R_+,H^m(\Omega))$. This concludes the definition of the sequences $(\rho^N)_N$ and $(\bu^N)_N$. We thereafter show that for any $T>0$ the sequence $(\rho^N)_N$ is bounded in $L^\infty((0,T),H^m(\Omega))$ and equicontinuous in $C((0,T),H^{m-1}(\Omega))$, so it converges in $C((0,T),H^{m-1}(\Omega))$ to a solution of the original system up to an extraction. Since this is true for any $T>0$ and by uniqueness of the weak solution ensured by \cite[Theorem 1.1 \& 1.2]{leblond}, we get the well-posedness of the system and the proposition is proven.
\paragraph{Boundedness.}
Let us show that we have for any $N\in\N$,
\begin{equation} \label{eq:wp-rec}
	\|\rho^N\|_{H^m} \leq \|\rho_0\|_{H^m}\exp\left(C\|\rho_0\|_{L^\infty}t+\frac{\|\nabla\rho_0\|_{L^\infty}}{\|\rho_0\|_{L^\infty}}\left(\exp(C\|\rho_0\|_{L^\infty}t)-1\right)\right) =:B_{\rho_0}(t).
\end{equation}
This inequality is immediately satisfied for $N=0$ since $\rho^0$ is constant in time and equal to $\rho_0$. Let $N\in\N$ such that \eqref{eq:wp-rec} is satisfied. Then the tame estimate \eqref{eq:tame} provides here
\begin{equation*}
	\frac\rd{\rd t}\|\rho^{N+1}(t)\|_{H^m}^2 
	\lesssim \|\nabla\bu^N\|_{L^\infty}\|\rho^{N+1}\|_{H^m}^2 + \|\nabla\rho^{N+1}\|_{L^\infty}\|\bu^N\|_{H^m}\|\rho^{N+1}\|_{H^m}.
\end{equation*}
The considerations \eqref{eq:wp-stokes} and \eqref{eq:wp-grad} applied to $\rho^N,\rho^{N+1}$ and $\bu^N$ lead here to
\begin{equation*}
	\frac\rd{\rd t}\|\rho^{N+1}\|_{H^m} \lesssim \|\rho_0\|_{L^\infty}\|\rho^{N+1}\|_{H^m} + \|\nabla\rho_0\|_{L^\infty}\exp(C\|\rho_0\|_{L^\infty}t)\|\rho^{N+1}\|_{H^m}.
\end{equation*}
From here, we use the Grönwall lemma to estimate $\|\rho^{N+1}\|_{H^m}$,
\begin{equation} \label{eq:wp-int}
	\|\rho^{N+1}\|_{H^m}\leq \exp(C\|\rho_0\|_{L^\infty}t)\left(\|\rho_0\|_{H^m} + C\|\nabla\rho_0\|_{L^\infty}\int_0^t\|\rho^N(s)\|_{H^m} \ud s\right).
\end{equation}
Then, according to the assumption on $\rho^N$, we observe that
\begin{equation*}
    \begin{aligned}
	C\|\nabla\rho_0\|_{L^\infty}&\int_0^t\|\rho^N(s)\|_{H^m}\ud s
	\\ &\leq C\|\rho_0\|_{H^m}\|\nabla\rho_0\|_{L^\infty}\int_0^t\exp\left(C\|\rho_0\|_{L^\infty}s+\frac{\|\nabla\rho_0\|_{L^\infty}}{\|\rho_0\|_{L^\infty}}\left(e^{C\|\rho_0\|_{L^\infty}s}-1\right)\right)\ud s \\
	&\leq C\|\rho_0\|_{H^m}\|\nabla\rho_0\|_{L^\infty}\int_0^{e^{C\|\rho_0\|_{L^\infty}t}-1}\exp\left(\frac{\|\nabla\rho_0\|_{L^\infty}}{\|\rho_0\|_{L^\infty}}r\right)\frac{\ud r}{C\|\rho_0\|_{L^\infty}} \\
	&=\|\rho_0\|_{H^m}\left(\exp\left(\frac{\|\nabla\rho_0\|_{L^\infty}}{\|\rho_0\|_{L^\infty}}\left(\exp(C\|\rho_0\|_{L^\infty}t) - 1\right)\right)-1\right).
    \end{aligned}
\end{equation*}
The latter bound substituted in \eqref{eq:wp-int} yields exactly the result \eqref{eq:wp-rec}. Therefore, for any $T>0$ the sequence $(\rho^N)_N$ is uniformly bounded in $L^\infty(0,T,H^m(\Omega))$.

\paragraph{Equicontinuity.}
We find a uniform bound on $(\p_t\rho^N)_N$ in $H^{m-1}(\Omega)$ to show the equicontinuity of the sequence in $C((0,T),H^{m-1}(\Omega))$. This bound, uniform in $N\in\N$ and $t\in[0,T]$ is obtained thanks to the tame estimate, the bounds \eqref{eq:wp-stokes} and the uniform bound \eqref{eq:wp-rec} on $\rho^N$,
\begin{equation*}
	\begin{aligned}
		\|\p_t\rho^N\|_{H^{m-1}}
		&=\|\bu^{N-1}\cdot\nabla\rho^{N}\|_{H^{m-1}} \\
		&\lesssim\|\bu^{N-1}\|_{L^\infty}\|\nabla\rho^{N}\|_{H^{m-1}}+\|\nabla \rho^N\|_{L^\infty}\|\bu^{N-1}\|_{H^{m-1}} \\
		&\lesssim \|\rho_0\|_{L^\infty}\|\rho^{N}\|_{H^m} + \|\rho^{N-1}\|_{H^m} \|\nabla \rho_0\|_\infty e^{C\|\rho_0\|_\infty t}\\
		&\lesssim \|\rho_0\|_{W^{1,\infty}}\|\rho_0\|_{H^m}\exp\left(C\|\rho_0\|_{L^\infty}t+\frac{\|\nabla\rho_0\|_{L^\infty}}{\|\rho_0\|_{L^\infty}}\left(\exp(C\|\rho_0\|_{L^\infty}t)-1\right)\right).
	\end{aligned}
\end{equation*}
\paragraph{Regularity.}
Let us show that the limit $\rho$ belongs to $L^\infty((0,T),H^m(\Omega))$. For any $t\in[0,T]$, $(\rho^N(t))_N$ is uniformly bounded in $H^m(\Omega)$ with respect to $N$ and $t$. Hence according to Banach--Alaoglu theorem, for any $t$ the sequence is weakly compact in $H^m(\Omega)$. Thus up to an extraction, $\rho^N(t)$ converges weakly toward a $\bar\rho(t) \in H^m(\Omega)$, and this limit satisfies
\begin{equation*}
    \|\bar\rho(t)\|_{H^m}\leq\liminf_N\|\rho^N(t)\|_{H^m},
\end{equation*}
where the right-hand-side is uniformly bounded thanks to \eqref{eq:wp-rec}. As $\rho^N$ already converges weakly in $H^m(\Omega)$, we can identify $\bar\rho$ and $\rho$, which then belongs to $L^\infty((0,T),H^m(\Omega))$. Finally, to reach the regularity $C((0,T),H^m(\Omega))$, Lemma II.5.6 in \cite{BoyerFabrie} tells us that since in particular $\rho \in L^\infty((0,T),H^m(\Omega))\cap C^0_\rw((0,T),H^{m-1}(\Omega))$ then $\rho \in C^0_\rw((0,T),H^m(\Omega))$. Hence it is enough to show that $t\mapsto\|\rho(t)\|_{H^m}$ is continuous to prove the strong continuity of $\rho$ in $H^m(\Omega)$. By weak continuity, we have
\begin{equation*}
	\|\rho_0\|_{H^m} \leq \liminf_{t\searrow 0}\|\rho(t)\|_{H^m}.
\end{equation*}
Also we have by weak convergence
\begin{equation*}
	\|\rho(t)\|_{H^m} \leq\liminf_{N\to\infty}\|\rho^N(t)\|_{H^m} \leq B_{\rho_0}(t),
\end{equation*}
which proves by clamping that $t\mapsto \|\rho(t)\|_{H^m}$ is continuous at $t=0$. This can be performed for any $t\in[0,T]$, hence the continuity. Finally, $\bu \in C^0((0,T),H^{m+2}(\Omega))$ by \eqref{eq:wp-stokes} and linearity of the Stokes equation.

\section{About the bilaplacian equation \label{app:bilaplacian}}

We use throughout the paper the following classical regularity result:

\begin{lemma}[Regularity] \label{lem:bilap}
	Let $f \in H^m(\Omega),m \geq -2$. The problem
	\begin{equation*}
		\Delta^2\psi=f, \qquad \psi|_{\p\Omega}=\p_\bn\psi|_{\p\Omega}= 0,
	\end{equation*}
	admits a unique strong solution $\psi \in H^2_0\cap H^{m+4}(\Omega)$ such that
	\begin{equation*}
		\|\psi\|_{H^{m+4}} \lesssim \|f\|_{H^m}.
	\end{equation*}
\end{lemma}

The eigenvalues and eigenfunctions of the bilaplacian in a channel can be semi-explicitly computed (see \cite{leblondPhD} for the details):

\begin{lemma}[Spectrum of the bilaplacian] \label{lem:basis}
	The eigenvalues of the operator $\Delta^2$ on $H^2_0$ in $\T\times(-1,1)$ are the union, for all $k\in\Z$, of strictly increasing sequences $(\lambda_{n,k})_{n\in\N}$ such that
	\[
	\lambda_{n,k} \simeq (n^2+k^2)^2
	\]
	with associated (unnormalized) eigenfunctions:
	\[
	b_{n,k} = e^{ikx} \begin{cases}
		\cos(\omega_{n,k}z) - \frac{\cos(\omega_{n,k})}{\cosh(r_{n,k})}\cosh(r_{n,k}z), & n\in2\N, \\
		\sin(\omega_{n,k}z) - \frac{\sin(\omega_{n,k})}{\sinh(r_{n,k})}\sinh(r_{n,k}z), & n\in2\N+1,
	\end{cases}
	\]
	with $\omega_{n,k}=(k^2-\lambda_{n,k}^{1/2})^{1/2}$ and $r_{n,k}=(k^2+\lambda_{n,k}^{1/2})^{1/2}$.
	Note that to simplify the calculations, the domain was chosen to be $\T\times(-1,1)$ and not $\Omega=\T\times(0,1)$.
\end{lemma}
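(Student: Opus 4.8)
The plan is to diagonalize the operator $\Delta^2$ on $H^2_0(\T\times(-1,1))$ by separation of variables in the horizontal variable, reducing the two-dimensional eigenvalue problem to a family of one-dimensional problems indexed by $k\in\Z$. First I would write any eigenfunction as $b(x,z)=e^{ikx}\varphi(z)$, so that $\Delta^2 b=\lambda b$ becomes the fourth-order ODE $\varphi^{(4)}-2k^2\varphi''+k^4\varphi=\lambda\varphi$ on $(-1,1)$, with the clamped boundary conditions $\varphi(\pm1)=\varphi'(\pm1)=0$ coming from $\psi|_{\p\Om}=\p_\bn\psi|_{\p\Om}=0$. Since the forcing $\p_x\theta'$ in the stream-function equation is horizontally mean-free, it is harmless (but not necessary) to restrict attention to $k\neq0$; for each fixed $k$ the operator $\frac{d^4}{dz^4}-2k^2\frac{d^2}{dz^2}+k^4$ with clamped conditions is self-adjoint, positive, with compact resolvent, hence has a discrete spectrum $0<\lambda_{0,k}<\lambda_{1,k}<\cdots\to\infty$ and an orthonormal basis of eigenfunctions in $L^2(-1,1)$; taking the union over $k$ and tensoring with $(e^{ikx})_{k}$ gives the claimed Hilbertian basis $(b_{n,k})$ of $L^2(\T\times(-1,1))$.

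Next I would solve the ODE explicitly. The characteristic equation $r^4-2k^2r^2+k^4=\lambda$ factors as $(r^2-k^2)^2=\lambda$, i.e. $r^2=k^2\pm\lambda^{1/2}$, so the four roots are $\pm\omega_{n,k}$ and $\pm r_{n,k}$ with $\omega_{n,k}=(k^2-\lambda^{1/2})^{1/2}$ (purely imaginary exponents, giving trigonometric solutions) and $r_{n,k}=(k^2+\lambda^{1/2})^{1/2}$ (giving hyperbolic solutions). By the symmetry $z\mapsto-z$ of the problem, the eigenspaces split into even and odd parts, which is why the stated eigenfunctions come in two families: even ones built from $\cos(\omega z)$ and $\cosh(r z)$, odd ones from $\sin(\omega z)$ and $\sinh(r z)$. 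For the even family I would impose $\varphi(1)=\varphi'(1)=0$ on $\varphi(z)=A\cos(\omega z)+B\cosh(rz)$; eliminating $(A,B)$ yields a transcendental dispersion relation of the form $r\tan\omega+\omega\tanh r=0$ (and the odd analogue $r\coth r=\omega\cot\omega$, up to sign conventions), and the normalization is fixed so that $\varphi$ is a combination of $\cos(\omega z)-\frac{\cos\omega}{\cosh r}\cosh(rz)$, exactly as in the statement. One must check that $\omega_{n,k}$ is real for all the relevant branches; this holds because the smallest eigenvalue satisfies $\lambda_{n,k}^{1/2}<k^2$ only for the first branch — more precisely, one sorts the solutions of the dispersion relation and relabels them by $n\in\N$, allowing $\omega$ to become imaginary (turning $\cos$ into $\cosh$) on higher branches; this case distinction is a bookkeeping point rather than a real difficulty.

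The remaining quantitative claim is the two-sided bound $\lambda_{n,k}\simeq(n^2+k^2)^2$. For the lower bound $\lambda_{n,k}\gtrsim(n^2+k^2)^2$ I would use the variational characterization: $\lambda_{n,k}=\min_{\dim V=n+1}\max_{\varphi\in V}\frac{\int|\varphi''|^2+2k^2|\varphi'|^2+k^4|\varphi|^2}{\int|\varphi|^2}\geq \min(\mu_n^2,\ \cdots)$, comparing with the Dirichlet Laplacian whose eigenvalues on $(-1,1)$ grow like $n^2$, so that the Rayleigh quotient is bounded below by $c(n^2+k^2)^2$; alternatively one reads it off the dispersion relation, whose $n$-th root behaves like $\omega_{n,k}\sim n\pi/2$ for large $n$, forcing $\lambda_{n,k}\sim(k^2+\omega_{n,k}^2)^2\sim(n^2+k^2)^2$. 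The matching upper bound follows by exhibiting explicit trial functions (e.g. compactly supported bumps oscillating at frequency $\sim n$) in the min-max. The main obstacle is the precise asymptotic analysis of the roots of the transcendental dispersion relation, uniformly in both $n$ and $k$: one needs to show the roots do not cluster or escape, which requires controlling $\tanh r\to1$ as $r\to\infty$ and handling the regime $k\gg n$ versus $n\gg k$ separately. Since the statement explicitly defers the details to the thesis of A.~Leblond, I would present the separation of variables and the explicit form of $b_{n,k}$ in full, and then indicate the min-max comparison argument for the eigenvalue asymptotics, referring to the thesis for the uniform root-counting estimates.
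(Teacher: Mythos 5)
The paper itself does not prove this lemma: it only records the statement and refers to the PhD thesis of A.~Leblond for the computation, so there is no in-paper argument to compare with. Your plan --- Fourier series in $x$, reduction to the clamped fourth-order ODE $\varphi''''-2k^2\varphi''+k^4\varphi=\lambda\varphi$ on $(-1,1)$, even/odd splitting, explicit trigonometric/hyperbolic solutions with a transcendental determinant condition, and a min--max argument for $\lambda_{n,k}\simeq(n^2+k^2)^2$ --- is the natural route and is sound in outline. The min--max part is in fact self-contained and arguably preferable to root asymptotics: the lower bound follows from $\lambda_{n,k}\geq\max(\mu_n,k^4)$ where $\mu_n\simeq n^4$ are the clamped-beam eigenvalues (note that you need this fourth-order comparison, not the Dirichlet Laplacian itself, whose eigenvalues only give $n^2$), and the upper bound from $n+1$ disjointly supported bumps of width $\sim 1/n$, whose Rayleigh quotient is $\lesssim n^4+k^2n^2+k^4\lesssim (n^2+k^2)^2$.

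Two details need repair, and one point is missing. First, your dispersion relations are garbled: for the even family $\varphi=A\cos(\omega z)+B\cosh(rz)$ the clamped conditions at $z=1$ give $r\cos\omega\,\sinh r+\omega\sin\omega\,\cosh r=0$, i.e. $\omega\tan\omega=-r\tanh r$, while the odd family gives $\tan\omega/\omega=\tanh r/r$; you swapped $\tan$ and $\tanh$ in the even case, and the exact form matters if you do want the root asymptotics. Second, the remark that $\lambda_{n,k}^{1/2}<k^2$ ``only for the first branch'' is backwards: every eigenvalue satisfies $\lambda^{1/2}>k^2$. Indeed, if $\lambda^{1/2}\leq k^2$ all four characteristic roots are real, and the clamped determinant conditions become $\mu\tanh\mu=r\tanh r$ (even) and $\tanh\mu/\mu=\tanh r/r$ (odd) with $0\leq\mu<r$, which have no solutions by strict monotonicity of $x\tanh x$ and $\tanh x/x$ (the degenerate case $\lambda^{1/2}=k^2$ is excluded similarly). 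So the oscillatory frequency $(\lambda^{1/2}-k^2)^{1/2}$ is real on \emph{all} branches, no case distinction is needed, and the $\omega_{n,k}$ of the statement is purely imaginary throughout --- your reading (imaginary characteristic roots giving trigonometric solutions) is the correct one, but the one-line verification that the hyperbolic--hyperbolic regime is empty should be part of the proof. Finally, the lemma asserts \emph{strictly increasing} sequences with cosine-type eigenfunctions for even $n$ and sine-type for odd $n$; this requires checking that within each parity the eigenspaces are one-dimensional (the $2\times2$ boundary system cannot vanish identically since $\cosh r\geq 1$), that the even and odd relations have no common root (viewing them as a linear system in $(\tan\omega,\tanh r)$ with determinant $-(\omega^2+r^2)\neq0$ forces $\tanh r=0$, impossible), and that the two families of roots interlace to justify the labelling by the parity of $n$. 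Your proposal does not address simplicity or this interlacing, which is the only genuinely missing ingredient beyond bookkeeping.
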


\section{Proof of \texorpdfstring{\cref{lem:WP-ODE-BL}}{Lemma 3.2} \label{ap:lemma32}}

The proof of the lemma relies on energy estimates in weighted Sobolev spaces, with weights that grow like $\exp(c Z^{4/5})$ for $Z\gg 1$. 
Unfortunately, we have not been able to treat all four cases for the boundary conditions simultaneously, but we will treat (i) and (iii) (resp. (ii) and (iv)) together.
Note that when \eqref{ODE} is multiplied (formally) by $\Psi w$ or by $-\p_Z \Psi w$, 
where $w\in C^\infty([0, + \infty))$ is an arbitrary weight function,
there are many commutator terms when we integrate by parts the fifth order derivative.
The main idea is that if the weight is adequately chosen, all these commutators can be absorbed in the main order terms, which will be designed to have a positive sign.
Hence we start with the following result, which will allow us to control the commutators:

\begin{lemma}\label{lem:commu-weight}
	Let $\Psi\in C^\infty_c([0,+\infty))$ such that $\Psi(0)=0$, and let $r\in (0,1)$. 
	
	\begin{itemize}
		\item  Let $W\in C^\infty([0, +\infty))$ such that $W(Z)=\exp(Z^{4/5})$ for $Z\geq 1$, and $W\geq 1$, $\p_z^2 W\geq 0$, $W\equiv 1$ in a neighborhood of zero.
		
		Then for $k\in \{1,2\}$, there exists a constant $C_k$, independent of $r$, such that for all $r\in (0,1)$,
		\begin{multline*}
		      \left| \int_0^\infty |\p_Z^{k} \Psi(Z)|^2 \frac{|\p_Z^{3-k} W (rZ)|^2}{W(rZ)}\ud Z \right|\\
		      \leq  C_k r^{-\frac{2}{3}(3-k)}\left[\int_0^\infty |\p_Z^3 \Psi(Z)|^2 W(rZ)\ud Z  +\int_0^\infty \Psi^2(Z) \left(\frac{r\p_Z W(rZ)}{Z} + \frac{W(rZ)}{Z^2}\right)\ud Z   \right].
		\end{multline*}

		\item Let $\Phi:Z\mapsto \Psi(Z)/Z$. Then for $k\in \{1,2,3,4\}$, for all $c>0$, for $r$ sufficiently small,
		\begin{multline*}
		      \int_0^\infty \mathbf 1_{rZ>c} (rZ)^{\frac{2+2k}{5}} \exp((rZ)^{4/5}) \p_Z^k \Phi(Z)^2\ud Z \\
            \lesssim_c r^{\frac{2(k-1)}{3}}\left[ \int_0^\infty \p_Z^4 \Psi(Z)^2\exp((rZ)^{4/5})\ud Z + \int_0^\infty \p_Z \Phi(Z)^2 (rZ)^{4/5} \exp((rZ)^{4/5}) \ud Z\right].
        \end{multline*}

	\end{itemize}
\end{lemma}

\begin{proof}
	$\bullet$	For $k=0,\dots, 3$, let us consider weights $\omega_k\in W^{1,\infty}_\text{loc}((0, +\infty))$ such that
	\[
	\ba
	\forall k \in \{0,\cdots,3\},\quad \forall Z\geq 1, \ \omega_k(Z)=e^{-1} Z^{-\frac{2}{5}(3-k)} \exp(Z^{4/5} ),\\
	\forall Z\in (0,1), \quad \omega_1(Z)=\omega_3(Z)=1,\quad \omega_0(Z)=Z^{-2},\quad \omega_2(Z)=Z^2.
	\ea
	\]
	Note that the weights $\omega_k$ satisfy the following assumptions:
	\begin{itemize}
		\item For $k\in \{1,2\}$, $\omega_k\leq \sqrt{\omega_{k-1}\omega_{k+1}}$;
		\item For $k\in \{1,2\}$, $|\p_Z\omega_k|\leq C_k \sqrt{\omega_k \omega_{k-1}}$ for some constant $C_k$;
		\item $\omega_2(0)=0$;
        \item $\omega_3 \leq C W$, $\omega_0 (Z)\leq C (Z^{-2} W (Z) + Z^{-1} \p_Z W(Z))$.
	\end{itemize}
	Let us now introduce, for $k=0,\dots, 3$
	\[
	I_k:=\int_0^\infty |\p_Z^k \Psi(Z)|^2 \omega_k(rZ)\ud Z .
	\]
	Then by definition of $W$, $\omega_0,\omega_3$, there exists a constant $C$ (independent of $r>0$) such that
	\[
	r^2 I_0 + I_3 \leq C  \int_0^\infty |\p_Z^3 \Psi(Z)|^2 W(rZ)\ud Z  +\int_0^\infty \Psi^2(Z) \left(\frac{r\p_Z W(rZ)}{Z} + \frac{W(rZ)}{Z^2}\right)\ud Z  .
	\]
	Let us set $E=r^2 I_0 + I_3$. For $k=1,2$, integrating by parts and using the conditions $\Psi(0)=\omega_2(0)=0$, we have
	\[
	I_k=-\int_0^\infty \p_Z^{k-1}\Psi(Z) \p_Z^{k+1}\Psi(Z) \omega_k(rZ)\ud Z  - r \int_0^\infty  \p_Z^{k-1}\Psi(Z) \p_Z^{k}\Psi(Z) \p_Z\omega_k(rZ)\ud Z  .
	\]
	Using the properties of $\omega_k$, we deduce that there exist constants $C_k$ such that
	\[
	I_k\leq C_k \left( \sqrt{ I_{k-1} I_{k+1}} + r \sqrt{I_k I_{k-1}}\right).
	\]
	Since $r^2 I_0\leq E$, we deduce first that $I_1\lesssim E + r^{-1} \sqrt{I_2 E}$, and plugging this inequality into the bound on $I_2$, we find, since $r\in (0,1)$,
	\[
	I_1\lesssim r^{-4/3} E,\quad I_2 \lesssim r^{-2/3} E.
	\]
	
	The first inequality from \cref{lem:commu-weight} then follows easily by noticing that
	\[
\frac{(\p_Z W)^2}{W}\lesssim \omega_2,\quad \frac{(\p_Z^2 W)^2}{W}\lesssim \omega_1.
	\]
	
    $\bullet$ Let us now set, for $k\in \{1,\cdots, 4\}$,
	\[
	J_k:=\int_0^\infty \p_Z^k \Phi(Z)^2 \zeta_k(rZ)\ud Z ,
	\]
	where the weights $\zeta_k\in W^{1,\infty}_\text{loc}(\R)$ are such that $\zeta_k\equiv 0$ in a neighborhood of zero, $\zeta_k(Z)= Z^{\frac{2+2k}{5}} \exp(Z^{4/5})$ for $Z$ large enough, and $\zeta_k\lesssim \sqrt{\zeta_{k-1}\zeta_{k+1}}$, $\p_Z \zeta_k\lesssim \sqrt{\zeta_k \zeta_{k-1}}$.	Let $F:=r^{-2} J_4 + J_1$. As above, for $k\in \{2,3\}$, we have
	\[
	J_k\leq C_k\Bigl(\sqrt{J_{k-1}J_k} + r \sqrt{ J_k J_{k-1}}\Bigr).
	\]
	From there, we infer that for $k\in \{1,\cdots 4\}$, $J_k \lesssim r^{2(k-1)/3} F$.
	
	Now, since $\Psi=Z \Phi$, we have $\p_Z^4 \Psi= Z \p_Z^4 \Phi + 4 \p_Z^3 \Phi$. It follows that
	\begin{align*}
		\int_0^\infty (&\p_Z^4 \Psi(Z))^2  e^{(rZ)^{4/5}} \ud Z  \\
    ={}&\int_0^\infty ( Z \p_Z^4 \Phi + 4 \p_Z^3 \Phi)^2  e^{(rZ)^{4/5}} \ud Z \\
		={}& \int_0^\infty  Z^2 (\p_Z^4 \Phi)^2 e^{(rZ)^{4/5}} \ud Z + \int_0^\infty 16 (\p_Z^3 \Phi)^2 e^{(rZ)^{4/5}} \ud Z  -  4\int_0^\infty  (\p_Z^3 \Phi)^2 \frac{\p}{\p Z} (Z e^{(rZ)^{4/5}})\ud Z \\
		={}& \int_0^\infty  Z^2 (\p_Z^4 \Phi)^2 e^{(rZ)^{4/5}} \ud Z +16 \int_0^\infty  (\p_Z^3 \Phi)^2  e^{(rZ)^{4/5}} \ud Z \\
		&- 4 \int_0^\infty \left( 1 + \frac{4}{5} (rZ)^{4/5}\right) (\p_Z^3 \Phi)^2e^{(rZ)^{4/5}} \ud Z .
	\end{align*}
	We split the last integral into two parts, for $rZ\leq 1$ and $rZ\geq 1$. When $rZ\leq 1$, 
	\[ 4 \int_0^{r^{-1}} \left( 1 + \frac{4}{5} (rZ)^{4/5}\right) (\p_Z^3 \Phi)^2 e^{(rZ)^{4/5}} \ud Z  \leq 8  \int_0^{r^{-1}}(\p_Z^3 \Phi)^2  e^{(rZ)^{4/5}} \ud Z .\]
	And for $rZ\geq 1$, for a suitable choice of $\zeta_3$
	\[
	\int_{r^{-1}}^\infty \left( 1 + \frac{4}{5} (rZ)^{4/5}\right) (\p_Z^3 \Phi)^2 e^{(rZ)^{4/5}} \ud Z  \lesssim J_3 \lesssim r^{4/3} F.
	\]
	We infer that
	\[
	\int_0^\infty (\p_Z^4 \Psi(Z))^2  e^{(rZ)^{4/5}} \ud Z  \geq  C^{-1}r^{-2} J_4 - Cr^{4/3} F,
	\]
	and therefore, for $r$ sufficiently small,
	\[
	F\lesssim \int_0^\infty (\p_Z^4 \Psi(Z))^2  e^{(rZ)^{4/5}} \ud Z  + J_1.
	\]
	The result follows.
\end{proof}

We now turn towards the proof of \cref{lem:WP-ODE-BL}. In both cases, we start with a formal \emph{a priori} estimate, from which we deduce an appropriate notion of variational solution in a suitable Hilbert space. Existence and uniqueness then follow in a straightforward manner from the Lax-Milgram Lemma.

\smallskip
\textit{\textbf{First case:} conditions (ii) and (iv):}

As explained above, we start with a formal \emph{a priori} estimate.
Let $w\in C^\infty(\R_+)$ be an arbitrary weight function, and multiply \eqref{ODE} by $\p_Z(\Psi(Z) w(Z))/Z$. 
On the one hand,
\begin{align*}
  \int_0^\infty \p_Z^5 \Psi(Z) \p_Z(\Psi w)(Z)\ud Z ={}& \int_0^\infty \p_Z^3 \Psi(Z) \p_Z^3(\Psi w)(Z)\ud Z \\&
  - \p_Z^4 \Psi(0) \p_Z(\Psi w)(0) + \p_Z^3 \Psi(0) \p_Z^2(\Psi w)(0).  
\end{align*}
Note that the two boundary terms vanish in cases  (ii) and (iv). We obtain
\[
\int_0^\infty \p_Z^5 \Psi(Z) \p_Z(\Psi w)(Z)\ud Z =\int_0^\infty (\p_Z^3 \Psi(Z))^2 w(Z) + \sum_{k=1}^3 \begin{pmatrix}3\\ k\end{pmatrix}\int_0^\infty \p_Z^3 \Psi(Z) \p_Z^{3-k}\Psi (Z) \p_Z^k w(Z)\ud Z .
\]
On the other hand, since $\Psi(0)=0$,
\begin{align*}
\int_0^\infty \Psi(Z) \p_Z(\Psi w) (Z) \frac{\ud Z}{Z}&= \int_0^\infty (\Psi w)(Z) \p_Z (\Psi w)(Z) \frac{\ud Z}{Zw(Z)}\\
&=-\frac{1}{2}\int_0^\infty (\Psi w)^2(Z) \frac{\ud }{ \ud Z}\left(\frac{1}{Z w(Z)}\right) \ud Z.
\end{align*}
Choosing $w$ such that $\p_Z w\geq 0$, the right-hand side has a positive sign.
We then choose $w(Z)=W(rZ)$ for some $W\in C^\infty(\R_+)$ such that $W(\xi)=\exp(\xi^{4/5})$ for $\xi\geq 1$, $W(\xi)=1$ for $\xi$ in a neighborhood of zero, $\p_\xi W\geq 0$, and $r>0$ small enough.
With this choice, the positive terms in the energy are bounded from below by
\[
\int_0^\infty  (\p_Z^3 \Psi(Z)^2 W(rZ)\ud Z  + \int_0^\infty \Psi^2(Z) \left(\frac{W(rZ)}{Z^2} + r \frac{\p_Z W(rZ)}{Z}\right) \ud Z.
\]
\cref{lem:commu-weight} then implies that there exists an explicit constant $\delta>0$ such that for $k=1,2,3$,
\begin{multline*}
	\left| \int_0^\infty \p_Z^3 \Psi(Z) \p_Z^{3-k}\Psi (Z) \p_Z^k w(Z)\ud Z \right| \\
    \leq r^\delta \left[\int_0^\infty  (\p_Z^3 \Psi(Z))^2 W(rZ)\ud Z  + \int_0^\infty \Psi^2(Z) \left(\frac{W(rZ)}{Z^2} + r \frac{\p_Z W(rZ)}{Z}\right) \ud Z\right] .
\end{multline*}

Therefore, for $r>0$ sufficiently small, we obtain
\begin{multline*}
	\int_0^\infty  (\p_Z^3 \Psi(Z))^2 W(rZ)\ud Z  + \int_0^\infty \Psi^2(Z) \left(\frac{W(rZ)}{Z^2} + r \frac{\p_Z W(rZ)}{Z}\right) \ud Z
	\\
	\lesssim \int_0^1 \frac{S(Z)^2}{Z^2}\ud Z  + \int_0^\infty S(Z)^2 W(rZ)\ud Z .
\end{multline*}
This leads us to the following formulation: let 
\[
\ba
\mathcal H:=\biggl\{\Psi \in H^3(\R_+), \Psi(0)=0,\  &\int_0^\infty  (\p_Z^3 \Psi(Z))^2 e^{(rZ)^{4/5}}\ud Z <+\infty,\\ & \int_0^\infty \Psi(Z)^2 (Z^{-2} + Z^{-1/5}) e^{(rZ)^{4/5}}\ud Z <+\infty\biggr\},\ea
\]
and let
\[
  \mathcal H_0:=\{\Psi \in \mathcal H,\ \p_Z \Psi(0)=\p_Z^2 \Psi(0)=0\}.
\]
We endow $\mathcal H$ and $\mathcal H_0$ with the norm
\[
\|\Psi\|_{\mathcal H}^2= \int_0^\infty  (\p_Z^3 \Psi(Z))^2 W(rZ)\ud Z  + \int_0^\infty \Psi^2(Z) \left(\frac{W(rZ)}{Z^2} + r \frac{\p_Z W(rZ)}{Z}\right) \ud Z,
\]
where $W$ is the previous weight. We say that $\Psi\in \mathcal H$ is a solution of \eqref{ODE}-(ii) (resp. 
$\Psi\in \mathcal H_0$ is a solution of \eqref{ODE}-(iv)) if and only if for all $\Phi\in \mathcal H$ (resp. $\Phi\in \mathcal H_0$),
\[
\int_0^\infty \p_Z^3 \Psi \p_Z^3(\Phi W(r\cdot)) + \int_0^\infty \Psi(Z) \frac{ \p_Z (\Phi(Z) W(rZ))}{Z}\ud Z  = \int_0^\infty \frac{S(Z)}{Z}  \p_Z(\Phi(Z) W(rZ))\ud Z .
\]
Existence and uniqueness of solutions of  \eqref{ODE}-(ii) (resp. of \eqref{ODE}-(iv)) in $\mathcal H$ (resp. $\mathcal H_0$) follow easily from the Lax-Milgram Lemma. Using the equation, we then infer that 
\[
\int_0^\infty (\p_Z^5 \Psi(Z))^2  e^{(rZ)^{4/5}}\ud Z <+\infty.
\]
The result follows.

\smallskip 
\textit{\textbf{Second case:} conditions (i) and (iii):}

The estimates in the case of conditions (i) and (iii) are similar, but slightly less straightforward, since we shall need to combine two estimates.

We first multiply \eqref{ODE} by $-{\p_Z^3 \Psi(Z) w_1(Z)}/{Z}$, with a weight $w_1$ to be chosen later. We obtain on the one hand
\[
-\int_0^\infty \p_Z^5\Psi(Z) \p_Z^3 \Psi(Z) w_1(Z)\ud Z =\int_0^\infty (\p_Z^4 \Psi(Z))^2 w_1(Z)\ud Z + \int_0^\infty \p_Z^4 \Psi(Z) \p_Z^3 \Psi(Z)\p_Z w_1(Z)\ud Z .
\]
Note that the boundary term $-\p_Z^4 \Psi(0) \p_Z^3 \Psi(0) w_1(0)$ vanishes in cases (i) and (iii).
The first term gives a positive contribution to the energy, and the second one will be treated below with the help of \cref{lem:commu-weight}.
On the other hand, we obtain for the zero-th order term, noticing that either $\p_Z^2 \Psi(0)=0$ (in case (iii)) or $(Z^{-1} \Psi(Z))\vert_{z=0}=\p_Z \Psi(0)=0$ (in case (i)),
\[
-\int_0^\infty \frac{\Psi(Z)}{Z}\p_Z^3 \Psi(Z) w_1(Z)\ud Z  = \int_0^\infty \p_Z^2\Psi(Z) \frac{\ud}{\ud Z}\left(\frac{\Psi(Z)}{Z} w_1(Z)\right)\ud Z .
\]
As in \cref{lem:commu-weight}, we set $\Phi(Z)=\Psi(Z)/Z$.
Let us write $\p_Z^2\Psi$ as
\[
\p_Z^2\Psi(Z) = \p_Z^2 \left( Z \Phi(Z)\right)= 2 \p_Z\Phi(Z) + Z \p_Z^2 \Phi(Z). 
\]
Performing integrations by parts and assuming that $\p_Z  w_1(0)=0$, we obtain
\[\ba
&\int_0^\infty \p_Z^2 \Psi(Z) \frac{\ud}{\ud Z}\left(\Phi(Z)w_1(Z)\right)\ud Z \\
=&\frac{3}{2}\int_0^\infty \left( \p_Z \Phi \right)^2 (w_1 (Z)- Z \p_Zw_1(Z))\ud Z + \frac{1}{2}\int_0^\infty \Psi(Z)^2 Z^{-1} \p_Z^3 w_1(Z)\ud Z .
\ea
\]
We shall choose $w_1$ so that $\p_Z^3 w_1\geq 0$, so that the last term has a positive sign.
However, for $Z\gg 1$, $w_1 - Z \p_Zw_1<0$, and therefore we need to add another term to the energy. More precisely, we now multiply \eqref{ODE} by $- Z^{-1}\p_Z(\p_Z \Phi w_2)$, with a weight $w_2$ which vanishes identically in a neighborhood of zero. We obtain
\[
-\int_0^\infty \frac{\Psi(Z)}{Z}\p_Z\left( \p_Z \Phi w_2\right)\ud Z  = \int_0^\infty \left( \p_Z\Phi(Z) \right)^2 w_2(Z)\ud Z .
\]
We then take $w_i(Z)=W_i(rZ)$, with $0<r\ll 1$ and $W_1,W_2$ satisfying the following properties:
\begin{itemize}
	\item $W_1\equiv 1$, $W_2\equiv 0$ in a neighborhood of zero;
	
	\item $\p_Z^3 W_1\geq 0$;
	
	\item $W_1(Z)=C \exp(Z^{4/5})$ for $Z$ large enough;
	
	\item $W_2 + \frac{3}{2}(W_1 - Z \p_Z W_1)\gtrsim (1 + Z^{4/5})\exp(Z^{4/5})$.
	
\end{itemize}

Our energy is then
\begin{align*}
	&\int_0^\infty (\p_Z^4 \Psi(Z))^2 W_1(rZ)\ud Z  + \int_0^\infty \left( \p_Z \Phi(Z) \right)^2 \left[ W_2 + \frac{3}{2}(W_1 - Z W_1')\right](rZ)\ud Z \\
	&+ \frac{r^3}{2}\int_0^\infty \Psi(Z)^2 Z^{-1} \p_Z^3 W_1(rZ)\ud Z \\
	&\gtrsim \int_0^\infty (\p_Z^4 \Psi(Z))^2 \exp((rZ)^{4/5})\ud Z + \int_0^\infty \left( \p_Z \Phi(Z) \right)^2  (1 + (rZ)^{4/5})\exp((rZ)^{4/5})\ud Z .
\end{align*}
Let us now consider the two  commutator terms, namely
\[
r \int_0^\infty \p_Z^4 \Psi(Z) \p_Z^3 \Psi(Z) \p_Z W_1(rZ)\ud Z %
\quad\text{and}\quad \int_0^\infty \p_Z^4 \Psi (Z) \p_Z^2\left( \p_Z \Phi w_2\right)\ud Z .
\]
For the first one, we write $\p_Z^3 \Psi= Z \p_Z^3 \Phi + 3\p_Z^2 \Phi$. We note that $\p_ZW_1(rZ) \lesssim \mathbf 1_{rZ>c} (rZ)^{-1/5} \exp((rZ)^{4/5})$. Using the second part of \cref{lem:commu-weight}, we infer that there exists $\delta>0$ such that
\begin{align*}
	r \int_0^\infty & \p_Z^4 \Psi(Z) \p_Z^3 \Psi(Z) W_1'(rZ)\ud Z \\
	\lesssim{} & \left(\int_0^\infty (\p_Z^4 \Psi(Z))^2 W_1(rZ)\ud Z \right)^{1/2}\left(\int_0^\infty \mathbf 1_{rZ>c} (rZ)^{8/5} (\p_Z^3 \Phi(Z))^2\exp((rZ)^{4/5})\ud Z \right)^{1/2}\\
	&+ r \left(\int_0^\infty (\p_Z^4 \Psi(Z))^2 W_1(rZ)\ud Z \right)^{1/2}\left(\int_0^\infty \mathbf 1_{rZ>c} (rZ)^{-2/5} (\p_Z^2 \Phi(Z))^2 \exp((rZ)^{4/5})\ud Z \right)^{1/2}\\
	\lesssim{}& r^\delta \left[  \int_0^\infty (\p_Z^4 \Psi(Z))^2 \exp((rZ)^{4/5})\ud Z + \int_0^\infty \left( \p_Z \Phi(Z) \right)^2   (rZ)^{4/5}\exp((rZ)^{4/5}\ud Z \right].
\end{align*}
Let us now address the second commutator term. We have for instance, using once again \cref{lem:commu-weight},
\begin{align*}
	\int_0^\infty &\p_Z^4 \Psi(Z) \p_Z^3 \Phi w_2(rZ)\ud Z \\
	&\lesssim \left(\int_0^\infty (\p_Z^4 \Psi(Z))^2 W_1(rZ)\ud Z \right)^{1/2} \left( \int_0^\infty \mathbf 1_{rZ>c} (rZ)^{8/5} (\p_Z^3 \Phi)^2 \exp((rZ)^{4/5}) \ud Z\right)^{1/2}\\
	&\lesssim r^{2/3} \left[  \int_0^\infty (\p_Z^4 \Psi(Z))^2 \exp((rZ)^{4/5})\ud Z + \int_0^\infty \left( \p_Z \Phi(Z) \right)^2   (rZ)^{4/5})\exp((rZ)^{4/5}\ud Z \right].
\end{align*}
The two other terms are treated in a similar fashion.
As in the first case, we find that for $r$ small enough, the energy is controlled by 
\[
\int_0^1 \frac{S(Z)^2}{Z^2}\ud Z  + \int_0^\infty S(Z)^2 (1+ (rZ)^{2/5})\exp{(rZ)^{4/5}}\ud Z .
\]
We conclude by a Lax-Milgram type argument.\qed

\paragraph{Acknowledgments.}
The authors thank Miguel Rodrigues for fruitful discussions especially for the identification of the limit profile, and  the referees for the quality and thoroughness of their comments.
This work has received funding from the European Research Council (ERC) under the European Union's Horizon 2020 research and innovation programme (Grant agreement No. \href{https://cordis.europa.eu/project/id/637653}{637653}, project BLOC) and by the French National Research Agency (Grants \href{https://anr.fr/Projet-ANR-18-CE40-0027}{ANR-18-CE40-0027}, project SingFlows and \href{https://anr.fr/Projet-ANR-23-CE40-0014}{ANR-23-CE40-0014-01}, project BOURGEONS.
A.-L. Dalibard acknowledges the support of the Institut Universitaire de France.
J. Guillod acknowledges the support of the Initiative d'Excellence (Idex) of Sorbonne University through the Emergence program.

\bibliographystyle{alphaurl}
\bibliography{paper}
\end{document}